\numberwithin{equation}{section}
\newcommand{\meanint}{{\int{\mkern-19mu}-}}
\newcommand{\dmeanint}{{\int{\mkern-16mu}-}}
\newtheorem{proposition}{Proposition}[section]
\newtheorem{theorem}[proposition]{Theorem}
\newtheorem{lemma}[proposition]{Lemma}
\newtheorem{corollary}[proposition]{Corollary}
\newtheorem{remark}[proposition]{Remark}
\newtheorem{definition}[proposition]{Definition}
\begin{document}

\title{Extending Sobolev Functions with Partially Vanishing Traces
from Locally $(\varepsilon,\delta)$-Domains
and Applications to Mixed Boundary Problems}

\author{Kevin Brewster, Dorina Mitrea\thanks{This author has been supported in part 
by a Simons Foundation grant.}\,\,\,\,
Irina Mitrea\thanks{This author has been supported in part 
by NSF grant DMS-1201736.}\,\,\,\,
and Marius Mitrea\thanks{2010{\it Mathematics Subject Classification:} 
Primary: 46E35, 42B35, 35J58, 35G45; Secondary 46B70, 31C15.
\newline
{\it Key words}: higher-order Sobolev space, linear extension 
operator, locally $(\varepsilon,\delta)$-domain, uniform domain, higher-order 
boundary trace operator, real and complex interpolation, Besov space, 
Ahlfors regular set, Hausdorff measure, Bessel capacity, synthesis, 
mixed boundary value problem, inhomogeneous Dirichlet problem, 
higher-order elliptic system.}}

\date{\today}

\maketitle

\begin{abstract}
We prove that given any $k\in{\mathbb{N}}$, for each open set $\Omega\subseteq{\mathbb{R}}^n$ and any closed subset $D$ of $\overline{\Omega}$ 
such that $\Omega$ is locally an $(\varepsilon,\delta)$-domain near 
$\partial\Omega\setminus D$ there exists a linear and bounded extension 
operator ${\mathfrak{E}}_{k,D}$ mapping, for each $p\in[1,\infty]$, 
the space $W^{k,p}_D(\Omega)$ into $W^{k,p}_D({\mathbb{R}}^n)$. Here, 
with ${\mathcal{O}}$ denoting either $\Omega$ or ${\mathbb{R}}^n$,
the space $W^{k,p}_D({\mathcal{O}})$ is defined as the completion in the classical 
Sobolev space $W^{k,p}({\mathcal{O}})$ of (restrictions to ${\mathcal{O}}$ of) functions
from ${\mathscr{C}}^\infty_c({\mathbb{R}}^n)$ whose supports are disjoint from $D$. 
In turn, this result is used to develop a functional analytic theory for 
the class $W^{k,p}_D(\Omega)$ (including intrinsic characterizations, boundary 
traces and extensions results, interpolation theorems, among other things) 
which is then employed in the treatment of mixed boundary value
problems formulated in locally $(\varepsilon,\delta)$-domains.
\end{abstract}

\section{Introduction}
\label{Sect:1}
\setcounter{equation}{0}

Extension results for Sobolev spaces defined on open subsets of the Euclidean 
ambient are important tools in many branches of mathematics, including 
harmonic analysis, potential theory, and partial differential equations. 
Recall that, given $k\in{\mathbb{N}}$ and $p\in[1,\infty]$, an open 
set $\Omega\subseteq{\mathbb{R}}^n$ is called a $W^{k,p}$-extension domain 
if there exists a bounded linear operator
\begin{eqnarray}\label{TFh-UN}
E:W^{k,p}(\Omega)\longrightarrow W^{k,p}({\mathbb{R}}^n)
\end{eqnarray}
with the property that $(Eu)\big|_{\Omega}=u$ for each $u\in W^{k,p}(\Omega)$
(for background definitions the reader is referred to \S\,\ref{Sect:2}).  
Such a condition necessarily imposes restrictions on the underlying set $\Omega$.  
For example, not all functions from the Sobolev space $W^{k,p}(\Omega)$ 
with $k\in{\mathbb{N}}$ and $p\in(n/k,\infty)$ may be extended 
to $W^{k,p}({\mathbb{R}}^n)$, $n\geq 2$, in the case in which 
\begin{eqnarray}\label{Yaytf}
\Omega_a=\big\{(x_1,\dots,x_n)\in{\mathbb{R}}^n:\,0<x_1,\dots,x_{n-1}<1
\,\,\mbox{ and }\,\,0<x_n<x_{n-1}^a\big\}\,\,\mbox{ with }\,\,a>kp-1.
\end{eqnarray}
Indeed, the fact that $p>n/k$ ensures that
$W^{k,p}({\mathbb{R}}^n)\hookrightarrow{\mathscr{C}}^0({\mathbb{R}}^n)$ and
yet if $b>0$ is small enough so that $a>p(k+b)-1$, then  
the function $u(x):=x_{n-1}^{-b}$ for each $x=(x_1,\dots,x_n)\in\Omega_a$ belongs 
to $W^{k,p}(\Omega_a)$, but obviously has no continuous extension to ${\mathbb{R}}^n$. 
The obstruction in this case is the presence of outward cusps on $\partial\Omega_a$
(caused by the fact that $a>kp-1$, $p>n/k$, and $n\geq 2$ necessarily entails $a>1$).

On the positive side, a classical result in harmonic analysis asserts that any 
Lipschitz domain is a $W^{k,p}$-extension domain for all $k\in{\mathbb{N}}$ 
and $p\in[1,\infty]$. The first breakthrough came in the work of A.P.~Calder\'on 
in \cite{Cal61} where, for each given Lipschitz domain $\Omega\subseteq{\mathbb{R}}^n$
and each given $k\in{\mathbb{N}}$, a linear extension operator $E_k$ is constructed
which maps $W^{k,p}(\Omega)$ boundedly into $W^{k,p}({\mathbb{R}}^n)$ for each 
$p\in(1,\infty)$, and which has the additional property that
\begin{eqnarray}\label{Yaytf.RRD}
{\rm supp}\big(E_k u\big)\subseteq\overline{\Omega}\,\,\,
\mbox{ for every }\,\,u\in{\mathscr{C}}_c^\infty(\Omega).
\end{eqnarray}
In the same geometrical setting but via a different approach, E.M.~Stein 
has produced (see the exposition in \cite[Theorem~5, p.\,181]{Ste70})
an extension operator which, as opposed to Calder\'on's, is universal  
in the sense that it does not depends on the order of smoothness (and, of course,
the integrability exponent), and which is also bounded in the limiting cases
$p=1$ and $p=\infty$. Nonetheless, Stein's operator no longer enjoys 
property \eqref{Yaytf.RRD} as it scatters the support of the function on 
which it acts across the boundary. 

Both original proofs of Calder\'on's and Stein's theorems make essential 
use of the fact that Lipschitz domains satisfy a uniform cone property. The 
latter property actually characterizes Lipschitzianity, so new ideas must be 
involved if the goal is to go establish extension results beyond this class of domains.
Via a conceptually novel approach, which builds on the seminal work of 
H.~Whitney on his extension theorem for Lipschitz functions in \cite{Whit34}, 
P.W.~Jones succeeded (cf. \cite[Theorem~1, p.\,73]{Jon81}) in generalizing the 
results of Calder\'on and Stein to a much larger class of sets, which he called
$(\varepsilon,\delta)$-domains. Jones also proved that a finitely connected open 
set $\Omega\subseteq{\mathbb{R}}^2$ is a $W^{k,p}$-extension domain for 
all $k\in{\mathbb{N}}$ and $p\in[1,\infty]$ if and only if $\Omega$ 
is $(\varepsilon,\delta)$-domains for some values $\varepsilon,\delta>0$ 
(cf. \cite[Theorem~3, p.\,74]{Jon81}).
Since Jones' class of domains is going to be of basic importance for the goals
we have in mind, below we record its actual definition. 

\begin{definition}\label{Def-EPDE}
Assume that $\varepsilon\in(0,\infty)$ and $\delta\in(0,\infty]$.
A nonempty, open, proper subset $\Omega$ of ${\mathbb{R}}^n$ is called an
{\tt $(\varepsilon,\delta)$-domain} if for any $x,y\in\Omega$ with $|x-y|<\delta$ 
there exists a rectifiable curve $\gamma:[0,1]\to\Omega$ such that
$\gamma(0)=x$, $\gamma(1)=y$, and 
\begin{eqnarray}\label{jcdj}
{\rm length}(\gamma)\leq\frac{1}{\varepsilon}|x-y|\quad\mbox{and}\quad
\frac{\varepsilon|z-x|\,|z-y|}{|x-y|}
\leq {\rm dist}\,(z,\partial\Omega),\quad\forall\,z\in\gamma([0,1]).
\end{eqnarray}
\end{definition}

Informally, the first condition in \eqref{jcdj} says that $\Omega$ is locally
connected in some quantitative sense, while the second condition in 
\eqref{jcdj} says that there exists some type of ``tube'' $T$, with 
$\gamma([0,1])\subset T\subset\Omega$ and the width of $T$ at a point $z$ 
on the curve is of the order $\min\{|z-x|,|z-y|\}$. 

Examples of $(\varepsilon,\delta)$-domains include bi-Lipschitz images
of Lipschitz domains, open sets whose boundaries are given locally as graphs 
of functions in the Zygmund class $\Lambda_1$, or of functions with gradients in 
the John-Nirenberg space ${\rm BMO}$, as well as the classical van Koch snowflake 
domain of conformal mapping theory. The boundary of an $(\varepsilon,\delta)$-domain 
can be highly nonrectifiable and, in general, no regularity condition on 
$\partial\Omega$ can be inferred from the $(\varepsilon,\delta)$ property
described in Definition~\ref{Def-EPDE}. 
The fact that, in general, $(\varepsilon,\delta)$-domains are not sets of 
finite perimeter can be seen from the fact that the classical van Koch snowflake 
domain does not have finite perimeter. In fact, for each $d\in[n-1,n)$ there 
exists an open set $\Omega\subseteq{\mathbb{R}}^n$ such that $\Omega$ is 
an $(\varepsilon,\infty)$-domain for some $\varepsilon=\varepsilon(d)\in(0,\infty)$
and $\partial\Omega$ has Hausdorff dimension $d$. 
This being said, it has been shown in \cite[Lemma~2.3, p.\,77]{Jon81} that 
\begin{eqnarray}\label{PJE-wcX}
\mbox{any $(\varepsilon,\delta)$-domain
$\Omega\subseteq{\mathbb{R}}^n$ satisfies ${\mathscr{L}}^n(\partial\Omega)=0$},
\end{eqnarray}
where ${\mathscr{L}}^n$ denotes the Lebesgue measure in ${\mathbb{R}}^n$.

Jone's $(\varepsilon,\delta)$-domains interface tightly with the category of 
uniform domains considered a little earlier by O.~Martio and J.~Sarvas in \cite{MS79}.
Recall that a nonempty, proper, open subset $\Omega$ of ${\mathbb{R}}^n$ is said to 
be a {\tt uniform domain} provided there exists a constant $c=c(\Omega)\in[1,\infty)$ 
with the property that each pair of points $x_1,x_2\in\Omega$ can be 
joined by a rectifiable curve $\gamma$ in $\Omega$ for which
\begin{eqnarray}\label{TYDY-854}
{\rm length}\,(\gamma)\leq c\,|x_1-x_2|\,\,\mbox{ and }\,\,
\min_{j=1,2}|x_j-x|\leq c\,{\rm dist}\,(x,\partial\Omega)
\,\,\mbox{ for each }\,\,x\in\gamma. 
\end{eqnarray}
Also, call $\Omega$ a {\tt locally uniform domain} if there exist
$c,r\in(0,\infty)$ with the property that \eqref{TYDY-854} holds whenever 
$|x_1-x_2|<r$. 
Then a nonempty, proper, open subset of the Euclidean space 
is an $(\varepsilon,\delta)$-domain for some $\varepsilon,\delta\in(0,\infty)$
if and only if it is a locally uniform domain. 
Moreover, if $\Omega$ is a uniform domain, then $\Omega$ satisfies an 
interior corkscrew condition as well as a Harnack chain condition, in the 
sense of D.~Jerison and C.E.~Kenig (cf. \cite{JeKe82}). Conversely, if $\Omega$ 
satisfies an interior corkscrew condition, a Harnack chain condition, and has 
the property that $\partial\Omega$ is bounded, then $\Omega$ is a uniform domain. 
The interested reader is referred to \cite[Propositions~A.2-A.3]{Han05} for 
more details in this regard. As a consequence, here we only wish to note that 
the class of $(\varepsilon,\delta)$-domains with a compact boundary coincides
with the category of {\tt one-sided NTA domains} (i.e., domains satisfying 
an interior corkscrew condition and a Harnack chain condition), from \cite{JeKe82}.

\vskip 0.10in

Returning to the issue of extension results for Sobolev spaces, 
the following is \cite[Theorem~1, p.\,73]{Jon81}.

\begin{theorem}\label{TGKV885}
Let $\Omega$ be a finitely connected $(\varepsilon,\delta)$-domain in ${\mathbb{R}}^n$
and fix $k\in{\mathbb{N}}$. Then there exists a linear operator $\Lambda_k$ mapping 
$W^{k,p}(\Omega)$ boundedly into $W^{k,p}({\mathbb{R}}^n)$ for each $p\in[1,\infty]$,
and such that $(\Lambda_k u)\big|_{\Omega}=u$ for each $u\in W^{k,p}(\Omega)$.
\end{theorem}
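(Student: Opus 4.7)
The plan is to follow the classical Whitney-cube construction of Jones. Reserving $k$ for the Sobolev order as in the statement, I will use $j$ to index interior Whitney cubes and $\ell$ to index exterior ones. First, fix compatible Whitney decompositions $\Omega=\bigcup_j Q_j$ and $\mathrm{int}(\mathbb{R}^n\setminus\overline{\Omega})=\bigcup_\ell S_\ell$, each cube having side length comparable to (and at most a small fixed fraction of) its distance to the corresponding boundary. By \eqref{PJE-wcX}, these two families cover $\mathbb{R}^n$ up to a Lebesgue null set, so it suffices to prescribe $\Lambda_k u$ on each piece separately. Declare an exterior cube $S_\ell$ \emph{small} if its side length is below a universal multiple of $\delta$, and \emph{large} otherwise; large cubes are bounded away from $\partial\Omega$ uniformly. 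Using the $(\varepsilon,\delta)$-condition together with the finite connectedness of $\Omega$, construct a \emph{reflection map} $\ell\mapsto j(\ell)$ pairing each small $S_\ell$ with an interior cube $Q_{j(\ell)}$ of comparable side length whose center lies within controlled distance of the center of $S_\ell$; finite connectedness is precisely what allows this pairing to be carried out consistently across all complementary components of $\Omega$.

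For each $j$, let $P_j:=P_j^{k-1}u$ denote an averaged Taylor polynomial of $u$ of degree at most $k-1$ over $Q_j$. Fix a smooth partition of unity $\{\varphi_\ell\}_\ell$ subordinate to a mild dilation of the exterior cubes, and a cutoff $\eta\in\mathscr{C}_c^\infty(\mathbb{R}^n)$ which equals $1$ on a neighborhood of $\overline{\Omega}$ and vanishes on every large exterior cube. Then set
\begin{equation}\label{Eq:ExtDef-Proposal}
(\Lambda_k u)(x):=\begin{cases} u(x) & \text{if } x\in\Omega,\\ \eta(x)\displaystyle\sum_{\ell\text{ small}}\varphi_\ell(x)\,P_{j(\ell)}(x) & \text{if } x\in\mathrm{int}(\mathbb{R}^n\setminus\overline{\Omega}).\end{cases}
\end{equation}
This is manifestly linear in $u$ and well-defined almost everywhere on $\mathbb{R}^n$.

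To estimate $\|\Lambda_k u\|_{W^{k,p}(\mathbb{R}^n)}$, one differentiates \eqref{Eq:ExtDef-Proposal} on the exterior. Using $\sum_\ell\varphi_\ell\equiv 1$ on a neighborhood of each small $S_\ell$, any derivative $\partial^\alpha$ with $|\alpha|\leq k$ of the sum reduces to a finite collection of terms of the form $(\partial^\beta\varphi_\ell)\bigl(\partial^{\alpha-\beta}(P_{j(\ell)}-P_{j(\ell')})\bigr)$, indexed by neighbors $\ell'$ of $\ell$, plus contributions involving derivatives of $\eta$ that are controlled directly by the $L^p(\Omega)$ norm of $u$. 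The heart of the argument, and the step I expect to be the main obstacle, is the pointwise estimate
\begin{equation}\label{Eq:PolyDiff-Key}
\bigl|\partial^{\gamma}(P_{j(\ell)}-P_{j(\ell')})(x)\bigr|\lesssim (\mathrm{diam}\,S_\ell)^{k-|\gamma|-n/p}\,\bigl\|\nabla^k u\bigr\|_{L^p(U_{\ell,\ell'})},\qquad x\in S_\ell,
\end{equation}
where $U_{\ell,\ell'}$ is a union of interior Whitney cubes whose overlap, as $(\ell,\ell')$ ranges over pairs of neighbors, is uniformly bounded. To establish \eqref{Eq:PolyDiff-Key}, I would apply the $(\varepsilon,\delta)$-property to points chosen in $Q_{j(\ell)}$ and $Q_{j(\ell')}$ to produce a rectifiable curve in $\Omega$, and then extract from it a Harnack-type chain of interior Whitney cubes of controlled length joining $Q_{j(\ell)}$ to $Q_{j(\ell')}$; telescoping $P_{j(\ell)}-P_{j(\ell')}$ across consecutive cubes of the chain and applying the standard polynomial-approximation Poincar\'e inequality on each pair of overlapping cubes yields \eqref{Eq:PolyDiff-Key}. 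A finite-overlap summation, combined with the uniform bound on chain lengths, then delivers $\|\Lambda_k u\|_{W^{k,p}(\mathbb{R}^n)}\leq C\|u\|_{W^{k,p}(\Omega)}$ for every $p\in[1,\infty]$, since each step in the argument (polynomial approximation, telescoping, finite overlap) goes through with $L^p$ norms uniformly in $p$.
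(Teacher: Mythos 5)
The paper does not actually prove this statement: it is quoted verbatim (with a more quantitative reformulation given as Theorem~\ref{cjeg}) from P.W.~Jones's paper \cite{Jon81}, so there is no in-text argument to compare against. On its own merits, your construction is the right one and agrees with Jones in its essentials: Whitney decompositions of $\Omega$ and $(\Omega^c)^\circ$, reflection of small exterior cubes onto interior cubes of comparable size and controlled distance, near-best polynomials of degree $k-1$ on the reflected cubes, a partition of unity subordinate to dilations of the exterior cubes, and the telescoping estimate for $\partial^\gamma(P_{j(\ell)}-P_{j(\ell')})$ along a chain of interior Whitney cubes combined with a bounded-overlap count. That part of the plan is sound, and you correctly identify the chain estimate as the technical heart.

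There is, however, a genuine gap where you assert that, since $\partial\Omega$ is Lebesgue-null, ``it suffices to prescribe $\Lambda_k u$ on each piece separately.'' That step does not come for free. Controlling the classical derivatives of $\Lambda_k u$ separately in $\Omega$ and in $(\Omega^c)^\circ$ and showing they lie in $L^p$ does not yet give $\Lambda_k u\in W^{k,p}(\mathbb{R}^n)$: the characteristic function $\mathbf{1}_\Omega$ is smooth on each of $\Omega$ and $(\Omega^c)^\circ$, and $\partial\Omega$ is null by \eqref{PJE-wcX}, yet $\mathbf{1}_\Omega\notin W^{1,p}(\mathbb{R}^n)$. One must show that the piecewise-computed derivatives are in fact the distributional derivatives of $\Lambda_k u$ across $\partial\Omega$. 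This is precisely where the choice of the $P_{j(\ell)}$ earns its keep: because $P_{j(\ell)}$ approximates $u$ to order $k-1$ on $Q_{j(\ell)}$, and $S_\ell$ lies at distance $\lesssim\ell(S_\ell)$ from $Q_{j(\ell)}$, the exterior extension agrees with $u$ to order $k-1$ in averaged $L^p$ sense as one approaches $\partial\Omega$; Jones converts this quantitative matching into the distributional statement by a mollification/approximation argument near the boundary (his Lemmas~3.2--3.4), and your plan has no analogue of that step. Separately, a compactly supported cutoff $\eta$ equal to $1$ on a neighborhood of $\overline{\Omega}$ cannot exist when $\Omega$ is unbounded (a half-space or the region above a Lipschitz graph is a connected $(\varepsilon,\delta)$-domain falling under the hypotheses); it is cleaner to omit $\eta$ altogether, as Jones does, since summing only over small exterior cubes already forces the extension to vanish at distance $\gtrsim\delta$ from $\partial\Omega$.
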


Since its introduction in the early 80's, Jones' extension operator $\Lambda_k$
has been the focal point of a considerable amount of work. For example, 
R.\,DeVore and R.\,Sharpley have successfully adapted Jones' ideas
as to construct in \cite{DeVSh} an extension operator for Besov spaces on 
$(\varepsilon,\delta)$-domains in ${\mathbb{R}}^n$. Furthermore, B.L.\,Fain 
in \cite{Fain}, A.\,Seeger in \cite{Se}, and P.A.\,Shvartsman in \cite{Shv} 
have generalized Jones' theorem to anisotropic Sobolev spaces, while S.-K.\,Chua 
has proved weighted versions of Jones' theorem, involving Muckenhoupt weights 
in \cite{Chua}, and doubling weights satisfying a Poincar\'e inequality 
in \cite{Chua2}. Here we also wish to mention the work of N.\,Garofalo and 
D.M.\,Nhieu in \cite{GN98} where the authors have established extension theorems 
for Sobolev functions in Carnot-Carath\'eodory spaces in a suitable analogue of
the class of $(\varepsilon,\delta)$-domains for this setting. 
Another significant development appeared in \cite{Rog06} where L.G.~Rogers
combines the techniques of P.~Jones and E.~Stein to produce an extension 
operator for Sobolev functions in $(\varepsilon,\delta)$-domains which, 
as opposed to Jones', is universal. However, Rogers' hybrid operator scatters 
supports of functions across the boundary even more severely than 
Jones's extension operator (which already fails to satisfy property \eqref{Yaytf.RRD}).  

A version of Jones'extension operator in $(\varepsilon,\delta)$-domains which 
is more in line with the original design from \cite{Jon81} is due to M.\,Christ 
(cf. \cite{Christ}), who has shown that a mild alteration renders Jones' operator
semi-universal (i.e., it simultaneously extends functions with preservation of 
class up to any desired, a priori given, threshold of smoothness). 
Moreover, M.\,Christ works (cf. also \cite{Mi4}) with a more general scale, 
which he denotes by ${\mathfrak{N}}^p_\alpha(\Omega)$, $1<p\leq\infty$, $\alpha>0$,
originally introduced by R.A.\,DeVore and R.C.\,Sharpley in \cite{DS1}. Indeed,  
${\mathfrak{N}}^p_\alpha(\Omega)$ turns out to be a genuine Sobolev space in the 
case when $\alpha$ is an integer (cf. \cite{DS1}), and a Triebel-Lizorkin space 
otherwise (cf. \cite{KMM}). The semi-universality character of M.\,Christ's extension 
is going to have some degree of significance for our work. This being said, the 
main issues we are presently concerned with (see below) have, to the best our 
knowledge, never been addressed before. 

\vskip 0.10in

To gain a broader perspective let us now revisit the concept of $W^{k,p}$-extension 
domain and introduce an extra nuance. Specifically, fix $k\in{\mathbb{N}}$
and $p\in[1,\infty]$ then, given an arbitrary nonempty
open set $\Omega\subseteq{\mathbb{R}}^n$ and a closed linear subspace $V$ of 
$W^{k,p}(\Omega)$, call $\Omega$ a $V$-{\it extension domain} provided
there exits a linear and bounded operator $E:V\to W^{k,p}({\mathbb{R}}^n)$
such that $(Eu)\big|_{\Omega}=u$ for each $u\in V$. In this terminology,  
any nonempty open set $\Omega\subseteq{\mathbb{R}}^n$ is a 
$\mathring{W}^{k,p}(\Omega)$-extension domain (taking $E$ to be the operator 
of extension by zero outside $\Omega$) while, at the other end of the spectrum,
Theorem~\ref{TGKV885} comes to assert that any finitely connected
$(\varepsilon,\delta)$-domain $\Omega$ in ${\mathbb{R}}^n$ is 
a $W^{k,p}(\Omega)$-extension domain. As an intermediate case, note that
given $k\in{\mathbb{N}}$ and $p\in(n/k,\infty)$, even though the set $\Omega_a$ 
from \eqref{Yaytf} fails to be a $W^{k,p}(\Omega_a)$-extension domain
it is a $V$-extension domain for any space of the form 
\begin{eqnarray}\label{Uah9GGa8732}
V:=\big\{u\in W^{k,p}(\Omega_a):\,u\equiv 0\,\,\mbox{ on }\,\,O\cap\Omega_a\big\}
\end{eqnarray}
where $O$ is some neighborhood of the cuspidal edge 
$C:=\big\{(x_1,...,x_{n-2},0,0):\,0\leq x_1,...,x_{n-2}\leq 1\big\}$ of $\Omega_a$.
This is clear from Calder\'on's extension theorem and the fact that 
$\partial\Omega_a$ is a Lipschitz surface away from $C$. 
In light of this discussion, it is very much apparent that 
the geometry of a nonempty open set $\Omega$ influences the nature of the
linear closed spaces $V\subseteq W^{k,p}(\Omega)$ for which 
$\Omega$ is a $V$-extension domain.

By way of analogy, suppose now that $\Omega$ is an open, nonempty subset of
${\mathbb{R}}^n$ which satisfies the $(\varepsilon,\delta)$ condition from
Definition~\ref{Def-EPDE} only near a relatively open portion $N$ of its 
topological boundary $\partial\Omega$. The question now becomes: {\it for what closed 
linear subspaces $V$ of $W^{k,p}(\Omega)$ is $\Omega$ a $V$-extension domain}? 
Our earlier analysis suggests considering the closure in $W^{k,p}(\Omega)$ of 
restrictions to $\Omega$ of functions from ${\mathscr{C}}^\infty_c({\mathbb{R}}^n)$ 
whose support is disjoint from the rough portion of the boundary, i.e., $\partial\Omega\setminus N$.

More generally, in the case when $\Omega$ is a nonempty open set in ${\mathbb{R}}^n$ 
and $D$ (playing the role of $\partial\Omega\setminus N$) is an arbitrary closed 
subset of $\overline{\Omega}$, introduce
\begin{eqnarray}\label{TD-TRY753}
W^{k,p}_D(\Omega):=\,\mbox{the closure of }\,\, 
\big\{\varphi\big|_{\Omega}:\,\varphi\in{\mathscr{C}}^\infty_c({\mathbb{R}}^n)
\,\,\mbox{ with }\,\,D\cap{\rm supp}\,\varphi=\emptyset\big\}\,\,\mbox{ in }\,\,
W^{k,p}(\Omega).
\end{eqnarray}
Then our first main result in this paper is the following extension result 
(for a more general formulation see Theorem~\ref{cjeg.5AD}; also,
the class of sets which are locally $(\varepsilon,\delta)$-domains
near a portion of their boundary is introduced in Definition~\ref{OM-Oah-Y88}). 

\begin{theorem}\label{TGKV885.2}
Let $\Omega\subseteq{\mathbb{R}}^n$ and $D\subseteq\overline{\Omega}$
be such that $D$ is closed and $\Omega$ is locally an $(\varepsilon,\delta)$-domain 
near $\partial\Omega\setminus D$. Then for any $k\in{\mathbb{N}}$ there exists 
an operator ${\mathfrak{E}}_{k,D}$ such that for any $p\in[1,\infty]$ one has
\begin{eqnarray}\label{Tgb-11VVV.a6yH}
&& {\mathfrak{E}}_{k,D}:W^{k,p}_D(\Omega)\longrightarrow W^{k,p}_D({\mathbb{R}}^n)
\quad\mbox{linearly and boundedly, and}
\\[4pt]
&& \big({\mathfrak{E}}_{k,D}\,u\big)\big|_{\Omega}=u,\quad
\mbox{${\mathscr{L}}^n$-a.e. on $\Omega$ for every $u\in W^{k,p}_D(\Omega)$}.
\label{Pa-PL2VB.a6yH}
\end{eqnarray}
\end{theorem}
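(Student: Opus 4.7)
The plan is to construct $\mathfrak{E}_{k,D}$ by adapting Jones' Whitney-cube reflection scheme (underlying Theorem~\ref{TGKV885}) to respect the vanishing condition near $D$. Since the functions $\varphi|_{\Omega}$ with $\varphi \in \mathscr{C}^\infty_c(\mathbb{R}^n)$ and $\mathrm{supp}\,\varphi \cap D = \emptyset$ are dense in $W^{k,p}_D(\Omega)$ by definition \eqref{TD-TRY753}, it suffices to build a linear operator on this dense subspace, bounded from $W^{k,p}(\Omega)$ into $W^{k,p}(\mathbb{R}^n)$ uniformly in $p \in [1,\infty]$, whose image lies in $W^{k,p}_D(\mathbb{R}^n)$; continuity together with closedness of the target space then promotes the construction to all of $W^{k,p}_D(\Omega)$.

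For the construction itself, first I would fix Whitney decompositions $\mathcal{W}^{+}$ of $\Omega$ and $\mathcal{W}^{-}$ of $\mathbb{R}^n \setminus \overline{\Omega}$, and partition $\mathcal{W}^{-}$ into two subfamilies: a cube $Q \in \mathcal{W}^{-}$ is \emph{good} if $\mathrm{dist}(Q, D) \geq c\,\ell(Q)$ for a fixed threshold $c$, and \emph{bad} otherwise. For each good cube $Q$, its nearest boundary point lies in $\partial\Omega \setminus D$, so the local $(\varepsilon,\delta)$ hypothesis supplies a reflected cube $Q^{*} \in \mathcal{W}^{+}$ of comparable side length and at bounded relative distance, exactly as in Jones. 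I would then set $\mathfrak{E}_{k,D} u := u$ on $\Omega$ and, on $\mathbb{R}^n \setminus \overline{\Omega}$, declare
\[
\mathfrak{E}_{k,D} u := \sum_{Q\ \text{good}} \eta_Q \, P_{Q^{*}}(u),
\]
where $\{\eta_Q\}$ is a smooth partition of unity subordinate to a mild dilation of the good cubes, and $P_{Q^{*}}(u)$ denotes the degree-$(k-1)$ best $L^p$-polynomial approximation of $u$ on $Q^{*}$. By design, $\mathfrak{E}_{k,D} u$ is identically zero on a neighborhood of every bad cube, and hence on a neighborhood of $D \cap \partial\Omega$ of thickness comparable to the distance-to-$D$ function.

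The boundedness $\|\mathfrak{E}_{k,D} u\|_{W^{k,p}(\mathbb{R}^n)} \lesssim \|u\|_{W^{k,p}(\Omega)}$, uniform in $p$, would follow from Jones' polynomial-approximation and partition-of-unity estimates: these proceed cube-by-cube and are only helped by dropping the contributions from bad cubes. To verify $\mathfrak{E}_{k,D}(\varphi|_\Omega) \in W^{k,p}_D(\mathbb{R}^n)$ for $\varphi$ as above, I would observe that $\mathfrak{E}_{k,D}(\varphi|_\Omega)$ lies in $W^{k,p}(\mathbb{R}^n)$ and vanishes on an open neighborhood $U$ of $D$ (inherited from the bad-cube exclusion); a standard mollification at a scale below $\mathrm{dist}(\mathrm{supp}\,\mathfrak{E}_{k,D}\varphi, D)$ then yields an approximating sequence in $\mathscr{C}^\infty_c(\mathbb{R}^n \setminus D)$ converging in $W^{k,p}$-norm, placing $\mathfrak{E}_{k,D}(\varphi|_\Omega)$ in $W^{k,p}_D(\mathbb{R}^n)$.

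The hard part will be the transition region, where good cubes abut bad ones. Here one must check that the partition of unity restricted to good cubes still obeys the bounded-overlap and Lipschitz-derivative estimates underpinning Jones' telescoping sums, and that the matched reflection $Q \mapsto Q^{*}$ together with the differences-of-approximations bounds $\|P_{Q^{*}}(u) - P_{R^{*}}(u)\|$ for neighboring good cubes $Q, R$ remain in force across the frontier. This reduces to a careful calibration of the threshold $c$ against the parameters $\varepsilon, \delta$ governing the local $(\varepsilon,\delta)$ structure of $\Omega$ near $\partial\Omega \setminus D$, and it is precisely here that the geometric hypothesis of the theorem must be exploited most delicately.
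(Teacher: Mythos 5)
Your proposal takes a genuinely different route from the paper's, and as you yourself flag, it leaves the central technical steps unverified rather than resolving them. The paper does not re-run Jones' cube-by-cube reflection in the locally $(\varepsilon,\delta)$ setting; instead it exploits Definition~\ref{OM-Oah-Y88} directly. The local structure furnishes open patches $\{O_j\}$ with bounded overlap and genuine $(\varepsilon,\delta)$-domains $\Omega_j$ satisfying $O_j\cap\Omega=O_j\cap\Omega_j$ and ${\rm rad}\,(\Omega_j)>\varkappa$. A quantitative partition of unity $\{\psi_j,\varphi_j,\eta\}$ is built subordinate to $\{O_j\}$, and the operator is defined as $\mathfrak{E}_{k,D}u:=\widetilde{(1-\eta)u}+\sum_j\varphi_j\,\Lambda_{k,j}\big(E_j(\psi_j u)\big)$, where each $\Lambda_{k,j}$ is Jones' operator for the $(\varepsilon,\delta)$-domain $\Omega_j$ applied as a black box, $E_j$ is extension by zero inside $\Omega_j$, and the interior piece $(1-\eta)u$ is extended by zero across $\partial\Omega$. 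Boundedness then follows from the uniform constant in \eqref{fgu4J.P75E}, the bounded overlap \eqref{IOHxffx.1}, and Leibniz's rule; the fact that the image lands in $W^{k,p}_D({\mathbb{R}}^n)$ is a consequence of the non-expansive support property of Jones' operator, $\overline{\Omega_j}\cap{\rm supp}\,(\Lambda_{k,j}v)={\rm supp}\,v$, established in Theorem~\ref{cj-TRf}. (The degenerate case $\partial\Omega\subseteq D$ is handled separately by extension by zero.)

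The gap in your approach is at the step where you assert that the locally $(\varepsilon,\delta)$ hypothesis ``supplies a reflected cube $Q^*$ exactly as in Jones.'' The reflection $Q\mapsto Q^*$ and the accompanying telescoping bounds on $\|P_{Q^*}(u)-P_{R^*}(u)\|$ for adjacent cubes are geometric facts proved by Jones for a globally $(\varepsilon,\delta)$-domain, via chains of Whitney cubes of $\Omega$ connecting $Q^*$ to $R^*$ along the $(\varepsilon,\delta)$-arcs; see \eqref{PJE-b1} and its source. In the present theorem $\Omega$ need not be an $(\varepsilon,\delta)$-domain at all, so these lemmas do not apply to $\mathcal{W}^-={\mathcal{W}}\big((\Omega^c)^\circ\big)$. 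What the hypothesis gives is a collection of auxiliary domains $\Omega_j$, each with its own Whitney decomposition ${\mathcal{W}}\big((\Omega_j^c)^\circ\big)$ that generally differs from $\mathcal{W}^-$, coinciding with it only for cubes small relative to the patch scale $r$ and well inside $O_j$. To make your construction rigorous you would therefore need to (i) show every good cube is small and deep in some $O_j$, hence is also a Whitney cube of $(\Omega_j^c)^\circ$ whose Jones reflection lands inside $O_j\cap\Omega_j=O_j\cap\Omega$; (ii) show the telescoping chain between reflections of neighboring good cubes stays in one patch; and (iii) verify the threshold $c$ can be chosen uniformly over $j$ using the quantitative data in \eqref{PPPa.1}--\eqref{PPPa.4}. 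This amounts to re-proving Jones' geometric machinery in the localized setting, which you acknowledge (``careful calibration'') but do not carry out. The paper's route pays a different price — one must first establish the support-preservation property of Theorem~\ref{cj-TRf}, which is a soft structural observation about formula \eqref{PJE-a1} — but in exchange keeps Jones' hard geometric lemmas entirely intact.
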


Theorem~\ref{TGKV885.2} bridges between the ``trivial" extension of functions
from $\mathring{W}^{k,p}(\Omega)$ by zero outside an arbitrary open set 
$\Omega$, to which it reduces in the case when $D:=\partial\Omega$, and Jones' 
extension operator $\Lambda_k$, which our ${\mathfrak{E}}_{k,D}$ becomes in the
case when $D:=\emptyset$. Indeed, such a choice forces $\Omega$ to be a genuine $(\varepsilon,\delta)$-domain and, as explained in the proof of Corollary~\ref{cjeg.5}, 
our operator ${\mathfrak{E}}_{k,D}$ automatically reduces to Jones' extension operator $\Lambda_k$ for this class of domains (irrespective of the nature of $D$). 
As such, Theorem~\ref{TGKV885.2} brings to light a 
novel basic feature of Jones' extension operator, i.e., the property that
for any $(\varepsilon,\delta)$-domain $\Omega$ and any closed set $D\subseteq\Omega$,
the operator $\Lambda_k$ maps the subspace $W^{k,p}_D(\Omega)$ of $W^{k,p}(\Omega)$
into the subspace $W^{k,p}_D({\mathbb{R}}^n)$ of $W^{k,p}({\mathbb{R}}^n)$
(with $D:=\emptyset$ yielding Theorem~\ref{TGKV885}, at least when $p\not=\infty$). 
It should be noted that Theorem~\ref{TGKV885} would readily imply this more refined 
version of itself if Jones' extension operator were to be support preserving, i.e., 
if ${\rm supp}\big(\Lambda_k u\big)={\rm supp}\,u$ for each $u\in W^{k,p}(\Omega)$, 
but this is far from being the case. In fact, as already remarked by P.W.~Jones 
in \cite{Jon81}, his extension operator $\Lambda_k$ lacks even the weaker 
property that ${\rm supp}\big(\Lambda_k u\big)\subseteq\overline{\Omega}$ 
for every $u\in{\mathscr{C}}_c^\infty(\Omega)$ which, as pointed out earlier, 
Calder\'on's extension operator enjoys (in the setting of Lipschitz domains, 
of course). 

The main ingredients in the proof of Theorem~\ref{TGKV885.2} are 
Jones' extension result stated in Theorem~\ref{TGKV885}, augmented with the 
property that
\begin{eqnarray}\label{fgghvk}
\overline{\Omega}\cap\,{\rm supp}\,(\Lambda_k u)
={\rm supp}\,u,\qquad\forall\,u\in W^{k,p}(\Omega),
\end{eqnarray}
and the existence of a quantitative partition of unity which is geometrically 
compatible with our notion of locally $(\varepsilon,\delta)$-domain near a 
portion of its boundary. The key non-expansive support property \eqref{fgghvk}
is proved in Theorem~\ref{cj-TRf} via a careful inspection of the format of 
Jones' extension operator, recounted in \eqref{PJE-a1}. One may regard 
\eqref{fgghvk} as a vestigial form of property \eqref{Yaytf.RRD}, in the 
more general context of $(\varepsilon,\delta)$-domains.
In fact, it is possible to prove a version of Theorem~\ref{TGKV885.2} in which 
the intervening extension operator is semi-universal, at least if $1<p\leq\infty$. 
We do so in Theorem~\ref{cjeg.5AD.UUV}, whose proof relies on M.\,Christ's alteration 
of Jones' extension operator, recorded in Theorem~\ref{cjeg.UUV}. This extra  
feature is important in the context of interpolation with change of smoothness, 
although we shall not pursue this in the present paper. 

\vskip 0.10in

Given that, in the special cases when $D=\partial\Omega$ and $D=\emptyset$
the space $W^{k,p}_D(\Omega)$ becomes, respectively, $\mathring{W}^{k,p}(\Omega)$ and
$W^{k,p}(\Omega)$ (for $p\not=\infty$), we shall refer to $W^{k,p}_D(\Omega)$
as a {\it Sobolev space with a partially vanishing trace} (on the set $D$). 
We stress that, in general, the set $D\subseteq{\overline{\Omega}}$ is not 
necessarily assumed to be contained in the boundary of $\Omega$, and that the 
terminology ``vanishing trace" warrants further clarification. 
The reader is referred to Theorem~\ref{YTab-YHb} for a formal statement 
in which the vanishing of the higher-order restriction of $u$ to $D$ is
formulated in an appropriate capacitary sense. See also Theorem~\ref{YTah-YYHa9}
where the aforementioned restriction is interpreted in the sense of 
${\mathcal{H}}^d$, the $d$-dimensional Hausdorff measure, 
in the case when $D$ is a closed subset of $\overline{\Omega}$ 
which is $d$-Ahlfors regular for some $d\in(0,n)$ (a piece of terminology explained
in \eqref{Fq-A.4}). Finally, in Theorem~\ref{Kance.745} we are able to 
describe $W^{k,p}_D(\Omega)$ as the space consisting of those $u\in W^{k,p}(\Omega)$ 
whose intrinsic restriction to $D$, as functions defined in $\Omega$, vanishes
${\mathcal{H}}^d$-a.e. on $D$. This is done under the assumption that 
$\Omega$ is an $(\varepsilon,\delta)$-domain and $D$ is a closed subset 
of $\overline{\Omega}$ which is $d$-Ahlfors regular for some $d\in(0,n)$.

The key ingredients in the proofs of the structure theorems for the spaces
$W^{k,p}_D(\Omega)$ from Theorem~\ref{YTab-YHb} and Theorem~\ref{YTah-YYHa9} 
are the extension result from Theorem~\ref{TGKV885.2}, a deep result of L.I.~Hedberg 
and T.H.~Wolff from \cite{HW83} stating that any closed set in ${\mathbb{R}}^n$ has 
the so-called $(k,p)$-synthesis property, for any $p\in(1,\infty)$ and any
$k\in{\mathbb{N}}$, along with the trace/extension theory on $d$-Ahlfors regular 
subsets of ${\mathbb{R}}^n$ developed by A.~Jonsson and H.~Wallin in \cite{JoWa84}.
The intrinsic characterization of the spaces $W^{k,p}_D(\Omega)$ from 
Theorem~\ref{Kance.745} (reviewed in the earlier paragraph) requires refining 
the Jonsson-Wallin theory in several important regards. This is accomplished
in Theorem~\ref{YTah-YYHa8}, Theorem~\ref{Ohav-7UJ.2}, and Theorem~\ref{NIceTRace}
which, in turn, are used to study the issue of preservation of Sobolev class
under extension by zero, in Theorem~\ref{Tfgg-75dS}, and under gluing 
functions with matching traces, in Theorem~\ref{SSSg-g5dS}.

Collectively, these results amount to a robust functional analytic theory for the
category of Sobolev spaces with partially vanishing traces introduced in 
\eqref{TD-TRY753}. Along the way, we also answer a recent question posed to us 
by D.~Arnold (cf. Theorem~\ref{Pcsd5} and Corollary~\ref{Uafav976f4} for 
precise statements, in various degrees of generality), and provide a solution 
to a question raised by J.\,Ne\v{c}as in 1967. Specifically, Problem~4.1 on 
p.\,91 of \cite{Nec} asks whether for any Lipschitz domain $\Omega$ in 
${\mathbb{R}}^n$, any $k\in{\mathbb{N}}$ and $p\in(1,\infty)$, one has
\begin{eqnarray}\label{Necas.YYY}
\mathring{W}^{k,p}(\Omega)=
\Big\{u\in W^{k,p}(\Omega):\,\frac{\partial^j u}{\partial\nu^j}=0\,\,
\mbox{ ${\mathcal{H}}^{n-1}$-a.e. on }\partial\Omega\,\,\mbox{ for }\,\,0\leq j\leq k-1
\Big\},
\end{eqnarray}
where $\frac{\partial^j}{\partial\nu^j}$ denotes the $j$-th iterated directional 
derivative with respect to the outward unit normal $\nu$ to $\Omega$ (suitably defined). 
In Theorem~\ref{NIcBBB} we prove that this is the case even in the considerably 
more general setting when $\Omega$ is an $(\varepsilon,\delta)$-domain 
in ${\mathbb{R}}^n$ with the 
property that $\partial\Omega$ is $(n-1)$-Ahlfors regular
and such that its measure theoretic boundary, $\partial_\ast\Omega$, has full 
${\mathcal{H}}^{n-1}$-measure in $\partial\Omega$. The latter condition, 
i.e. that ${\mathcal{H}}^{n-1}(\partial\Omega\setminus\partial_*\Omega)=0$, 
merely ensures that the geometric measure theoretic outward unit normal $\nu$ 
to $\Omega$ is defined ${\mathcal{H}}^{n-1}$-a.e. on $\partial\Omega$. 
In this vein, it is worth noting that, by Rademacher's a.e. differentiability 
theorem for Lipschitz functions, this is always the case in the category of 
Lipschitz domains.

While the functional analytic study of the spaces $W^{k,p}_D(\Omega)$ undertaken 
in the first part of the paper is of independent interest, the principal 
motivation for such an endeavor remains its impact on the study of partial 
differential equations. In particular, the specific nature of the spaces
$W^{k,p}_D(\Omega)$ from \eqref{TD-TRY753} naturally makes the body of 
results established here particularly well-suited for the treatment of boundary 
value problems of mixed type in very general classes of Euclidean domains. 
Mixed boundary value problems arises naturally in connection to a series of 
important problems in mathematical physics and engineering, dealing with 
conductivity, heat transfer, wave phenomena, electrostatics, metallurgical melting, 
stamp problems in elasticity and hydrodynamics, among many other applications. 
Specific references can be found in \cite{AzKr}, \cite{Da}, \cite{Fabr}, \cite{Gr},
\cite{Grog}, \cite{Lag}, \cite{MazRo}, \cite{MiMi}, \cite{Pr}, \cite{ReSh}, \cite{Sav}, 
\cite{Sim}, \cite{Sham}, \cite{Sned}, to cite just a fraction of a vast literature
on this topic. 

In the last section of our paper we formulate and solve such mixed boundary problems 
for strongly elliptic higher-order systems in bounded open subsets $\Omega$ of ${\mathbb{R}}^n$ which are locally $(\varepsilon,\delta)$-domains near 
$\partial\Omega\setminus D$ with $D$ closed subset of $\partial\Omega$ 
which is $d$-Ahlfors regular for some $d\in(n-2,n)$. In such a scenario, 
$D$ is the portion of the boundary on which a homogeneous higher-order Dirichlet
condition is imposed, while a homogeneous Neumann condition is assigned on 
$\partial\Omega\setminus D$. In this connection, we wish to note that the class
of domains just described is much more general than those previously considered 
in the literature. The following is a slightly sanitized version of our main 
well-posedness result proved in Theorem~\ref{yaUNDJ}. 

\begin{theorem}\label{yaUHVav7f39}
Let $\Omega$ be a bounded, connected, open, nonempty, subset of ${\mathbb{R}}^n$, 
$n\geq 2$, and suppose that $D$ is a nonempty closed subset of $\partial\Omega$ 
which is $d$-Ahlfors regular for some $d\in(n-2,n)$. In addition, assume that 
$\Omega$ is locally an $(\varepsilon,\delta)$-domain near $\partial\Omega\setminus D$,  
and consider a strongly elliptic, divergence-form system ${\mathcal{L}}$ 
of order $2m$, whose tensor coefficient $A$ consists of bounded measurable functions
in $\Omega$. 

Then there exists $p_\ast\in(2,\infty)$ with the following significance. If 
\begin{eqnarray}\label{S-EllPka.ii33}
\frac{p_\ast}{p_\ast-1}<p<p_\ast
\end{eqnarray}
then the mixed boundary value problem 
\begin{eqnarray}\label{Tgb-5bb.ii33}
\left\{
\begin{array}{l}
{\mathcal{L}}u=f\lfloor_{\,\Omega}\,\,\mbox{ in }\,\,{\mathcal{D}}'(\Omega),
\\[6pt]
u\in W^{m,p}_D(\Omega),
\\[6pt]
\partial^A_\nu(u,f)=0\,\,\mbox{ on }\,\,\partial\Omega\setminus D,
\end{array}
\right.
\end{eqnarray}
is uniquely solvable for each functional $f\in\big(W^{m,p'}_D(\Omega)\big)^*$,
where $1/p+1/p'=1$.
\end{theorem}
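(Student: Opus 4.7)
The plan is to reformulate the mixed boundary problem as an abstract operator equation on a Banach space, solve it at $p=2$ by the Lax--Milgram lemma, and then propagate invertibility to an open range of exponents around $2$ via Shneiberg's stability theorem applied to the complex interpolation scale $\{W^{m,p}_D(\Omega)\}_{p\in(1,\infty)}$ produced by the extension operator of Theorem~\ref{TGKV885.2}.

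First, using the Green-type identity that \emph{defines} the higher-order conormal derivative, the mixed boundary value problem \eqref{Tgb-5bb.ii33} is equivalent to finding $u\in W^{m,p}_D(\Omega)$ such that
\[
B(u,v)\;:=\;\int_\Omega \bigl\langle A(x)\,D^{m}u(x),\,D^{m}v(x)\bigr\rangle\,dx\;=\;\langle f,v\rangle
\]
for every test function $v\in W^{m,p'}_D(\Omega)$. The vanishing-trace condition on $v$ along $D$ kills the boundary contribution on $D$, while the fact that $v$ is otherwise unconstrained on $\partial\Omega\setminus D$ delivers, distributionally, the Neumann-type condition $\partial^A_\nu(u,f)=0$ there. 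Letting $T_p:W^{m,p}_D(\Omega)\to\bigl(W^{m,p'}_D(\Omega)\bigr)^{\!*}$ be defined by $(T_p u)(v):=B(u,v)$, the well-posedness of \eqref{Tgb-5bb.ii33} is equivalent to $T_p$ being a bijection.

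At $p=2$, the boundedness of $A$ yields continuity of $B$, while strong ellipticity of $\mathcal{L}$ produces a G\aa{}rding estimate $B(u,u)\ge c_0\|D^{m}u\|_{L^2(\Omega)}^{2}-c_1\|u\|_{L^2(\Omega)}^{2}$. To upgrade this to full coercivity one needs a Poincar\'e-type inequality $\|u\|_{L^2(\Omega)}\le C\|D^{m}u\|_{L^2(\Omega)}$ on $W^{m,2}_D(\Omega)$. This I would obtain by the usual compactness/contradiction argument: the extension operator $\mathfrak{E}_{m,D}$, composed with a cutoff and Rellich on $\mathbb{R}^n$, makes $W^{m,2}_D(\Omega)\hookrightarrow L^2(\Omega)$ compact; if the inequality failed, a limit point would be a polynomial of degree less than $m$ lying in $W^{m,2}_D(\Omega)$, but since $D$ is $d$-Ahlfors regular with $d>n-2\ge n-2m$ it has strictly positive $(m,2)$-capacity, so Theorem~\ref{YTab-YHb} forces such a polynomial to vanish identically. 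Lax--Milgram then gives that $T_2$ is an isomorphism, settling the theorem at $p=2$.

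Third, to unlock an open interval of $p$'s I would invoke the stability result of Shneiberg: an operator that is invertible at an interior point of a complex interpolation scale remains invertible on an open neighborhood of that point. The prerequisite is the complex interpolation identity
\[
\bigl[W^{m,p_0}_D(\Omega),\,W^{m,p_1}_D(\Omega)\bigr]_{\theta}\;=\;W^{m,p_\theta}_D(\Omega),\qquad \tfrac{1}{p_\theta}=\tfrac{1-\theta}{p_0}+\tfrac{\theta}{p_1},
\]
which I would prove by using $(\mathfrak{E}_{m,D},\,\cdot|_\Omega)$ as a retraction/coretraction pair that transfers the analogous interpolation identity on $\mathbb{R}^n$ down to $\Omega$. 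Crucially, Theorem~\ref{TGKV885.2} guarantees that $\mathfrak{E}_{m,D}$ lands in the \emph{subspace} $W^{m,p}_D(\mathbb{R}^n)$ rather than merely in $W^{m,p}(\mathbb{R}^n)$, so the subspace structure is preserved at every intermediate $p$. Since $B$ is $p$-independent, Shneiberg's theorem, applied at the central point $p=2$ where $T_{2}$ is invertible, yields $\eta>0$ such that $T_p$ is an isomorphism for all $p\in(2-\eta,2+\eta)$. Duality (the adjoint of $T_p$ being essentially $T_{p'}$ associated with the transposed coefficient tensor $A^{\!\top}$, which is again strongly elliptic) renders the interval of invertibility symmetric about $p=2$ in the conjugate-exponent sense, producing the advertised range $\bigl(p_{\ast}/(p_{\ast}-1),\,p_{\ast}\bigr)$ for some $p_{\ast}>2$.

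The main obstacle is the complex interpolation identity for $W^{m,p}_D(\Omega)$: one must verify that $\mathfrak{E}_{m,D}$ behaves well with Calder\'on's analytic family construction and, above all, that the output genuinely lies in the partially-vanishing-trace subspace on $\mathbb{R}^n$ at every $p$. This is exactly where the non-expansive support property \eqref{fgghvk} built into $\mathfrak{E}_{m,D}$, together with the capacitary/Hausdorff-measure characterization in Theorem~\ref{YTab-YHb}, pays off. Once the interpolation scale is secured, the combination of Lax--Milgram at $p=2$ with Shneiberg's stability delivers the theorem by a mechanical concatenation.
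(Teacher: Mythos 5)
Your proposal is correct and reaches the conclusion by a route that is genuinely different from the paper's at the two key junctures, so let me compare the two. The paper does not attempt a Poincar\'e inequality at $p=2$; instead it derives a G\aa{}rding-type estimate $\|u\|_{W^{m,2}(\Omega)}\le C\|T_{\mathcal{L}}u\|_{(W^{m,2}_D(\Omega))^*}+C\|u\|_{W^{m-1,2}(\Omega)}$, exploits the compactness of $W^{m,2}_D(\Omega)\hookrightarrow W^{m-1,2}(\Omega)$ to conclude that $T_{\mathcal{L}}$ is semi-Fredholm, uses $(T_{\mathcal{L}})^*=T_{\mathcal{L}^*}$ together with the strong ellipticity of $\mathcal{L}^*$ to get index zero, and then identifies both kernels with ${\mathcal{P}}_{m-1}(\Omega)\cap W^{m,2}_D(\Omega)$, which is $\{0\}$ when $D\neq\emptyset$ because the intrinsic trace operator ${\mathscr{R}}^{(m)}_{\Omega\to D}$ from Theorem~\ref{NIceTRace}/Corollary~\ref{Kae.Yab7b6} annihilates such functions $\mathcal{H}^d$-a.e.\ on $D$, which has positive $\mathcal{H}^d$-measure. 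Your Lax--Milgram route via coercivity (which, given the pointwise strong ellipticity assumed in \eqref{S-Ellip}, already yields $\mathrm{Re}\,B(u,\overline u)\ge\kappa\|\nabla^m u\|_{L^2}^2$ with no lower-order correction, so only the Poincar\'e inequality is needed) is cleaner when $D\neq\emptyset$, but it degenerates for $D=\emptyset$; the paper's Fredholm/index-zero machinery is what allows it to treat $D=\emptyset$ uniformly in Theorem~\ref{yaUNDJ}, whose present statement is a specialization. Your capacity argument to kill the limiting polynomial is sound, though the clean way to state it is that the top-order derivatives of a degree-$(m-1)$ polynomial in $W^{m,2}_D(\Omega)$ must vanish on a set of positive $(1,2)$-capacity, which forces them to vanish identically since $d>n-2$; one then descends through the degrees, or more simply notes that a degree-$(m-1)$ polynomial whose $(m-1)$-jet vanishes at a single point is zero. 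For the stability step, you invoke Shneiberg's theorem (perturbation of invertibility along a complex interpolation scale), whereas the paper appeals to the stability of the Fredholm property and index along nested interpolation scales (the results of \cite{CaSa}, \cite{KaMi}, \cite{ViVi}, \cite{KMM}); at $p=2$ with $D\neq\emptyset$ these are interchangeable since invertibility holds outright. In both approaches, the indispensable input is the interpolation identity $\bigl[W^{m,p_0}_D(\Omega),W^{m,p_1}_D(\Omega)\bigr]_\theta=W^{m,p_\theta}_D(\Omega)$ for $\max\{1,n-d\}<p_0,p_1<\infty$, which the paper establishes as Theorem~\ref{HanmNB8} exactly by the retraction mechanism you describe, with the partially-vanishing-trace compatible extension operator $\mathfrak{E}_{m,D}$ of Theorem~\ref{cjeg.5AD} (resting, as you say, on the non-expansive support property of Theorem~\ref{cj-TRf}) as the coretraction, and with the common projection ${\mathscr{P}}^{(m)}_D=I-{\mathscr{E}}^{(m)}_D\circ{\mathscr{R}}^{(m)}_D$ from Proposition~\ref{YTah-aP3G} handling the subspace structure on the $\mathbb{R}^n$ side; the hypothesis $d\in(n-2,n)$ guarantees $n-d<2$, placing $p=2$ in the interior of the admissible exponent range, which is precisely what the local stability theorem needs.
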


\noindent Above, $f\lfloor_{\,\Omega}$ denotes the distribution in $\Omega$ 
canonically associated with the functional $f\in\big(W^{m,p'}_D(\Omega)\big)^*$,
and the homogeneous Neumann boundary condition $\partial^A_\nu(u,f)=0$ 
on $\partial\Omega\setminus D$ is understood in a variational sense, 
made clear in Definition~\ref{Ian-Yab679}. We also remark that the membership 
of $u$ to $W^{m,p}_D(\Omega)$ automatically implies (by virtue of results mentioned
earlier) that the higher-order restriction of $u$ to $D$ vanishes 
at ${\mathcal{H}}^d$-a.e. point on $D$. Thus a homogeneous Dirichlet boundary 
condition on $D$ is implicit, making it clear that problem \eqref{Tgb-5bb.ii33} 
has a mixed character. 

Specializing \eqref{Tgb-5bb.ii33} to the particular case when $D=\partial\Omega$ 
yields a well-posedness result for the inhomogeneous Dirichlet problem. For the
sake of this introduction, we choose to formulate this corollary in a way which
emphasizes the traditional Dirichlet boundary condition in the higher-order 
setting (i.e., using iterated normal derivatives). Compared with 
Theorem~\ref{yaUHVav7f39}, this requires upgrading the underlying geometrical 
assumptions in order to make this type of boundary condition meaningful. 
Specifically, the following is a particular case of Theorem~\ref{yTganDJ} from 
the body of the paper.

\begin{theorem}\label{yTganDJ.IOa}
Let $\Omega$ be a bounded $(\varepsilon,\delta)$-domain in ${\mathbb{R}}^n$ 
whose boundary is $(n-1)$-Ahlfors regular. In addition, assume that
${\mathcal{H}}^{n-1}(\partial\Omega\setminus\partial_\ast\Omega)=0$ and  
denote by $\nu$ the geometric measure theoretic outward unit normal to $\Omega$.  
Also, consider a strongly elliptic, 
divergence-form system ${\mathcal{L}}$ of order $2m$, with bounded measurable
coefficients in $\Omega$. Then there exists $p_\ast\in(2,\infty)$, depending only 
on $\Omega$ as well as the bounds on the coefficients and the ellipticity constant 
of ${\mathcal{L}}$, with the property that the classical inhomogeneous Dirichlet 
boundary value problem 
\begin{eqnarray}\label{Tgb-5Uhgba}
\left\{
\begin{array}{l}
{\mathcal{L}}u=f\in W^{-m,p}(\Omega),
\\[6pt]
u\in W^{m,p}(\Omega),
\\[6pt]
\displaystyle
\frac{\partial^j u}{\partial\nu^j}=0
\,\mbox{ on }\,\partial\Omega\,\mbox{ for }\,0\leq j\leq m-1,
\end{array}
\right.
\end{eqnarray}
is well-posed whenever $\frac{p_\ast}{p_\ast-1}<p<p_\ast$.
\end{theorem}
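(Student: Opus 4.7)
The plan is to derive Theorem~\ref{yTganDJ.IOa} as a direct specialization of the general mixed boundary value problem result Theorem~\ref{yaUHVav7f39}, by taking $D:=\partial\Omega$ and then translating the abstract solution space into the classical Dirichlet formulation via the characterization of $\mathring{W}^{m,p}(\Omega)$ that answers Ne\v{c}as's problem (i.e., Theorem~\ref{NIcBBB} cited in the introduction).

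First, I would verify that the geometric hypotheses of Theorem~\ref{yaUHVav7f39} are met with $D:=\partial\Omega$. Since $\Omega$ is assumed to be a genuine $(\varepsilon,\delta)$-domain in ${\mathbb{R}}^n$, it is in particular locally an $(\varepsilon,\delta)$-domain near the (empty) set $\partial\Omega\setminus D$. Moreover, $D=\partial\Omega$ is closed, nonempty (as $\Omega$ is bounded), and $(n-1)$-Ahlfors regular by hypothesis; the dimension $n-1$ belongs to the admissible range $(n-2,n)$ when $n\geq 2$. Hence Theorem~\ref{yaUHVav7f39} produces a critical exponent $p_\ast\in(2,\infty)$ such that for every $p\in(p_\ast/(p_\ast-1),p_\ast)$ the mixed problem \eqref{Tgb-5bb.ii33} with $D=\partial\Omega$ has a unique solution for each datum in $\bigl(W^{m,p'}_{\partial\Omega}(\Omega)\bigr)^\ast$.

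Next, I would identify the function spaces in \eqref{Tgb-5bb.ii33} with those appearing in \eqref{Tgb-5Uhgba}. By the very definition \eqref{TD-TRY753}, choosing $D:=\partial\Omega$ reduces $W^{m,p}_D(\Omega)$ to the closure in $W^{m,p}(\Omega)$ of restrictions of test functions from ${\mathscr{C}}^\infty_c({\mathbb{R}}^n)$ whose supports avoid $\partial\Omega$, i.e., to $\mathring{W}^{m,p}(\Omega)$. Consequently, $\bigl(W^{m,p'}_{\partial\Omega}(\Omega)\bigr)^\ast=\bigl(\mathring{W}^{m,p'}(\Omega)\bigr)^\ast=W^{-m,p}(\Omega)$ in the canonical duality, and the distribution $f\lfloor_{\,\Omega}$ coincides with $f\in W^{-m,p}(\Omega)$. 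Since $\partial\Omega\setminus D=\emptyset$, the variational Neumann condition $\partial^A_\nu(u,f)=0$ on $\partial\Omega\setminus D$ is automatically vacuous. Thus the mixed problem \eqref{Tgb-5bb.ii33} collapses to: find $u\in\mathring{W}^{m,p}(\Omega)$ with ${\mathcal{L}}u=f$ in ${\mathcal{D}}'(\Omega)$, and this problem has exactly one solution.

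The only remaining task is to reconcile the abstract membership condition $u\in\mathring{W}^{m,p}(\Omega)$ with the classical pointwise-a.e. boundary conditions $\partial^j u/\partial\nu^j=0$ on $\partial\Omega$ for $0\leq j\leq m-1$ appearing in \eqref{Tgb-5Uhgba}. This is where the geometric upgrades in the statement come into play: the extra hypotheses that $\partial\Omega$ is $(n-1)$-Ahlfors regular and ${\mathcal{H}}^{n-1}(\partial\Omega\setminus\partial_\ast\Omega)=0$ guarantee both that the measure-theoretic outward unit normal $\nu$ is defined ${\mathcal{H}}^{n-1}$-a.e. on $\partial\Omega$ and that the Ne\v{c}as-type identity \eqref{Necas.YYY} holds in this broader setting; precisely this equivalence is supplied by Theorem~\ref{NIcBBB} quoted in the introduction. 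Applying that equivalence identifies $\mathring{W}^{m,p}(\Omega)$ with the subspace of $W^{m,p}(\Omega)$ cut out by the iterated normal-derivative conditions, which completes the reduction to the well-posedness statement \eqref{Tgb-5Uhgba}.

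The substantive mathematical content therefore rests on two pillars already available: the general mixed well-posedness of Theorem~\ref{yaUHVav7f39}, and the intrinsic normal-derivative description of $\mathring{W}^{m,p}(\Omega)$ in Theorem~\ref{NIcBBB}. The only conceivable obstacle is ensuring that the ranges of $p$ produced by these two inputs are compatible; however both are open intervals symmetric under the duality $p\leftrightarrow p'$ and both are driven by the ellipticity and coefficient bounds of ${\mathcal{L}}$ together with the geometry of $\Omega$, so by possibly shrinking the exponent $p_\ast$ coming from Theorem~\ref{yaUHVav7f39} we may arrange that the Ne\v{c}as-type identification is applicable throughout, yielding the desired result.
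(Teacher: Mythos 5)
Your proposal follows essentially the same route the paper takes: the introduction explicitly states that Theorem~\ref{yTganDJ.IOa} is a particular case of Theorem~\ref{yTganDJ}, whose own proof specializes Theorem~\ref{yaUNDJ} (= the precise version of Theorem~\ref{yaUHVav7f39}) to $D=\partial\Omega$, and the translation of $\mathring{W}^{m,p}(\Omega)$ into the iterated normal-derivative conditions is exactly Theorem~\ref{NIcBBB}. Two remarks on details are in order.

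First, there is a small but real gap in the reduction as you have written it: Theorem~\ref{yaUHVav7f39} carries the standing hypothesis that $\Omega$ be \emph{connected}, whereas Theorem~\ref{yTganDJ.IOa} only assumes that $\Omega$ is a bounded $(\varepsilon,\delta)$-domain, and such a set need not be connected once ${\rm diam}\,\Omega\geq\delta$ (connectedness is only forced when $\delta=\infty$). The paper sidesteps this by routing through Theorem~\ref{yTganDJ}, which deliberately drops connectedness and replaces the identification of the finite-dimensional kernel with the observation~\eqref{FYU-UIG}, proved via an $(m-1)$-fold application of Poincar\'e's inequality in $\mathring{W}^{m,p}(\Omega)$. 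To make your argument airtight you would either need to invoke Theorem~\ref{yTganDJ} directly, or supply an analogous Poincar\'e-based argument for uniqueness when $\Omega$ has several components.

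Second, the concern in your final paragraph about reconciling $p$-ranges is unnecessary: Theorem~\ref{NIcBBB} holds for every $p\in(1,\infty)$, so the Ne\v{c}as-type identification of $\mathring{W}^{m,p}(\Omega)$ imposes no further restriction on $p$ and the exponent $p_\ast$ coming from the well-posedness theorem can be used as is, without shrinking.
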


By means of counterexamples (based on classical constructions due 
to N.~Meyers \cite{Mey}, E.~De Giorgi \cite{DG}, and V.~Maz'ya \cite{Maz-Ctr}), 
in the last part of \S\,\ref{Sect:7} we prove that the restriction of $p$ to a 
small interval near $2$ is actually necessary for the well-posedness of the 
inhomogeneous Dirichlet boundary value problem \eqref{Tgb-5Uhgba}. 
A key ingredient in the proof of Theorems~\ref{yaUHVav7f39}-\ref{yTganDJ.IOa}
is an interpolation result (cf. Theorem~\ref{HanmNB8} for a more complete statement) 
to the effect that, for each fixed $k\in{\mathbb{N}}$,
\begin{eqnarray}\label{GYknnbn}
\begin{array}{c}
\mbox{the scale }\,\,
\big\{W^{k,p}_D(\Omega)\big\}_{\max\{1,n-d\}<p<\infty}\,\,\mbox{ is stable} 
\\[6pt]
\mbox{both under the complex and the real method},
\end{array}
\end{eqnarray}
whenever $\Omega\subseteq{\mathbb{R}}^n$ and $D\subseteq\overline{\Omega}$ are 
such that $D$ is closed and $d$-Ahlfors regular for some $d\in(0,n)$, and $\Omega$ 
is locally an $(\varepsilon,\delta)$-domain near $\partial\Omega\setminus D$.

\vskip 0.10in

The layout of the paper is as follows. In \S\,\ref{Sect:2} we collect background
definitions and results, clarify terminology, and record a detailed statement of 
Jones' extension result, expanding on the succinct presentation from 
Theorem~\ref{TGKV885}. In turn, this is used in Theorem~\ref{cj-TRf}
to establish the fact that Jones' extension operator $\Lambda_k$ does not 
enlarge the support of a function $u\in W^{k,p}(\Omega)$ in $\overline{\Omega}$ (assuming, of course, that $\Omega$ is an $(\varepsilon,\delta)$-domain). 
This aspect plays a basic role in the proof of Theorem~\ref{TGKV885.2} 
(reformulated more generally as Theorem~\ref{cjeg.5AD}) in \S\,\ref{Sect:3}. 
In Theorem~\ref{cjeg.UUV} we record M.\,Christ's version of Jones' extension
theorem, specialized to Sobolev spaces, and subsequently note that Christ's 
semi-universal extension operator also enjoys the non-expansive support property 
alluded to above. In particular, this allows us to construct a semi-universal 
extension operator for Sobolev spaces with partially vanishing traces in  
Theorem~\ref{cjeg.5AD.UUV}.

The task of elucidating the structure of Sobolev spaces with partially 
vanishing traces from \eqref{TD-TRY753} is taken up in 
\S\,\ref{Sect:4}. Here, characterizations involving the vanishing of traces 
on $D$ is proved in Theorem~\ref{YTab-YHb} in a capacitary quasieverywhere sense, 
and in Theorem~\ref{YTah-YYHa9} where such a vanishing condition is formulated 
using the Hausdorff measure in place of Bessel capacities. The key ingredient 
in the proof of the latter result is the intrinsic characterization of the 
null-space of the the higher-order boundary trace operator (in the sense considered by 
A.~Jonsson and H.~Wallin in \cite{JoWa84}) in Theorem~\ref{YTah-YYHa8}
when this trace operator is acting from Sobolev spaces defined in ${\mathbb{R}}^n$.
Subsequently, in Theorem~\ref{Ohav-7UJ.2} we refine the Jonsson-Wallin theory 
from Theorem~\ref{GCC-67} in a manner which allows considering extension/restriction
operators preserving certain types of vanishing conditions. 

The main result in \S\,\ref{Sect:5} is Theorem~\ref{NIceTRace}, containing 
a trace/extension theory on locally $(\varepsilon,\delta)$-domains 
onto/from Ahlfors regular subsets in ${\mathbb{R}}^n$. One of the basic consequences
of this theory is the intrinsic description of Sobolev spaces with partially 
vanishing traces from Theorem~\ref{Kance.745}. We then proceed to deduce 
several important properties of this scale of spaces, including the hereditary 
property from Theorem~\ref{Pcsd5}, the issue of preservation of Sobolev class
under extension by zero in Theorem~\ref{Tfgg-75dS}, and under gluing Sobolev 
functions with matching traces in Theorem~\ref{SSSg-g5dS}. The last result 
in this section is Theorem~\ref{NIcBBB}, which establishes the characterization 
of the null-space of the higher-order Dirichlet trace operator from \eqref{Necas.YYY}. 

The main goal in \S\,\ref{Sect:6} is proving interpolation results in the spirit of 
\eqref{GYknnbn}. See Theorem~\ref{HanmNB8} in this regard. Finally, \S\,\ref{Sect:6}
is devoted to applications to boundary value problems in a very general 
geometric measure theoretic setting. More specifically, Theorem~\ref{yaUNDJ}
deals with the higher-order mixed boundary value problem 
in locally $(\varepsilon,\delta)$-domains, Theorem~\ref{yTganDJ} treats 
the higher-order inhomogeneous Dirichlet problem in arbitrary bounded open
sets with $d$-Ahlfors regular boundaries, Theorem~\ref{yTganDJ.Yam}
addresses the fully inhomogeneous higher-order Poisson problem in 
bounded $(\varepsilon,\delta)$-domains with $d$-Ahlfors regular boundaries, 
while the higher-order Neumann problem is considered in Theorem~\ref{yaUNDJ.taF} 
in the context of bounded $(\varepsilon,\delta)$-domains.

\section{Sobolev spaces and Jones' extension operator}
\label{Sect:2}
\setcounter{equation}{0}

We begin by discussing some background definitions and results. 
Fix a space dimension $n\in{\mathbb{N}}$, $n\geq 2$, and denote by ${\mathscr{L}}^n$ 
the $n$-dimensional Lebesgue measure in ${\mathbb{R}}^n$. Given a Lebesgue
measurable set ${\mathcal{O}}$ in ${\mathbb{R}}^n$, we 
let $L^p({\mathcal{O}},{\mathscr{L}}^n)$, $0<p\leq\infty$, stand for the 
scale of (equivalent classes of) Lebesgue-measurable functions which are 
$p$-th power ${\mathscr{L}}^n$-integrable in ${\mathcal{O}}$. Also, given 
an open set $\Omega\subseteq{\mathbb{R}}^n$, for each $p\in(0,\infty]$ denote by 
$L^p_{loc}(\Omega,{\mathscr{L}}^n)$ the space of Lebesgue-measurable functions 
$u$ in $\Omega$ with the property that $u\big|_{K}\in L^p(K,{\mathscr{L}}^n)$
for every compact subset $K$ of $\Omega$. 

With ${\mathbb{N}}$ denoting the collection of all 
(strictly) positive integers, we shall abbreviate ${\mathbb{N}}_0:={\mathbb{N}}\cup\{0\}$.
In particular, ${\mathbb{N}}_0^n$ may be regarded as the set of all 
multi-indices $\{\alpha=(\alpha_1,...,\alpha_n):\,
\alpha_i\in{\mathbb{N}}_0,\,\,1\leq i\leq n\}$. As usual, for each 
multi-index $\alpha=(\alpha_1,...,\alpha_n)\in{\mathbb{N}}_0^n$ 
we denote by $|\alpha|:=\alpha_1+\cdots+\alpha_n$ its length, and 
define $\alpha!:=\alpha_1!\cdots\alpha_n!$ (with the usual convention that $0!:=1$).
Also, write $\partial^\alpha:=\partial_{x_1}^{\alpha_1}\cdots\partial_{x_n}^{\alpha_n}$
and, given $\alpha=(\alpha_1,...,\alpha_n),
\beta=(\beta_1,...,\beta_n)\in{\mathbb{N}}_0^n$, by $\beta\leq\alpha$ 
it is understood that $\beta_j\leq\alpha_j$ for each $j\in\{1,...,n\}$.
For an arbitrary set $E\subseteq{\mathbb{R}}^n$ we shall denote by 
$E^\circ$, $\overline{E}$, ${\rm diam}\,E$, and $E^c$, respectively the interior, 
closure, diameter, distance to and complement of $E$ in ${\mathbb{R}}^n$.
In addition, ${\rm dist}\,(F,E)$, denotes the distance from $F$ to $E$.  
As usual, $B(x,r):=\{y\in{\mathbb{R}}^n:\,|x-y|<r\}$ for each 
$x\in{\mathbb{R}}$ and $r\in(0,\infty]$, where $|\cdot|$ denotes the standard 
Euclidean norm. Next, given a measurable space 
$\big({\mathscr{X}},{\mathfrak{M}},\mu\big)$, for any subset $E$ of 
the ambient ${\mathscr{X}}$ which belongs to the sigma-algebra ${\mathfrak{M}}$ 
we denote by $\mu\lfloor E$ the restriction of the measure $\mu$ to $E$.
Throughout, $\#E$ and ${\bf 1}_E$ stand, respectively, for the 
cardinality and the characteristic function of a given set $E$.
Given an open subset ${\mathcal{O}}$ of ${\mathbb{R}}^n$
and $k\in{\mathbb{N}}_0\cup\{\infty\}$, we shall denote 
by ${\mathscr{C}}^k({\mathcal{O}})$ the collection of all $k$-times 
continuously differentiable functions in ${\mathcal{O}}$, 
and by ${\mathscr{C}}^\infty_c({\mathcal{O}})$ the collection of all indefinitely 
differentiable functions which are compactly supported in ${\mathcal{O}}$.
Finally, define ${\mathscr{C}}^\infty(\overline{\mathcal{O}})
:=\big\{\varphi\big|_{\mathcal{O}}:\,\varphi
\in{\mathscr{C}}^\infty({\mathbb{R}}^n)\big\}$.

Assume next that $\Omega\subseteq{\mathbb{R}}^n$ is an arbitrary nonempty
open set. Then for each integer $k\in{\mathbb{N}}_0$ and integrability exponent 
$p\in[1,\infty]$, the $L^p$-based Sobolev space of order $k$ in $\Omega$ is defined 
intrinsically by 
\begin{eqnarray}\label{PJE-2}
W^{k,p}(\Omega):=\big\{u\in L^1_{loc}(\Omega,{\mathscr{L}}^n):\,
\partial^\alpha u\in L^p(\Omega,{\mathscr{L}}^n) 
\,\mbox{ for each }\,\alpha\in{\mathbb{N}}_0^n\,\,\,\mbox{ with }\,\,|\alpha|\leq k\big\},
\end{eqnarray}
where all derivatives are taken in the sense of distributions in the open set $\Omega$. 
As is well-known (cf., e.g., \cite[Theorem~3.3, p.\,60]{AdFo}),
$W^{k,p}(\Omega)$ becomes a Banach space when equipped with the natural norm 
\begin{eqnarray}\label{PJE-3}
\|u\|_{W^{k,p}(\Omega)}:=\sum_{|\alpha|\leq k}
\|\partial^\alpha u\|_{L^p(\Omega,{\mathscr{L}}^n)} 
\,\,\,\mbox{ for each }\,\,u\in W^{k,p}(\Omega).
\end{eqnarray}
As is well-known, $W^{k,p}(\Omega)$ is separable if $1\leq p<\infty$ and 
reflexive if $1<p<\infty$.
Since taking distributional derivatives commutes with the operation of 
restricting distributions to open subsets of their domain, and does not 
increase the Lebesgue norm, it is clear that the restriction operator 
\begin{eqnarray}\label{PJE-3UYGav}
W^{k,p}(\Omega)\ni u\longmapsto u\big|_{{\mathcal{O}}}\in W^{k,p}({\mathcal{O}})
\end{eqnarray}
is well-defined, linear and bounded, whenever $\Omega$ is an open subset 
of ${\mathbb{R}}^n$, ${\mathcal{O}}$ is a nonempty subset of $\Omega$, 
$k\in{\mathbb{N}}$, and $p\in[1,\infty]$.

For future purposes let us also define
\begin{eqnarray}\label{azTagbM}
\mathring{W}^{k,p}(\Omega)
:=\,\mbox{the closure of }\,{\mathscr{C}}^\infty_c(\Omega)\mbox{ in }
\Big(W^{k,p}(\Omega)\,,\,\|\cdot\|_{W^{k,p}(\Omega)}\Big)
\end{eqnarray}
and, assuming that $p,p'\in(1,\infty)$ are such that $1/p+1/p'=1$, set 
\begin{eqnarray}\label{azTagbM-W}
W^{-k,p}(\Omega):=\big(\mathring{W}^{k,p'}(\Omega)\bigr)^*.
\end{eqnarray}
As is well-known, for each $p\in(1,\infty)$, an alternative description of the
the above Sobolev space of negative order is
\begin{eqnarray}\label{azTagbM-W2}
W^{-k,p}(\Omega)=\Big\{u\in{\mathcal{D}}'(\Omega):\,
u=\sum_{|\alpha|\leq k}\partial^\alpha v_\alpha,\,\,\mbox{ where each }
v_\alpha\in L^p(\Omega,{\mathscr{L}}^n)\Big\},
\end{eqnarray}
where ${\mathcal{D}}'(\Omega)$ is the space of distributions in $\Omega$.
For this, as well as other related matters, the interested reader is referred
to, e.g., \cite[pp.\,62-65]{AdFo}. Here we only wish to note that
if $k\in{\mathbb{N}}$, $p\in[1,\infty]$ and $\Omega$ is an arbitrary
nonempty open subset of ${\mathbb{R}}^n$ then  
\begin{eqnarray}\label{YJBKL-uvv.5}
\mathring{W}^{k,p}(\Omega)\ni u\longmapsto\widetilde{u}
\in W^{k,p}({\mathbb{R}}^n)\,\,\,\mbox{ isometrically},
\end{eqnarray}
where tilde denotes the extension by zero outside $\Omega$, 
to the entire ${\mathbb{R}}^n$. Indeed, this rests on the observation 
that $\widetilde{\partial^\alpha u}=\partial^\alpha\widetilde{u}$ 
for every $u\in\mathring{W}^{k,p}(\Omega)$ and any $\alpha\in{\mathbb{N}}_0^n$
with $|\alpha|\leq k-1$ (which, in turn, is readily seen from \eqref{azTagbM}
and a simple limiting argument).

Moving on to the topic of extensions of functions from Sobolev spaces, we recall 
that the idea underpinning the construction of Jones' extension operator from 
\cite[Theorem~1, p.\,73]{Jon81} is to glue together, via a scale-sensitive partition 
of unity associated with a Whitney decomposition of the interior of the complement 
of the domain, certain polynomials which best fit the given function on the 
corresponding reflected cube across the boundary. While all this is made precise
in Theorem~\ref{cjeg} below, for the time being we briefly digress in order to 
clarify terminology and review some standard results.

According to Whitney's decomposition lemma (cf. \cite[Theorem~1, p.\,167]{Ste70}), 
one can associate to any open, nonempty, proper subset ${\mathcal{O}}$ 
of ${\mathbb{R}}^n$ a family ${\mathcal{W}}({\mathcal{O}})=\{Q_j\}_{j\in{\mathbb{N}}}$
of countably many closed dyadic cubes from ${\mathbb{R}}^n$ such that
\begin{eqnarray}\label{Patah-8n.A}
&& {\mathcal{O}}=\bigcup_{j\in{\mathbb{N}}}Q_j,
\\[6pt]
&& \sqrt{n}\ell(Q_j)\leq{\rm dist}\,(Q_j,\partial{\mathcal{O}})
\leq 4\sqrt{n}\,\ell(Q_j),\quad\mbox{for all }\,\,j\in{\mathbb{N}},
\label{Patah-8n.B}
\\[6pt]
&& Q_j^\circ\cap Q_k^\circ=\emptyset,\quad\mbox{for all }\,\,j,k\in{\mathbb{N}}
\,\,\,\mbox{ with }\,\,j\not=k,
\label{Patah-8n.C}
\\[6pt]
&& \tfrac{1}{4}\,\ell(Q_j)\leq\ell(Q_k)
\leq 4\ell(Q_j),\quad\mbox{for all }\,\,j,k\in{\mathbb{N}}
\,\,\,\mbox{ with }\,\,Q_j\cap Q_k\not=\emptyset.
\label{Patah-8n.D}
\end{eqnarray}
Above, $\ell(Q)$ denotes the side-length of the cube $Q$, and
$Q^\circ$ stands for the interior of $Q$. Also, given a 
positive number $\lambda$ and a cube $Q$, we denote by $\lambda Q$ the 
cube with the same center $x_Q$ as $Q$, and side-length $\lambda\ell(Q)$. 
With this convention, it is then straightforward to check that \eqref{Patah-8n.B}
implies
\begin{eqnarray}\label{Patah-8n.E}
\mbox{if $\lambda\in(0,3)$ then }\,\,
\lambda Q_j\subseteq{\mathcal{O}}\,\,\,\mbox{ for all }\,\,j\in{\mathbb{N}}.
\end{eqnarray}
In fact, for each $\lambda\in(0,3)$ there exists $c_\lambda\in(0,1)$ such that
\begin{eqnarray}\label{Patah-8n.F}
c_\lambda\leq\frac{{\rm dist}\,\big(\lambda Q_j,\partial{\mathcal{O}}\big)}{\ell(Q_j)}
\leq c_\lambda^{-1},\,\,\,\mbox{ for all }\,\,j\in{\mathbb{N}}.
\end{eqnarray}
Finally, given an arbitrary nonempty open set $\Omega\subseteq{\mathbb{R}}^n$, 
define 
\begin{eqnarray}\label{YUagv-75v}
\begin{array}{c}
{\rm rad}\,(\Omega):=\inf\limits_{m}\inf\limits_{x\in\Omega_m}
\sup\limits_{y\in\Omega_m}|x-y|,
\\[10pt]
\mbox{ where $\{\Omega_m\}_m$ are the connected components of }\,\,\Omega.
\end{array}
\end{eqnarray}
Unraveling definitions yields the following geometric characterization 
in the class of connected open sets in ${\mathbb{R}}^n$:
\begin{eqnarray}\label{YUagv-75v.4rf}
\begin{array}{c}
{\rm rad}\,(\Omega)=\inf\,\big\{r>0:\,\exists\,x\in\Omega\,\,\mbox{ such that }\,\,
\Omega\subseteq B(x,r)\bigr\},
\\[6pt]
\mbox{for any connected open set $\Omega\subseteq{\mathbb{R}}^n$},
\end{array}
\end{eqnarray}
i.e., ${\rm rad}\,(\Omega)$ is, loosely speaking, the smallest of all 
radii of balls centered in $\Omega$ which envelop this connected set. In particular, 
it is clear that ${\rm rad}\,(\Omega)>0$ for every finitely connected open 
set $\Omega$ in ${\mathbb{R}}^n$. 

Here is the result advertised earlier. It is a detailed statement of 
\cite[Theorem~1, p.\,73]{Jon81} (which should also be compared with the 
statement of \cite[Theorem~B, p.\,1029]{Chua}).

\begin{theorem}[P.W.~Jones's Extension Theorem]\label{cjeg}
Let $\Omega$ be an $(\varepsilon,\delta)$-domain in ${\mathbb{R}}^n$ with 
${\rm rad}\,(\Omega)>0$, and fix $k\in{\mathbb{N}}$. Also, pick a Whitney 
decomposition ${\mathcal{W}}(\Omega)$ of $\Omega$ along with a Whitney decomposition
${\mathcal{W}}\big((\Omega^c)^\circ\big)$ of $(\Omega^c)^\circ$, and 
consider the collection of all small cube in the latter, i.e., define 
\begin{eqnarray}\label{PJE-sd1}
{\mathcal{W}}_s\big((\Omega^c)^\circ\big):=
\big\{Q\in{\mathcal{W}}\big((\Omega^c)^\circ\big):\,\ell(Q)\leq\varepsilon\delta/(16 n)
\big\}.
\end{eqnarray}
For any function $u\in L^1_{loc}(\Omega,{\mathscr{L}}^n)$ and any dyadic 
cube $Q\in{\mathcal{W}}(\Omega)$ let $P_{Q}(u)$ denote the unique polynomial 
of degree $k-1$ which best fits $u$ on $Q$ in the sense that 
\begin{eqnarray}\label{PJE-7jmB}
\int_{Q}\partial^{\alpha}\big(u-P_{Q}(u)\big)\,d{\mathcal{L}}^n=0
\,\,\mbox{ for each }\,\,\alpha\in{\mathbb{N}}_0^n\,\,\mbox{ with }\,\,|\alpha|\leq k-1.
\end{eqnarray}
To each
$Q\in{\mathcal{W}}_s\big((\Omega^c)^\circ\big)$ 
assign a cube $Q^*\in{\mathcal{W}}(\Omega)$ satisfying 
(cf. \cite[Lemma~2.4, p.\,77]{Jon81})
\begin{eqnarray}\label{PJE-b1}
\ell(Q)\leq\ell(Q^*)\leq 4\ell(Q)\,\,\mbox{ and }\,\,
{\rm dist}\,(Q,Q^*)\leq C_{n,\varepsilon}\,\ell(Q),\qquad
\forall\,Q\in{\mathcal{W}}_s\big((\Omega^c)^\circ\big),
\end{eqnarray}

Finally, to each $u\in L^1_{loc}(\Omega,{\mathscr{L}}^n)$ associate the 
function $\Lambda_k u$ defined ${\mathscr{L}}^n$-a.e. in ${\mathbb{R}}^n$ by 
\begin{eqnarray}\label{PJE-a1}
\Lambda_k u:=\left\{
\begin{array}{ll}
u & \mbox{ in }\,\,\Omega,
\\[8pt]
\displaystyle
\sum\limits_{Q\in{\mathcal{W}}_s
\big((\Omega^c)^\circ\big)}P_{Q^*}(u)\,\varphi_Q
& \mbox{ in }\,\,\big(\Omega^c\big)^{\circ},
\end{array}
\right.
\end{eqnarray}
where the family $\big\{\varphi_Q\big\}_{Q\in{\mathcal{W}}_s
\big((\Omega^c)^\circ\big)}$ consists of functions satisfying 
\begin{eqnarray}\label{PJE-H76}
\varphi_Q\in{\mathscr{C}}^\infty_c({\mathbb{R}}^n),\quad
{\rm supp}\,\varphi_Q\subseteq\tfrac{17}{16}Q,\quad
0\leq\varphi_Q\leq 1,\quad
\big|\partial^\alpha\varphi_Q\big|\leq C_\alpha\ell(Q)^{-|\alpha|},\quad
\forall\,\alpha\in{\mathbb{N}}_0^n,
\end{eqnarray}
for every $Q\in{\mathcal{W}}_s
\big((\Omega^c)^\circ\big)$, as well as
\begin{eqnarray}\label{PJE-H77}
\sum_{Q\in{\mathcal{W}}_s
\big((\Omega^c)^\circ\big)}\varphi_Q\equiv 1\qquad\mbox{ on }\,\,\,
\bigcup_{Q\in{\mathcal{W}}_s
\big((\Omega^c)^\circ\big)}Q.
\end{eqnarray}

Then for every $p\in[1,\infty]$ the operator $\Lambda_k$ satisfies
\begin{eqnarray}\label{fgu4J.P}
&& \Lambda_k:W^{k,p}(\Omega)\longrightarrow W^{k,p}({\mathbb{R}}^n)
\quad\mbox{ linearly and boundedly},
\\[4pt]
&& \Lambda_k u\big|_{\Omega}=u,\quad
\mbox{${\mathscr{L}}^n$-a.e. on $\Omega$ for every $u\in W^{k,p}(\Omega)$},
\label{Pa-PL2}
\end{eqnarray}
and with operator norm which may be controlled 
solely in terms of $\varepsilon,\delta,n,p,k,{\rm rad}\,(\Omega)$ in the 
following fashion:
\begin{eqnarray}\label{fgu4J.P75E}
\begin{array}{c}
\forall\,p\in[1,\infty],\,\,\forall\,k\in{\mathbb{N}},\,\,\forall\,M\in(0,\infty),
\,\,\exists\,C(n,k,p,M)\in(0,\infty)\,\,\mbox{ such that}
\\[8pt]
\big\|\Lambda_k\big\|_{W^{k,p}(\Omega)\to W^{k,p}({\mathbb{R}}^n)}\leq C(n,k,p,M)
\,\,\mbox{ whenever}
\\[10pt]
\mbox{$\Omega$ is an $(\varepsilon,\delta)$-domain in ${\mathbb{R}}^n$ 
such that }\,\,\max\,\big\{\varepsilon^{-1},\delta^{-1},{\rm rad}\,(\Omega)^{-1}\big\}
\leq M.
\end{array}
\end{eqnarray}
\end{theorem}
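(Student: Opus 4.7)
The plan is to verify the four clauses---well-definedness and linearity of $\Lambda_k$, the identity $\Lambda_k u|_\Omega = u$, the boundedness into $W^{k,p}(\mathbb{R}^n)$, and the uniform control \eqref{fgu4J.P75E} of the operator norm---by unpacking the pieces of the construction in turn. First I would verify that for every cube $Q$ and every $u \in L^1_{\mathrm{loc}}(\Omega,\mathscr{L}^n)$ the polynomial $P_Q(u)$ of degree $k-1$ satisfying \eqref{PJE-7jmB} exists and is unique: the moment conditions yield a linear system in the $\binom{n+k-1}{n}$ coefficients whose Gram matrix, after rescaling to the unit cube, is nonsingular and independent of $Q$. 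This immediately gives the linearity of $u \mapsto P_Q(u)$ as well as the Poincar\'e/Sobolev estimate
\begin{equation*}
\bigl\|\partial^\alpha \bigl(u - P_Q(u)\bigr)\bigr\|_{L^p(Q,\mathscr{L}^n)}
\leq C(n,k)\,\ell(Q)^{k-|\alpha|}\!\!\sum_{|\beta|=k}\!\|\partial^\beta u\|_{L^p(Q,\mathscr{L}^n)}
\end{equation*}
for each $|\alpha|\leq k-1$, together with the pointwise bound $|\partial^\alpha P_Q(u)(x)| \leq C\,\ell(Q)^{-|\alpha|-n/p}\|u\|_{L^p(Q,\mathscr{L}^n)}$ valid throughout $c\ell(Q)$-neighborhoods of $Q$. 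That $\Lambda_k u = u$ on $\Omega$ is then immediate from \eqref{PJE-a1}.

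The central analytic step is a comparison lemma: for any two cubes $Q_1,Q_2 \in \mathcal{W}_s((\Omega^c)^\circ)$ whose dilates $\tfrac{17}{16}Q_1$ and $\tfrac{17}{16}Q_2$ overlap, one must show
\begin{equation*}
\bigl|\partial^\alpha\bigl(P_{Q_1^*}(u) - P_{Q_2^*}(u)\bigr)(x)\bigr|
\leq C\,\ell(Q_1)^{k-|\alpha|-n/p}\sum_{|\beta|=k}\|\partial^\beta u\|_{L^p(F,\mathscr{L}^n)}
\end{equation*}
for $x$ in $\tfrac{17}{16}Q_1$ and a suitable enlargement $F \subset \Omega$ of $Q_1^* \cup Q_2^*$. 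This is exactly where the $(\varepsilon,\delta)$-condition enters: by Jones' chaining argument (\cite[Lemma~2.8, p.\,81]{Jon81}) one can interpolate between $Q_1^*$ and $Q_2^*$ by a chain of Whitney cubes in $\mathcal{W}(\Omega)$ of comparable sizes, whose number is bounded in terms of $n,\varepsilon$, all lying in a set $F \subset \Omega$ of $\mathscr{L}^n$-measure $\lesssim \ell(Q_1)^n$. The difference of polynomials on adjacent links of the chain is controlled by the Poincar\'e estimate above, and telescoping gives the required bound. The constraint $\ell(Q) \leq \varepsilon\delta/(16n)$ defining $\mathcal{W}_s$ ensures the endpoints of the chain lie within $\delta$ of one another so the $(\varepsilon,\delta)$-property applies.

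With the comparison lemma in hand, the $L^p$ estimate on $(\Omega^c)^\circ$ proceeds as follows. On each cube $Q \in \mathcal{W}_s((\Omega^c)^\circ)$, rewrite
\begin{equation*}
\partial^\alpha(\Lambda_k u)
= \sum_{Q'} \sum_{\beta \leq \alpha} \binom{\alpha}{\beta} \partial^\beta P_{(Q')^*}(u)\,\partial^{\alpha-\beta}\varphi_{Q'}
= \sum_{Q'} \sum_{\beta \leq \alpha} \binom{\alpha}{\beta} \bigl(\partial^\beta P_{(Q')^*}(u) - \partial^\beta P_{Q^*}(u)\bigr)\partial^{\alpha-\beta}\varphi_{Q'},
\end{equation*}
where the sum runs over the bounded-overlap family of $Q'$ with $\tfrac{17}{16}Q' \cap \tfrac{17}{16}Q \neq \emptyset$, and the last equality uses $\sum_{Q'} \varphi_{Q'} \equiv 1$ on the relevant set to subtract $P_{Q^*}(u)$. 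Applying the comparison lemma and the derivative bounds on $\varphi_{Q'}$ in \eqref{PJE-H76} yields $|\partial^\alpha(\Lambda_k u)(x)|^p \leq C \ell(Q)^{-n}\sum_{|\beta|=k}\int_{F_Q} |\partial^\beta u|^p d\mathscr{L}^n$ for $|\alpha|=k$ and lower-order multi-indices separately; summing over $Q$ and using the bounded overlap of the enlargements $F_Q \subset \Omega$ produces $\|\Lambda_k u\|_{W^{k,p}((\Omega^c)^\circ)} \leq C\|u\|_{W^{k,p}(\Omega)}$, with $C$ depending only on $n,k,p,\varepsilon$. The case $p=\infty$ follows by a parallel argument using essential suprema.

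It remains to check that the distributional derivatives of $\Lambda_k u$ on $\mathbb{R}^n$ coincide with the piecewise derivatives computed separately on $\Omega$ and $(\Omega^c)^\circ$. Since $\mathscr{L}^n(\partial\Omega) = 0$ by \eqref{PJE-wcX}, it suffices to verify that no distributional jump is supported on $\partial\Omega$ for $|\alpha| \leq k-1$, which reduces via integration by parts and a standard limit across dyadic boundary layers to the estimate $\lim_{j\to\infty}\int_{S_j}|\Lambda_k u - u|\,d\mathscr{L}^{n-1} = 0$ along Whitney skins $S_j$ of $\partial\Omega$; this in turn follows from the $L^p$ bounds just obtained together with the Poincar\'e estimate by a Fubini argument. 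Finally, tracking constants throughout, all thresholds scale via $\varepsilon$, $\delta$, and $\mathrm{rad}(\Omega)$ (the last of these enters only to absorb low-frequency contributions when handling the full norm, since for small cubes the comparison lemma gives control of all derivatives including $\alpha=0$ via the mean-value built into $P_{Q^*}$); this yields \eqref{fgu4J.P75E}. The principal obstacle throughout is the comparison lemma: the $(\varepsilon,\delta)$ chain must be converted into a quantitative statement about differences of polynomials with the correct scaling in $\ell(Q)$, and all subsequent estimates hinge on this being sharp.
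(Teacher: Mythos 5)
The paper does not prove this theorem; Theorem~\ref{cjeg} is explicitly presented as ``a detailed statement of \cite[Theorem~1, p.\,73]{Jon81}'' (with a parallel citation to \cite[Theorem~B]{Chua}) and is taken for granted throughout. So there is no proof in the paper to compare against; what you have written is a sketch of Jones's original argument. Your structure is faithful to \cite{Jon81}: existence and uniqueness of the best-fit polynomial $P_Q(u)$ with the associated Poincar\'e estimate, the chaining comparison lemma exploiting the $(\varepsilon,\delta)$-property to control $\partial^\beta(P_{Q_1^*}(u)-P_{Q_2^*}(u))$ for neighbouring cubes, bounded overlap of the enlargements $F_Q\subseteq\Omega$, and verification that no distributional jump occurs across $\partial\Omega$ (using that ${\mathscr{L}}^n(\partial\Omega)=0$).

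One genuine slip: your displayed identity
\begin{equation*}
\sum_{Q'} \sum_{\beta \leq \alpha} \binom{\alpha}{\beta} \partial^\beta P_{(Q')^*}(u)\,\partial^{\alpha-\beta}\varphi_{Q'}
= \sum_{Q'} \sum_{\beta \leq \alpha} \binom{\alpha}{\beta} \bigl(\partial^\beta P_{(Q')^*}(u) - \partial^\beta P_{Q^*}(u)\bigr)\partial^{\alpha-\beta}\varphi_{Q'}
\end{equation*}
is false as stated for $|\alpha|<k$. The quantity subtracted is
\begin{equation*}
\sum_{\beta\leq\alpha}\binom{\alpha}{\beta}\partial^\beta P_{Q^*}(u)\,
\partial^{\alpha-\beta}\Bigl(\sum_{Q'}\varphi_{Q'}\Bigr)=\partial^\alpha P_{Q^*}(u),
\end{equation*}
since only the term $\beta=\alpha$ survives after differentiating $\sum_{Q'}\varphi_{Q'}\equiv 1$. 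Thus the correct identity reads
\begin{equation*}
\partial^\alpha(\Lambda_k u)=\partial^\alpha P_{Q^*}(u)
+\sum_{Q'} \sum_{\beta \leq \alpha} \binom{\alpha}{\beta} \bigl(\partial^\beta P_{(Q')^*}(u) - \partial^\beta P_{Q^*}(u)\bigr)\partial^{\alpha-\beta}\varphi_{Q'}.
\end{equation*}
The extra term vanishes when $|\alpha|=k$ (since $P_{Q^*}(u)$ has degree $\leq k-1$), so your top-order estimate stands, but for $|\alpha|<k$ you must separately bound $\partial^\alpha P_{Q^*}(u)$ via the moment bound $|\partial^\alpha P_{Q^*}(u)|\lesssim\ell(Q^*)^{-|\alpha|-n/p}\|u\|_{L^p(Q^*)}$ and then sum using bounded overlap; you do allude to handling the orders ``separately'' in prose, so the intent is there, but the displayed formula should be corrected. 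You should also mention that $\sum_{Q'}\varphi_{Q'}\equiv 1$ only on $\bigcup_{Q\in{\mathcal{W}}_s}Q$, so the subtraction device applies on that set; on the transition zone (where cubes in $(\Omega^c)^\circ$ have $\ell(Q)>\varepsilon\delta/(16n)$) the partition of unity degenerates, but these cubes lie a definite distance from $\partial\Omega$ and are handled by crude local estimates, with ${\rm rad}(\Omega)$ entering to control the resulting lower-order contributions.
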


We continue by describing an alteration of Jones' extension operator, following 
the work of M.\,Christ in \cite{Christ}. In preparation, we first record the 
statement of \cite[Proposition~2.5, p.\,67]{Christ} (cf. also \cite{BIN}). 

\begin{lemma}\label{TYD-ytfc}
Fix a number $\Upsilon\in{\mathbb{N}}$ and let $Q_o:=\big(-\frac12,\frac12\big)^n$ 
be the standard unit cube centered at the origin in ${\mathbb{R}}^n$. Then there 
exists a linear projection operator assigning 
\begin{eqnarray}\label{iagarEDV}
L^1\big(Q_o,{\mathscr{L}}^n\big)\ni u\longmapsto\widehat{P}_o(u)\in\big
\{P\big|_{Q_o}:\,P\,\,\mbox{ polynomial of degree $\leq\Upsilon$ in }\,\,
{\mathbb{R}}^n\big\}
\end{eqnarray}
with the property that, given any $M\in{\mathbb{N}}$ such that $M\leq\Upsilon$, 
then for any $p\in[1,\infty]$ and any $r\in(0,1]$,
\begin{eqnarray}\label{iagarEDV.2}
\sum_{|\alpha|\leq M-1}\big\|\partial^\alpha\big(u-\widehat{P}_o(u)\big)
\big\|_{L^p(rQ_o,\,{\mathscr{L}}^n)}\leq C(\Upsilon,r)\sum_{|\beta|=M}
\|\partial^\beta u\|_{L^p(rQ_o,\,{\mathscr{L}}^n)},
\end{eqnarray}
for every $u\in W^{M,p}(Q_o)$, and such that
\begin{eqnarray}\label{iagarEDV.3}
\big\|\partial^\alpha\widehat{P}_o(u)\big\|_{L^p(rQ_o,\,{\mathscr{L}}^n)}
\leq C(\Upsilon,r)\sum_{|\beta|=|\alpha|}
\|\partial^\beta u\|_{L^p(rQ_o,{\mathscr{L}}^n)},\quad\forall\,\alpha
\in{\mathbb{N}}_0^n\,\,\mbox{ with }\,\,|\alpha|\leq\Upsilon-1,
\end{eqnarray}
for every $u\in W^{\Upsilon-1,p}(Q_o)$.
\end{lemma}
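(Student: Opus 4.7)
The plan is to construct $\widehat{P}_o$ explicitly as a linear projection of $L^1(Q_o,{\mathscr{L}}^n)$ onto the finite--dimensional space of (restrictions to $Q_o$ of) polynomials of degree at most $\Upsilon$, and then to derive both estimates from the projection property combined with a Bramble--Hilbert/Deny--Lions type approximation lemma and classical inverse inequalities on polynomial spaces. Specifically, by Gram--Schmidt applied to the monomial basis $\{x^\alpha\}_{|\alpha|\leq\Upsilon}$ inside $L^2(Q_o)$, I would produce a biorthogonal system $\{\psi_\beta\}_{|\beta|\leq\Upsilon}\subset{\mathscr{C}}^\infty_c(Q_o)$ with $\int_{Q_o}\psi_\beta(y)\,y^\alpha\,d{\mathscr{L}}^n(y)=\delta_{\alpha,\beta}$, and then set
\begin{equation*}
\widehat{P}_o(u)(x):=\sum_{|\beta|\leq\Upsilon}\Bigl(\,\int_{Q_o}u\,\psi_\beta\,d{\mathscr{L}}^n\Bigr)\,x^\beta.
\end{equation*}
Biorthogonality immediately forces $\widehat{P}_o(x^\alpha)=x^\alpha$ for every $|\alpha|\leq\Upsilon$, so $\widehat{P}_o$ is a linear projection onto the target space from \eqref{iagarEDV}, as well as bounded from $L^1(Q_o)$ into every $L^p(Q_o)$ since $\widehat{P}_o(u)$ is a finite linear combination of monomials with coefficients that are linear functionals of $u$ of norm controlled by $\|\psi_\beta\|_{L^\infty}$.

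For estimate \eqref{iagarEDV.2}, I would exploit the projection property in the form of the identity $u-\widehat{P}_o(u)=(I-\widehat{P}_o)(u-P)$, valid for any polynomial $P$ of degree at most $M-1\leq\Upsilon$ (since such a $P$ is fixed by $\widehat{P}_o$). By the Deny--Lions polynomial approximation lemma applied on $rQ_o$, one may choose $P\in{\mathcal{P}}_{M-1}$ so that $\sum_{|\alpha|\leq M-1}\|\partial^\alpha(u-P)\|_{L^p(rQ_o)}\leq C(n,M,r)\sum_{|\beta|=M}\|\partial^\beta u\|_{L^p(rQ_o)}$, which handles the $(u-P)$ portion. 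To deal with $\partial^\alpha\widehat{P}_o(u-P)$ on $rQ_o$, I would combine the boundedness of $\widehat{P}_o$ established above with the standard inverse (Bernstein--Markov) inequality on the finite--dimensional polynomial space of degree $\leq\Upsilon$, which says that $L^p$--norms (of derivatives up to order $\Upsilon$) over $rQ_o$ and over $Q_o$ are equivalent up to a constant $C(\Upsilon,r)$. Summing yields \eqref{iagarEDV.2}.

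For estimate \eqref{iagarEDV.3}, the key twist is that $\partial^\alpha$ annihilates every polynomial $Q$ of degree $\leq|\alpha|-1$, and such a $Q$ is fixed by $\widehat{P}_o$ because $|\alpha|-1\leq\Upsilon-1\leq\Upsilon$. Consequently $\partial^\alpha\widehat{P}_o(u)=\partial^\alpha\widehat{P}_o(u-Q)$ for every such $Q$. Picking $Q$ to be the Deny--Lions polynomial of degree $|\alpha|-1$ associated with $u$ on $rQ_o$ converts the problem into bounding $\|\partial^\alpha\widehat{P}_o(u-Q)\|_{L^p(rQ_o)}$ by $\|u-Q\|_{L^p(rQ_o)}$---accomplished by the same combination of $L^p$--boundedness of $\widehat{P}_o$ with the finite--dimensional inverse inequality---after which Deny--Lions delivers the right--hand side $\sum_{|\beta|=|\alpha|}\|\partial^\beta u\|_{L^p(rQ_o)}$.

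The main obstacle I anticipate is the reconciliation between the fact that $\widehat{P}_o$, as a single operator, reads data from all of $Q_o$, and the two estimates, whose right--hand sides involve integration only over the possibly smaller cube $rQ_o$. This is absorbed into the constant $C(\Upsilon,r)$ by arranging the biorthogonal functions $\{\psi_\beta\}$ to have support in a fixed sub--cube of $Q_o$ (contained in $rQ_o$ for the relevant range of $r$), together with the polynomial inverse inequalities mentioned above; the remaining bookkeeping is routine verification that the $C(\Upsilon,r)$ produced in each step depends only on the stated parameters, not on $u$ or $M$.
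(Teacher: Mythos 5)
The paper supplies no proof of Lemma \ref{TYD-ytfc}: it simply records the statement of \cite[Proposition~2.5]{Christ} without argument. Your reconstruction is therefore independent of the paper, and its skeleton---build $\widehat{P}_o$ by pairing the monomials $\{x^\beta\}_{|\beta|\le\Upsilon}$ against a biorthogonal family $\{\psi_\beta\}\subset{\mathscr{C}}^\infty_c(Q_o)$, observe that $\widehat{P}_o$ then fixes every polynomial of degree $\le\Upsilon$, and run a Deny--Lions/Bramble--Hilbert argument together with finite-dimensional norm equivalences for polynomials---is the right one. One small slip in the opening line: Gram--Schmidt applied to the monomials in $L^2(Q_o)$ produces orthogonal \emph{polynomials}, not compactly supported test functions; what you actually want is to fix a linearly independent family in ${\mathscr{C}}^\infty_c(Q_o)$ whose moment matrix against the monomials is nonsingular and then solve a finite linear system for the $\psi_\beta$'s.

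The real issue is precisely the one you flag at the end, and the remedy you sketch there does not resolve it. Anchoring the $\psi_\beta$'s on a fixed inner cube $Q_1\Subset Q_o$ makes $\widehat{P}_o$ read $u$ only from $Q_1$, but this helps only for those $r$ with $Q_1\subseteq rQ_o$, i.e. for $r$ bounded below. No single fixed $\widehat{P}_o$ can satisfy \eqref{iagarEDV.2}--\eqref{iagarEDV.3} for every $r\in(0,1]$: take $p=\infty$, $M=1$, and $u\in W^{\Upsilon-1,\infty}(Q_o)$ with $u\equiv 1$ on $rQ_o$ but arbitrary outside; then the right side of \eqref{iagarEDV.2} vanishes, while the left side forces $\widehat{P}_o(u)\equiv1$ independently of the behavior of $u$ off $rQ_o$. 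By linearity $\widehat{P}_o$ must annihilate every $u\in W^{1,\infty}(Q_o)$ vanishing on $rQ_o$, and letting $r\to0^+$ shows $\widehat{P}_o$ vanishes on an $L^1(Q_o)$-dense subspace, contradicting $\widehat{P}_o(1)=1$. So either the operator should be allowed to depend on $r$ (anchor the $\psi_\beta$'s on $rQ_o$ itself, absorbing the rescaling into $C(\Upsilon,r)$), or the statement should be read with $r$ confined to a fixed compact subinterval of $(0,1]$. Either reading is harmless for the application in Theorem \ref{cjeg.UUV}, where the reflected cubes $Q$ and $Q^*$ have comparable sizes and bounded relative distance, so only a compact range of scales is ever used; but ``the remaining bookkeeping is routine'' glosses over exactly the non-routine point, and your write-up should either commit to $r$-dependent operators or restrict the $r$-range explicitly.
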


\noindent In general, given any cube $Q$ in ${\mathbb{R}}^n$, define the operator 
$\widehat{P}_Q$ by translating and dilating $Q$ as to coincide with the unit 
cube $Q_o$, apply $\widehat{P}_o$ from Lemma~\ref{TYD-ytfc}, and then reverse 
the dilation and translation. 

Theorem~\ref{cjeg.UUV} below is a restatement of 
\cite[Theorem~1.1, p.\,64]{Christ} (complemented with 
\cite[Remark, p.\,66]{Christ}) specialized to the case when the smoothness
index is a positive integer, in which scenario the DeVore-Sharpley spaces
(in terms of which Christ's theorem is originally stated) coincide with 
Sobolev spaces (see \cite[Theorem~6.2, p.\,37]{DS1} in this regard). 
Compared to $\Lambda_k$ from Theorem~\ref{cjeg}, the novel feature of the operator 
$\widehat{\Lambda}$ constructed here is the fact that the latter {\it simultaneously} 
extends functions from $W^{k,p}(\Omega)$ to ${\mathbb{R}}^n$, with preservation of 
class, for all $k$'s up to an a priori upper bound $\Upsilon$. It is therefore
natural to refer to such an extension operator as being {\tt semi-universal}. 
There is a price to pay, however, namely the exclusion of the case when $p=1$. 

\begin{theorem}[M.\,Christ's version of Jones' Extension Theorem]\label{cjeg.UUV}
Let $\Omega$ be an $(\varepsilon,\delta)$-domain in ${\mathbb{R}}^n$ with 
${\rm rad}\,(\Omega)>0$, and fix $\Upsilon\in{\mathbb{N}}$. 
In this context, retain the notation introduced in the statement of 
Theorem~\ref{cjeg} and, to each $u\in L^1_{loc}(\Omega,{\mathscr{L}}^n)$ 
associate the function $\widehat{\Lambda}\,u$ defined ${\mathscr{L}}^n$-a.e. 
in ${\mathbb{R}}^n$ by 
\begin{eqnarray}\label{PJE-a1.UUV}
\widehat{\Lambda}\,u:=\left\{
\begin{array}{ll}
u & \mbox{ in }\,\,\Omega,
\\[8pt]
\displaystyle
\sum\limits_{Q\in{\mathcal{W}}_s
\big((\Omega^c)^\circ\big)}\widehat{P}_{Q^*}(u)\,\varphi_Q
& \mbox{ in }\,\,\big(\Omega^c\big)^{\circ},
\end{array}
\right.
\end{eqnarray}
where $\widehat{P}_{Q^*}$ is the polynomial projection associated with 
the reflected cube $Q^\ast$ (cf. \eqref{PJE-b1}) as in Lemma~\ref{TYD-ytfc}
and the subsequent comment. 

Then for every $p\in(1,\infty]$ and every $k\in{\mathbb{N}}$ 
such that $k<\Upsilon$, the operator $\widehat{\Lambda}$ satisfies
\begin{eqnarray}\label{fgu4J.P.UUV}
&& \widehat{\Lambda}:W^{k,p}(\Omega)\longrightarrow W^{k,p}({\mathbb{R}}^n)
\quad\mbox{ linearly and boundedly},
\\[4pt]
&& \widehat{\Lambda}\,u\big|_{\Omega}=u,\quad
\mbox{${\mathscr{L}}^n$-a.e. on $\Omega$ for every $u\in W^{k,p}(\Omega)$},
\label{Pa-PL2.UUV}
\end{eqnarray}
and with operator norm which may be controlled 
solely in terms of $\varepsilon,\delta,n,p,{\rm rad}\,(\Omega)$ and 
$\Upsilon$ in a similar manner to \eqref{fgu4J.P75E}.
\end{theorem}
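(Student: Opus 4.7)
The plan is to adapt the proof of Jones' original Theorem~\ref{cjeg} almost verbatim, with the only substitution being that the $k$-dependent best-fit polynomial $P_{Q^*}(u)$ is replaced throughout by the universal projection $\widehat{P}_{Q^*}(u)$ of degree $\leq \Upsilon$ furnished by Lemma~\ref{TYD-ytfc}. The role of universality is precisely that one fixed formula \eqref{PJE-a1.UUV} can preserve Sobolev class simultaneously for every smoothness order $k < \Upsilon$. The identity $\widehat{\Lambda}\,u|_{\Omega}=u$ in \eqref{Pa-PL2.UUV} is immediate from the definition, so the entire task is to prove the boundedness asserted in \eqref{fgu4J.P.UUV}; by \eqref{PJE-wcX} and \eqref{YJBKL-uvv.5}, matters reduce to controlling $\|\partial^\alpha \widehat{\Lambda}\,u\|_{L^p((\Omega^c)^\circ)}$ and to verifying that the distributional derivatives do not develop singular components on $\partial\Omega$.

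For the main estimate, I would differentiate the sum defining $\widehat{\Lambda}\,u$ on $(\Omega^c)^\circ$ and, for $|\alpha| \leq k$, exploit the cancellation $\sum_{Q} \partial^\beta \varphi_Q \equiv 0$ for $|\beta|>0$ (a consequence of \eqref{PJE-H77}) to subtract a reference polynomial $\widehat{P}_{Q_0^*}(u)$ from each summand. This reduces the problem to controlling the differences $\widehat{P}_{Q^*}(u) - \widehat{P}_{(Q')^*}(u)$ on cubes $Q, Q'$ whose dilates $\tfrac{17}{16}Q,\tfrac{17}{16}Q'$ intersect. Telescoping along Whitney chains connecting $Q^*$ to $(Q')^*$ in $\Omega$ (whose length is controlled by $\varepsilon,\delta$ via the geometry built into \eqref{PJE-b1}), one estimates each link via \eqref{iagarEDV.3} on the overlap and \eqref{iagarEDV.2} with $M=k$, producing bounds of the form $\ell(Q)^{k-|\alpha|}$ times an $L^p$ norm of $\nabla^k u$ on a uniformly bounded union of Whitney cubes in $\Omega$. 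Combined with the derivative bound $|\partial^\beta \varphi_Q|\leq C\ell(Q)^{-|\beta|}$ from \eqref{PJE-H76} and the finite-overlap property of the family $\{\tfrac{17}{16}Q\}$, this gives pointwise a.e. control yielding the required $L^p$ bound after summation against the Whitney properties \eqref{Patah-8n.A}--\eqref{Patah-8n.D}.

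The main obstacle is quantifying the Whitney-chain telescoping uniformly in $(\varepsilon,\delta,n,p,\mathrm{rad}(\Omega),\Upsilon)$ so that the final operator bound has the character described in \eqref{fgu4J.P75E}, and matching traces across $\partial\Omega$ so that no distributional derivative picks up a surface mass. The latter is handled essentially as in Jones' argument, by showing that $\widehat{\Lambda}\,u$ and its derivatives of order $\leq k-1$ have non-tangential limits from both sides that agree a.e. on $\partial\Omega$; this step uses \eqref{iagarEDV.2} to quantify how well $\widehat{P}_{Q^*}(u)$ approximates $u$ on small Whitney cubes near $\partial\Omega$. Finally, the restriction $p>1$ surfaces at the stage where the telescoped pointwise bound is summed: the aggregation is naturally dominated by a Hardy--Littlewood maximal function applied to $|\nabla^k u|$, and the required vector-valued $L^p$ inequality is false at $p=1$. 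For $p=\infty$, in contrast, the corresponding estimates follow directly from \eqref{iagarEDV.2}--\eqref{iagarEDV.3} without invoking maximal functions, so that endpoint is retained.
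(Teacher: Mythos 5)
The paper does not prove Theorem~\ref{cjeg.UUV} at all: it is presented as a restatement of \cite[Theorem~1.1, p.\,64]{Christ} (together with \cite[Remark, p.\,66]{Christ}), specialized to integer smoothness via the identification of DeVore--Sharpley spaces with Sobolev spaces from \cite[Theorem~6.2, p.\,37]{DS1}. Your proposal supplies an argument the paper only references, and its overall architecture is faithful to what Christ actually does: replace the degree-$(k-1)$ best-fit polynomial $P_{Q^*}(u)$ everywhere in Jones's construction by the degree-$(\Upsilon-1)$ projection $\widehat{P}_{Q^*}(u)$, exploit the cancellation $\sum_Q\partial^\beta\varphi_Q\equiv 0$ for $|\beta|>0$ on the union of small Whitney cubes to subtract a reference polynomial, and telescope along Whitney chains in $\Omega$ using \eqref{iagarEDV.2} and \eqref{iagarEDV.3} in place of Jones's Poincar\'e-type estimates. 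The trace-matching across $\partial\Omega$ is also handled as you describe, and the control of the operator norm in terms of $\varepsilon,\delta,n,p,{\rm rad}\,(\Omega),\Upsilon$ falls out of the uniform geometric bounds on Whitney chains, exactly as in \eqref{fgu4J.P75E}.

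One point is misleading, though, and worth flagging. You attribute the exclusion of $p=1$ to the aggregation step being dominated by the Hardy--Littlewood maximal function applied to $|\nabla^k u|$, a mechanism that would fail at $p=1$. But Jones's own aggregation, via chains of Whitney cubes with the same reflected-cube bookkeeping, succeeds at $p=1$ (Theorem~\ref{cjeg} allows $p\in[1,\infty]$), and the relevant single-cube estimates \eqref{iagarEDV.2}--\eqref{iagarEDV.3} are stated for all $p\in[1,\infty]$. The true source of the restriction $p>1$ in Theorem~\ref{cjeg.UUV} is upstream of the extension estimate: Christ's theorem is proved on the DeVore--Sharpley scale ${\mathfrak{N}}^p_\alpha(\Omega)$, which is defined via local sharp-maximal operators, and the identification ${\mathfrak{N}}^p_k(\Omega)=W^{k,p}(\Omega)$ for integer $k$ (which is how the paper translates Christ's statement into \eqref{fgu4J.P.UUV}) holds only for $1<p\leq\infty$ by \cite[Theorem~6.2, p.\,37]{DS1}. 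In other words, the $p=1$ loss is in the equivalence of function spaces, not in the telescoping of the extension operator. If you were to attempt a direct proof in the Sobolev framework as you sketch, you would either have to rediscover this obstruction somewhere in the $L^1$ aggregation with the higher-degree projection (it is not as obviously a maximal-function issue as your proposal suggests), or route through the DeVore--Sharpley machinery as Christ does and inherit the $p>1$ restriction from the space identification.
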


Our first result in this paper brings to the forefront a salient feature of 
Jones' extension operator $\Lambda_k$ from Theorem~\ref{cjeg}, namely the 
property that for any function $u\in W^{k,p}(\Omega)$ the support of 
its extension $\Lambda_k u\in W^{k,p}({\mathbb{R}}^n)$ does not touch 
$\partial\Omega$ outside the region where the support of $u$ itself makes contact 
with $\partial\Omega$. As we shall see momentarily, the same type of property is 
enjoyed by Christ's version of Jones' operator. In order to make this precise, 
we need to introduce some notation. 

Generally speaking, given an open set ${\mathcal{O}}\subseteq{\mathbb{R}}^n$ and 
an ${\mathscr{L}}^n$-measurable function $v$ on ${\mathcal{O}}$, define 
\begin{eqnarray}\label{PJE-1}
{\rm supp}\,v:=\big\{x\in\overline{\mathcal{O}}:
\,\mbox{there is no $r>0$ such that }\,\,v\equiv 0\,\,
\mbox{ ${\mathscr{L}}^n$-a.e. in }\,B(x,r)\cap{\mathcal{O}}\big\}.
\end{eqnarray}
Note that while the function $v$ is known to be defined only in ${\mathcal{O}}$, 
the set ${\rm supp}\,v$ (itself a closed subset of ${\mathbb{R}}^n$) 
is contained in $\overline{\mathcal{O}}$. 
It is also clear from the above definition that if 
${\mathcal{O}}\subseteq{\mathbb{R}}^n$ is an open set and $v$ is 
an ${\mathscr{L}}^n$-measurable function defined on ${\mathcal{O}}$, then 
\begin{eqnarray}\label{PJE-1PP}
{\rm supp}\,\big(v\big|_{U}\big)\subseteq\overline{U}\cap{\rm supp}\,v,\qquad
\mbox{for any open subset $U$ of ${\mathcal{O}}$}.
\end{eqnarray}
Moreover, since every open cover of ${\mathcal{O}}\setminus{\rm supp}\,v$ 
has a countable subcover (given that the open set in question is $\sigma$-compact),
it follows that 
\begin{eqnarray}\label{PJE-UnEE}
\mbox{$v$ vanishes ${\mathscr{L}}^n$-a.e. on ${\mathcal{O}}\setminus{\rm supp}\,v$}.
\end{eqnarray}

Here is the precise formulation of the result announced earlier.

\begin{theorem}[Preservation of support in the closure of domain]\label{cj-TRf}
Let $\Omega$ be an $(\varepsilon,\delta)$-domain in ${\mathbb{R}}^n$
and fix some $k\in{\mathbb{N}}$. Then the Jones' extension operator $\Lambda_k$
from Theorem~\ref{cjeg} has the property that, given any $p\in[1,\infty]$, one has
\begin{eqnarray}\label{fgu4J-D}
\overline{\Omega}\cap\,{\rm supp}\,(\Lambda_k u)
={\rm supp}\,u,\qquad\forall\,u\in W^{k,p}(\Omega).
\end{eqnarray}
Moreover, there exists a finite number $R=R(n,\varepsilon,\delta)>0$ with the
property that 
\begin{eqnarray}\label{fgu4J-Dyab.4RR}
{\rm supp}\,\big(\Lambda_k u\big)\subseteq
\big\{x\in{\mathbb{R}}^n:\,{\rm dist}\,(x,{\rm supp}\,u)\leq R\big\}
\quad\mbox{for every }\,\,u\in W^{k,p}(\Omega).
\end{eqnarray}
In particular, for every $u\in W^{k,p}(\Omega)$, one has
\begin{eqnarray}\label{fgu4J-DUBB}
{\rm supp}\,u\,\,\mbox{ compact}\,\Longrightarrow\,
{\rm supp}\,(\Lambda_k u)\,\,\mbox{ compact}.
\end{eqnarray}

Finally, for any given $\Upsilon\in{\mathbb{N}}$, the operator $\widehat{\Lambda}$ 
from Theorem~\ref{cjeg.UUV} satisfies similar properties, provided 
$k<\Upsilon$ and $1<p\leq\infty$. 
\end{theorem}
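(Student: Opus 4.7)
The plan is to read every assertion directly off the explicit formula \eqref{PJE-a1}, combining the Whitney geometry \eqref{Patah-8n.A}--\eqref{Patah-8n.D} with the reflection estimates \eqref{PJE-b1}. Since $\partial\Omega$ is Lebesgue-null by \eqref{PJE-wcX}, pointwise arguments up to an ${\mathscr{L}}^n$-null set are unaffected by the boundary, and I may treat ${\mathbb{R}}^n$ as the disjoint union of $\Omega$ and $(\Omega^c)^\circ$ modulo a nullset.

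The inclusion ${\rm supp}\,u\subseteq\overline{\Omega}\cap{\rm supp}(\Lambda_k u)$ is immediate from \eqref{Pa-PL2} and \eqref{PJE-1}: the containment ${\rm supp}\,u\subseteq\overline{\Omega}$ is automatic, and if $u$ fails to vanish a.e. on any $B(x,r)\cap\Omega$ then so does $\Lambda_k u$. For the reverse, fix $x\in\overline{\Omega}\setminus{\rm supp}\,u$ and $r_0>0$ with $u\equiv 0$ a.e. on $B(x,r_0)\cap\Omega$. If $x\in\Omega$, simply shrink $r_0$ so that $B(x,r_0)\subseteq\Omega$, where $\Lambda_k u=u\equiv 0$. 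The only nontrivial case is $x\in\partial\Omega$, and there I will show that for any sufficiently small $r\in(0,r_0)$ every cube $Q\in{\mathcal{W}}_s((\Omega^c)^\circ)$ whose enlargement $\tfrac{17}{16}Q$ meets $B(x,r)$ has its reflected partner $Q^*$ entirely contained in $B(x,r_0)\cap\Omega$. Granting this, $u\equiv 0$ a.e. on $Q^*$ forces $P_{Q^*}(u)=0$ by linearity and uniqueness of the moment system \eqref{PJE-7jmB}; hence the sum in \eqref{PJE-a1} vanishes a.e. on $B(x,r)\cap(\Omega^c)^\circ$, whence $x\notin{\rm supp}(\Lambda_k u)$.

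The heart of the argument, and its only delicate point, is the size estimate for these Whitney cubes. If $y\in\tfrac{17}{16}Q\cap B(x,r)$ then the center $x_Q$ of $Q$ satisfies $|x_Q-x|\leq\tfrac{17}{32}\sqrt{n}\,\ell(Q)+r$, while \eqref{Patah-8n.B} combined with $x\in\partial\Omega$ yields $\sqrt{n}\,\ell(Q)\leq{\rm dist}(Q,\partial\Omega)\leq|x_Q-x|$. Eliminating $|x_Q-x|$ gives $\ell(Q)\leq C_n r$, and feeding this back into \eqref{PJE-b1} controls both ${\rm diam}(Q^*)$ and ${\rm dist}(x,Q^*)$ by $C(n,\varepsilon)\,r$. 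Therefore $r<r_0/C(n,\varepsilon)$ secures $Q^*\subseteq B(x,r_0)\cap\Omega$, as required.

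The very same Whitney/reflection bookkeeping delivers \eqref{fgu4J-Dyab.4RR}: if $x\in{\rm supp}(\Lambda_k u)\setminus\overline{\Omega}$ then $x$ lies in the closure of some $\tfrac{17}{16}Q$ with $P_{Q^*}(u)\not\equiv 0$, so $Q^*\cap{\rm supp}\,u\neq\emptyset$; the a priori size cap $\ell(Q)\leq\varepsilon\delta/(16n)$ built into \eqref{PJE-sd1} together with \eqref{PJE-b1} then bound $|x-Q^*|+{\rm diam}(Q^*)$ by a finite constant $R=R(n,\varepsilon,\delta)$, yielding ${\rm dist}(x,{\rm supp}\,u)\leq R$. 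Assertion \eqref{fgu4J-DUBB} is immediate, as the closed $R$-neighborhood of a compact set is compact. Finally, the same proof runs verbatim for Christ's operator $\widehat{\Lambda}$, since the only algebraic property of $P_{Q^*}$ that was used is ``$u\equiv 0$ a.e. on $Q^*$ implies $P_{Q^*}(u)\equiv 0$'', which for the projection $\widehat{P}_{Q^*}$ is a direct consequence of its linearity as recorded in Lemma~\ref{TYD-ytfc}.
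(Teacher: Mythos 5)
Your proof is correct and takes essentially the same route as the paper's, resting on the same Whitney/reflection estimates from \eqref{Patah-8n.B} and \eqref{PJE-b1}: near a boundary point where $u$ vanishes a.e., the contributing Whitney cubes of $(\Omega^c)^\circ$ are small and their reflected partners $Q^*$ land inside the vanishing ball, forcing the best-fit polynomials $P_{Q^*}(u)$ to vanish. The only (cosmetic) difference is organizational: the paper first introduces the auxiliary set $F_u=\bigcup\{\tfrac{17}{16}Q:\,Q^*\cap{\rm supp}\,u\neq\emptyset\}$, proves ${\rm supp}(\Lambda_k u)\subseteq{\rm supp}\,u\cup\overline{F_u}$ and then $\overline{\Omega}\cap\overline{F_u}\subseteq{\rm supp}\,u$ via a contradiction with a sequence of shrinking cubes, whereas you argue pointwise and directly; both versions also share the same minor imprecision (present in the paper too) of citing \eqref{Patah-8n.B} for ${\rm dist}(Q,\partial\Omega)$ rather than ${\rm dist}(Q,\partial((\Omega^c)^\circ))$, which is harmless since $Q$ is disjoint from $\overline\Omega$ so ${\rm dist}(Q,\partial\Omega)\geq{\rm dist}(Q,\overline\Omega)={\rm dist}(Q,\partial\overline\Omega)={\rm dist}(Q,\partial((\Omega^c)^\circ))$.
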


\begin{proof}
Fix $k\in{\mathbb{N}}$, $p\in[1,\infty]$, and select an arbitrary $u\in W^{k,p}(\Omega)$.
Then combining \eqref{Pa-PL2} with \eqref{PJE-1PP} (used here with 
$v:=\Lambda_k u$, ${\mathcal{O}}:={\mathbb{R}}^n$, and $U:=\Omega$) yields 
\begin{eqnarray}\label{PJE-1PQ}
{\rm supp}\,u={\rm supp}\,\big(\Lambda_k u\big|_{\Omega}\big)
\subseteq\overline{\Omega}\cap{\rm supp}\,\Lambda_k u,
\end{eqnarray}
proving the right-to-left inclusion in \eqref{fgu4J-D}. Proceeding in the opposite 
direction, we first claim that 
\begin{eqnarray}\label{PJE-23av}
{\rm supp}\,\Lambda_k u\subseteq{\rm supp}\,u\cup\overline{F_u},
\end{eqnarray}
where 
\begin{eqnarray}\label{PJE-5taGB}
F_u:=\bigcup_{\stackrel{Q\in{\mathcal{W}}_s
\big((\Omega^c)^\circ\big),}{Q^*\cap\,{\rm supp}\,u\not=\emptyset}}\tfrac{17}{16}Q.
\end{eqnarray}
To justify this claim, assume that 
$x_o\in{\mathbb{R}}^n\setminus({\rm supp}\,u\cup\overline{F_u})$. Then there exists 
$r>0$ with the property that 
\begin{eqnarray}\label{PJA-j}
\mbox{$u$ vanishes ${\mathscr{L}}^n$-a.e. in $B(x_o,r)\cap\Omega$},
\end{eqnarray}
and such that $B(x_o,r)\cap F_u=\emptyset$. The latter condition further entails
(in concert with \eqref{PJE-H76}) that, on the one hand,  
\begin{eqnarray}\label{PJA-i}
\sum\limits_{\stackrel{Q\in{\mathcal{W}}_s
\big((\Omega^c)^\circ\big),}{Q^*\cap\,{\rm supp}\,u\not=\emptyset}}
P_{Q^*}(u)\,\varphi_Q\equiv 0\quad\mbox{ in }\quad 
B(x_o,r)\cap\big(\Omega^c\big)^{\circ}.
\end{eqnarray}
On the other hand, we claim that we also have
\begin{eqnarray}\label{PJA-ii}
\sum\limits_{\stackrel{Q\in{\mathcal{W}}_s
\big((\Omega^c)^\circ\big),}{Q^*\cap\,{\rm supp}\,u=\emptyset}}
P_{Q^*}(u)\,\varphi_Q\equiv 0\quad\mbox{ in }\quad 
B(x_o,r)\cap\big(\Omega^c\big)^{\circ}.
\end{eqnarray}
Indeed, since for every $Q\in{\mathcal{W}}_s
\big((\Omega^c)^\circ\big)$ the best fit polynomial $P_{Q^\ast}(u)$ has 
degree $k-1$, condition \eqref{PJE-7jmB} entails 
\begin{eqnarray}\label{PJE-Yhan}
P_{Q^\ast}(u)(x)=\sum_{|\alpha|\leq k-1}\frac{x^\alpha}{\alpha!}\,\meanint_{\!Q^\ast}
\partial^\alpha u\,d{\mathscr{L}}^n,\qquad\forall\,x\in{\mathbb{R}}^n.
\end{eqnarray}
Here and elsewhere, $\dmeanint$ stands for integral average.
In particular, this and \eqref{PJE-UnEE} show that 
\begin{eqnarray}\label{PJE-YhP.6}
P_{Q^\ast}(u)\equiv 0\quad\mbox{ for every }\,\,
Q\in{\mathcal{W}}_s\big((\Omega^c)^\circ\big)
\,\,\,\mbox{ with }\,\,\,Q^*\cap\,{\rm supp}\,u=\emptyset,
\end{eqnarray}
and \eqref{PJA-ii} readily follows from \eqref{PJE-YhP.6}. Together, 
\eqref{PJA-i} and \eqref{PJA-ii} imply that 
\begin{eqnarray}\label{PJA-iii}
\sum\limits_{Q\in{\mathcal{W}}_s
\big((\Omega^c)^\circ\big)}P_{Q^*}(u)\,\varphi_Q\equiv 0
\quad\mbox{ in }\quad B(x_o,r)\cap\big(\Omega^c\big)^{\circ}.
\end{eqnarray}
From \eqref{PJE-a1}, \eqref{PJA-j}, \eqref{PJA-iii}, and \eqref{PJE-wcX}, 
we may then deduce that
\begin{eqnarray}\label{PJA-jj}
\mbox{$\Lambda_k u$ vanishes ${\mathscr{L}}^n$-a.e. in $B(x_o,r)$}.
\end{eqnarray}
Hence, $x_o\notin{\rm supp}\,\Lambda_k u$ which finishes the proof of \eqref{PJE-23av}.

Having established \eqref{PJE-23av}, we next claim that 
\begin{eqnarray}\label{PJE-23av.2}
\overline{\Omega}\cap\overline{F_u}\subseteq{\rm supp}\,u.
\end{eqnarray}
To justify this, reason by contradiction and assume that 
there exists a point $x_o\in\overline{\Omega}\cap\overline{F_u}$ such that 
$x_o\notin{\rm supp}\,u$. In particular, the latter condition entails the existence 
of some $r>0$ for which 
\begin{eqnarray}\label{PJA-k}
\mbox{$u$ vanishes ${\mathscr{L}}^n$-a.e. in $B(x_o,r)\cap\Omega$}.
\end{eqnarray}
Let us take a closer look at the fact that 
$x_o\in\overline{\Omega}\cap\overline{F_u}$. For starters, the fact that
$F_u\subseteq(\Omega^c)^\circ$ (as seen from \eqref{Patah-8n.E} and \eqref{PJE-5taGB})
forces
\begin{eqnarray}\label{PJE-ASV}
\overline{\Omega}\cap\overline{F_u}
\subseteq\overline{\Omega}\cap\overline{(\Omega^c)^\circ}
=\overline{\Omega}\setminus(\,\overline{\Omega}\,)^\circ\subseteq
\overline{\Omega}\setminus\Omega=\partial\Omega,
\end{eqnarray}
whereupon
\begin{eqnarray}\label{PJE-ASV.2}
x_o\in\partial\Omega.
\end{eqnarray}
Next, the membership of $x_o$ to the closure of the set $F_u$ defined in 
\eqref{PJE-5taGB} entails the existence of a sequence of dyadic cubes
$\{Q_j\}_j\subseteq{\mathcal{W}}_s
\big((\Omega^c)^\circ\big)$ and a sequence $\{x_j\}_j$ of points in ${\mathbb{R}}^n$ 
satisfying 
\begin{eqnarray}\label{PJE-E.a3}
&& x_j\in\tfrac{17}{16}Q_j\,\,\,\mbox{ for every }\,\,j,
\\[4pt]
&& Q_j^*\cap\,{\rm supp}\,u\not=\emptyset\,\,\,\mbox{ for every }\,\,j,
\label{PJE-E.a3.i}
\\[4pt]
&& \lim_{j}x_j=x_o.
\label{PJE-E.a3.ii}
\end{eqnarray}
Now, \eqref{PJE-E.a3} forces 
\begin{eqnarray}\label{PJE-E.a3LL}
|x_j-x_{Q_j}|\leq\sqrt{n}\tfrac{17}{16}\ell(Q_j)\,\,\,\mbox{ for every }\,\,j,
\end{eqnarray}
while from \eqref{Patah-8n.F} we conclude that there exists $c\in(0,1)$ such that
\begin{eqnarray}\label{Patah-8n.Fi}
c\,\ell(Q_j)\leq{\rm dist}\,\big(\tfrac{17}{16}Q_j,\partial{\Omega}\,\big)
\leq{\rm dist}\,\big(x_j,\partial{\Omega}\,\big)\leq |x_j-x_o|,
\,\,\,\mbox{ for all }\,\,j,
\end{eqnarray}
where the last inequality uses \eqref{PJE-ASV.2}.
From \eqref{PJE-b1} we also deduce that
\begin{eqnarray}\label{PJE-b1AAA}
\ell(Q_j)\leq\ell(Q_j^*)\leq 4\ell(Q_j)\,\,\mbox{ and }\,\,
|x_{Q_j}-x_{Q_j^*}|\leq C_{n,\varepsilon}\,\ell(Q_j),
\,\,\,\mbox{ for all }\,\,j.
\end{eqnarray}
Combining now \eqref{PJE-E.a3.ii}-\eqref{PJE-b1AAA} yields 
\begin{eqnarray}\label{PJbb}
\lim_{j}x_{Q_j^\ast}=x_o\,\,\,\mbox{ and }\,\,\,\lim_{j}\ell(Q_j^\ast)=0.
\end{eqnarray}
In turn, from \eqref{PJbb} we deduce that 
\begin{eqnarray}\label{PJbb.LLs3}
\mbox{there exists $j$ such that $Q_j^\ast\subseteq B(x_o,r)\cap\Omega$}
\end{eqnarray}
which, in light of \eqref{PJA-k}, implies that
\begin{eqnarray}\label{PJu-n1}
\mbox{there exists $j$ such that $u$ vanishes ${\mathscr{L}}^n$-a.e. in $Q_j^\ast$}.
\end{eqnarray}
This, however, contradicts \eqref{PJE-E.a3.i}. The proof of \eqref{PJE-23av.2} is 
therefore complete. 

With \eqref{PJE-23av.2} in hand, and availing ourselves of \eqref{PJE-23av} we may 
write 
\begin{eqnarray}\label{PJE-23av.KL}
\overline{\Omega}\cap{\rm supp}\,\Lambda_k u
\subseteq\big(\,\overline{\Omega}\cap{\rm supp}\,u\big)\cup
\big(\,\overline{\Omega}\cap\overline{F_u}\,\big)
\subseteq {\rm supp}\,u,
\end{eqnarray}
which proves the left-to-right inclusion in \eqref{fgu4J-D}. 

As far as \eqref{fgu4J-DUBB} is concerned, assume some $u\in W^{k,p}(\Omega)$
has been given. Consider an arbitrary point $x_1\in F_u$. Then there exists some 
cube $Q\in{\mathcal{W}}_s\big((\Omega^c)^\circ\big)$
such that $x_1\in\tfrac{17}{16}Q$ and $Q^\ast\cap{\rm supp}\,u\not=\emptyset$. 
Pick some $x_2\in Q^\ast\cap{\rm supp}\,u$ and note that, thanks to 
\eqref{PJE-sd1}-\eqref{PJE-b1}, we may estimate 
\begin{eqnarray}\label{UHi-Jba.6}
&& |x_1-x_Q|\leq 2\sqrt{n}\,\tfrac{17}{16}\ell(Q)\leq C(n,\varepsilon,\delta),
\\[4pt]
&& |x_Q-x_{Q^\ast}|\leq{\rm dist}\,(Q,Q^\ast)
\leq C_{n,\varepsilon}\ell(Q)\leq C(n,\varepsilon,\delta),
\label{UHi-Jba.7}
\\[4pt]
&& |x_{Q^\ast}-x_2|\leq 2\sqrt{n}\,\ell(Q^\ast)
\leq 8\sqrt{n}\,\ell(Q)\leq C(n,\varepsilon,\delta),
\label{UHi-Jba.8}
\end{eqnarray}
for some finite constant $C(n,\varepsilon,\delta)>0$. Collectively, 
\eqref{UHi-Jba.6}-\eqref{UHi-Jba.8} imply that 
\begin{eqnarray}\label{fgu4J-Dyab}
{\rm dist}\,(x_1,{\rm supp}\,u)\leq |x_1-x_2|\leq
|x_1-x_Q|+|x_Q-x_{Q^\ast}|+|x_{Q^\ast}-x_2|\leq R:=3\,C(n,\varepsilon,\delta).
\end{eqnarray}
Since $x_1$ has been arbitrarily chosen in $F_u$, this proves that 
\begin{eqnarray}\label{fgu4J-Dyab.2}
F_u\subseteq\big\{x\in{\mathbb{R}}^n:\,{\rm dist}\,(x,{\rm supp}\,u)\leq R\big\}.
\end{eqnarray}
At this point, \eqref{fgu4J-Dyab.4RR} readily follows from \eqref{fgu4J-Dyab.2} 
and \eqref{PJE-23av}. Finally, if ${\rm supp}\,u$ is compact, this 
shows that ${\rm supp}\,\Lambda_k u$ is a bounded set, hence also compact, 
as desired. 

Finally, to see that similar properties may be established for the operator 
$\widehat{\Lambda}$ from Theorem~\ref{cjeg.UUV} (assuming that 
$k<\Upsilon$ and $1<p\leq\infty$), it suffices to observe that \eqref{PJE-YhP.6} 
holds for $\widehat{P}_{Q^\ast}(u)$ in place of $P_{Q^\ast}(u)$; the rest of the 
proof is virtually identical to the one carried out for $\Lambda_k$.
\end{proof}

\section{Extension of Sobolev functions with partially vanishing traces 
defined in locally $(\varepsilon,\delta)$-domains}
\label{Sect:3}
\setcounter{equation}{0}

The class of domains alluded to in the title of this section is going to be 
introduced a little later, in Definition~\ref{OM-Oah-Y88}. To set the stage, 
we shall make the following definition which plays an important role throughout 
the proceedings. 

\begin{definition}\label{IUha-Tw}
Given a nonempty open set $\Omega$ in ${\mathbb{R}}^n$ and a closed subset 
$D$ of $\overline{\Omega}$, consider 
\begin{eqnarray}\label{uig-Uba}
{\mathscr{C}}^\infty_D(\Omega):=\big\{\widetilde{\psi}\big|_{\Omega}:\,
\psi\in{\mathscr{C}}^\infty_c({\mathbb{R}}^n\setminus D)\big\}
=\big\{\varphi\big|_{\Omega}:\,
\varphi\in{\mathscr{C}}^\infty_c({\mathbb{R}}^n)\,\,\mbox{ with }\,\,
D\cap{\rm supp}\,\varphi=\emptyset\big\}
\end{eqnarray}
where tilde denotes the extension by zero outside the support to ${\mathbb{R}}^n$, 
and for each $k\in{\mathbb{N}}$, $p\in[1,\infty]$, define 
\begin{eqnarray}\label{uig-adjb-2}
W^{k,p}_D(\Omega):=\mbox{the closure of ${\mathscr{C}}^\infty_D(\Omega)$ 
in $\Bigl(W^{k,p}(\Omega)\,,\,\|\cdot\|_{W^{k,p}(\Omega)}\Bigr)$}.
\end{eqnarray}
\end{definition}

\noindent In particular, given a closed subset $D$ of ${\mathbb{R}}^n$, 
for each $k\in{\mathbb{N}}$, $p\in[1,\infty]$, corresponding to the 
case when $\Omega={\mathbb{R}}^n$ in Definition~\ref{IUha-Tw} we have 
\begin{eqnarray}\label{uig-adjb-2AAA}
W^{k,p}_D({\mathbb{R}}^n):=\mbox{the closure of
$\{\varphi\in{\mathscr{C}}^\infty_c({\mathbb{R}}^n):\,D\cap{\rm supp}\,\varphi 
=\emptyset\}$ 
in $\Bigl(W^{k,p}({\mathbb{R}}^n)\,,\,\|\cdot\|_{W^{k,p}({\mathbb{R}}^n)}\Bigr)$}.
\end{eqnarray}
In particular, corresponding to the case when $D=\emptyset$ we have 
$W^{k,p}_\emptyset({\mathbb{R}}^n)=W^{k,p}({\mathbb{R}}^n)$.

A number of other useful elementary properties of the spaces 
considered above are collected in the next lemma (whose routine proof is omitted).

\begin{lemma}\label{TYRD-f4f}
Let $\Omega$ be a nonempty open subset of ${\mathbb{R}}^n$ and consider a closed 
subset $D$ of $\overline{\Omega}$. Also, $k\in{\mathbb{N}}$ and $p\in[1,\infty]$.
Then the following hold:
\begin{enumerate}
\item[(1)] ${\mathscr{C}}^\infty_D(\Omega)$ is a dense linear subspace 
of $W^{k,p}_D(\Omega)$;
\item[(2)] $W^{k,p}_D(\Omega)$ is a closed linear subspace of $W^{k,p}(\Omega)$;
\item[(3)] $\Bigl(W^{k,p}_D(\Omega)\,,\,\|\cdot\|_{W^{k,p}(\Omega)}\Bigr)$ 
is a Banach space, which is separable if $1\leq p<\infty$, and reflexive if $1<p<\infty$;
\item[(4)] the inclusion $\Big\{u\big|_{\Omega}:\,u\in W^{k,p}_D({\mathbb{R}}^n)\Big\}
\hookrightarrow W^{k,p}_D(\Omega)$ is well-defined and continuous, and the restriction
operator $W^{k,p}_D({\mathbb{R}}^n)\ni u\mapsto u\big|_{\Omega}\in W^{k,p}_D(\Omega)$
is well-defined, linear and bounded;
\item[(5)] corresponding to the case when $D=\partial\Omega$, there holds 
$W^{k,p}_{\partial\Omega}(\Omega)=\mathring{W}^{k,p}(\Omega)$;
\item[(6)] if $\partial\Omega\subseteq D$ then the inclusion
$W^{k,p}_D(\Omega)\hookrightarrow\mathring{W}^{k,p}(\Omega)$
is well-defined and isometric;
\item[(7)] ${\mathscr{C}}^\infty_{\partial\Omega}(\Omega)
={\mathscr{C}}^\infty_c(\Omega)$, and if $\partial\Omega\subseteq D$ 
then ${\mathscr{C}}^\infty_D(\Omega)\subseteq{\mathscr{C}}^\infty_c(\Omega)$;
\item[(8)] if $\Sigma$ is a closed subset of $\Omega$ which contains the given set $D$, 
then ${\mathscr{C}}^\infty_{\Sigma}(\Omega)\subseteq{\mathscr{C}}^\infty_D(\Omega)$, 
and the inclusion $W^{k,p}_{\Sigma}(\Omega)\hookrightarrow W^{k,p}_D(\Omega)$
is well-defined, linear and isometric;
\item[(9)] if $v\in W^{k,p}_D(\Omega)$ then 
$v\big|_{{\mathcal{O}}}\in W^{k,p}_{\overline{{\mathcal{O}}}\cap D}({\mathcal{O}})$
for any open subset ${\mathcal{O}}$ of $\Omega$.
\end{enumerate}
\end{lemma}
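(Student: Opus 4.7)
The plan is to dispatch the nine items in an order designed so later ones can invoke earlier ones, with item (7) serving as the linchpin for (5) and (6). Items (1) and (2) are immediate from \eqref{uig-adjb-2}: by definition, ${\mathscr{C}}^\infty_D(\Omega)$ is dense in its own closure $W^{k,p}_D(\Omega)$, and the closure of any linear subspace of a Banach space is itself a closed linear subspace. Item (3) then follows from the standard fact that any closed subspace of a Banach space is Banach, is separable whenever the ambient space is (the case $1\leq p<\infty$), and is reflexive whenever the ambient space is (the case $1<p<\infty$); the relevant properties of the ambient $W^{k,p}(\Omega)$ were recalled right after \eqref{PJE-3}.

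The real heart of the verification is item (7). I would argue that if $\varphi\in{\mathscr{C}}^\infty_c({\mathbb{R}}^n)$ satisfies $\partial\Omega\cap{\rm supp}\,\varphi=\emptyset$, then the compact set $\overline{\Omega}\cap{\rm supp}\,\varphi$ lies in $\overline{\Omega}\setminus\partial\Omega=\Omega$ (using that $\Omega$ is open), whence $\varphi|_\Omega\in{\mathscr{C}}^\infty_c(\Omega)$. Conversely, any $\psi\in{\mathscr{C}}^\infty_c(\Omega)$ extends by zero to $\widetilde{\psi}\in{\mathscr{C}}^\infty_c({\mathbb{R}}^n)$ whose support equals ${\rm supp}\,\psi\subseteq\Omega$, hence is disjoint from $\partial\Omega$. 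This proves ${\mathscr{C}}^\infty_{\partial\Omega}(\Omega)={\mathscr{C}}^\infty_c(\Omega)$, and the same bookkeeping yields ${\mathscr{C}}^\infty_D(\Omega)\subseteq{\mathscr{C}}^\infty_c(\Omega)$ whenever $\partial\Omega\subseteq D$. Item (5) now reduces to matching the closures in \eqref{uig-adjb-2} and \eqref{azTagbM}, and item (6) follows by taking closures in the common norm $\|\cdot\|_{W^{k,p}(\Omega)}$, which automatically makes the inclusion isometric.

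The remaining items (4), (8), and (9) are handled by uniform sequence-chases. For (4), if $u\in W^{k,p}_D({\mathbb{R}}^n)$ is approximated by $\varphi_j\in{\mathscr{C}}^\infty_c({\mathbb{R}}^n)$ with $D\cap{\rm supp}\,\varphi_j=\emptyset$, then continuity of the restriction operator \eqref{PJE-3UYGav} forces $\varphi_j|_\Omega\to u|_\Omega$ in $W^{k,p}(\Omega)$ and each $\varphi_j|_\Omega\in{\mathscr{C}}^\infty_D(\Omega)$ by \eqref{uig-Uba}, so $u|_\Omega\in W^{k,p}_D(\Omega)$ with the bound inherited from restriction. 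For (8), the containment $D\subseteq\Sigma$ gives ${\rm supp}\,\varphi\cap\Sigma=\emptyset\Rightarrow{\rm supp}\,\varphi\cap D=\emptyset$, so ${\mathscr{C}}^\infty_\Sigma(\Omega)\subseteq{\mathscr{C}}^\infty_D(\Omega)$, and taking closures in the shared norm delivers the isometric inclusion. For (9), approximating $v$ by $\varphi_j|_\Omega\in{\mathscr{C}}^\infty_D(\Omega)$, the further restrictions $\varphi_j|_{\mathcal{O}}$ lie in ${\mathscr{C}}^\infty_{\overline{{\mathcal{O}}}\cap D}({\mathcal{O}})$ since $(\overline{{\mathcal{O}}}\cap D)\cap{\rm supp}\,\varphi_j\subseteq D\cap{\rm supp}\,\varphi_j=\emptyset$, and they converge to $v|_{\mathcal{O}}$ in $W^{k,p}({\mathcal{O}})$ by \eqref{PJE-3UYGav} once more.

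No real obstacle is anticipated; every claim reduces to the definitions \eqref{uig-Uba}--\eqref{uig-adjb-2} together with elementary facts about supports, restrictions, and closures in Banach spaces. The only point where a genuine argument (as opposed to pure definition-chasing) is required is the upgrade in (7) from ``${\rm supp}\,\varphi$ avoids $\partial\Omega$'' to ``$\varphi|_\Omega$ has compact support in $\Omega$,'' and even this is a one-line topological observation that leverages the openness of $\Omega$. This justifies the author's decision to omit the proof as routine.
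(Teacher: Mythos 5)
Your proof is correct, and since the paper explicitly omits the proof of this lemma as routine, your argument is exactly the natural sequence of definition-chases the authors had in mind: items (1)--(3) from the definition \eqref{uig-adjb-2} and closed-subspace heredity, item (7) via the topological identity $\overline{\Omega}\setminus\partial\Omega=\Omega$, items (5)--(6) by reconciling \eqref{uig-adjb-2} with \eqref{azTagbM}, and items (4), (8), (9) by passing approximating sequences through the continuous restriction map \eqref{PJE-3UYGav}. No gaps.
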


The special case of the scale \eqref{uig-adjb-2} with $D=\emptyset$ is considered 
below, in the context of $(\varepsilon,\delta)$-domains and for $p<\infty$.

\begin{lemma}\label{LLDEnse}
Let $\Omega$ be an $(\varepsilon,\delta)$-domain in ${\mathbb{R}}^n$
with ${\rm rad}\,(\Omega)>0$,
and fix some $k\in{\mathbb{N}}$ along with some $p\in[1,\infty)$. Then, 
corresponding to the case when $D=\emptyset$, one has 
\begin{eqnarray}\label{PaE-1}
W^{k,p}_{\emptyset}(\Omega)=W^{k,p}(\Omega).
\end{eqnarray}
\end{lemma}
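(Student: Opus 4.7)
The plan is to show the nontrivial inclusion $W^{k,p}(\Omega)\subseteq W^{k,p}_{\emptyset}(\Omega)$, since the reverse inclusion is automatic from Definition~\ref{IUha-Tw} (the closure in \eqref{uig-adjb-2} takes place inside $W^{k,p}(\Omega)$, which is closed in itself). Observe that when $D=\emptyset$ the support condition $D\cap\mathrm{supp}\,\varphi=\emptyset$ is vacuous, so
\[
{\mathscr{C}}^\infty_{\emptyset}(\Omega)=\bigl\{\varphi\big|_{\Omega}:\,\varphi\in{\mathscr{C}}^\infty_c({\mathbb{R}}^n)\bigr\}.
\]
Thus it suffices to prove that this set is dense in $W^{k,p}(\Omega)$ for $p\in[1,\infty)$.

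The approach is a two-step extend-then-mollify argument that leverages Jones' extension operator. First, given an arbitrary $u\in W^{k,p}(\Omega)$, apply Theorem~\ref{cjeg} to form $U:=\Lambda_k u\in W^{k,p}({\mathbb{R}}^n)$; this uses precisely the hypotheses that $\Omega$ is an $(\varepsilon,\delta)$-domain and that $\mathrm{rad}(\Omega)>0$. By \eqref{Pa-PL2}, $U\big|_{\Omega}=u$ ${\mathscr{L}}^n$-a.e. in $\Omega$. Second, invoke the classical fact that ${\mathscr{C}}^\infty_c({\mathbb{R}}^n)$ is dense in $W^{k,p}({\mathbb{R}}^n)$ whenever $p\in[1,\infty)$ (obtained via mollification combined with a radial cutoff; this is where the exclusion of $p=\infty$ enters). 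This yields a sequence $\{\varphi_j\}_{j\in{\mathbb{N}}}\subset{\mathscr{C}}^\infty_c({\mathbb{R}}^n)$ with $\varphi_j\to U$ in $W^{k,p}({\mathbb{R}}^n)$.

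Finally, restricting to $\Omega$, the boundedness of the restriction operator \eqref{PJE-3UYGav} gives
\[
\bigl\|\varphi_j\big|_{\Omega}-u\bigr\|_{W^{k,p}(\Omega)}
=\bigl\|(\varphi_j-U)\big|_{\Omega}\bigr\|_{W^{k,p}(\Omega)}
\leq\|\varphi_j-U\|_{W^{k,p}({\mathbb{R}}^n)}\xrightarrow[j\to\infty]{}0.
\]
Since each $\varphi_j\big|_{\Omega}$ belongs to ${\mathscr{C}}^\infty_{\emptyset}(\Omega)$, this approximation exhibits $u$ as a limit of functions from ${\mathscr{C}}^\infty_{\emptyset}(\Omega)$ in $W^{k,p}(\Omega)$; hence $u\in W^{k,p}_{\emptyset}(\Omega)$, proving \eqref{PaE-1}.

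There is no real obstacle here: the entire content of the lemma is packed into the existence of Jones' extension (supplied by Theorem~\ref{cjeg}) plus the standard density of test functions in $W^{k,p}({\mathbb{R}}^n)$ for $p<\infty$. The role of the $(\varepsilon,\delta)$-hypothesis is solely to guarantee applicability of $\Lambda_k$, and the role of $p<\infty$ is to ensure the mollifier-based density on ${\mathbb{R}}^n$ is available—the conclusion would genuinely fail at $p=\infty$, where ${\mathscr{C}}^\infty_c({\mathbb{R}}^n)$ is not dense in $W^{k,\infty}({\mathbb{R}}^n)$.
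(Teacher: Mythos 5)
Your proof is correct, and it follows a genuinely different route from the paper's. The paper does not invoke Jones' extension operator $\Lambda_k$ at all in this proof; instead it cites a separate approximation theorem from \cite[\S\,4, pp.\,83--85]{Jon81}, which asserts that any $u\in W^{k,p}(\Omega)$ (for $\Omega$ an $(\varepsilon,\delta)$-domain, $p<\infty$) can be approximated in $W^{k,p}(\Omega)$ by restrictions $\psi|_\Omega$ of functions $\psi\in{\mathscr{C}}^\infty({\mathbb{R}}^n)\cap W^{k,\infty}({\mathbb{R}}^n)$; then a dilated cutoff $\eta_j=\eta(\cdot/j)$ is multiplied on to produce compactly supported approximants. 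Your argument instead runs entirely through Theorem~\ref{cjeg}: extend $u$ to $U=\Lambda_k u\in W^{k,p}({\mathbb{R}}^n)$, approximate $U$ in $W^{k,p}({\mathbb{R}}^n)$ by functions in ${\mathscr{C}}^\infty_c({\mathbb{R}}^n)$ using the classical density result for $p<\infty$, and restrict. Both proofs are valid; yours is slightly more economical within the framework of this paper because it reuses the already-recorded Theorem~\ref{cjeg} and a textbook density fact, rather than importing an additional approximation result from \cite{Jon81}, and it also localizes the role of $p<\infty$ exactly where the paper's proof does (density of test functions / a limiting argument that fails at $p=\infty$). The paper's route has the mild advantage of not requiring the full strength of Jones' extension theorem, relying only on the ``$H=W$''-type approximation result, which is a logically weaker input.
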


\begin{proof}
The approximation result proved in \cite[\S\,4, pp.\,83-85]{Jon81}
shows that any given $u\in W^{k,p}(\Omega)$ may be approximated arbitrarily well 
by functions of the form $\psi\big|_{\Omega}$ where 
$\psi\in{\mathscr{C}}^\infty({\mathbb{R}}^n)\cap W^{k,\infty}({\mathbb{R}}^n)$
with the property that $\psi\big|_{\Omega}\in W^{k,p}(\Omega)$. Fix now such a 
function $\psi$ and consider $\eta\in{\mathscr{C}}^\infty_c\big(B(0,2)\big)$ 
such that $0\leq\eta\leq 1$ and $\eta\equiv 1$ on $B(0,1)$. For each 
$j\in{\mathbb{N}}$ then set $\eta_j:=\eta(\cdot/j)$ and consider 
$\varphi_j:=\psi\eta_j$. Then clearly 
$\varphi_j\in{\mathscr{C}}^\infty_c({\mathbb{R}}^n)$ and it is 
straightforward to check that $\varphi_j\big|_{\Omega}\to\psi\big|_{\Omega}$ 
in $W^{k,p}(\Omega)$ as $j\to\infty$. This proves the right-to-left inclusion in 
\eqref{PaE-1}. Since the left-to-right inclusion is contained in 
part $(2)$ of Lemma~\ref{TYRD-f4f}, formula \eqref{PaE-1} follows. 
\end{proof}

We are now in a ready to introduce a class of domains for which, roughly speaking, 
the $(\varepsilon,\delta)$-property only holds near a (possibly proper) subset
of the boundary. The following definition is central to the work carried out 
in this paper. 

\begin{definition}\label{OM-Oah-Y88}
Let $\varepsilon,\delta>0$ be given. In addition, suppose that $\Omega$ is 
an open, nonempty, proper subset of ${\mathbb{R}}^n$, and that $N$ is an 
arbitrary subset of $\partial\Omega$. Then $\Omega$ is said to be 
{\tt locally an $(\varepsilon,\delta)$-domain near $N$} provided there exist
a number $\varkappa>0$ and an at most countable family $\{O_j\}_{j\in J}$ of 
open subsets of ${\mathbb{R}}^n$ satisfying
\begin{eqnarray}\label{PPPa.1}
&& \{O_j\}_{j\in J}\,\,\mbox{ is locally finite and has bounded overlap},
\\[4pt]
&& \forall\,j\in J\,\,\,\exists\,\Omega_j\,\,\mbox{ $(\varepsilon,\delta)$-domain 
in ${\mathbb{R}}^n$ with }\,\,{\rm rad}\,(\Omega_j)>\varkappa\,\,\mbox{ and }\,\,
O_j\cap\Omega=O_j\cap\Omega_j,
\label{PPPa.3}
\\[4pt]
&& \exists\,r\in(0,\infty]\,\,\mbox{ such that }\,\,
\forall\,x\in N\,\,\,\exists\,j\in J\,\,\mbox{ for which }\,\,B(x,r)\subseteq O_j.
\label{PPPa.4}
\end{eqnarray}  
\end{definition}

\noindent Occasionally, when the nature of the set $N$ is not important, 
or it is clear from the context, we shall slightly abuse language and refer 
to a domain $\Omega$ as in Definition~\ref{OM-Oah-Y88} simply as being  
{\tt locally an $(\varepsilon,\delta)$-domain}. 

It is obvious from Definition~\ref{OM-Oah-Y88} that the following 
hereditary property holds: if $\Omega$ is an open, nonempty, proper subset of ${\mathbb{R}}^n$ which is locally an $(\varepsilon,\delta)$-domain near 
a subset $N$ of $\partial\Omega$, then $\Omega$ continues to be locally an $(\varepsilon,\delta)$-domain near any subset $N_o$ of $N$. Other features
of the class of domains considered in Definition~\ref{OM-Oah-Y88} are reviewed below.

\begin{remark}\label{Erwv65}
Suppose that $\varepsilon,\delta>0$ are given. Then as a consequence
of Lebesgue's Number Lemma, one may readily verify that 
an open, nonempty, proper subset $\Omega$ of ${\mathbb{R}}^n$ 
is locally an $(\varepsilon,\delta)$-domain near a bounded subset
$N$ of $\partial\Omega$ if and only if there exists
an finite open cover $\{O_j\}_{j\in J}$ of $\overline{N}$ with the 
property that
\begin{eqnarray}\label{PPPa.Tdaf1}
\forall\,j\in J\,\,\,\exists\,\Omega_j\,\,\mbox{ $(\varepsilon,\delta)$-domain 
in ${\mathbb{R}}^n$ with }\,\,
\,\,{\rm rad}\,(\Omega_j)>0\,\,\mbox{ and }\,\,O_j\cap\Omega=O_j\cap\Omega_j.
\end{eqnarray}  
In particular, this more streamlined characterization is valid for the class of 
locally $(\varepsilon,\delta)$-domains with compact boundaries.
\end{remark}

Clearly, when $N$ is a proper subset of the topological boundary allows
a locally $(\varepsilon,\delta)$-domain near $N$ to be quite different than
an ordinary $(\varepsilon,\delta)$-domain in the sense of Definition~\ref{Def-EPDE}
since no condition is imposed on the portion of the boundary outside $N$. 
In particular, the class of domains described in Definition~\ref{Def-EPDE}
corresponding to the case in which $N=\emptyset$ is the collection of all
open, nonempty, proper subsets of ${\mathbb{R}}^n$ (since in this situation
conditions \eqref{PPPa.1}-\eqref{PPPa.4} are satisfied with $J=\emptyset$).
However, in the case in which $N$ coincides with the topological boundary 
of the underlying domain, the classes of domains introduced 
in Definition~\ref{OM-Oah-Y88} and Definition~\ref{Def-EPDE} relate
to one another in the manner made precise in the lemma below.

\begin{lemma}\label{yufa} 
Assume that $\Omega$ is a nonempty proper open
set in ${\mathbb{R}}^n$ and let $\varepsilon,\delta>0$. Then the 
following implications hold:
\begin{eqnarray}\label{Itebn09}
&& \Omega\,\,\mbox{ is an }\,\,(\varepsilon,\delta)\mbox{-domain with }\,\,
{\rm rad}\,(\Omega)>0\,\Longrightarrow\,\,\mbox{
$\Omega$ is a locally $(\varepsilon,\delta)$-domain near $\partial\Omega$},
\\[4pt]
&& \mbox{$\Omega$ is a locally $(\varepsilon,\delta)$-domain near $\partial\Omega$}
\,\,\Longrightarrow\,\,\exists\,\delta'>0\,\,\mbox{ such that }\,\,
\Omega\,\,\mbox{ is an }\,\,(\varepsilon,\delta')\mbox{-domain}. \qquad
\label{Itebn09.bFD}
\end{eqnarray}
\end{lemma}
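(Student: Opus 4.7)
The first implication is essentially formal. Given an $(\varepsilon,\delta)$-domain $\Omega$ with ${\rm rad}\,(\Omega)>0$, I would take $J:=\{1\}$, $O_1:=\mathbb{R}^n$, $\Omega_1:=\Omega$, $\varkappa:={\rm rad}\,(\Omega)/2$, and $r:=\infty$. Then \eqref{PPPa.1}, \eqref{PPPa.3}, and \eqref{PPPa.4} are immediate.

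For the nontrivial second implication, the plan is to use the $r$ from \eqref{PPPa.4} and set $\delta':=\min\bigl\{\delta,\,\varepsilon r/10\bigr\}$ (noting that $\varepsilon\leq 1$ is automatic for any nonempty $(\varepsilon,\delta)$-domain, since a rectifiable curve joining $x$ and $y$ has length $\geq|x-y|$). Given $x,y\in\Omega$ with $|x-y|<\delta'$, I would split into two cases according to the distance to $\partial\Omega$.

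If $\min\{{\rm dist}\,(x,\partial\Omega),{\rm dist}\,(y,\partial\Omega)\}\geq r/2$, the straight line segment works: its length is $|x-y|\leq|x-y|/\varepsilon$, and for any $z$ on it one has ${\rm dist}\,(z,\partial\Omega)\geq r/2-|x-y|\gg\varepsilon|x-y|/4\geq\varepsilon|z-x|\,|z-y|/|x-y|$. Otherwise, WLOG ${\rm dist}\,(x,\partial\Omega)<r/2$, so one can pick $x^*\in\partial\Omega$ with $|x-x^*|<r/2$; then both $x$ and $y$ lie in $B(x^*,r)$, and by \eqref{PPPa.4} there is an index $j\in J$ with $B(x^*,r)\subseteq O_j$. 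Hence $x,y\in O_j\cap\Omega=O_j\cap\Omega_j$, and since $|x-y|<\delta'\leq\delta$, the $(\varepsilon,\delta)$-property of $\Omega_j$ produces a rectifiable curve $\gamma:[0,1]\to\Omega_j$ with $\gamma(0)=x$, $\gamma(1)=y$, satisfying the length bound and the distance bound against $\partial\Omega_j$.

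The main obstacle, and the heart of the argument, is transferring these bounds from $\Omega_j$ to $\Omega$. The length bound forces every $z\in\gamma([0,1])$ to lie in $B(x,|x-y|/\varepsilon)\subseteq B(x^*,r/2+r/10)$, so $B(z,2r/5)\subseteq B(x^*,r)\subseteq O_j$; in particular $\gamma([0,1])\subseteq O_j\cap\Omega_j=O_j\cap\Omega\subseteq\Omega$. For the distance bound, the key observation is that $O_j\cap\Omega=O_j\cap\Omega_j$ implies $O_j\cap\partial\Omega=O_j\cap\partial\Omega_j$; more concretely, for $s_z:=\varepsilon|z-x|\,|z-y|/|x-y|\leq\varepsilon|x-y|/4\leq\varepsilon^2 r/40<2r/5$, the ball $B(z,s_z)$ lies inside $O_j$, and since it lies in $\Omega_j$ (by the $(\varepsilon,\delta)$-condition for $\Omega_j$) it therefore lies in $O_j\cap\Omega_j=O_j\cap\Omega\subseteq\Omega$, proving ${\rm dist}\,(z,\partial\Omega)\geq s_z$. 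This yields the $(\varepsilon,\delta')$-domain property for $\Omega$ and completes the plan.
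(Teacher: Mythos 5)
Your argument follows essentially the same route as the paper's: handle the first implication by taking $J=\{1\}$, $O_1={\mathbb{R}}^n$, $\Omega_1=\Omega$, and for the second implication split into an ``interior'' case (straight segment) and a ``near-boundary'' case, in the latter invoking the $(\varepsilon,\delta)$-property of a suitable $\Omega_j$ and transferring the distance bound to $\partial\Omega$ via the identity $O_j\cap\partial\Omega=O_j\cap\partial\Omega_j$. One small slip: in the near-boundary case you bound $s_z=\varepsilon|z-x|\,|z-y|/|x-y|\leq\varepsilon|x-y|/4$, but for a point $z$ on the \emph{curve} $\gamma$ (as opposed to the segment) one only has $|z-x|+|z-y|\leq{\rm length}(\gamma)\leq|x-y|/\varepsilon$, giving the weaker estimate $s_z\leq|x-y|/(4\varepsilon)$; fortunately $|x-y|/(4\varepsilon)\leq\delta'/(4\varepsilon)\leq r/40<2r/5$, so the needed conclusion $B(z,s_z)\subseteq O_j$ still holds and the proof goes through unchanged after this correction.
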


\begin{proof}
Obviously, any $(\varepsilon,\delta)$-domain $\Omega$ in ${\mathbb{R}}^n$ 
is a locally $(\varepsilon,\delta)$-domain near $\partial\Omega$
(since \eqref{PPPa.1}-\eqref{PPPa.4} are verified if we take $J:=\{1\}$, 
$O_1:={\mathbb{R}}^n$, $\Omega_1:=\Omega$ and $r>0$ arbitrary). 
Conversely, we claim that if $\Omega\subseteq{\mathbb{R}}^n$ is a locally 
$(\varepsilon,\delta)$-domain near $\partial\Omega$ then there exists 
a small number $c=c(\varepsilon,r)>0$, with $r>0$ as in \eqref{PPPa.4}, 
such that $\Omega$ is a actually a $(\varepsilon,c\delta)$-domain in ${\mathbb{R}}^n$.
To see that this is the case, fix two points $x,y\in\Omega$ such that $|x-y|<c\delta$,
with $c\in(0,1)$ small to be determined later. Then the existence of a rectifiable 
curve $\gamma:[0,1]\to\Omega$ such that $\gamma(0)=x$, $\gamma(1)=y$, and 
\eqref{jcdj} holds is readily verified when $B(x,100 c\delta)\cap\partial\Omega
=\emptyset$ (by simply taking $\gamma$ to be the line segment joining $x$ and $y$), 
whereas in the case when $B(x,100 c\delta)\cap\partial\Omega$ contains a point 
$x_o$ one may reason as follows. First, from \eqref{PPPa.4} (with $N=\partial\Omega$)
there exists some $j\in J$ such that $B(x_o,r)\subseteq O_j$. Second, since
$\Omega_j$ is an $(\varepsilon,\delta)$-domain, Definition~\ref{Def-EPDE}
guarantees the existence of a rectifiable curve $\gamma:[0,1]\to\Omega_j$ 
with $\gamma(0)=x$, $\gamma(1)=y$, and such that
\begin{eqnarray}\label{jcdj-Tnav43}
{\rm length}(\gamma)\leq\frac{1}{\varepsilon}|x-y|\quad\mbox{and}\quad
\frac{\varepsilon|z-x|\,|z-y|}{|x-y|}
\leq {\rm dist}\,(z,\partial\Omega_j),\quad\forall\,z\in\gamma([0,1]).
\end{eqnarray}
In particular, ${\rm length}(\gamma)\leq c\delta/\varepsilon$. Also, 
starting from $O_j\cap\Omega=O_j\cap\Omega_j$, elementary topology gives that 
\begin{eqnarray}\label{jcdj-Tnav44}
O_j\cap\partial\Omega=O_j\cap\partial\Omega_j.
\end{eqnarray}
Combining the above facts it is now not difficult to show that by choosing 
$c=c(\varepsilon,r)\in(0,1)$ small enough then 
$\gamma([0,1])\subseteq B(x_o,r)\cap\Omega$ and for every $z\in\gamma([0,1])$
\begin{eqnarray}\label{jcdj-Tnigd}
{\rm dist}\,(z,\partial\Omega)
={\rm dist}\,(z,B(x_o,r)\cap\partial\Omega)
={\rm dist}\,(z,B(x_o,r)\cap\partial\Omega_j)
={\rm dist}\,(z,\partial\Omega_j). 
\end{eqnarray}
With this in hand, the desired conclusion follows. 
\end{proof}

In view of Lemma~\ref{yufa} and the comments preceding 
it, the class of domains introduced in Definition~\ref{Def-EPDE} 
bridges between arbitrary open, nonempty, proper subsets of ${\mathbb{R}}^n$,
on the one hand, and Jones' class of $(\varepsilon,\delta)$-domains
(with the set $N$ playing the role of a fine-tuning parameter).
The most significant feature of the category of locally $(\varepsilon,\delta)$-domains
is the fact that they are extension domains relative to the scale of Sobolev
spaces introduced in Definition~\ref{IUha-Tw}, in a sense made precise in our 
next theorem. The main ingredients in its proof are Theorem~\ref{cjeg}
(including the nature of the quantitative bound from \eqref{fgu4J.P75E}), 
Theorem~\ref{cj-TRf}, and a geometrically compatible quantitative partition of unity.

\begin{theorem}[Extension Theorem for locally
$(\varepsilon,\delta)$-domains]\label{cjeg.5AD}
Suppose that $\Omega\subseteq{\mathbb{R}}^n$ and $D\subseteq\overline{\Omega}$
are such that $D$ is closed and $\Omega$ is locally an $(\varepsilon,\delta)$-domain 
near $\partial\Omega\setminus D$. Then for any $k\in{\mathbb{N}}$ there exists a 
linear operator ${\mathfrak{E}}_{k,D}$, mapping locally integrable functions in 
$\Omega$ into Lebesgue measurable functions in ${\mathbb{R}}^n$, such that for 
each $p\in[1,\infty]$ and each closed subset $\Sigma$ of $\overline{\Omega}$ 
satisfying 
\begin{eqnarray}\label{Jabna-86GV}
D\cap\partial\Omega\subseteq\Sigma\cap\partial\Omega,
\end{eqnarray}
one has
\begin{eqnarray}\label{Tgb-11VVV.BIS}
{\mathfrak{E}}_{k,D}:W^{k,p}_\Sigma(\Omega)\longrightarrow W^{k,p}_\Sigma({\mathbb{R}}^n)
\quad\mbox{linearly and boundedly},
\end{eqnarray}
(with operator norm controlled in terms of $\varepsilon,\delta,n,k,p$,
and the quantitative aspects of \eqref{PPPa.1}-\eqref{PPPa.4}), and
\begin{eqnarray}\label{Pa-PL2VB.BIS}
\big({\mathfrak{E}}_{k,D}\,u\big)\big|_{\Omega}=u,\quad
\mbox{${\mathscr{L}}^n$-a.e. on $\Omega$ for every $u\in W^{k,p}_\Sigma(\Omega)$}.
\end{eqnarray}
In particular, corresponding to the case when $\Sigma:=D$, one has
\begin{eqnarray}\label{Tgb-11VVV}
&& {\mathfrak{E}}_{k,D}:W^{k,p}_D(\Omega)\longrightarrow W^{k,p}_D({\mathbb{R}}^n)
\quad\mbox{linearly and boundedly, and}
\\[4pt]
&& \big({\mathfrak{E}}_{k,D}\,u\big)\big|_{\Omega}=u,\quad
\mbox{${\mathscr{L}}^n$-a.e. on $\Omega$ for every $u\in W^{k,p}_D(\Omega)$}.
\label{Pa-PL2VB}
\end{eqnarray}
\end{theorem}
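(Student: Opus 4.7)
The strategy is to reduce the claim to Jones' Extension Theorem~\ref{cjeg} via a quantitative partition of unity subordinate to the local $(\varepsilon,\delta)$-structure, with the non-expansive support identity of Theorem~\ref{cj-TRf} being the key mechanism for preserving the partially vanishing trace.

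As a preliminary reduction I would first upgrade Theorem~\ref{cjeg} to the following principle: if $\Omega'\subseteq{\mathbb{R}}^n$ is a genuine $(\varepsilon,\delta)$-domain with ${\rm rad}(\Omega')>0$ and $\Sigma'$ is any closed subset of $\overline{\Omega'}$, then Jones' operator $\Lambda_k$ carries $W^{k,p}_{\Sigma'}(\Omega')$ boundedly into $W^{k,p}_{\Sigma'}({\mathbb{R}}^n)$. Indeed, for $\varphi\in{\mathscr{C}}^\infty_{\Sigma'}(\Omega')$ the extension $\Lambda_k\varphi$ has compact support by \eqref{fgu4J-DUBB}, and $\Sigma'\subseteq\overline{\Omega'}$ combined with \eqref{fgu4J-D} forces
\begin{eqnarray*}
{\rm supp}\,(\Lambda_k\varphi)\cap\Sigma'=\overline{\Omega'}\cap{\rm supp}\,(\Lambda_k\varphi)\cap\Sigma'={\rm supp}\,\varphi\cap\Sigma'=\emptyset.
\end{eqnarray*}
Standard mollification then places $\Lambda_k\varphi$ in $W^{k,p}_{\Sigma'}({\mathbb{R}}^n)$, and density of ${\mathscr{C}}^\infty_{\Sigma'}(\Omega')$ in $W^{k,p}_{\Sigma'}(\Omega')$ together with the closedness of $W^{k,p}_{\Sigma'}({\mathbb{R}}^n)$ inside $W^{k,p}({\mathbb{R}}^n)$ propagates the conclusion to arbitrary inputs in $W^{k,p}_{\Sigma'}(\Omega')$.

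Next, let $\{O_j\}_{j\in J}$ and $\{\Omega_j\}_{j\in J}$ be the cover and companion $(\varepsilon,\delta)$-domains supplied by Definition~\ref{OM-Oah-Y88}, and let $R=R(n,\varepsilon,\delta)>0$ be the uniform spreading radius from \eqref{fgu4J-Dyab.4RR} for each $\Lambda_k^{(j)}$ (Jones' operator on $\Omega_j$). By a standard quantitative construction, building on \eqref{PPPa.1}--\eqref{PPPa.4} and refining the covering scale if needed at a uniformly bounded cost, one produces functions $\psi_j\in{\mathscr{C}}^\infty_c(O_j)$ whose supports lie at distance $>R$ from $\partial O_j$, together with a smooth cutoff $\psi_\infty$ that vanishes on an open neighborhood of $\partial\Omega\setminus D$, such that $\psi_\infty+\sum_{j\in J}\psi_j\equiv 1$ on ${\mathbb{R}}^n$ with uniform $C^k$-bounds and controlled overlap. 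Define, for any locally integrable $u$ on $\Omega$,
\begin{eqnarray*}
{\mathfrak{E}}_{k,D}\,u:=\widetilde{\psi_\infty u}+\sum_{j\in J}\Lambda_k^{(j)}\bigl(\widetilde{\psi_j u}\bigr),
\end{eqnarray*}
where $\widetilde{\psi_j u}$ denotes the extension of $(\psi_j u)|_{O_j\cap\Omega}$ by zero to $\Omega_j$ (well-defined because $\psi_j$ is compactly supported inside $O_j$ and $O_j\cap\Omega=O_j\cap\Omega_j$), and $\widetilde{\psi_\infty u}$ is the extension of $\psi_\infty u$ by zero from $\Omega$ to ${\mathbb{R}}^n$. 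The $R$-buffer ensures each summand $\Lambda_k^{(j)}(\widetilde{\psi_j u})$ has support confined to $O_j$, rendering the series locally finite.

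To verify the mapping property \eqref{Tgb-11VVV.BIS} for a given $\Sigma$ as in \eqref{Jabna-86GV}, test the construction on $\varphi=\Phi|_{\Omega}$ with $\Phi\in{\mathscr{C}}^\infty_c({\mathbb{R}}^n\setminus\Sigma)$. Since $\psi_\infty$ vanishes near $\partial\Omega\setminus D$ while $\Phi$ vanishes near $\Sigma\supseteq D\cap\partial\Omega$, the product $\psi_\infty\Phi$ vanishes on an open neighborhood of the entire $\partial\Omega$, so $\widetilde{\psi_\infty\varphi}\in{\mathscr{C}}^\infty_c({\mathbb{R}}^n\setminus\Sigma)\subseteq W^{k,p}_\Sigma({\mathbb{R}}^n)$. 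For each $j$, $\widetilde{\psi_j\varphi}\in{\mathscr{C}}^\infty_{\Sigma\cap\overline{\Omega_j}}(\Omega_j)$, so the preliminary step places $\Lambda_k^{(j)}(\widetilde{\psi_j\varphi})$ in $W^{k,p}_{\Sigma\cap\overline{\Omega_j}}({\mathbb{R}}^n)$ with compact support in $O_j$; the elementary identity $O_j\cap\overline{\Omega}=O_j\cap\overline{\Omega_j}$ combined with $\Sigma\subseteq\overline{\Omega}$ then upgrades the containment to $W^{k,p}_\Sigma({\mathbb{R}}^n)$. Summing, and invoking the uniform bound \eqref{fgu4J.P75E}, the uniform $C^k$-estimates on $\{\psi_j\}$, and the bounded-overlap property, yields $\|{\mathfrak{E}}_{k,D}\varphi\|_{W^{k,p}({\mathbb{R}}^n)}\leq C\|\varphi\|_{W^{k,p}(\Omega)}$; density extends this to all of $W^{k,p}_\Sigma(\Omega)$. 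The identity \eqref{Pa-PL2VB.BIS} then comes for free since, on $\Omega$, each summand recovers $\psi_\infty u$ or $\psi_j u$ (because $\Lambda_k^{(j)}$ is the identity on $\Omega_j$ and $O_j\cap\Omega_j=O_j\cap\Omega$), and the partition of unity reassembles $u$. The main obstacle I anticipate is orchestrating the quantitative geometry so that the $R$-spreading of each Jones extension stays inside its $O_j$-patch while the preservation of vanishing on $\Sigma$ remains uniform in $j$; this is precisely what Theorem~\ref{cj-TRf} is designed to deliver.
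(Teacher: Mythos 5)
The first preliminary reduction you make (upgrading Theorem~\ref{cjeg} so that $\Lambda_k$ maps $W^{k,p}_{\Sigma'}(\Omega')$ into $W^{k,p}_{\Sigma'}({\mathbb{R}}^n)$) is correct and is exactly the content of Corollary~\ref{cjeg.5}; your argument via \eqref{fgu4J-D}, \eqref{fgu4J-DUBB}, mollification and density is sound.

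The genuine gap lies in the partition-of-unity step. You require $\psi_j\in{\mathscr{C}}^\infty_c(O_j)$ with ${\rm supp}\,\psi_j$ at distance $>R$ from $\partial O_j$, where $R=R(n,\varepsilon,\delta)$ is the spreading radius from \eqref{fgu4J-Dyab.4RR}, and you need $\psi_\infty+\sum_j\psi_j\equiv 1$ with $\psi_\infty$ vanishing near $N=\partial\Omega\setminus D$. This forces the $R$-contractions $[O_j]_R$ to cover a neighborhood of $N$. But condition \eqref{PPPa.4} only guarantees that the $r$-contractions $[O_j]_r$ cover $N$, and $r$ is an independent parameter of the hypotheses that has no a priori lower bound in terms of $\varepsilon,\delta,n$. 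If $r<R$ (which can happen: the $O_j$ might all be balls of radius $2r\ll R$), the sets $[O_j]_R$ can be empty and no such partition exists. Your remedy — ``refining the covering scale if needed at a uniformly bounded cost'' — does not work: subdividing the $O_j$ makes them thinner (the wrong direction), while shrinking $\delta$ to reduce $R$ destroys the uniform control in \eqref{fgu4J.P75E} because the operator-norm bound degrades as $\delta^{-1}$ grows.

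The paper's proof avoids this obstruction by a structurally different device: it does \emph{not} try to keep ${\rm supp}\,\Lambda_{k,j}(\cdot)$ inside $O_j$ automatically, but instead multiplies the Jones extension by a second, outer cutoff $\varphi_j$ supported in $O_j$, i.e. it uses terms $\varphi_j\,\Lambda_{k,j}\big(E_j(\psi_j u)\big)$. This requires no buffer on $\psi_j$. The price is that the identity ${\mathfrak{E}}_{k,D}u|_\Omega=u$ is no longer automatic from $\sum_j\psi_j=1$; the paper arranges it by taking $\varphi_j:=\eta\psi_j/\sum_i\psi_i^2$, so that $\sum_j\varphi_j\psi_j=\eta$, and by putting $\widetilde{(1-\eta)u}$ in place of your $\widetilde{\psi_\infty u}$. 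Preservation of the partially vanishing trace then comes from the algebraic chain $\overline{\Omega}\cap O_j\subseteq\overline{\Omega\cap O_j}=\overline{\Omega_j\cap O_j}\subseteq\overline{\Omega_j}$ together with Theorem~\ref{cj-TRf} applied to $\Omega_j$, exactly the non-expansive support mechanism you identified — just deployed through the output cutoff rather than through an input buffer. To repair your argument along these lines you would need to introduce the analogue of $\varphi_j$ and the $\eta/\sum\psi_i^2$ normalization; the single-layer partition with an $R$-buffer cannot be made to work in general.
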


\begin{proof}
Introduce $N:=\partial\Omega\setminus D$. We shall consider first the case 
when $N\not=\emptyset$, then indicate the alterations needed if $N=\emptyset$
in the last part of the proof. As a preliminary matter, we shall construct
a quantitative partition of unity which is then used to glue together Jones'
extension operators acting from genuine $(\varepsilon,\delta)$-domains
which agree with $\Omega$ near points in $N$. Turning to specifics, since 
$\Omega$ is assumed to be locally an $(\varepsilon,\delta)$-domain near $N$,
Definition~\ref{OM-Oah-Y88} ensures that there exist a number $\varkappa>0$ 
and a collection $\{O_j\}_{j\in J}$, where $J$ is an at most countable set, 
of open subsets of ${\mathbb{R}}^n$ satisfying
\begin{eqnarray}\label{IOHxffx.1}
&& \exists\,L\in{\mathbb{N}}\,\,\mbox{ such that }\,\,\sum_{j\in J}{\bf 1}_{O_j}\leq L
\,\,\,\mbox{ in }\,\,\,{\mathbb{R}}^n,
\\[4pt]
&& \forall\,K\,\,\mbox{ compact in }\,\,{\mathbb{R}}^n,\,\,\,
\#\{j\in J:\,O_j\cap K\not=\emptyset\}<+\infty,
\label{IOHxffx.2}
\\[4pt]
&& \forall\,j\in J\,\,\,\exists\,\Omega_j\,\,\mbox{ $(\varepsilon,\delta)$-domain 
in ${\mathbb{R}}^n$ with }\,\,{\rm rad}\,(\Omega_j)>\varkappa\,\,\mbox{ and }\,\,
O_j\cap\Omega=O_j\cap\Omega_j,
\label{IOHxffx.3}
\\[4pt]
&& \exists\,r\in(0,\infty)\,\,\mbox{ such that }\,\,
\forall\,x\in N\,\,\,\exists\,j\in J\,\,\mbox{ for which }\,\,B(x,r)\subseteq O_j.
\label{IOHxffx.4}
\end{eqnarray}
Now, generally speaking, for each nonempty $E\subseteq{\mathbb{R}}^n$ and each 
$\rho\in(0,\infty)$ define its $\rho$-contraction by 
\begin{eqnarray}\label{IOHxffx.5b}
[E]_\rho:=\big\{x\in{\mathbb{R}}^n:\,B(x,\rho)\subseteq E\big\}
=\big\{x\in{\mathbb{R}}^n:\,{\rm dist}\,(x,E^c)\geq\rho\big\}.
\end{eqnarray}
This and \eqref{IOHxffx.4} imply that
\begin{eqnarray}\label{IOHxffx.5}
N\subseteq\bigcup_{j\in J}[O_j]_r.
\end{eqnarray}
Choose a function $\theta\in{\mathscr{C}}^\infty_c({\mathbb{R}}^n)$
such that $0\leq\theta\leq 1$, ${\rm supp}\,\theta\subseteq B(0,1)$, and
$\int_{{\mathbb{R}}^n}\theta\,d{\mathscr{L}}^n=1$. As usual, for each
$t>0$, define $\theta_t(x):=t^{-n}\theta(x/t)$ for every $x\in{\mathbb{R}}^n$.
Next, for each $j\in J$, consider (with $r>0$ as in \eqref{IOHxffx.4}) 
\begin{eqnarray}\label{IOHxffx.6}
\psi_j:=\theta_{r/4}\ast{\bf 1}_{[O_j]_{r/4}}\,\,\,\mbox{ in }\,\,
{\mathbb{R}}^n.
\end{eqnarray}
Then for each $j\in J$,
\begin{eqnarray}\label{IOHxffx.7}
\begin{array}{c}
\psi_j\in{\mathscr{C}}^\infty({\mathbb{R}}^n),\quad 0\leq\psi_j\leq 1,
\quad{\rm supp}\,\psi_j\subseteq O_j,\,\,\,
\psi_j\equiv 1\,\,\mbox{ in }\,\,[O_j]_{r/2},\,\,\,
\\[6pt]
\mbox{and }\,\,
\big|\partial^\alpha\psi_j\big|\leq 4^{|\alpha|}r^{-|\alpha|}
\|\partial^\alpha\theta\|_{L^1({\mathbb{R}}^n,{\mathscr{L}}^n)}\quad
\mbox{ for each }\,\,\alpha\in{\mathbb{N}}^n_0.
\end{array}
\end{eqnarray}
Let us also note that $\sum_{j\in J}\psi_j^2$ is a well-defined function belonging to 
${\mathscr{C}}^\infty({\mathbb{R}}^n)$, thanks to \eqref{IOHxffx.7} and 
\eqref{IOHxffx.3}, and that
\begin{eqnarray}\label{IOHxffx.10}
\sum_{j\in J}\psi_j^2\geq 1\,\,\,\mbox{ on }\,\,\,
\{x\in{\mathbb{R}}^n:\,{\rm dist}\,(x,N)<r/2\}.
\end{eqnarray}
Indeed, if $x\in{\mathbb{R}}^n$ is such that ${\rm dist}\,(x,N)<r/2$, 
then there exists $x_0\in N$ such that $x\in B(x_0,r/2)$.
Thanks to \eqref{IOHxffx.4} we know that there 
exists $j_0\in J$ such that $B(x_0,r)\subseteq O_{j_0}$. 
Combining these facts it follows that $x\in[O_{j_0}]_{r/2}$, 
whereupon $\psi_{j_0}(x)=1$ by \eqref{IOHxffx.7}. Consequently, 
$\sum_{j\in J}\psi_j(x)^2\geq 1$, as wanted. 

Next, introduce 
\begin{eqnarray}\label{IOHxffx.7ia}
U:=\{x\in{\mathbb{R}}^n:\,{\rm dist}\,(x,N)<r/4\}.
\end{eqnarray}
If we now define 
\begin{eqnarray}\label{IOHxffx.8}
\eta:=\theta_{r/8}\ast{\bf 1}_{U}\,\,\,\mbox{ in }\,\,{\mathbb{R}}^n,
\end{eqnarray}
then 
\begin{eqnarray}\label{IOHxffx.9}
\begin{array}{c}
\eta\in{\mathscr{C}}^\infty({\mathbb{R}}^n),\,\,\,0\leq\eta\leq 1,
\\[6pt]
{\rm supp}\,\eta\subseteq\{x\in{\mathbb{R}}^n:\,{\rm dist}\,(x,N)<r/2\},\,\,\,
\\[6pt]
\eta\equiv 1\,\,\mbox{ in }\,\,\{x\in{\mathbb{R}}^n:\,{\rm dist}\,(x,N)<r/8\},\,\,\,
\\[6pt]
\mbox{and }\,\,
\big|\partial^\alpha\eta\big|\leq 8^{|\alpha|}r^{-|\alpha|}
\|\partial^\alpha\theta\|_{L^1({\mathbb{R}}^n,{\mathscr{L}}^n)}\quad
\mbox{ for each }\,\,\alpha\in{\mathbb{N}}^n_0.
\end{array}
\end{eqnarray}
Given that by \eqref{IOHxffx.10} and \eqref{IOHxffx.9} we have 
$\sum_{j\in J}\psi_j^2\geq 1$ in a neighborhood of ${\rm supp}\,\eta$, 
it follows that if for each $j\in J$ we now set 
\begin{eqnarray}\label{IOHxffx.11}
\varphi_j:=\frac{\eta\psi_j}{\sum_{i\in J}\psi_i^2},
\end{eqnarray}
then for every $j\in J$
\begin{eqnarray}\label{IOHxffx.12}
\begin{array}{c}
\varphi_j\in{\mathscr{C}}^\infty({\mathbb{R}}^n),\quad
{\rm supp}\,\varphi_j\subseteq O_j,\quad 0\leq\varphi_j\leq 1,
\\[6pt]
\mbox{and }\,\,
\big|\partial^\alpha\varphi_j\big|\leq C_{\theta,\alpha}\,r^{-|\alpha|},\quad
\mbox{ for each }\,\,\alpha\in{\mathbb{N}}^n_0,
\end{array}
\end{eqnarray}
where $C_{\theta,\alpha}>0$ is a finite constant independent of $j$. Moreover,
\begin{eqnarray}\label{IOHxffx.13}
\sum_{j\in J}\psi_j\varphi_j=\eta\,\,\mbox{ in }\,\,{\mathbb{R}}^n.
\end{eqnarray}

Fix now a closed set $\Sigma$ satisfying $D\subseteq\Sigma
\subseteq\overline{\Omega}$, along with a number $k\in{\mathbb{N}}$ and, 
for each $j\in J$, denote by $\Lambda_{k,j}$ Jones' extension operator 
for the $(\varepsilon,\delta)$-domain $\Omega_j$. In this context, we now define the operator 
\begin{eqnarray}\label{Tgb-ak}
{\mathfrak{E}}_{k,D}\,u:=\widetilde{(1-\eta)u}+\sum_{j\in J}\varphi_j
\Lambda_{k,j}\big(E_j(\psi_ju)\big),
\quad\mbox{for every }\,\,u\in{\mathscr{C}}^\infty_\Sigma(\Omega),
\end{eqnarray}
where tilde is the operation of extending functions from 
${\mathscr{C}}^\infty_c(\Omega)$ by zero outside of their support to the entire 
${\mathbb{R}}^n$. Also, for each $j\in J$ we have denoted by $E_j$ 
the operator mapping each function $v$ from ${\mathscr{C}}^\infty(\overline{\Omega})$ 
with the property that there exits a compact subset $K$ of $O_j$ such that 
$v\equiv 0$ on $\Omega\setminus K$ into 
\begin{eqnarray}\label{Tgb-akiOpp}
E_j v:=\left\{
\begin{array}{ll}
v & \mbox{ in }\,\,\Omega\cap O_j=\Omega_j\cap O_j,
\\[4pt]
0 & \mbox{ in }\,\,\Omega_j\setminus O_j.
\end{array}
\right.
\end{eqnarray}
In particular, it is clear that for any $v$ as above,
\begin{eqnarray}\label{Tgb-akiOpp.T}
\begin{array}{c}
E_jv\in{\mathscr{C}}^\infty(\overline{\Omega_j}\,),\quad
{\rm supp}\,(E_jv)\subseteq{\rm supp}\,v,\,\,\,\mbox{ and}
\\[8pt]
\partial^\alpha (E_jv)=E_j(\partial^\alpha v)\,\,\mbox{ in }\,\,\Omega_j
\,\,\,\mbox{ for each }\,\,\,\alpha\in{\mathbb{N}}_0^n.
\end{array}
\end{eqnarray}
With these conventions we first claim that, given any 
$u\in{\mathscr{C}}^\infty_\Sigma(\Omega)$, the right-hand side of \eqref{Tgb-ak}
is well-defined. To justify this claim note that if $u=\varphi\big|_{\Omega}$
for some $\varphi\in{\mathscr{C}}^\infty_c({\mathbb{R}}^n)$ with the
property that $\Sigma\cap{\rm supp}\,\varphi=\emptyset$, then  
\begin{eqnarray}\label{Tgb-aUhaYav}
{\mathfrak{K}}:={\rm supp}\,\varphi\cap
\{x\in{\mathbb{R}}^n:\,\,{\rm dist}\,(x,N)\geq r/8\}
\end{eqnarray}
is a compact subset of ${\mathbb{R}}^n$ which satisfies
${\mathfrak{K}}\cap N=\emptyset$ and ${\mathfrak{K}}\cap\partial\Omega\cap D
\subseteq{\mathfrak{K}}\cap\partial\Omega\cap\Sigma=\emptyset$, by \eqref{Jabna-86GV}. 
Hence, 
\begin{eqnarray}\label{Tgb-iahYBBv}
{\mathfrak{K}}\cap\partial\Omega=\big({\mathfrak{K}}\cap N\big)
\cup\big({\mathfrak{K}}\cap\partial\Omega\cap D)=\emptyset.
\end{eqnarray}
As such, ${\mathfrak{K}}\cap\Omega$ is a compact subset of $\Omega$ outside of which 
the function $(1-\eta)u$ vanishes identically. 
This shows that $(1-\eta)u\in {\mathscr{C}}^\infty_c(\Omega)$,
hence the first term in the right-hand side of \eqref{Tgb-ak} is meaningful.
The sum in the right-hand side of \eqref{Tgb-ak} is also meaningful,
thanks to the support condition in \eqref{IOHxffx.12} and \eqref{IOHxffx.2}.
In fact, \eqref{IOHxffx.12}-\eqref{IOHxffx.2} may also be used to justify that,
given an arbitrary function $u\in{\mathscr{C}}^\infty_\Sigma(\Omega)$, 
for each $\alpha\in{\mathbb{N}}_0^n$ with $|\alpha|\leq k$, we have
\begin{eqnarray}\label{TTag-Ugat653e.f}
\partial^\alpha{\mathfrak{E}}_{k,D}\,u
=\sum_{\beta+\gamma=\alpha}\frac{\alpha!}{\beta!\gamma!}(-1)^{|\beta|}
\widetilde{\partial^\beta\eta\partial^\gamma u}
+\sum_{j\in J}\sum_{\beta+\gamma=\alpha}\frac{\alpha!}{\beta!\gamma!}
\partial^\beta\varphi_j\partial^\gamma\Lambda_{k,j}\big(E_j(\psi_ju)\big).
\end{eqnarray}
Consequently, given any $p\in[1,\infty)$, 
for every $u\in{\mathscr{C}}^\infty_\Sigma(\Omega)$ we may estimate
\begin{eqnarray}\label{TTag-Ugat653e}
\big|\partial^\alpha{\mathfrak{E}}_{k,D}\,u\big|^p
&\leq & C_k\Big(\sum_{\beta+\gamma=\alpha}
\widetilde{|\partial^\beta\eta||\partial^\gamma u|}
+\sum_{j\in J}\sum_{\beta+\gamma=\alpha}|\partial^\beta\varphi_j|
\big|\partial^\gamma\Lambda_{k,j}\big(E_j(\psi_ju)\big)\big|\Big)^p
\nonumber\\[4pt]
&\leq & C_{k,\theta,r}\Big(\sum_{\gamma\leq\alpha}
\widetilde{|\partial^\gamma u|}
+\sum_{j\in J}\sum_{\gamma\leq\alpha}{\bf 1}_{O_j}
\big|\partial^\gamma\Lambda_{k,j}\big(E_j(\psi_ju)\big)\big|\Big)^p
\nonumber\\[4pt]
&\leq & C_{k,\theta,r,L,p}\Big(\sum_{\gamma\leq\alpha}
\widetilde{|\partial^\gamma u|^p}
+\sum_{j\in J}\sum_{\gamma\leq\alpha}
\big|\partial^\gamma\Lambda_{k,j}\big(E_j(\psi_ju)\big)\big|^p\Big),
\end{eqnarray}
where the last inequality uses the support condition in \eqref{IOHxffx.12} 
and \eqref{IOHxffx.1}. Thus, further,
\begin{eqnarray}\label{gat6Toh6yb}
\int_{{\mathbb{R}}^n}\big|\partial^\alpha{\mathfrak{E}}_{k,D}\,u\big|^p\,d{\mathscr{L}}^n
&\leq & C_{k,\theta,r,L,p}\Big(\int_{{\mathbb{R}}^n}\sum_{\gamma\leq\alpha}
\widetilde{|\partial^\gamma u|^p}\,d{\mathscr{L}}^n
+\sum_{j\in J}\sum_{\gamma\leq\alpha}\int_{{\mathbb{R}}^n}
\big|\partial^\gamma\Lambda_{k,j}\big(E_j(\psi_ju)\big)\big|^p\,d{\mathscr{L}}^n\Big)
\nonumber\\[4pt]
&\leq & C_{k,\theta,r,L,p}\Big(\int_{\Omega}\sum_{\gamma\leq\alpha}
|\partial^\gamma u|^p\,d{\mathscr{L}}^n
+\sum_{j\in J}\big\|\Lambda_{k,j}\big(E_j(\psi_ju)\big)\big\|_{W^{k,p}({\mathbb{R}}^n)}^p
\Big)
\nonumber\\[4pt]
&\leq & C_{k,\theta,r,L,p,\varepsilon,\delta,n,\varkappa}\Big(\|u\|_{W^{k,p}(\Omega)}^p
+\sum_{j\in J}\big\|E_j(\psi_ju)\big\|_{W^{k,p}(\Omega_j)}^p
\Big)
\nonumber\\[4pt]
&=& C_{k,\theta,r,L,p,\varepsilon,\delta,n,\varkappa}\Big(\|u\|_{W^{k,p}(\Omega)}^p
+\sum_{j\in J}\|\psi_ju\|_{W^{k,p}(\Omega)}^p\Big)
\nonumber\\[4pt]
&\leq & C_{k,\theta,r,L,p,\varepsilon,\delta,n,\varkappa}\Big(\|u\|_{W^{k,p}(\Omega)}^p
+\sum_{j\in J}\sum_{\beta+\gamma=\alpha}\frac{\alpha!}{\beta!\gamma!}
\int_{\Omega}|\partial^\beta\psi_j|^p|\partial^\gamma u|^p\,d{\mathscr{L}}^n\Big)
\nonumber\\[4pt]
&\leq & C_{k,\theta,r,L,p,\varepsilon,\delta,n,\varkappa}\Big(\|u\|_{W^{k,p}(\Omega)}^p
+\sum_{j\in J}\sum_{\gamma\leq\alpha}
\int_{\Omega}{\bf 1}_{O_j}|\partial^\gamma u|^p\,d{\mathscr{L}}^n\Big)
\nonumber\\[4pt]
& = & C_{k,\theta,r,L,p,\varepsilon,\delta,n,\varkappa}\Big(\|u\|_{W^{k,p}(\Omega)}^p
+\sum_{\gamma\leq\alpha}
\int_{\Omega}\Big(\sum_{j\in J}{\bf 1}_{O_j}\Big)
|\partial^\gamma u|^p\,d{\mathscr{L}}^n\Big)
\nonumber\\[4pt]
&\leq & C_{k,\theta,r,L,p,\varepsilon,\delta,n,\varkappa}\Big(\|u\|_{W^{k,p}(\Omega)}^p
+L\sum_{\gamma\leq\alpha}\int_{\Omega}
|\partial^\gamma u|^p\,d{\mathscr{L}}^n\Big)
\nonumber\\[4pt]
&\leq & C_{k,\theta,r,L,p,\varepsilon,\delta,n,\varkappa}\,\|u\|_{W^{k,p}(\Omega)}^p.
\end{eqnarray}
Above, the first inequality is implied by \eqref{TTag-Ugat653e}, the second 
inequality is obvious, the third inequality is based on Theorem~\ref{cjeg} and the 
fact that each $\Omega_j$ is an $(\varepsilon,\delta)$-domain in ${\mathbb{R}}^n$
with ${\rm rad}\,(\Omega_j)\geq,\varkappa$, 
the subsequent equality is readily seen from \eqref{Tgb-akiOpp}-\eqref{Tgb-akiOpp.T},
the fourth inequality uses Leibniz's formula, the fifth inequality follows
from \eqref{IOHxffx.7}, the next equality is trivial, the sixth inequality 
is clear from \eqref{IOHxffx.1}, while the last inequality is obvious.
In turn, \eqref{gat6Toh6yb} goes to show that
\begin{eqnarray}\label{InavfE59}
\|{\mathfrak{E}}_{k,D}\,u\|_{W^{k,p}({\mathbb{R}}^n)}
\leq C_{k,\theta,r,L,p,\varepsilon,\delta,n,\varkappa}\,\|u\|_{W^{k,p}(\Omega)},
\qquad\forall\,u\in{\mathscr{C}}^\infty_\Sigma(\Omega),
\end{eqnarray}
at least if $p\in[1,\infty)$. In fact, a minor variation of the above argument 
shows that estimate \eqref{InavfE59} is actually valid for $p=\infty$ as well.
This is based on the fact that for any family of $[0,\infty]$-valued 
Lebesgue-measurable functions $\{\xi_j\}_{j\in J}$ in ${\mathbb{R}}^n$
with the property that there exists $M\in{\mathbb{N}}$ such that 
$\#\{j\in J:\,\xi_j(x)\not=0\}\leq M$ for ${\mathscr{L}}^n$-a.e. $x\in{\mathbb{R}}^n$,
there holds
\begin{eqnarray}\label{bavTGa65i}
\Big\|\sum_{j\in J}\xi_j\Big\|_{L^\infty({\mathbb{R}}^n,{\mathscr{L}}^n)}
\leq M\,\sup_{j\in J}\|\xi_j\|_{L^\infty({\mathbb{R}}^n,{\mathscr{L}}^n)}.
\end{eqnarray}

Based on \eqref{InavfE59} and \eqref{uig-adjb-2}, we may therefore conclude that 
\begin{eqnarray}\label{fgu4J.ii}
{\mathfrak{E}}_{k,D}:W^{k,p}_\Sigma(\Omega)\longrightarrow W^{k,p}({\mathbb{R}}^n)
\quad\mbox{ linearly and boundedly},
\end{eqnarray}
with operator norm controlled in terms of $n,\varepsilon,\delta,k,p$.
On account of this and \eqref{uig-adjb-2AAA}, the claim in \eqref{Tgb-11VVV.BIS}
will follow as soon as we show that 
\begin{eqnarray}\label{uig-aTL7.B.L}
{\mathfrak{E}}_{k,D}\big[{\mathscr{C}}^\infty_\Sigma(\Omega)\big]
\subseteq W^{k,p}_\Sigma({\mathbb{R}}^n).
\end{eqnarray}
To this end, we first remark that 
\begin{eqnarray}\label{Ian-Yba}
\overline{\Omega}\cap O_j\subseteq\overline{\Omega\cap O_j}
\,\,\,\mbox{ for every }\,\,j\in\{1,...,J\}.
\end{eqnarray}
Indeed, if $j\in J$ and $x\in\overline{\Omega}\cap O_j$ are arbitrary,
then there exists a sequence $\{x_i\}_{i\in{\mathbb{N}}}\subseteq\Omega$
such that $\lim\limits_{i\to\infty}x_i=x\in O_j$. Given that 
$O_j$ is open, there is no loss of generality in assuming that 
$\{x_i\}_{i\in{\mathbb{N}}}\subseteq\Omega\cap O_j$ which, in turn, goes 
to show that the limit point $x$ belongs to $\overline{\Omega\cap O_j}$.
This proves \eqref{Ian-Yba}. Now, if $u\in{\mathscr{C}}^\infty_\Sigma(\Omega)$ 
then for every $j\in J$ we may write 
\begin{eqnarray}\label{TmmmN}
\overline{\Omega}\cap{\rm supp}\,\Big(\varphi_j\Lambda_{k,j}
\big(E_j(\psi_j u)\big)\Big)
&\subseteq & \overline{\Omega}\cap O_j
\cap{\rm supp}\,\Big(\Lambda_{k,j}
\big(E_j(\psi_j u)\big)\Big)
\nonumber\\[4pt]
&\subseteq & \big(\,\overline{\Omega\cap O_j}\,\big)
\cap{\rm supp}\,\Big(\Lambda_{k,j}\big(E_j(\psi_j u)\big)\Big)
\nonumber\\[4pt]
&=& \big(\,\overline{\Omega_j\cap O_j}\,\big)
\cap{\rm supp}\,\Big(\Lambda_{k,j}\big(E_j(\psi_j u)\big)\Big)
\nonumber\\[4pt]
&\subseteq & \overline{\Omega_j}
\cap{\rm supp}\,\Big(\Lambda_{k,j}\big(E_j(\psi_j u)\big)\Big)
\nonumber\\[4pt]
&=& {\rm supp}\,\big(E_j(\psi_j u)\big)
\nonumber\\[4pt]
&\subseteq & {\rm supp}\,(\psi_j u)
\nonumber\\[4pt]
&\subseteq & {\rm supp}\,u.
\end{eqnarray}
The first inclusion above is a consequence of the fact that 
${\rm supp}\,\varphi_j\subseteq{\mathcal{O}}_j$, the second inclusion is based
on \eqref{Ian-Yba}, the first equality is guaranteed by \eqref{IOHxffx.3},
the third inclusion is obvious, the second equality follows from Theorem~\ref{cj-TRf},
the penultimate inclusion is implied by \eqref{Tgb-akiOpp.T}, while the last one is 
obvious. From \eqref{Tgb-ak} and \eqref{TmmmN} we then deduce that 
\begin{eqnarray}\label{TajbUJM}
\overline{\Omega}\cap{\rm supp}\,{\mathfrak{E}}_{k,D}\,u\subseteq{\rm supp}\,u
\,\,\,\mbox{ for each }\,\,u\in{\mathscr{C}}^\infty_\Sigma(\Omega).
\end{eqnarray}
As a consequence, from \eqref{TajbUJM} and the fact that 
$\Sigma\subseteq\overline{\Omega}$, for each 
$u\in{\mathscr{C}}^\infty_\Sigma(\Omega)$ we have
\begin{eqnarray}\label{TajbUJM.54}
\Sigma\cap{\rm supp}\,{\mathfrak{E}}_{k,D}\,u=\big(\,\Sigma\cap\overline{\Omega}\,\big)
\cap{\rm supp}\,{\mathfrak{E}}_{k,D}\,u=\Sigma\cap\big(\,\overline{\Omega}
\cap{\rm supp}\,{\mathfrak{E}}_{k,D}\,u\,\big)
\subseteq\Sigma\cap{\rm supp}\,u=\emptyset.
\end{eqnarray}
Hence, $\Sigma\cap{\rm supp}\,{\mathfrak{E}}_{k,D}\,u=\emptyset$ for each 
$u\in{\mathscr{C}}^\infty_\Sigma(\Omega)$. Given that for each 
$u\in{\mathscr{C}}^\infty_\Sigma(\Omega)$ the set ${\rm supp}\,{\mathfrak{E}}_{k,D}\,u$ 
is compact (by \eqref{Tgb-ak} and \eqref{fgu4J-Dyab.4RR} in Theorem~\ref{cj-TRf}),
and since $\Sigma$ is closed, it follows that 
\begin{eqnarray}\label{aNNE-1}
{\rm dist}\,\big(\Sigma\,,\,{\rm supp}\,{\mathfrak{E}}_{k,D}\,u\big)>0
\,\,\,\mbox{ for each }\,\,u\in{\mathscr{C}}^\infty_\Sigma(\Omega).
\end{eqnarray}
At this stage, for each $i\in{\mathbb{N}}$
define $\theta_i(x):=i^n\theta(ix)$ for every $x\in{\mathbb{R}}^n$ and,
having fixed some $u\in{\mathscr{C}}^\infty_\Sigma(\Omega)$, set 
\begin{eqnarray}\label{f-Gs1.Ta.W}
\xi_i:=\theta_i\ast{\mathfrak{E}}_{k,D}\,u\,\,\mbox{ in }\,\,{\mathbb{R}}^n,
\,\,\mbox{ for each }\,\,i\in{\mathbb{N}}.
\end{eqnarray}
Then 
\begin{eqnarray}\label{f-Gs1.Ta}
\begin{array}{c}
\xi_i\in{\mathscr{C}}^\infty_c({\mathbb{R}}^n),\,\,\mbox{ and }\,\,
\Sigma\cap{\rm supp}\,\xi_i=\emptyset\,\,\,\mbox{ if $i$ is large enough},
\\[8pt]
\mbox{and }\,\,\,
\xi_i\longrightarrow{\mathfrak{E}}_{k,D}\,u\,\,\,\mbox{ in }\,\,W^{k,p}({\mathbb{R}}^n)
\,\,\,\mbox{ as }\,\,i\to\infty,
\end{array}
\end{eqnarray}
by virtue of \eqref{aNNE-1} and the fact that ${\mathfrak{E}}_{k,D}\,u$ 
has compact support and belongs to $W^{k,p}({\mathbb{R}}^n)$. 
In light of \eqref{uig-adjb-2AAA}, the approximation result in \eqref{f-Gs1.Ta} 
implies that actually 
\begin{eqnarray}\label{LJE-ajan}
{\mathfrak{E}}_{k,D}\,u\in W^{k,p}_\Sigma({\mathbb{R}}^n)\,\,\mbox{ for each }\,\,
u\in{\mathscr{C}}^\infty_\Sigma(\Omega).
\end{eqnarray}
From \eqref{LJE-ajan}, \eqref{fgu4J.ii}, \eqref{uig-adjb-2}, and the fact that 
$W^{k,p}_\Sigma({\mathbb{R}}^n)$ is a closed subspace of $W^{k,p}({\mathbb{R}}^n)$, 
it now follows that the operator \eqref{Tgb-ak} extends to a linear and bounded
mapping as in \eqref{Tgb-11VVV.BIS}. 

There remains to show that the mapping just defined also satisfies \eqref{Pa-PL2VB.BIS}.
To this end, for each $j\in J$ denote by $F_j$ the operator mapping functions $w$ 
defined in $O_j\cap\Omega$ into functions defined in $\Omega$ according to 
\begin{eqnarray}\label{nab-7gVh6}
F_j w:=\left\{
\begin{array}{ll}
w & \mbox{ in }\,\,\Omega\cap O_j=\Omega_j\cap O_j,
\\[4pt]
0 & \mbox{ in }\,\,\Omega\setminus O_j.
\end{array}
\right.
\end{eqnarray}
For any function $u\in{\mathscr{C}}^\infty_\Sigma(\Omega)$ we then have
\begin{eqnarray}\label{Tgb-ak.887}
{\mathfrak{E}}_{k,D}\,u\Big|_{\Omega} &=& \widetilde{(1-\eta)u}\Big|_{\Omega}
+\sum_{j\in J}\Big[\varphi_j\Lambda_{k,j}\big(E_j(\psi_j u)\big)
\Big]\Big|_{\Omega}
\nonumber\\[4pt]
&=& (1-\eta)u+\sum_{j\in J}\big(\varphi_j\big|_{\Omega}\big)
F_j\left(\Big[\Lambda_{k,j}\big(E_j(\psi_j u)\big)
\Big]\Big|_{O_j\cap\Omega}\right)
\nonumber\\[4pt]
&=& (1-\eta)u+\sum_{j\in J}\big(\varphi_j\big|_{\Omega}\big)
F_j\left(\Big[\Lambda_{k,j}\big(E_j(\psi_j u)\big)
\Big]\Big|_{O_j\cap\Omega_j}\right)
\nonumber\\[4pt]
&=& (1-\eta)u+\sum_{j\in J}\big(\varphi_j\big|_{\Omega}\big)
F_j\left(\Big[E_j(\psi_j u)\Big]\Big|_{O_j\cap\Omega_j}\right)
\nonumber\\[4pt]
&=& (1-\eta)u+\sum_{j\in J}\varphi_j
F_j\left(\big(\psi_j u\big)\Big|_{O_j\cap\Omega}\right)
\nonumber\\[4pt]
&=& (1-\eta)u+\sum_{j\in J}\varphi_j\psi_j u
\nonumber\\[4pt]
&=& (1-\eta)u+\eta u=u,
\end{eqnarray}
thanks to \eqref{Tgb-ak}, \eqref{IOHxffx.2}, \eqref{IOHxffx.3}, \eqref{nab-7gVh6},
the fact that $\Lambda_{k,j}$ is an extension operator for $O_j\cap\Omega_j$ for each $j\in J$, as well as \eqref{Tgb-akiOpp} and \eqref{Tgb-akiOpp.T}.
Having established this, \eqref{Pa-PL2VB.BIS} now follows from \eqref{Tgb-ak.887}, 
part $(4)$ in Lemma~\ref{TYRD-f4f}, \eqref{Tgb-11VVV.BIS}, and \eqref{uig-adjb-2}.
This concludes the construction and analysis of the operator 
${\mathfrak{E}}_{k,D}$ in the case when $N\not=\emptyset$.

Consider now the case when $N=\emptyset$, i.e., when $\partial\Omega\subseteq D$;
in particular, $\partial\Omega\subseteq\Sigma$ by \eqref{Jabna-86GV}. 
In this scenario, $\Omega$ is just an arbitrary open, nonempty, proper subset of ${\mathbb{R}}^n$ (as noted in the comments before the statement of Lemma~\ref{yufa}),
and we define the operator 
\begin{eqnarray}\label{YL-uvv.rt2}
{\mathfrak{E}}_{k,D}:W^{k,p}_\Sigma(\Omega)\longrightarrow W^{k,p}_\Sigma({\mathbb{R}}^n),
\qquad
{\mathfrak{E}}_{k,D}\,u:=\left\{
\begin{array}{ll}
u & \mbox{ in }\,\,\Omega,
\\[4pt]
0 & \mbox{ in }\,\,\Omega^c:={\mathbb{R}}^n\setminus\Omega,
\end{array}
\right.
\qquad\forall\,u\in W^{k,p}_\Sigma(\Omega),
\end{eqnarray}
which formally corresponds to choosing $\eta\equiv 0$ and $J:=\emptyset$
in \eqref{Tgb-ak}.
Property \eqref{Pa-PL2VB.BIS} is now a simple feature of the design 
of ${\mathfrak{E}}_{k,D}$ and we are left with checking that this 
operator maps $W^{k,p}_\Sigma(\Omega)$ boundedly into $W^{k,p}_\Sigma({\mathbb{R}}^n)$.
With this goal in mind, consider first the operator 
${\mathfrak{E}}_{k,D}:W^{k,p}_\Sigma(\Omega)\rightarrow W^{k,p}({\mathbb{R}}^n)$
defined by the same formula as in the last part of \eqref{YL-uvv.rt2},
and observe that this operator may be viewed as a composition between the 
isometric inclusion $W^{k,p}_\Sigma(\Omega)\hookrightarrow 
W^{k,p}_{\partial\Omega}(\Omega)$ (cf. $(8)$ in Lemma~\ref{TYRD-f4f}) and 
the bounded linear mapping from \eqref{YJBKL-uvv.5}.
That this composition is meaningful is ensured by part $(5)$ in Lemma~\ref{TYRD-f4f}.
Given the goals we have in mind, there remains to prove that
$\widetilde{u}\in W^{k,p}_\Sigma({\mathbb{R}}^n)$ for every
$u\in W^{k,p}_\Sigma(\Omega)$ (where tilde denotes the extension by zero from $\Omega$
to ${\mathbb{R}}^n$). To justify this, note that if $u\in W^{k,p}_\Sigma(\Omega)$ 
then $u\in{\mathring{W}}^{k,p}(\Omega)$ by  $(5)$ in Lemma~\ref{TYRD-f4f}, and also
there exists a sequence $\{\varphi_j\}_{j\in{\mathbb{N}}}\subseteq
{\mathscr{C}}^\infty_c({\mathbb{R}}^n)$ such that 
$\Sigma\cap{\rm supp}\,\varphi_j=\emptyset$
for each $j\in{\mathbb{N}}$ and $\varphi_j\big|_\Omega\to u$ in 
$W^{k,p}(\Omega)$ as $j\to\infty$, by \eqref{uig-adjb-2}.
In particular, $\varphi_j\big|_\Omega\in{\mathscr{C}}^\infty_c(\Omega)\subseteq
{\mathring{W}}^{k,p}(\Omega)$ for every $j\in{\mathbb{N}}$, by the 
support condition and our assumption on $D$. In turn, this readily entails that
for each $j\in{\mathbb{N}}$ we have $\widetilde{\varphi_j|_\Omega}\in
{\mathscr{C}}^\infty_c({\mathbb{R}}^n)$ and $\Sigma\cap{\rm supp}\,
\big(\,\widetilde{\varphi_j|_\Omega}\,\big)=\emptyset$ which, in light of
\eqref{YJBKL-uvv.5}, implies that $\widetilde{\varphi_j|_\Omega}\to\widetilde{u}$ in $W^{k,p}({\mathbb{R}}^n)$ as $j\to\infty$. By \eqref{uig-adjb-2AAA}, the latter
convergence may be interpreted as saying that 
$\widetilde{u}\in W^{k,p}_\Sigma({\mathbb{R}}^n)$, as wanted.
Hence, the operator \eqref{YL-uvv.rt2} has all desired properties in 
this case as well, and this completes the proof of the theorem.
\end{proof}

It is instructive to note that Theorem~\ref{cjeg.5AD} contains as a 
particular case the fact that Jones' mapping $\Lambda_k$ continues
to be a well-defined and bounded extension operator  
on the scale of Sobolev spaces introduced in Definition~\ref{IUha-Tw}
considered on $(\varepsilon,\delta)$-domains. This is made precise in
the following corollary.

\begin{corollary}\label{cjeg.5}
Let $\Omega$ be an $(\varepsilon,\delta)$-domain in ${\mathbb{R}}^n$ 
with ${\rm rad}\,(\Omega)>0$, and fix 
an arbitrary number $k\in{\mathbb{N}}$. Then Jones' extension operator $\Lambda_k$ 
(from Theorem~\ref{cjeg}) has the property that, for each closed subset $D$ 
of $\overline{\Omega}$ and each $p\in[1,\infty]$,
\begin{eqnarray}\label{fgu4J}
\Lambda_k:W^{k,p}_D(\Omega)\longrightarrow W^{k,p}_D({\mathbb{R}}^n)
\quad\mbox{ linearly and boundedly},
\end{eqnarray}
with operator norm controlled solely in terms of $n,\varepsilon,\delta,k,p$.
\end{corollary}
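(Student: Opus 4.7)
The plan is to deduce the corollary directly from Theorem~\ref{cjeg} (for the basic boundedness of $\Lambda_k$ on the full Sobolev space), Theorem~\ref{cj-TRf} (for the non-expansive support property), and a density/mollification argument paralleling the one used at the end of the proof of Theorem~\ref{cjeg.5AD}. The crux is that the corollary does not ask for a new operator: the same $\Lambda_k$ constructed in Theorem~\ref{cjeg} must be shown to respect the subspace $W^{k,p}_D$.

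First I would note that by part $(2)$ of Lemma~\ref{TYRD-f4f} the space $W^{k,p}_D(\Omega)$ sits inside $W^{k,p}(\Omega)$ as a closed subspace, and by Theorem~\ref{cjeg} the restriction of $\Lambda_k$ to it is automatically linear and bounded into $W^{k,p}(\mathbb{R}^n)$ with operator norm governed by $n,\varepsilon,\delta,k,p$. The task therefore reduces to proving the inclusion
\[
\Lambda_k\bigl[W^{k,p}_D(\Omega)\bigr]\subseteq W^{k,p}_D(\mathbb{R}^n).
\]
Since $W^{k,p}_D(\mathbb{R}^n)$ is closed in $W^{k,p}(\mathbb{R}^n)$ and $\Lambda_k$ is bounded, by part $(1)$ of Lemma~\ref{TYRD-f4f} it suffices to verify this inclusion on the dense subset $\mathscr{C}^\infty_D(\Omega)$.

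Fix $u\in\mathscr{C}^\infty_D(\Omega)$, so $u=\varphi|_\Omega$ for some $\varphi\in\mathscr{C}^\infty_c(\mathbb{R}^n)$ with $D\cap\operatorname{supp}\varphi=\emptyset$; in particular $\operatorname{supp}u\subseteq\overline\Omega\setminus D$. By Theorem~\ref{cj-TRf}, the set $\operatorname{supp}(\Lambda_k u)$ is compact and satisfies $\overline\Omega\cap\operatorname{supp}(\Lambda_k u)=\operatorname{supp}u$. Since $D\subseteq\overline\Omega$, this yields
\[
D\cap\operatorname{supp}(\Lambda_k u)
=D\cap\overline\Omega\cap\operatorname{supp}(\Lambda_k u)
=D\cap\operatorname{supp}u=\emptyset,
\]
and as $D$ is closed and $\operatorname{supp}(\Lambda_k u)$ is compact, $\operatorname{dist}(D,\operatorname{supp}(\Lambda_k u))>0$. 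Mollifying $\Lambda_k u\in W^{k,p}(\mathbb{R}^n)$ with a standard approximation to the identity $\theta_i$ then produces $\xi_i:=\theta_i\ast(\Lambda_k u)\in\mathscr{C}^\infty_c(\mathbb{R}^n)$ whose supports are disjoint from $D$ for all large $i$, and which converge to $\Lambda_k u$ in $W^{k,p}(\mathbb{R}^n)$; by \eqref{uig-adjb-2AAA} this gives $\Lambda_k u\in W^{k,p}_D(\mathbb{R}^n)$, as desired.

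The main (minor) obstacle is the limiting exponent $p=\infty$, for which the mollification argument above does not literally converge in the $W^{k,\infty}$-norm. In that case I would argue indirectly: take $\{u_j\}\subseteq\mathscr{C}^\infty_D(\Omega)$ with $u_j\to u$ in $W^{k,p'}(\Omega)$ for some $p'\in[1,\infty)$ (e.g.\ using Lemma~\ref{LLDEnse}-type approximation together with the support-preservation in the cut-off construction), apply the argument above at exponent $p'$, and combine with the uniform bound $\|\Lambda_k u_j\|_{W^{k,\infty}(\mathbb{R}^n)}\leq C\|u_j\|_{W^{k,\infty}(\Omega)}$ from Theorem~\ref{cjeg} to identify $\Lambda_k u$ as a member of $W^{k,\infty}_D(\mathbb{R}^n)$ through the same mollification/support mechanism applied to the compactly supported limit. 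The final assertion on the operator norm is then inherited directly from \eqref{fgu4J.P75E}, since the construction of $\Lambda_k$ and its bound depend only on $n,\varepsilon,\delta,k,p$ (and $\mathrm{rad}(\Omega)$, which is absorbed into the constants through our standing assumption $\mathrm{rad}(\Omega)>0$).
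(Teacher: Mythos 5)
Your strategy is genuinely different from the paper's. The paper's proof of Corollary~\ref{cjeg.5} is a one-paragraph specialization of Theorem~\ref{cjeg.5AD}: taking $D:=\emptyset$ as the locally-$(\varepsilon,\delta)$ parameter, $\Sigma:=D$ (the given closed set), and $J=\{1\}$, $O_1=\mathbb{R}^n$, $\Omega_1=\Omega$, $r=\infty$, one checks that \eqref{PPPa.1}--\eqref{PPPa.4} hold trivially and that the glued operator $\mathfrak{E}_{k,\emptyset}$ of \eqref{Tgb-ak} degenerates (with $\eta=\varphi_1=\psi_1\equiv 1$) to $\Lambda_k$ itself; all conclusions then follow from \eqref{Tgb-11VVV.BIS}--\eqref{Pa-PL2VB.BIS}. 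You instead re-derive the conclusion from scratch, combining Theorem~\ref{cjeg} (boundedness), Theorem~\ref{cj-TRf} (non-expansive support), the density of $\mathscr{C}^\infty_D(\Omega)$, and mollification; this is in effect re-running the last stretch of the proof of Theorem~\ref{cjeg.5AD} (see \eqref{TajbUJM}--\eqref{LJE-ajan}) for the single-chart case. Both routes are legitimate; the paper's is more economical because the real work is amortized once inside Theorem~\ref{cjeg.5AD}, whereas yours makes the mechanism explicit for this particular corollary.

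Up through the mollification step your argument is correct and aligned with the internal logic of the paper: for $u\in\mathscr{C}^\infty_D(\Omega)$ one has $u\in W^{k,p}(\Omega)$ for all $p$, so \eqref{fgu4J-D} and \eqref{fgu4J-DUBB} give that $\operatorname{supp}(\Lambda_k u)$ is compact with $D\cap\operatorname{supp}(\Lambda_k u)=\emptyset$, hence $\operatorname{dist}(D,\operatorname{supp}(\Lambda_k u))>0$, and mollification lands $\Lambda_k u$ in $W^{k,p}_D(\mathbb{R}^n)$ via \eqref{uig-adjb-2AAA}.

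The issue is your treatment of $p=\infty$. You correctly sense that mollification need not converge in the $W^{k,\infty}$-norm (the $k$-th order derivatives of $\Lambda_k u$ are merely in $L^\infty$, not uniformly continuous). But the ``indirect'' fix you sketch does not close the gap: taking $u_j\in\mathscr{C}^\infty_D(\Omega)$ with $u_j\to u$ in some $W^{k,p'}(\Omega)$, $p'<\infty$, and invoking a uniform bound $\|\Lambda_k u_j\|_{W^{k,\infty}(\mathbb{R}^n)}\le C\|u_j\|_{W^{k,\infty}(\Omega)}$, gives you at best weak-$*$ compactness in $W^{k,\infty}$ together with $W^{k,p'}$-convergence, which does not yield norm convergence in $W^{k,\infty}$ and hence does not place $\Lambda_k u$ in the \emph{closed} subspace $W^{k,\infty}_D(\mathbb{R}^n)$. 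Moreover, for the density reduction $u$ is already in $\mathscr{C}^\infty_D(\Omega)$, so approximating $u$ by $u_j\in\mathscr{C}^\infty_D(\Omega)$ is circular. Note that the paper simply applies the mollification step \eqref{f-Gs1.Ta.W}--\eqref{f-Gs1.Ta} uniformly for all $p\in[1,\infty]$, including $p=\infty$, and does not differentiate; so if you wish to follow the paper's logic, the correct move is to use that same mollification argument at $p=\infty$ as well, rather than substituting a scheme that does not establish the required membership.
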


\begin{proof}
Let $\Omega$ be an $(\varepsilon,\delta)$-domain in ${\mathbb{R}}^n$
with ${\rm rad}\,(\Omega)>0$ and fix a closed subset $D$ of $\overline{\Omega}$. 
Then the hypotheses of Theorem~\ref{cjeg.5AD} are satisfied if 
we choose $\Sigma$ to be the current $D$ and take $D$ (in the statement 
of Theorem~\ref{cjeg.5AD}) to be the empty set. Indeed, conditions 
\eqref{PPPa.1}-\eqref{PPPa.4} are presently satisfied if we take
$J:=\{1\}$, $O_1:={\mathbb{R}}^n$, $\Omega_1:=\Omega$ and $r:=\infty$.
In such a scenario, the choice $\eta=\varphi_1=\psi_1\equiv 1$ in ${\mathbb{R}}^n$
is permissible and formula \eqref{Tgb-ak} reduces to ${\mathfrak{E}}_{k,\emptyset}
=\Lambda_k$, Jones' extension operator for the domain $\Omega$. As such, 
all desired conclusions follow from Theorem~\ref{cjeg.5AD}.
\end{proof}

\noindent It should be noted that, at least if $1\leq p<\infty$, 
specializing Corollary~\ref{cjeg.5} to the particular case $D:=\emptyset$ 
yields (in light of Lemma~\ref{LLDEnse}) Jones' extension result recorded 
in Theorem~\ref{cjeg}. Corollary~\ref{cjeg.5} may be regarded as a suitable 
analogue of the property of Calder\'on's extension operator in Lipschitz domains
of not increasing the support of functions $u\in W^{k,p}(\Omega)$ which vanish 
near $\partial\Omega$, in the case of Jones' extension operator in 
$(\varepsilon,\delta)$-domains. 

Substituting Theorem~\ref{cjeg.UUV} for Theorem~\ref{cjeg} in the 
proof of Theorem~\ref{cjeg.5AD} yields a semi-universal extension operator 
for locally $(\varepsilon,\delta)$-domains. Specifically, we have the following result.

\begin{theorem}[A semi-universal extension operator for locally
$(\varepsilon,\delta)$-domains]\label{cjeg.5AD.UUV}
Assume that $\Omega\subseteq{\mathbb{R}}^n$ and $D\subseteq\overline{\Omega}$
are such that $D$ is closed and $\Omega$ is locally an $(\varepsilon,\delta)$-domain 
near $\partial\Omega\setminus D$. Then for any $\Upsilon\in{\mathbb{N}}$ there exists 
a linear operator ${\mathfrak{E}}_{D}$, mapping locally integrable functions in 
$\Omega$ into Lebesgue measurable functions in ${\mathbb{R}}^n$, such that for 
each $p\in(1,\infty]$, each $k\in{\mathbb{N}}$ such that $k<\Upsilon$, and each 
closed subset $\Sigma$ of $\overline{\Omega}$ satisfying
$D\cap\partial\Omega\subseteq\Sigma\cap\partial\Omega$, one has 
\begin{eqnarray}\label{Tgb-11VVV.BIS.UUV}
{\mathfrak{E}}_{D}:W^{k,p}_\Sigma(\Omega)\longrightarrow W^{k,p}_\Sigma({\mathbb{R}}^n)
\quad\mbox{linearly and boundedly},
\end{eqnarray}
(with operator norm controlled in terms of $\varepsilon,\delta,n,p,\Upsilon$,
and the quantitative aspects of \eqref{PPPa.1}-\eqref{PPPa.4}), and
\begin{eqnarray}\label{Pa-PL2VB.BIS.UUV}
\big({\mathfrak{E}}_{D}\,u\big)\big|_{\Omega}=u,\quad
\mbox{${\mathscr{L}}^n$-a.e. on $\Omega$ for every $u\in W^{k,p}_\Sigma(\Omega)$}.
\end{eqnarray}
As a consequence, corresponding to the case when $\Sigma:=D$, one has
\begin{eqnarray}\label{Tgb-11VVV.UUV}
&& {\mathfrak{E}}_{D}:W^{k,p}_D(\Omega)\longrightarrow W^{k,p}_D({\mathbb{R}}^n)
\quad\mbox{linearly and boundedly, and}
\\[4pt]
&& \big({\mathfrak{E}}_{D}\,u\big)\big|_{\Omega}=u,\quad
\mbox{${\mathscr{L}}^n$-a.e. on $\Omega$ for every $u\in W^{k,p}_D(\Omega)$}.
\label{Pa-PL2VB.UUV}
\end{eqnarray}
\end{theorem}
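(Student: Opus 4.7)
The plan is to follow the blueprint of the proof of Theorem~\ref{cjeg.5AD} essentially verbatim, substituting Christ's operator for Jones' operator at the appropriate juncture. Concretely, let $N:=\partial\Omega\setminus D$. When $N\neq\emptyset$, invoke Definition~\ref{OM-Oah-Y88} to obtain the family $\{O_j\}_{j\in J}$ together with the associated $(\varepsilon,\delta)$-domains $\{\Omega_j\}_{j\in J}$ satisfying $\mathrm{rad}(\Omega_j)>\varkappa$ and $O_j\cap\Omega=O_j\cap\Omega_j$, and construct exactly the same partition of unity $\{\varphi_j\}_{j\in J}$ and cutoff $\eta$ as in \eqref{IOHxffx.6}--\eqref{IOHxffx.13}. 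For each $j\in J$, let $\widehat{\Lambda}_j$ denote Christ's extension operator (Theorem~\ref{cjeg.UUV}) associated with $\Omega_j$ for the fixed threshold $\Upsilon$, and define
\begin{equation*}
{\mathfrak{E}}_{D}\,u:=\widetilde{(1-\eta)u}+\sum_{j\in J}\varphi_j\,\widehat{\Lambda}_j\bigl(E_j(\psi_j u)\bigr),
\end{equation*}
where $E_j$ is the trivial extension operator from \eqref{Tgb-akiOpp}. The crucial observation is that $\widehat{\Lambda}_j$ does not depend on the integer $k$, so neither does ${\mathfrak{E}}_D$, yielding the semi-universal character.

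The quantitative boundedness \eqref{Tgb-11VVV.BIS.UUV} follows from an estimate identical to \eqref{TTag-Ugat653e}--\eqref{gat6Toh6yb}, using the $W^{k,p}$-bound for $\widehat{\Lambda}_j$ provided by Theorem~\ref{cjeg.UUV} in place of the one for $\Lambda_{k,j}$; the $p=\infty$ case is handled by the same bounded-overlap observation \eqref{bavTGa65i}, while $p=1$ is excluded precisely because Christ's bound fails there. To obtain ${\mathfrak{E}}_D u\in W^{k,p}_\Sigma(\mathbb{R}^n)$ (rather than merely $W^{k,p}(\mathbb{R}^n)$) for $u\in{\mathscr{C}}^\infty_\Sigma(\Omega)$, I will rerun the support chain \eqref{TmmmN}; the only nontrivial ingredient is the analogue of the non-expansive support identity $\overline{\Omega_j}\cap\mathrm{supp}(\widehat{\Lambda}_j v)=\mathrm{supp}\,v$, which is exactly the content of the last sentence of Theorem~\ref{cj-TRf} (valid for $p>1$). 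Consequently $\Sigma\cap\mathrm{supp}({\mathfrak{E}}_D u)=\emptyset$ with positive distance, and mollification in the spirit of \eqref{f-Gs1.Ta.W}--\eqref{f-Gs1.Ta} shows ${\mathfrak{E}}_D u\in W^{k,p}_\Sigma(\mathbb{R}^n)$.

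The identity \eqref{Pa-PL2VB.BIS.UUV} on $\Omega$ is derived by the same telescoping computation as in \eqref{Tgb-ak.887}: restrict to $\Omega$, use $O_j\cap\Omega=O_j\cap\Omega_j$, invoke the property ${\widehat{\Lambda}_j v}\big|_{\Omega_j}=v$ from \eqref{Pa-PL2.UUV}, and recognize the collapse $(1-\eta)u+\sum_j\varphi_j\psi_j u=(1-\eta)u+\eta u=u$ from \eqref{IOHxffx.13}. The residual case $N=\emptyset$ (equivalently $\partial\Omega\subseteq D\subseteq\Sigma$) is handled, as in \eqref{YL-uvv.rt2}, by declaring ${\mathfrak{E}}_D$ to be extension by zero outside $\Omega$; this is manifestly independent of $k$, and the argument that $\widetilde{u}\in W^{k,p}_\Sigma(\mathbb{R}^n)$ whenever $u\in W^{k,p}_\Sigma(\Omega)$ is precisely the one given at the end of the proof of Theorem~\ref{cjeg.5AD} via parts (5) and (8) of Lemma~\ref{TYRD-f4f} and \eqref{YJBKL-uvv.5}.

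The only conceptual obstacle is verifying that the semi-universal ingredient of Christ's operator --- namely the $k$-independent polynomial projector $\widehat{P}_{Q^\ast}$ from Lemma~\ref{TYD-ytfc} --- enjoys the same vanishing property \eqref{PJE-YhP.6} as the best-fit polynomials $P_{Q^\ast}$ used by Jones. This reduces, via \eqref{iagarEDV.3} applied with $u\equiv 0$ on $Q^\ast$, to the trivial fact that $\widehat{P}_{Q^\ast}(0)=0$ by linearity of the projector; thus the support-chasing portion of the proof of Theorem~\ref{cj-TRf} transfers intact, which is exactly what the closing remark of that theorem asserts. Beyond that single verification, every remaining step is a literal transcription of the proof of Theorem~\ref{cjeg.5AD}.
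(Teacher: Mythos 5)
Your proposal is correct and follows essentially the same approach as the paper, which constructs ${\mathfrak{E}}_D$ by replacing the operators $\Lambda_{k,j}$ in \eqref{Tgb-ak} with $\widehat{\Lambda}_j$ from Theorem~\ref{cjeg.UUV} and then re-running the proof of Theorem~\ref{cjeg.5AD} verbatim, with Theorem~\ref{cjeg.UUV} substituted for Theorem~\ref{cjeg}. Your verification that $\widehat{P}_{Q^\ast}(u)\equiv 0$ when $u$ vanishes a.e.\ on $Q^\ast$ (by linearity of the projector) correctly identifies the one ingredient needed to transfer the support-preservation property, which is exactly what the closing remark of Theorem~\ref{cj-TRf} records.
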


\begin{proof}
Construct ${\mathfrak{E}}_{D}$ as in the proof of Theorem~\ref{cjeg.5AD}
(cf. \eqref{Tgb-ak} and \eqref{YL-uvv.rt2}) replacing in \eqref{Tgb-ak} the 
operators $\Lambda_{k,j}$ by $\widehat{\Lambda}_j$, naturally associated with 
$\Omega_j$ as in Theorem~\ref{cjeg.UUV} for the given $\Upsilon$. Then the same
argument as in the proof of Theorem~\ref{cjeg.5AD} yields all desired conclusions,
by substituting Theorem~\ref{cjeg.UUV} for Theorem~\ref{cjeg}.
\end{proof}

It is also worth noting that the semi-universal extension operator $\widehat{\Lambda}$ 
constructed in Theorem~\ref{cjeg.UUV} in relation to a given 
$(\varepsilon,\delta)$-domain $\Omega$ in ${\mathbb{R}}^n$ with 
${\rm rad}\,(\Omega)>0$ and a given $\Upsilon\in{\mathbb{N}}$, 
has the property that for each closed subset $D$ of $\overline{\Omega}$, 
each $p\in(1,\infty]$, and each $k\in{\mathbb{N}}$ with $k<\Upsilon$, 
\begin{eqnarray}\label{fgu4J.UUV}
\widehat{\Lambda}:W^{k,p}_D(\Omega)\longrightarrow W^{k,p}_D({\mathbb{R}}^n)
\quad\mbox{ linearly and boundedly},
\end{eqnarray}
with operator norm controlled solely in terms of $n,\varepsilon,\delta,p,\Upsilon$.
This can be seen by reasoning as in the proof of Corollary~\ref{cjeg.5}.

\vskip 0.10in

We conclude this section by recording the following useful consequence 
of Theorem~\ref{cjeg.5AD}.

\begin{corollary}\label{cjPPa}
Suppose that $\Omega\subseteq{\mathbb{R}}^n$ and $D\subseteq\overline{\Omega}$ 
are such that $D$ is closed and $\Omega$ is locally an $(\varepsilon,\delta)$-domain near 
$\partial\Omega\setminus D$. Then for any $k\in{\mathbb{N}}$, $p\in[1,\infty]$,
and any closed subset $\Sigma$ of $\overline{\Omega}$ satisfying 
$D\cap\partial\Omega\subseteq\Sigma\cap\partial\Omega$, there holds 
\begin{eqnarray}\label{kaYabYHH.Sd}
W^{k,p}_\Sigma(\Omega)=\Big\{u\big|_{\Omega}:\,u\in W^{k,p}_\Sigma({\mathbb{R}}^n)\Big\}.
\end{eqnarray} 
As a consequence, 
\begin{eqnarray}\label{kaYabYHH}
W^{k,p}_D(\Omega)=\Big\{u\big|_{\Omega}:\,u\in W^{k,p}_D({\mathbb{R}}^n)\Big\}.
\end{eqnarray} 
In particular, formula \eqref{kaYabYHH} holds for any $k\in{\mathbb{N}}$ and
$p\in[1,\infty]$ whenever $\Omega$ is an $(\varepsilon,\delta)$-domain in
${\mathbb{R}}^n$ and $D$ is a closed subset of $\overline{\Omega}$.
\end{corollary}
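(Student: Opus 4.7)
The plan is to read off this corollary as an essentially immediate consequence of Theorem~\ref{cjeg.5AD}, with Lemma~\ref{TYRD-f4f} supplying the trivial inclusion.

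First I would establish the right-to-left inclusion in \eqref{kaYabYHH.Sd}. Given any $u\in W^{k,p}_\Sigma({\mathbb{R}}^n)$, part $(4)$ of Lemma~\ref{TYRD-f4f} (applied with $\Sigma$ playing the role of $D$) guarantees that the restriction operator $W^{k,p}_\Sigma({\mathbb{R}}^n)\ni u\mapsto u\big|_\Omega\in W^{k,p}_\Sigma(\Omega)$ is well-defined, so $u\big|_\Omega$ belongs to $W^{k,p}_\Sigma(\Omega)$ as required. Note that this inclusion makes sense because $\Sigma\subseteq\overline{\Omega}$ is closed in ${\mathbb{R}}^n$, so $W^{k,p}_\Sigma({\mathbb{R}}^n)$ is defined via \eqref{uig-adjb-2AAA}.

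For the opposite inclusion, I would invoke Theorem~\ref{cjeg.5AD} directly. The hypotheses of that theorem are precisely those we are assuming on $\Omega$ and $D$, and the closed set $\Sigma\subseteq\overline{\Omega}$ satisfies the required compatibility condition $D\cap\partial\Omega\subseteq\Sigma\cap\partial\Omega$. Hence the operator ${\mathfrak{E}}_{k,D}$ supplied by that theorem maps $W^{k,p}_\Sigma(\Omega)$ linearly and boundedly into $W^{k,p}_\Sigma({\mathbb{R}}^n)$ and satisfies $({\mathfrak{E}}_{k,D}v)\big|_\Omega=v$ almost everywhere on $\Omega$ for each $v\in W^{k,p}_\Sigma(\Omega)$. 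Thus every $v\in W^{k,p}_\Sigma(\Omega)$ is of the form $u\big|_\Omega$ with $u:={\mathfrak{E}}_{k,D}v\in W^{k,p}_\Sigma({\mathbb{R}}^n)$, proving \eqref{kaYabYHH.Sd}.

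The specialization to \eqref{kaYabYHH} is obtained by taking $\Sigma:=D$, in which case the compatibility condition $D\cap\partial\Omega\subseteq\Sigma\cap\partial\Omega$ becomes tautological. Finally, for the last assertion, I would observe that if $\Omega$ is an $(\varepsilon,\delta)$-domain in ${\mathbb{R}}^n$, then taking $J:=\{1\}$, $O_1:={\mathbb{R}}^n$, $\Omega_1:=\Omega$ and $r:=\infty$ in Definition~\ref{OM-Oah-Y88} shows that $\Omega$ is locally an $(\varepsilon,\delta)$-domain near the whole of $\partial\Omega$, hence near $\partial\Omega\setminus D$ for any closed $D\subseteq\overline{\Omega}$ (this is the hereditary property noted immediately after Definition~\ref{OM-Oah-Y88}). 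Thus the already-established \eqref{kaYabYHH} applies. There is no real obstacle in this argument; the entire content has been front-loaded into the extension theorem, and the only thing worth double-checking is that the hypothesis $D\cap\partial\Omega\subseteq\Sigma\cap\partial\Omega$ in Theorem~\ref{cjeg.5AD} is indeed the condition we need to match with the hypothesis of the corollary, which it is.
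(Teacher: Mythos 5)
Your proof is correct and follows the paper's own argument essentially verbatim: the trivial inclusion comes from Lemma~\ref{TYRD-f4f}$(4)$, the nontrivial one from the extension operator ${\mathfrak{E}}_{k,D}$ of Theorem~\ref{cjeg.5AD}, and \eqref{kaYabYHH} follows by taking $\Sigma=D$. The only thing you add beyond the paper's (very terse) proof is an explicit justification of the final ``in particular'' sentence, which is a useful clarification but still the intended route.
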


\begin{proof}
The right-to-left inclusion in \eqref{kaYabYHH.Sd} is implied by $(4)$ in 
Lemma~\ref{TYRD-f4f}, whereas the left-to-right inclusion in \eqref{kaYabYHH.Sd} 
is a consequence of Theorem~\ref{cjeg.5AD}. Finally, \eqref{kaYabYHH} follows by 
specializing \eqref{kaYabYHH.Sd} to the case when $\Sigma=D$.
\end{proof}

\section{The structure of Sobolev spaces with partially vanishing traces}
\label{Sect:4}
\setcounter{equation}{0}

In this section we shall make use of the extension result established in 
Theorem~\ref{cjeg.5AD} in order to further shed 
light on the nature of the spaces introduced in \eqref{uig-adjb-2}. 
To set the stage, we first record some useful capacity results. For an 
authoritative extensive discussion on this topic see the monographs \cite{AH96},
\cite{Ma85}, \cite{Ma11}, and \cite{Zi89}. Given $\alpha>0$ and $p\in(1,\infty)$, 
denote by $C_{\alpha,p}(\cdot)$ the $L^p$-based Bessel capacity of order 
$\alpha$ in ${\mathbb{R}}^n$. When $K\subseteq{\mathbb{R}}^n$ is a compact set, 
this is defined by
\begin{equation}\label{cap-1}
C_{\alpha,p}(K):=\inf\,\Bigl\{\int_{{\mathbb{R}}^n}f^p\,d{\mathscr{L}}^n:\,
f\mbox{ nonnegative, measurable, and }G_\alpha\ast f\geq 1\mbox{ on }K\Bigr\},
\end{equation} 
where the Bessel kernel $G_\alpha$ is defined as the function whose
Fourier transform is given by
\begin{equation}\label{cap-2}
\widehat{G_\alpha}(\xi)=(2\pi)^{-n/2}(1+|\xi|^2)^{-\alpha/2},
\quad\xi\in{\mathbb{R}}^n.
\end{equation} 
When ${\mathcal{O}}\subseteq{\mathbb{R}}^n$ is open, we define
\begin{equation}\label{cap-1X}
C_{\alpha,p}({\mathcal{O}}):=
\sup\,\{C_{\alpha,p}(K):\,K\subseteq{\mathcal{O}},\,K\mbox{ compact}\,\},
\end{equation} 
and, finally, when $E\subseteq{\mathbb{R}}^n$ is an arbitrary set,
\begin{equation}\label{cap-1Y}
C_{\alpha,p}(E):=
\inf\,\{C_{\alpha,p}({\mathcal{O}}):\,{\mathcal{O}}\supseteq E,\,
{\mathcal{O}}\mbox{ open}\,\}.
\end{equation} 
As is customary, generic properties which hold with the possible exception 
of a set $A\subseteq{\mathbb{R}}^n$ satisfying $C_{\alpha,p}(A)=0$ are said to be true 
$(\alpha,p)$-quasieverywhere (or, briefly, $(\alpha,p)$-q.e.).

Given a function $u\in L^q_{loc}({\mathbb{R}}^n,{\mathscr{L}}^n)$ for some 
$q\in[1,\infty)$, denote by 
$\overline{L}_{u,q}\subseteq{\mathbb{R}}^n$ the set of points $x\in{\mathbb{R}}^n$ 
with the property that 
\begin{eqnarray}\label{Paj-YaP}
\overline{u}(x):=\lim\limits_{r\to 0^{+}}\meanint_{B(x,r)}u\,d{\mathscr{L}}^n
\,\,\mbox{ exists }\,\,\mbox{ and }\,\,
\lim\limits_{r\to 0^{+}}\meanint_{B(x,r)}|u-\overline{u}(x)|^q\,d{\mathscr{L}}^n=0.
\end{eqnarray}
It is then clear that for every $u\in L^q_{loc}({\mathbb{R}}^n,{\mathscr{L}}^n)$,
$1\leq q<\infty$, one has
\begin{eqnarray}\label{Paj-YaP.2}
L_{u,q}:=\Big\{x\in{\mathbb{R}}^n:\,\lim\limits_{r\to 0^{+}}
\meanint_{B(x,r)}|u-u(x)|^q\,d{\mathscr{L}}^n=0\Big\}\subseteq\overline{L}_{u,q}.
\end{eqnarray}
Based on this and the classical Lebesgue Differentiation Theorem, it follows that
for every function $u\in L^q_{loc}({\mathbb{R}}^n,{\mathscr{L}}^n)$ with $1\leq q<\infty$,
\begin{eqnarray}\label{Paj-YaP.3}
{\mathscr{L}}^n({\mathbb{R}}^n\setminus L_{u,q})=
{\mathscr{L}}^n({\mathbb{R}}^n\setminus\overline{L}_{u,q})=0\,\,\mbox{ and }\,\,
\overline{u}(x)=u(x)\,\,\mbox{ for every }\,\,x\in L_{u,q}.
\end{eqnarray}

Fix now $p\in(1,\infty)$, $k\in{\mathbb{N}}$, and consider the Sobolev space 
$W^{k,p}({\mathbb{R}}^n)$. If $p>n/k$ then classical embedding results ensure that
in the equivalence class of any $u\in W^{k,p}({\mathbb{R}}^n)$ there is a continuous
representative. It is then immediate from definitions that in this case 
\begin{eqnarray}\label{Paj-Yna.8u}
\overline{L}_{u,q}={\mathbb{R}}^n\,\,\mbox{ for every }\,\,q\in[1,\infty),
\end{eqnarray}
and that $\overline{u}$ (defined in \eqref{Paj-YaP}) 
actually equals the aforementioned continuous representative of $u$
everywhere in ${\mathbb{R}}^n$.

Consider next the case when $p\leq n/k$, in which scenario also fix some 
\begin{eqnarray}\label{Paj-UVb.1}
q\in\big[1,\tfrac{np}{n-kp}\big]\,\,\mbox{ if }\,\,kp<n
\,\,\mbox{ and }\,\,q\in[1,\infty)\,\,\mbox{ if }\,\,kp=n. 
\end{eqnarray}
Then $W^{k,p}({\mathbb{R}}^n)\subseteq L^q_{loc}({\mathbb{R}}^n,{\mathscr{L}}^n)$
and, according to a classical result in potential theory, 
for every $u\in W^{k,p}({\mathbb{R}}^n)$ there exist $A\subseteq{\mathbb{R}}^n$
and some $g\in L^p({\mathbb{R}}^n,{\mathscr{L}}^n)$ such that
\begin{eqnarray}\label{Paj-UUU.1}
&& C_{k,p}(A)=0,\quad{\mathbb{R}}^n\setminus A\subseteq\overline{L}_{u,q},
\quad u=\overline{u}\,\,{\mathscr{L}}^n\mbox{-a.e. in ${\mathbb{R}}^n$},
\\[4pt]
&& u=G_k\ast g\,\,{\mathscr{L}}^n\mbox{-a.e. in ${\mathbb{R}}^n$},\,\,\mbox{ and }\,\,
\overline{u}=G_k\ast g\,\,\mbox{ on }\,\,{\mathbb{R}}^n\setminus A.
\label{Paj-UUU.2}
\end{eqnarray}
See, e.g., \cite[Theorem~6.2.1, p.\,159]{AH96}. Moreover, based on the fact that
$G_k\ast g$ is $(k,p)$-quasicontinuous in the sense of 
\cite[Definition~6.1.1, p.\,156]{AH96} (see \cite[Proposition~6.1.2, p.\,156]{AH96})
it may be easily checked that $\overline{u}$ is also $(k,p)$-quasicontinuous.
This makes $\overline{u}$ a $(k,p)$-quasicontinuous representative of $u$.

\begin{definition}\label{DEF234}
Assume that $p\in(1,\infty)$, $k\in{\mathbb{N}}$, and fix an arbitrary set
$E\subseteq{\mathbb{R}}^n$. In this setting, define the 
operator of restriction to $E$ as the mapping associating to each 
$u\in W^{k,p}({\mathbb{R}}^n)$ the function 
\begin{eqnarray}\label{Piba.P}
{\mathscr{R}}_E u:=
\left\{
\begin{array}{ll}
\overline{u} & \mbox{ on }\,\,E\cap\overline{L}_{u,q},
\\[8pt]
0& \mbox{ on }\,\,E\setminus\overline{L}_{u,q},
\end{array}
\right.
\end{eqnarray}
where $q$ is as in \eqref{Paj-UVb.1}. Moreover, we shall interpret 
${\mathscr{R}}_E u$ as being independent of $q$, at the price of 
regarding this function as being defined only $(k,p)$-q.e. on $E$. 
\end{definition}

It  is useful to note that, in the context of Definition~\ref{DEF234}, 
\begin{eqnarray}\label{PJa-abG}
\big({\mathscr{R}}_E u\big)(x)=
\left\{
\begin{array}{ll}
\lim\limits_{r\to 0^{+}}\dmeanint_{B(x,r)}u\,d{\mathscr{L}}^n
& \mbox{ if }\,x\in E\cap\overline{L}_{u,q},
\\[8pt]
0& \mbox{ if }\,x\in E\setminus\overline{L}_{u,q},
\end{array}
\right.
\qquad\forall\,x\in E.
\end{eqnarray}
In particular, when interpreting ${\mathscr{R}}_E u$ as being independent of $q$
(hence, regarding it as being defined only $(k,p)$-q.e. on $E$), 
the fact that ${\mathscr{R}}_E u=0$ becomes equivalent to 
\begin{eqnarray}\label{PJa-abF}
\lim\limits_{r\to 0^{+}}\meanint_{B(x,r)}u\,d{\mathscr{L}}^n=0\qquad
(k,p)\mbox{-q.e. on }\,\,E.
\end{eqnarray}

We are now in a position to prove, in the case $1\leq p<\infty$, the following 
intrinsic characterization result of the spaces \eqref{uig-adjb-2}, originally 
defined via a completion procedure. Given any $d\in[0,n]$, denote by 
${\mathcal{H}}^d$ the $d$-{\tt dimensional Hausdorff measure} in ${\mathbb{R}}^n$.

\begin{theorem}[Structure Theorem for spaces on subsets 
of ${\mathbb{R}}^n$: Version~1]\label{YTab-YHb}
Let $\Omega$ be an open subset of ${\mathbb{R}}^n$ and let $D$ be a 
closed subset $D$ of $\overline{\Omega}$. In addition, assume that $\Omega$ is either 
the entire ${\mathbb{R}}^n$, or locally an $(\varepsilon,\delta)$-domain 
near $\partial\Omega\setminus D$. Finally, fix an arbitrary $k\in{\mathbb{N}}$.
Then for every $p\in(1,\infty)$ one has 
\begin{eqnarray}\label{kaan655}
&&\hskip -1.00in
W^{k,p}_D(\Omega)=\Big\{u\big|_{\Omega}:\,u\in W^{k,p}({\mathbb{R}}^n)
\,\,\mbox{ and }\,\,
{\mathscr{R}}_D(\partial^\alpha u)=0\quad(k-|\alpha|,p)\mbox{-q.e. on }D,
\nonumber\\[4pt]
&&\hskip 1.90in
\mbox{ for each }\,\,\alpha\in{\mathbb{N}}_0^n\,\,\mbox{ with }\,\,
|\alpha|\leq k-1\Big\},
\end{eqnarray}
whereas corresponding to $p=1$ one has
\begin{eqnarray}\label{kaan655.NNO}
&&\hskip -1.00in
W^{k,1}_D(\Omega)=\Big\{u\big|_{\Omega}:\,u\in W^{k,1}({\mathbb{R}}^n)
\,\,\mbox{ and }\,\,
{\mathscr{R}}_D(\partial^\alpha u)=0\quad{\mathcal{H}}^{\max\,\{0,n-k+|\alpha|\}}
\mbox{-a.e. on }D,
\nonumber\\[4pt]
&&\hskip 1.90in
\mbox{ for each }\,\,\alpha\in{\mathbb{N}}_0^n\,\,\mbox{ with }\,\,
|\alpha|\leq k-1\Big\}.
\end{eqnarray}

As a consequence, if $\Omega$ is an arbitrary nonempty open set in 
${\mathbb{R}}^n$ and $k\in{\mathbb{N}}$, then for any $p\in(1,\infty)$ one has
\begin{eqnarray}\label{kaan655.LPK}
&&\hskip -1.00in
\mathring{W}^{k,p}(\Omega)=\Big\{u\big|_{\Omega}:\,u\in W^{k,p}({\mathbb{R}}^n)
\,\,\mbox{ and }\,\,{\mathscr{R}}_{\partial\Omega}(\partial^\alpha u)=0
\quad(k-|\alpha|,p)\mbox{-q.e. on }{\partial\Omega},
\nonumber\\[4pt]
&&\hskip 1.90in
\mbox{ for each }\,\,\alpha\in{\mathbb{N}}_0^n\,\,\mbox{ with }\,\,
|\alpha|\leq k-1\Big\},
\end{eqnarray}
and, corresponding to $p=1$, 
\begin{eqnarray}\label{kaan655.LPK.NNO}
&&\hskip -1.00in
\mathring{W}^{k,1}(\Omega)=\Big\{u\big|_{\Omega}:\,u\in W^{k,1}({\mathbb{R}}^n)
\,\,\mbox{ and }\,\,{\mathscr{R}}_{\partial\Omega}(\partial^\alpha u)=0
\quad{\mathcal{H}}^{\max\,\{0,n-k+|\alpha|\}}\mbox{-a.e. on }D,
\nonumber\\[4pt]
&&\hskip 1.90in
\mbox{ for each }\,\,\alpha\in{\mathbb{N}}_0^n\,\,\mbox{ with }\,\,
|\alpha|\leq k-1\Big\}.
\end{eqnarray}
\end{theorem}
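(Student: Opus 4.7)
The plan is to first reduce the theorem to the special case $\Omega={\mathbb{R}}^n$. Indeed, Corollary~\ref{cjPPa} gives $W^{k,p}_D(\Omega)=\{v\big|_{\Omega}:\,v\in W^{k,p}_D({\mathbb{R}}^n)\}$ under the current hypotheses, so both \eqref{kaan655} and \eqref{kaan655.NNO} will follow once the analogous intrinsic characterizations are established for $W^{k,p}_D({\mathbb{R}}^n)$ itself. Moreover, the last two statements \eqref{kaan655.LPK} and \eqref{kaan655.LPK.NNO} are immediate by specializing $D:=\partial\Omega$: the hypothesis on $\Omega$ becomes vacuous (since $\partial\Omega\setminus D=\emptyset$), and part~(5) of Lemma~\ref{TYRD-f4f} identifies $W^{k,p}_{\partial\Omega}(\Omega)$ with $\mathring{W}^{k,p}(\Omega)$.

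For the forward inclusion in the $\Omega={\mathbb{R}}^n$ case, I would take $u\in W^{k,p}_D({\mathbb{R}}^n)$ and pick $\varphi_j\in{\mathscr{C}}^\infty_c({\mathbb{R}}^n)$ with $D\cap{\rm supp}\,\varphi_j=\emptyset$ and $\varphi_j\to u$ in $W^{k,p}({\mathbb{R}}^n)$. For each multi-index $\alpha$ with $|\alpha|\leq k-1$, one has $\partial^\alpha\varphi_j\to\partial^\alpha u$ in $W^{k-|\alpha|,p}({\mathbb{R}}^n)$, and each $\partial^\alpha\varphi_j$ vanishes identically in a neighborhood of $D$. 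When $p\in(1,\infty)$, the standard capacity-theoretic fact that $W^{m,p}$-convergence implies $(m,p)$-quasieverywhere convergence of the precise representatives along a subsequence (e.g. \cite[Proposition~6.1.4]{AH96}), applied with $m:=k-|\alpha|$, delivers $\overline{\partial^\alpha u}=0$ and hence ${\mathscr{R}}_D(\partial^\alpha u)=0$ $(k-|\alpha|,p)$-q.e. on $D$, as required. When $p=1$, I would replace this capacitary tool by its Hausdorff analogue: $W^{m,1}$-convergence yields, along a subsequence, convergence of representatives off an ${\mathcal{H}}^{\max\{0,n-m\}}$-null set, which suffices to conclude the ${\mathcal{H}}^{\max\{0,n-k+|\alpha|\}}$-a.e. vanishing asserted in \eqref{kaan655.NNO}.

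The reverse inclusion is the crux of the argument and is where the main obstacle lies. Given $u\in W^{k,p}({\mathbb{R}}^n)$ with ${\mathscr{R}}_D(\partial^\alpha u)$ vanishing in the appropriate sense for every $|\alpha|\leq k-1$, one must produce test functions supported away from $D$ converging to $u$ in $W^{k,p}({\mathbb{R}}^n)$. For $p\in(1,\infty)$ this assertion is exactly the statement that the closed set $D$ enjoys the $(k,p)$-synthesis property, which is precisely the content of the deep Hedberg--Wolff theorem from \cite{HW83}; invoking it as a black box closes the argument. For $p=1$, where Bessel capacities are not available, the hard part will be to engineer a direct approximation argument based on the removability of ${\mathcal{H}}^{\max\{0,n-k+|\alpha|\}}$-null subsets of $D$ for the relevant Sobolev classes: the strategy is to build explicit smooth cutoffs that vanish near $D$, tend to $1$ in the Sobolev norm thanks to the Hausdorff-measure hypothesis on the precise representatives of the derivatives of $u$, and then combine these cutoffs with a standard mollification to produce the required sequence in ${\mathscr{C}}^\infty_c({\mathbb{R}}^n\setminus D)$. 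In summary, the forward direction is routine modulo classical capacity/Hausdorff convergence results, while the reverse direction is entirely powered by (and stands or falls with) the synthesis property — available on the nose from Hedberg--Wolff in the range $1<p<\infty$, and requiring a more hands-on Hausdorff-based substitute at $p=1$.
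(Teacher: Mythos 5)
Your reduction to $\Omega=\mathbb{R}^n$ via Corollary~\ref{cjPPa}, the identification $\mathring{W}^{k,p}(\Omega)=W^{k,p}_{\partial\Omega}(\Omega)$ via Lemma~\ref{TYRD-f4f}\,(5), and the appeal to the Hedberg--Wolff $(k,p)$-synthesis theorem for $p\in(1,\infty)$ all match the paper's argument. The genuine gap is the endpoint $p=1$. The paper closes this case by citing a theorem of Yu.~Netrusov \cite{Net}, which is exactly the $L^1$ analogue of Hedberg--Wolff synthesis and gives \eqref{kaan655.NNO} with $\Omega=\mathbb{R}^n$ directly. Your proposal replaces this citation with a sketch of a ``hands-on'' cutoff construction: build smooth functions vanishing near $D$, converging to $1$ in the relevant Sobolev norms, using the $\mathcal{H}^{\max\{0,n-k+|\alpha|\}}$-a.e.\ vanishing of the precise representatives of the derivatives of $u$. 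But that sketch is not carried out, and it is not a routine exercise; constructing such cutoffs near an \emph{arbitrary} closed set with no geometric hypotheses, with $W^{k,1}$-control coming solely from the trace-vanishing hypothesis, is precisely what Netrusov's theorem does (just as the corresponding assertion for $1<p<\infty$ is precisely what Hedberg--Wolff does). Deferring it as a lemma ``to be engineered'' therefore leaves the reverse inclusion in \eqref{kaan655.NNO} unproved.

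Two smaller remarks. First, both the Hedberg--Wolff and Netrusov results are biconditional characterizations of $W^{k,p}_D(\mathbb{R}^n)$, so once they are invoked the forward inclusion you argue separately (q.e.\ or $\mathcal{H}^{d}$-a.e.\ subsequential convergence of precise representatives) comes for free; your separate treatment of it is correct but not needed. Second, for \eqref{kaan655.LPK}--\eqref{kaan655.LPK.NNO} you are right that the choice $D:=\partial\Omega$ makes the local $(\varepsilon,\delta)$ hypothesis vacuous, but the specific mechanism in the paper is to apply \eqref{kaan655}--\eqref{kaan655.NNO} with this $D$ and then invoke Lemma~\ref{TYRD-f4f}\,(5); your phrasing is consistent with that.
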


\begin{proof}
We begin by treating the case $\Omega={\mathbb{R}}^n$. In this scenario,
for each $p\in(1,\infty)$ formula \eqref{kaan655} becomes
\begin{eqnarray}\label{kaan67}
&&\hskip -0.80in
W^{k,p}_D({\mathbb{R}}^n)=\big\{u\in W^{k,p}({\mathbb{R}}^n):\,
{\mathscr{R}}_D(\partial^\alpha u)=0\quad(k-|\alpha|,p)\mbox{-q.e. on }D,
\nonumber\\[4pt]
&&\hskip 1.80in
\mbox{ for each }\,\,\alpha\in{\mathbb{N}}_0^n\,\,\mbox{ with }\,\,
|\alpha|\leq k-1\big\}.
\end{eqnarray}
In turn, this is a consequence of a remarkable result of L.I.~Hedberg and
T.H.~Wolff to the effect that any closed set in ${\mathbb{R}}^n$
admits what has become known as $(k,p)$-synthesis, for any $p\in(1,\infty)$ 
and any $k\in{\mathbb{N}}$. See \cite[Theorem~5, p.\,166]{HW83}, as well as
\cite[Theorem~9.1.3, p.\,234]{AH96}. The end-point case $p=1$ of this result,
namely 
\begin{eqnarray}\label{kaan655.NNO2}
&&\hskip -1.00in
W^{k,1}_D({\mathbb{R}}^n)=\Big\{u\in W^{k,1}({\mathbb{R}}^n):\,
{\mathscr{R}}_D(\partial^\alpha u)=0\quad{\mathcal{H}}^{\max\,\{0,n-k+|\alpha|\}}
\mbox{-a.e. on }D,
\nonumber\\[4pt]
&&\hskip 1.90in
\mbox{ for each }\,\,\alpha\in{\mathbb{N}}_0^n\,\,\mbox{ with }\,\,
|\alpha|\leq k-1\Big\},
\end{eqnarray}
has been obtained by Yu.\,Netrusov in \cite{Net}. 
With \eqref{kaan67}-\eqref{kaan655.NNO2} in hand, \eqref{kaan655}-\eqref{kaan655.NNO}
follow, granted the assumptions made on $\Omega$ and $D$ in the statement of the
theorem, by appealing to Corollary~\ref{cjPPa}. Finally, 
\eqref{kaan655.LPK}-\eqref{kaan655.LPK.NNO} are immediate 
from \eqref{kaan655}-\eqref{kaan655.NNO} with $D=\partial\Omega$ 
and part $(5)$ in Lemma~\ref{TYRD-f4f}.
\end{proof}

Theorem~\ref{YTab-YHb} suggests considering higher-order restriction operators of the
following nature. Assuming that $p\in(1,\infty)$, $k\in{\mathbb{N}}$, and 
$E\subseteq{\mathbb{R}}^n$ is arbitrary, for each $u\in W^{k,p}({\mathbb{R}}^n)$ define 
\begin{eqnarray}\label{Piba}
{\mathscr{R}}^{(k)}_E u:=\big\{{\mathscr{R}}_E(\partial^\alpha u)\big\}
_{|\alpha|\leq k-1}.
\end{eqnarray}
In particular, ${\mathscr{R}}^{(1)}_E={\mathscr{R}}_E$. With this piece of notation,
given a function $u\in W^{k,p}({\mathbb{R}}^n)$, where $p\in(1,\infty)$ and $k\in{\mathbb{N}}$, along with an arbitrary set $E\subseteq{\mathbb{R}}^n$,
we agree to interpret the condition 
\begin{eqnarray}\label{Piba.2}
{\mathscr{R}}^{(k)}_E u=0\,\,\mbox{ quasi-everywhere on }\,\,E
\end{eqnarray}
as an abbreviation of the fact that 
\begin{eqnarray}\label{Piba.3}
{\mathscr{R}}_E(\partial^\alpha u)=0\quad(k-|\alpha|,p)\mbox{-q.e. on }E,
\mbox{ for each }\,\,\alpha\in{\mathbb{N}}_0^n\,\,\mbox{ with }\,\,|\alpha|\leq k-1.
\end{eqnarray}
With this interpretation, in the context of Theorem~\ref{YTab-YHb}, 
formula \eqref{kaan655} becomes 
\begin{eqnarray}\label{kkiii}
W^{k,p}_D(\Omega)=\Big\{u\big|_{\Omega}:\,u\in W^{k,p}({\mathbb{R}}^n)
\,\,\mbox{ and }\,\,{\mathscr{R}}^{(k)}_D u=0\,\,
\mbox{ quasi-everywhere on }\,\,D\Big\}.
\end{eqnarray}

We are now interested in the case in which the vanishing trace condition 
intervening in the right-hand side of \eqref{kkiii} may be reformulated using the 
Hausdorff measure in lieu of Bessel capacities. This requires some preparations. 
To set the stage, recall that a subset $D$ of ${\mathbb{R}}^n$ is said to be 
$d$-{\tt Ahlfors regular} provided there exits some finite constant $C\geq 1$ 
with the property that
\begin{eqnarray}\label{Fq-A.4}
C^{-1}r^{d}\leq {\mathcal{H}}^{d}(B(x,r)\cap D)
\leq Cr^{d},\qquad\forall\,x\in D,\,\,\,0<r\leq{\rm diam}\,(D)
\end{eqnarray}
(where, as before, ${\mathcal{H}}^d$ is the $d$-dimensional Hausdorff measure 
in ${\mathbb{R}}^n$). For example, the boundary of the Koch's snowflake 
in ${\mathbb{R}}^2$ is $d$-Ahlfors regular for $d=\frac{\ln 4}{\ln 3}$.

Assuming that $D\subseteq{\mathbb{R}}^n$ is closed and $d$-Ahlfors regular for 
some $0<d\leq n$, we shall define the measure
\begin{eqnarray}\label{Fq-A.5afv}
\sigma:={\mathcal{H}}^d\lfloor D.
\end{eqnarray}
In this context, a brand of Besov spaces have been introduced by 
A.~Jonsson and H.~Wallin in \cite{JoWa84} as follows. Given $p,q\in[1,\infty]$ and
$s\in(0,\infty)\setminus{\mathbb{N}}$, define the Besov space $B^{p,q}_{s}(D)$ as 
the collection of families $\dot{f}:=\{f_\alpha\}_{|\alpha|\leq [s]}$
(where $[s]$ denotes the integer part of $s$), whose 
components are functions from $L^p(D,\sigma)$, with the property that if for each $\alpha\in{\mathbb{N}}_0^n$ with $|\alpha|\leq[s]$ we set 
\begin{eqnarray}\label{Fq-A.5PL1}
R_\alpha(x,y):=f_{\alpha}(x)-\sum_{|\beta|\leq[s]-|\alpha|}\frac{(x-y)^\beta}{\beta!}
f_{\alpha+\beta}(y)\qquad\mbox{ for $\sigma$-a.e. }\,\,x,y\in D,
\end{eqnarray}
then 
\begin{eqnarray}\label{Fq-A.5PL2}
&&\hskip -0.45in
\|\dot{f}\|_{B^{p,q}_{s}(D)}:=\sum_{|\alpha|\leq[s]}\|f_\alpha\|_{L^p(D,\sigma)}
\\[6pt]
&&\hskip 0.40in
+\left(\sum_{j=0}^\infty \sum_{|\alpha|\leq[s]} 2^{j(s-|\alpha|)q}
\Big(2^{jd}\int\!\!\int_{|x-y|<2^{-j}}|R_\alpha(x,y)|^p
\,d\sigma(x)\,d\sigma(y)\Bigr)^{q/p}\right)^{1/q}<+\infty,
\nonumber
\end{eqnarray}
with a natural interpretation when $\max\,\{p,q\}=\infty$. 
Hereafter, we shall always understand that $B^{p,q}_{s}(D)$ is equipped
with the norm \eqref{Fq-A.5PL2}, in which case this becomes a Banach space.
In fact, a suitable definition of $B^{p,q}_{s}(D)$ may also be given when $s\in{\mathbb{N}}$ as the collection of families 
$\dot{f}=\{f_\alpha\}_{|\alpha|\leq s-1}$ whose components satisfy a certain 
approximation property, though we shall not be needing this case here (this being 
said, the interested reader is refer to \cite[Definition~2, p.\,123]{JoWa84} for
details). 
 
The following theorem is a particular case of more general results, regarding 
traces and extensions on (and from) $d$-Ahlfors regular closed subsets 
of ${\mathbb{R}}^n$ proved by A.\,Jonsson and H.\,Wallin in 
\cite[Theorem~1, p.\,182]{JoWa84}, \cite[Theorem~2, p.\,183]{JoWa84}, 
and \cite[Theorem~3, p.\,197]{JoWa84}.

\begin{theorem}[Jonsson-Wallin trace/extension theory from/into ${\mathbb{R}}^n$]
\label{GCC-67}
Let $D\subseteq{\mathbb{R}}^n$ be a closed set which is $d$-Ahlfors regular
for some $d\in(0,n)$. Also, fix a number $k\in{\mathbb{N}}$ and assume that
\begin{eqnarray}\label{GGG-89J5g}
\max\{1,n-d\}<p<\infty.
\end{eqnarray}

Then for every $v\in W^{k,p}({\mathbb{R}}^n)$ the vector-valued limit 
\begin{eqnarray}\label{THna}
\left\{\lim\limits_{r\to 0^{+}}\meanint_{B(x,r)}
\partial^\alpha v\,d{\mathscr{L}}^n\right\}_{|\alpha|\leq k-1}
\,\,\,\mbox{ exists at ${\mathcal{H}}^d$-a.e. }\,\,x\in D,
\end{eqnarray}
the higher-order trace operator from \eqref{Piba} satisfies
\begin{eqnarray}\label{THna.2}
\big({\mathscr{R}}_D^{(k)}v\big)(x)=\left\{\lim\limits_{r\to 0^{+}}\meanint_{B(x,r)}
\partial^\alpha v\,d{\mathscr{L}}^n\right\}_{|\alpha|\leq k-1}
\,\,\,\mbox{ at ${\mathcal{H}}^d$-a.e. }\,\,x\in D,
\end{eqnarray}
and induces a well-defined, linear, and bounded mapping  
\begin{eqnarray}\label{GGG-89}
{\mathscr{R}}_D^{(k)}:W^{k,p}({\mathbb{R}}^n)
\longrightarrow B^{p,p}_{k-(n-d)/p}(D).
\end{eqnarray}

Conversely, to each $\dot{f}=\{f_\alpha\}_{|\alpha|\leq k-1}\in B^{p,p}_{k-(n-d)/p}(D)$
associate the polynomial 
\begin{eqnarray}\label{GoPLa.1}
P_{\dot{f}}(x,y):=\sum_{|\alpha|\leq k-1}\frac{(x-y)^\alpha}{\alpha!}f_\alpha(y),
\qquad x\in{\mathbb{R}}^n,\,\,\,y\in D,
\end{eqnarray}
and introduce the function ${\mathscr{E}}^{(k)}_D\dot{f}$ 
defined ${\mathscr{L}}^n$-a.e. in ${\mathbb{R}}^n$ (since ${\mathscr{L}}^n(D)=0$
given that $D$ is $d$-Ahlfors regular with $d<n$) according to 
\begin{eqnarray}\label{GoPLa.2}
\big({\mathscr{E}}^{(k)}_D\dot{f}\big)(x):=
\sum\limits_{\stackrel{Q\in{\mathcal{W}}({\mathbb{R}}^n\setminus D)}{\ell(Q)\leq 1}}
\varphi_Q(x)\meanint_{D\cap B(x_Q,6\,{\rm diam}\,(Q))}P_{\dot{f}}(x,y)
\,d{\mathcal{H}}^d(y),\quad\forall\,x\in{\mathbb{R}}^n\setminus D,
\end{eqnarray}
where the family $\big\{\varphi_Q\big\}_{Q\in{\mathcal{W}}({\mathbb{R}}^n\setminus D)}$ consists of functions satisfying 
\begin{eqnarray}\label{GoPLa.3}
\varphi_Q\in{\mathscr{C}}^\infty_c({\mathbb{R}}^n),\quad
{\rm supp}\,\varphi_Q\subseteq\tfrac{17}{16}Q,\quad
0\leq\varphi_Q\leq 1,\quad
\big|\partial^\alpha\varphi_Q\big|\leq C_\alpha\ell(Q)^{-|\alpha|},\quad
\forall\,\alpha\in{\mathbb{N}}_0^n,
\end{eqnarray}
for every $Q\in{\mathcal{W}}({\mathbb{R}}^n\setminus D)$, as well as
\begin{eqnarray}\label{GoPLa.4}
\sum\limits_{\stackrel{Q\in{\mathcal{W}}({\mathbb{R}}^n\setminus D)}{\ell(Q)\leq 1}}
\varphi_Q\equiv 1\qquad\mbox{ on }\,\,\,
\bigcup_{\stackrel{Q\in{\mathcal{W}}({\mathbb{R}}^n\setminus D)}{\ell(Q)\leq 1}}Q.
\end{eqnarray}
Then the operator ${\mathscr{E}}^{(k)}_D$ (whose action depends only on $D$ 
and $k$) has the property that, for each $p$ as in \eqref{GGG-89J5g},
\begin{eqnarray}\label{GGG-90}
{\mathscr{E}}^{(k)}_D:B^{p,p}_{k-(n-d)/p}(D)\longrightarrow W^{k,p}({\mathbb{R}}^n)
\,\,\,\mbox{ linearly and boundedly,}
\end{eqnarray}
and
\begin{eqnarray}\label{GGG-91}
{\mathscr{R}}_D^{(k)}\circ{\mathscr{E}}^{(k)}_D=I,
\,\,\,\mbox{ the identity on }\,\,\,B^{p,p}_{k-(n-d)/p}(D).
\end{eqnarray}
\end{theorem}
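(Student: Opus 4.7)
My plan is to derive Theorem~\ref{GCC-67} as a specialization of the trace/extension theory developed by A.~Jonsson and H.~Wallin in \cite{JoWa84}, combined with a capacitary comparison that lets us identify the Jonsson--Wallin trace with the operator $\mathscr{R}_D^{(k)}$ introduced in \eqref{Piba}. The main components are: (i) existence of the pointwise limits \eqref{THna} at $\mathcal{H}^d$-a.e.\ point of $D$, (ii) the identification \eqref{THna.2}, (iii) the boundedness \eqref{GGG-89}, (iv) the boundedness \eqref{GGG-90} of the Whitney-type extension, and (v) the right-inverse property \eqref{GGG-91}.

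For (i)--(iii), the first step is to recall that under the assumption $p>n-d$, the exceptional sets for the $(j,p)$-Bessel capacity with $j\in\{1,\dots,k\}$ have zero $\mathcal{H}^d$-measure on any $d$-Ahlfors regular set. (This is a standard consequence of comparing Bessel capacities with Hausdorff content, together with \eqref{Fq-A.4}.) For each $\alpha\in{\mathbb{N}}_0^n$ with $|\alpha|\leq k-1$, the function $\partial^\alpha v$ lies in $W^{k-|\alpha|,p}({\mathbb{R}}^n)\subseteq L^{p}({\mathbb{R}}^n,{\mathscr{L}}^n)$, hence by \eqref{Paj-UUU.1}--\eqref{Paj-UUU.2} it possesses a $(k-|\alpha|,p)$-quasicontinuous representative defined outside an exceptional set of $(k-|\alpha|,p)$-capacity zero; by the preceding comparison, this representative is defined $\mathcal{H}^d$-a.e.\ on $D$ and agrees with the limit in \eqref{THna.2}. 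This proves \eqref{THna} and \eqref{THna.2}. For the boundedness into the Besov space \eqref{GGG-89}, the plan is to verify the defining estimate \eqref{Fq-A.5PL2} directly: the $L^p(D,\sigma)$ norms of the traces are controlled by \cite[Theorem~1, p.\,182]{JoWa84}, while the remainders $R_\alpha(x,y)$ from \eqref{Fq-A.5PL1} are estimated via an integral Taylor formula applied to the quasicontinuous representative of $v$, combined with Ahlfors regularity to convert $L^p$-averages on balls into $\sigma$-averages on $D$---this is the content of \cite[Theorem~2, p.\,183]{JoWa84} applied at smoothness $s=k-(n-d)/p$.

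For (iv), given $\dot{f}\in B^{p,p}_{k-(n-d)/p}(D)$, I would verify that the series in \eqref{GoPLa.2} converges in $W^{k,p}({\mathbb{R}}^n)$. The key computation is a differentiation under the sum which produces, for each multi-index $\alpha$ with $|\alpha|\leq k$, a sum of the form $\sum_Q (\partial^\beta\varphi_Q) \cdot \bigl(\text{polynomial average}\bigr)$; the derivatives of $\varphi_Q$ carry factors $\ell(Q)^{-|\beta|}$ which are absorbed using the fact that the distance of points in $\tfrac{17}{16}Q$ to $D$ is comparable to $\ell(Q)$, together with Ahlfors regularity to replace $\sigma$-integrals on $D\cap B(x_Q,6\,\mathrm{diam}\,Q)$ with $\ell(Q)^d$-weighted quantities. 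Summing over Whitney cubes and using a discrete Hardy-type inequality, the $W^{k,p}$-norm is dominated by the Besov norm \eqref{Fq-A.5PL2}; this is exactly the content of \cite[Theorem~3, p.\,197]{JoWa84}. For (v), at $\mathcal{H}^d$-a.e.\ point $y_0\in D$ one uses the fact that Whitney cubes $Q$ touching a ball $B(y_0,r)\cap({\mathbb{R}}^n\setminus D)$ satisfy $\ell(Q)\lesssim r$, so that the polynomial averages in \eqref{GoPLa.2} reduce, in the limit $r\to 0^+$, to $P_{\dot f}(y_0,y_0)=f_{(0,\dots,0)}(y_0)$ for the zero-order component, with analogous Taylor-expansion identities for higher derivatives producing $f_\alpha(y_0)$; combined with \eqref{THna.2} this gives \eqref{GGG-91}.

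The step I expect to require the most care is (i)--(ii): the passage from the $(k-|\alpha|,p)$-quasicontinuous representative (defined up to capacity-zero exceptional sets) to an object that is well-defined and equal to the volume average limit $\mathcal{H}^d$-a.e.\ on $D$. This relies crucially on the standing hypothesis $p>n-d$ to guarantee that the capacity-null exceptional sets are $\mathcal{H}^d$-negligible, and on the Ahlfors regularity \eqref{Fq-A.4} to compare averages over Euclidean balls with the underlying measure $\sigma={\mathcal{H}}^d\lfloor D$ (needed to make the Lebesgue-type differentiation argument produce a pointwise trace $\mathcal{H}^d$-a.e.\ rather than merely $(k-|\alpha|,p)$-q.e.).
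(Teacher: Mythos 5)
Your proposal follows essentially the same route as the paper, which presents Theorem~\ref{GCC-67} as a specialization of the results of Jonsson and Wallin in \cite{JoWa84} (specifically Theorems~1, 2, and 3 there) without giving an independent proof. Your sketch correctly unpacks that citation, including the one point the paper leaves implicit, namely the identification of the capacitarily-defined restriction ${\mathscr{R}}_D^{(k)}$ from Definition~\ref{DEF234} with the Jonsson--Wallin pointwise-limit trace; your observation that $p>n-d$ forces $(k-|\alpha|,p)$-null sets (with $k-|\alpha|\geq 1$) to be ${\mathcal{H}}^d$-null on the $d$-Ahlfors regular set $D$ is exactly the comparison (Bessel capacity versus Hausdorff content) that makes the two notions of trace agree ${\mathcal{H}}^d$-a.e.
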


Clearly, the existence of a right-inverse makes the higher-order trace 
operator in \eqref{GGG-89} surjective whenever \eqref{GGG-89J5g} holds.
However, one result conspicuously absent from the above theorem 
is an intrinsic description of the null-space of \eqref{GGG-89}.
Our next theorem addresses this aspect.

\begin{theorem}[Characterization of the null-space of the trace operator
acting from ${\mathbb{R}}^n$]\label{YTah-YYHa8}
Suppose that $D\subseteq{\mathbb{R}}^n$ is a closed set which is $d$-Ahlfors regular
for some $d\in(0,n)$, and fix a number $k\in{\mathbb{N}}$. Then 
\begin{eqnarray}\label{GYan-YHN64}
W^{k,p}_D({\mathbb{R}}^n)=
\Big\{u\in W^{k,p}({\mathbb{R}}^n):\,{\mathscr{R}}_D^{(k)}u=0
\,\,\mbox{ at ${\mathcal{H}}^d$-a.e. point on }\,\,D\Big\}
\end{eqnarray}
whenever 
\begin{eqnarray}\label{GGG-89J5gii}
\max\{1,n-d\}<p<\infty.
\end{eqnarray}
\end{theorem}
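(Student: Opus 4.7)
The plan is to establish both inclusions in \eqref{GYan-YHN64}.

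For the easy inclusion $\subseteq$, I take a defining sequence $\{\varphi_j\}\subseteq\mathscr{C}^\infty_c(\mathbb{R}^n)$ with $D\cap\mathrm{supp}\,\varphi_j=\emptyset$ approximating $u\in W^{k,p}_D(\mathbb{R}^n)$ in $W^{k,p}(\mathbb{R}^n)$. Each $\varphi_j$ vanishes identically in a neighborhood of $D$, so $\mathscr{R}_D(\partial^\alpha\varphi_j)\equiv 0$ on $D$ for every $|\alpha|\leq k-1$. Continuity of the trace in \eqref{GGG-89} from Theorem~\ref{GCC-67} then gives, via passage to the limit in $B^{p,p}_{k-(n-d)/p}(D)$, that $\mathscr{R}_D^{(k)}u=0$ in this Besov space; since the first term of the norm \eqref{Fq-A.5PL2} is $\sum_{|\alpha|\leq k-1}\|\mathscr{R}_D(\partial^\alpha u)\|_{L^p(D,\sigma)}$ with $\sigma=\mathcal{H}^d\lfloor D$, each component vanishes $\mathcal{H}^d$-a.e.\ on $D$.

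For the reverse inclusion $\supseteq$, the plan is to build modified approximants using the Jonsson--Wallin extension $\mathscr{E}^{(k)}_D$. Given $u\in W^{k,p}(\mathbb{R}^n)$ with $\mathscr{R}_D^{(k)}u=0$ $\mathcal{H}^d$-a.e.\ on $D$---equivalently, $\mathscr{R}_D^{(k)}u=0$ in $B^{p,p}_{k-(n-d)/p}(D)$---I pick $\eta_j\in\mathscr{C}^\infty_c(\mathbb{R}^n)$ with $\eta_j\to u$ in $W^{k,p}(\mathbb{R}^n)$ and set
\[
u_j:=\eta_j-\mathscr{E}^{(k)}_D\mathscr{R}_D^{(k)}\eta_j\in W^{k,p}(\mathbb{R}^n).
\]
Then $\mathscr{R}_D^{(k)}u_j=0$ by \eqref{GGG-91}, while $\mathscr{R}_D^{(k)}\eta_j\to\mathscr{R}_D^{(k)}u=0$ together with the continuity of $\mathscr{E}^{(k)}_D$ in \eqref{GGG-90} yield $u_j\to u$ in $W^{k,p}(\mathbb{R}^n)$. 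Closedness of $W^{k,p}_D(\mathbb{R}^n)$ thus reduces the problem to placing each $u_j$ in $W^{k,p}_D(\mathbb{R}^n)$.

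To this end, I claim that $u_j$ vanishes to order $k$ along $D$. Indeed, the integrand $P_{\mathscr{R}_D^{(k)}\eta_j}(x,y)$ appearing in \eqref{GoPLa.2} is exactly the $(k-1)$-st Taylor polynomial of the smooth $\eta_j$ based at $y\in D$ and evaluated at $x$, and on every active Whitney cube $Q\subseteq\mathbb{R}^n\setminus D$ one has $|x-y|\lesssim\mathrm{diam}(Q)\sim\mathrm{dist}(x,D)$. Applying Taylor's remainder theorem, carrying out the Leibniz expansion that distributes derivatives between the cutoffs $\varphi_Q$ and the Taylor averages, and using the cancellation identity $\sum_Q\partial^\gamma\varphi_Q\equiv 0$ on $\mathbb{R}^n\setminus D$ for every $|\gamma|\geq 1$ (an immediate consequence of \eqref{GoPLa.4}) to annihilate the leading-order contributions, one obtains
\[
|\partial^\beta u_j(x)|\leq C(\eta_j,k,n)\,\mathrm{dist}(x,D)^{k-|\beta|},\qquad x\in\mathbb{R}^n\setminus D,\,\,|\beta|\leq k.
\]
I then employ a cutoff $\chi_\epsilon\in\mathscr{C}^\infty(\mathbb{R}^n)$ that vanishes on $\{\mathrm{dist}(\cdot,D)<\epsilon\}$, equals $1$ on $\{\mathrm{dist}(\cdot,D)>2\epsilon\}$, and satisfies $|\partial^\alpha\chi_\epsilon|\lesssim\epsilon^{-|\alpha|}$. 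The Leibniz contributions to $\partial^\alpha(u_j-u_j\chi_\epsilon)$ live in the $\epsilon$-shell around $D$, whose $\mathscr{L}^n$-measure is $\lesssim\epsilon^{n-d}$ by $d$-Ahlfors regularity; combined with the decay estimate and the standing assumption $p>\max\{1,n-d\}$, the resulting $L^p$-norms scale as $\epsilon^{(k-|\alpha|)p+n-d}\to 0^+$, so $u_j\chi_\epsilon\to u_j$ in $W^{k,p}(\mathbb{R}^n)$. Since $u_j$ inherits bounded support from $\eta_j$ (as \eqref{GoPLa.2} restricts nontrivial Whitney cubes to a bounded neighborhood of $\mathrm{supp}\,\eta_j\cap D$), $u_j\chi_\epsilon$ is compactly supported at positive distance from $D$, and standard mollification embeds it in the closure of $\mathscr{C}^\infty_c(\mathbb{R}^n\setminus D)$, placing $u_j\in W^{k,p}_D(\mathbb{R}^n)$ and ultimately $u\in W^{k,p}_D(\mathbb{R}^n)$.

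The main obstacle I anticipate is the rigorous derivation of the pointwise decay $|\partial^\beta u_j|\lesssim\mathrm{dist}(\cdot,D)^{k-|\beta|}$ uniformly for all $|\beta|\leq k$: it demands meticulous Leibniz bookkeeping as derivatives distribute over the Whitney cutoffs $\varphi_Q$ and the $\mathcal{H}^d$-averages of Taylor polynomials, with the partition-of-unity cancellations $\sum_Q\partial^\gamma\varphi_Q\equiv 0$ playing a decisive role in eliminating the non-decaying Taylor terms that would otherwise spoil the claimed power of $\mathrm{dist}(x,D)$.
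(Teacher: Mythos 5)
Your argument is correct in overall strategy and reaches the stated conclusion, but it takes a genuinely different route from the paper's in the decisive step. Both proofs begin identically: approximate $u$ by $\varphi_j\in\mathscr{C}^\infty_c(\mathbb{R}^n)$, form the modified approximants $v_j:=\varphi_j-\mathscr{E}^{(k)}_D\mathscr{R}^{(k)}_D\varphi_j$, note $\mathscr{R}^{(k)}_D v_j=0$ by \eqref{GGG-91}, and observe $v_j\to u$ in $W^{k,p}(\mathbb{R}^n)$. Both then reduce the problem to placing $v_j\in W^{k,p}_D(\mathbb{R}^n)$, and both handle the easy inclusion ``$\subseteq$'' the same way.

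The divergence is in how each places $v_j$ in $W^{k,p}_D(\mathbb{R}^n)$. The paper observes that $v_j\in W^{k,q}(\mathbb{R}^n)$ for \emph{every} $q\in(\max\{1,n-d\},\infty)$, hence $v_j\in\mathscr{C}^{k-1}(\mathbb{R}^n)$ by embedding, so the derivatives $\partial^\alpha v_j$ ($|\alpha|\leq k-1$) vanish \emph{everywhere} on $D$; membership in $W^{k,p}_D(\mathbb{R}^n)$ then follows from the Hedberg--Wolff $(k,p)$-synthesis characterization \eqref{kaan67}. You instead propose a direct geometric argument: a Whitney-type decay estimate $|\partial^\beta v_j|\lesssim\mathrm{dist}(\cdot,D)^{k-|\beta|}$ near $D$, combined with a cutoff $\chi_\epsilon$ that excises an $\epsilon$-shell around $D$ of $\mathscr{L}^n$-measure $\lesssim\epsilon^{n-d}$ (by Ahlfors regularity), after which mollification at small scale produces admissible approximants with support off $D$. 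The scaling $(k-|\alpha|)p+(n-d)>0$ for $|\alpha|\leq k$ uses precisely $d<n$, which is where the geometry enters. This is more elementary and self-contained --- it bypasses the spectral synthesis theorem of \cite{HW83} entirely --- at the cost of requiring the pointwise decay estimate, which is the genuine technical content. Your sketch of that estimate (Taylor-remainder representation $v_j=\sum_Q\varphi_Q\,\dmeanint_D\big(\eta_j-T^{k-1}_y\eta_j\big)$ via $\sum_Q\varphi_Q\equiv 1$ near $D$, followed by Leibniz and the cancellations $\sum_Q\partial^\gamma\varphi_Q\equiv 0$) is on the right track, but remember it only holds in the region $\bigcup_{\ell(Q)\leq 1}Q$ where the partition of unity \eqref{GoPLa.4} actually sums to one --- which is fine for the cutoff argument since only small $\epsilon$ matters --- and for $|\beta|=k$ the estimate gives boundedness near $D$, which is not strictly needed for the cutoff argument if one uses dominated convergence for the undifferentiated-$\chi_\epsilon$ term, a simplification worth making.
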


\begin{proof}
Suppose that $p$ is as in \eqref{GGG-89J5gii}
and assume that $u\in W^{k,p}({\mathbb{R}}^n)$ is such that 
${\mathscr{R}}_D^{(k)}u=0$ at $\sigma$-a.e. point on $D$ or, equivalently,
in $B^{p,p}_{k-(n-d)/p}(D)$. Since ${\mathscr{C}}_c^\infty({\mathbb{R}}^n)$
is dense in $W^{k,p}({\mathbb{R}}^n)$, it is possible to select a sequence
\begin{eqnarray}\label{ayYYH.6a}
\{\varphi_j\}_{j\in{\mathbb{N}}}\subseteq{\mathscr{C}}_c^\infty({\mathbb{R}}^n)
\,\,\mbox{ such that }\,\,\varphi_j\longrightarrow u\,\,
\mbox{ in }\,\,W^{k,p}({\mathbb{R}}^n)\,\,\mbox{ as }\,\,j\to\infty.
\end{eqnarray}
In particular, by our assumptions on $u$ and the continuity of \eqref{GGG-89},
we have 
\begin{eqnarray}\label{ayYYH.6ai}
{\mathscr{R}}_D^{(k)}\varphi_j\to{\mathscr{R}}_D^{(k)}u=0
\,\,\mbox{ in }\,\,B^{p,p}_{k-(n-d)/p}(D)\,\,\mbox{ as }\,\,j\to\infty.
\end{eqnarray}
Going further, for each $j\in{\mathbb{N}}$ define 
\begin{eqnarray}\label{ayYYH.6b}
v_j:=\varphi_j-{\mathscr{E}}^{(k)}_D\big({\mathscr{R}}_D^{(k)}\varphi_j\big)
\,\,\,\mbox{ in }\,\,{\mathbb{R}}^n,
\end{eqnarray}
and note that, in light of \eqref{ayYYH.6b}, \eqref{ayYYH.6a}, and 
Theorem~\ref{GCC-67}, we have 
\begin{eqnarray}\label{aTBbii-U}
v_j\in W^{k,q}({\mathbb{R}}^n)\,\,\,\mbox{ whenever }\,\,\,\max\{1,n-d\}<q<\infty.
\end{eqnarray}
In concert with standard embedding results, this further entails
\begin{eqnarray}\label{aTBbii-U.2}
v_j\in{\mathscr{C}}^{k-1}({\mathbb{R}}^n)\,\,\,\mbox{ for each }\,\,\,j\in{\mathbb{N}}.
\end{eqnarray}
Next, thanks to \eqref{GGG-90}, for every $j\in{\mathbb{N}}$ we may estimate
\begin{eqnarray}\label{ayYYH.6c}
\|u-v_j\|_{W^{k,p}({\mathbb{R}}^n)}
&\leq & \|u-\varphi_j\|_{W^{k,p}({\mathbb{R}}^n)}
+\big\|{\mathscr{E}}^{(k)}_D\big({\mathscr{R}}_D^{(k)}\varphi_j\big)
\big\|_{W^{k,p}({\mathbb{R}}^n)}
\nonumber\\[4pt]
&\leq & \|u-\varphi_j\|_{W^{k,p}({\mathbb{R}}^n)}
+C\big\|{\mathscr{R}}_D^{(k)}\varphi_j\big\|_{B^{p,p}_{k-(n-d)/p}(D)}.
\end{eqnarray}
Consequently,
\begin{eqnarray}\label{ayTTga}
v_j\longrightarrow u\,\,\mbox{ in }\,\,W^{k,p}({\mathbb{R}}^n)\,\,\,
\mbox{ as }\,\,j\to\infty,
\end{eqnarray}
by \eqref{ayYYH.6c}, \eqref{ayYYH.6a}, and \eqref{ayYYH.6ai}.
Moreover, from \eqref{ayYYH.6b} and \eqref{GGG-91} we deduce that for every 
$j\in{\mathbb{N}}$, 
\begin{eqnarray}\label{ayTTga.8}
{\mathscr{R}}_D^{(k)}v_j=0\,\,\,\sigma\mbox{-a.e. on }\,\,D. 
\end{eqnarray}
From \eqref{ayTTga.8}, \eqref{Piba}, \eqref{PJa-abG}, \eqref{aTBbii-U.2},
and \eqref{Fq-A.4} we may now conclude that, for each $j\in{\mathbb{N}}$,
\begin{eqnarray}\label{Yan-Uba77}
(\partial^\alpha v_j)\big|_{D}=0\,\,\mbox{ everywhere on }\,\,D,
\,\,\mbox{ for each }\,\,\alpha\in{\mathbb{N}}_0^n\,\,\mbox{ with }\,\,
|\alpha|\leq k-1.
\end{eqnarray}
When used together with \eqref{kaan67}, \eqref{aTBbii-U} and \eqref{GGG-89J5gii}, 
the everywhere vanishing trace condition from \eqref{Yan-Uba77} implies that 
\begin{eqnarray}\label{kaan67-YYG}
v_j\in W^{k,p}_D({\mathbb{R}}^n)\,\,\,\mbox{ for every }\,\,j\in{\mathbb{N}}.
\end{eqnarray}
Finally, from \eqref{kaan67-YYG}, \eqref{ayTTga} and the fact that 
$W^{k,p}_D({\mathbb{R}}^n)$ is a closed subspace of $W^{k,p}({\mathbb{R}}^n)$
we deduce that 
\begin{eqnarray}\label{PJE-1Uhab}
u\in W^{k,p}_D({\mathbb{R}}^n).
\end{eqnarray}
The membership in \eqref{PJE-1Uhab} proves the right-to-left inclusion 
in \eqref{GYan-YHN64}.

Conversely, if $u\in W^{k,p}_D({\mathbb{R}}^n)$ then there exists
a sequence
\begin{eqnarray}\label{Ja-YjanH}
\begin{array}{c}
\{\varphi_j\}_{j\in{\mathbb{N}}}\subseteq{\mathscr{C}}_c^\infty({\mathbb{R}}^n)
\,\,\mbox{ such that }\,\,D\cap{\rm supp}\,\varphi_j=\emptyset
\,\,\mbox{ for each }\,\,j\in{\mathbb{N}},
\\[6pt]
\mbox{and }\,\,\varphi_j\longrightarrow u\,\,\mbox{ in }\,\,W^{k,p}({\mathbb{R}}^n)
\,\,\mbox{ as }\,\,j\to\infty.
\end{array}
\end{eqnarray}
In particular, from \eqref{Ja-YjanH}, \eqref{Piba} and \eqref{PJa-abG} we see that
\begin{eqnarray}\label{UamBBB.1}
{\mathscr{R}}_D^{(k)}\varphi_j=0\,\,\,\mbox{ everywhere on }\,\,D,
\,\,\,\mbox{ for each }\,\,j\in{\mathbb{N}}.
\end{eqnarray}
Collectively, \eqref{Ja-YjanH}, the continuity of
\eqref{GGG-89}, and \eqref{UamBBB.1} imply that ${\mathscr{R}}_D^{(k)}u=0$ in 
$B^{p,p}_{k-(n-d)/p}(D)$ or, equivalently, at ${\mathcal{H}}^d$-a.e. point on $D$.
This establishes the left-to-right inclusion in \eqref{GYan-YHN64}, and finishes
the proof of the theorem.
\end{proof}

\begin{corollary}\label{Gbab5tg4}
Assume that $D$ is a closed subset of ${\mathbb{R}}^n$ which is $d$-Ahlfors regular 
for some $d\in(0,n)$, and fix $k\in{\mathbb{N}}$ and $p$ such that 
$\max\,\{1,n-d\}<p<\infty$. Then for any function $v\in W^{k,p}({\mathbb{R}}^n)$, 
\begin{eqnarray}\label{Uamiii}
{\mathscr{R}}_D^{(k)}v=0\,\,\mbox{ at ${\mathcal{H}}^d$-a.e. point on }\,\,D
\end{eqnarray}
if and only if 
\begin{eqnarray}\label{Piba.2Bf4tg}
{\mathscr{R}}^{(k)}_D v=0\,\,\mbox{ quasi-everywhere on }\,\,D.
\end{eqnarray}
\end{corollary}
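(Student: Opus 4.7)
The plan is to deduce this corollary directly by composing the two characterizations of $W^{k,p}_D(\mathbb{R}^n)$ that have just been established. The point is that both the capacitary vanishing condition and the Hausdorff-measure vanishing condition are independently known to single out exactly the same subspace of $W^{k,p}(\mathbb{R}^n)$, namely $W^{k,p}_D(\mathbb{R}^n)$, so their equivalence is immediate once one places them side by side.

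More precisely, I would first invoke Theorem~\ref{YTab-YHb} in the form \eqref{kaan67} (which is the special case $\Omega = \mathbb{R}^n$). Since $p \in (1,\infty)$ (as guaranteed by $\max\{1,n-d\} < p < \infty$ together with $d < n$), this tells us that for any $v \in W^{k,p}(\mathbb{R}^n)$,
\begin{equation*}
v \in W^{k,p}_D(\mathbb{R}^n) \iff \mathscr{R}^{(k)}_D v = 0 \text{ quasi-everywhere on } D.
\end{equation*}
Next I would invoke Theorem~\ref{YTah-YYHa8}, whose hypothesis $\max\{1,n-d\} < p < \infty$ is precisely what is assumed in the corollary. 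By \eqref{GYan-YHN64}, this gives
\begin{equation*}
v \in W^{k,p}_D(\mathbb{R}^n) \iff \mathscr{R}_D^{(k)} v = 0 \text{ at } \mathcal{H}^d\text{-a.e. point on } D.
\end{equation*}
Chaining these two biconditionals yields the stated equivalence between \eqref{Uamiii} and \eqref{Piba.2Bf4tg}.

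There is no substantive obstacle here, since the work has all been carried out upstream: the Hedberg--Wolff $(k,p)$-synthesis theorem is what underlies \eqref{kaan67}, while the Jonsson--Wallin trace/extension machinery together with the argument regularizing an arbitrary $u \in W^{k,p}_D(\mathbb{R}^n)$ by the sequence $v_j := \varphi_j - \mathscr{E}^{(k)}_D(\mathscr{R}^{(k)}_D \varphi_j)$ is what underlies Theorem~\ref{YTah-YYHa8}. The only small caveat worth flagging in the write-up is the compatibility of the two interpretations of $\mathscr{R}^{(k)}_D$: the Bessel-capacity viewpoint from Definition~\ref{DEF234} and the Lebesgue-point viewpoint from \eqref{THna}--\eqref{THna.2} agree wherever both are defined, so the symbol $\mathscr{R}^{(k)}_D v$ in \eqref{Uamiii} and in \eqref{Piba.2Bf4tg} really does refer to the same object, just tested against two different notions of negligibility.
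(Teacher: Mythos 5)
Your proof is correct and essentially identical to the paper's, which likewise deduces the corollary immediately by combining the quasi-everywhere characterization of $W^{k,p}_D(\mathbb{R}^n)$ from \eqref{kaan67} with the ${\mathcal{H}}^d$-a.e.\ characterization from \eqref{GYan-YHN64} (the paper's proof appears to carry a label typo, citing \eqref{kkiii} where \eqref{GYan-YHN64} is clearly intended). Your closing remark that the two readings of $\mathscr{R}^{(k)}_D v$ coincide where both are defined is a sensible point to make explicit, since the object being tested is the same in both \eqref{Uamiii} and \eqref{Piba.2Bf4tg}.
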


\begin{proof}
This is a immediate consequence of formulas \eqref{kkiii}, \eqref{kaan67}.
\end{proof}

We shall now provide an alternative description of the space $W^{k,p}_D(\Omega)$, 
which should be contrasted to that provided in Theorem~\ref{YTab-YHb}. The new 
feature is is that the vanishing trace condition is now formulated using the 
Hausdorff measure in place of Bessel capacities.

\begin{theorem}[Structure Theorem for spaces on subdomains of 
${\mathbb{R}}^n$: Version~2]\label{YTah-YYHa9}
Suppose that $\Omega\subseteq{\mathbb{R}}^n$ and $D\subseteq\overline{\Omega}$ 
are such that $D$ is closed and $d$-Ahlfors regular for some $d\in(0,n)$, while 
$\Omega$ is locally an $(\varepsilon,\delta)$-domain near $\partial\Omega\setminus D$. 
In addition, fix a number $k\in{\mathbb{N}}$ and assume that $\max\,\{1,n-d\}<p<\infty$. Then 
\begin{eqnarray}\label{GYan-YJaYn}
W^{k,p}_D(\Omega)=\Big\{u\big|_{\Omega}:\,u\in W^{k,p}({\mathbb{R}}^n)
\,\,\mbox{ and }\,\,{\mathscr{R}}_D^{(k)}u=0
\,\,\mbox{ at ${\mathcal{H}}^d$-a.e. point on }\,\,D\Big\}.
\end{eqnarray}

In particular, if $\Omega$ is a nonempty open subset of ${\mathbb{R}}^n$ 
with the property that $\partial\Omega$ is $d$-Ahlfors regular for some 
$d\in(0,n)$, then 
\begin{eqnarray}\label{kaan655.LPK.2}
\mathring{W}^{k,p}(\Omega)=\Big\{u\big|_{\Omega}:\,u\in W^{k,p}({\mathbb{R}}^n)
\,\,\mbox{ and }\,\,{\mathscr{R}}_{\partial\Omega}^{(k)}u=0
\,\,\mbox{ at ${\mathcal{H}}^d$-a.e. point on }\,\,\partial\Omega\Big\},
\end{eqnarray}
whenever $k\in{\mathbb{N}}$ and $\max\,\{1,n-d\}<p<\infty$.
\end{theorem}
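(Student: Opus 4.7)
The plan is to deduce Theorem~\ref{YTah-YYHa9} by splicing together two ingredients that have already been assembled in the paper: the extension/restriction correspondence from Corollary~\ref{cjPPa}, which converts questions on $\Omega$ into questions on all of ${\mathbb{R}}^n$, and the intrinsic Hausdorff-measure characterization of $W^{k,p}_D({\mathbb{R}}^n)$ provided by Theorem~\ref{YTah-YYHa8}.

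First, under the standing hypotheses (in particular that $\Omega$ is locally an $(\varepsilon,\delta)$-domain near $\partial\Omega\setminus D$ and $D$ is closed in $\overline{\Omega}$), Corollary~\ref{cjPPa} gives the identification
\begin{equation*}
W^{k,p}_D(\Omega)=\bigl\{u\big|_{\Omega}:\,u\in W^{k,p}_D({\mathbb{R}}^n)\bigr\}.
\end{equation*}
Next, since $D\subseteq{\mathbb{R}}^n$ is closed and $d$-Ahlfors regular with $d\in(0,n)$, and since $p$ satisfies $\max\{1,n-d\}<p<\infty$, Theorem~\ref{YTah-YYHa8} yields
\begin{equation*}
W^{k,p}_D({\mathbb{R}}^n)=\bigl\{u\in W^{k,p}({\mathbb{R}}^n):\,{\mathscr{R}}_D^{(k)}u=0\,\,\text{at } {\mathcal{H}}^d\text{-a.e.\ point on } D\bigr\}.
\end{equation*}
Substituting the second identification into the first produces exactly \eqref{GYan-YJaYn}. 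This is the entire content of the first assertion; no new analytic input is needed beyond what is already available.

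For the second assertion, I would specialize the first to $D:=\partial\Omega$. In this situation $\partial\Omega\setminus D=\emptyset$, so the condition that $\Omega$ be locally an $(\varepsilon,\delta)$-domain near $\partial\Omega\setminus D$ is vacuous (it suffices to take $J=\emptyset$ in Definition~\ref{OM-Oah-Y88}, as noted in the discussion preceding Lemma~\ref{yufa}). Hence the first part of the theorem applies to any nonempty open $\Omega\subseteq{\mathbb{R}}^n$ whose boundary is $d$-Ahlfors regular, and it gives the description
\begin{equation*}
W^{k,p}_{\partial\Omega}(\Omega)=\bigl\{u\big|_{\Omega}:\,u\in W^{k,p}({\mathbb{R}}^n),\,{\mathscr{R}}_{\partial\Omega}^{(k)}u=0\,\,{\mathcal{H}}^d\text{-a.e.\ on }\partial\Omega\bigr\}.
\end{equation*}
Invoking part $(5)$ of Lemma~\ref{TYRD-f4f} to rewrite the left-hand side as $\mathring{W}^{k,p}(\Omega)$ completes the proof of \eqref{kaan655.LPK.2}.

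There is essentially no obstacle here beyond bookkeeping: all the hard work is concentrated in Theorem~\ref{YTah-YYHa8} (which itself relies on the Hedberg--Wolff synthesis theorem, Netrusov's $p=1$ endpoint, and the Jonsson--Wallin trace/extension theory of Theorem~\ref{GCC-67}) and in the extension theorem for locally $(\varepsilon,\delta)$-domains (Theorem~\ref{cjeg.5AD}) that underlies Corollary~\ref{cjPPa}. The only points worth double-checking are that the range of $p$ is compatible across both cited results (it is: both require $\max\{1,n-d\}<p<\infty$), and that the identification of the ``quasi-everywhere'' and ``${\mathcal{H}}^d$-a.e.'' vanishing conditions implicit in passing between Theorem~\ref{YTab-YHb} and Theorem~\ref{YTah-YYHa9} is indeed guaranteed by Corollary~\ref{Gbab5tg4} in the Ahlfors-regular setting.
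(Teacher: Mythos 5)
Your proof is correct and follows essentially the same route as the paper: combine Corollary~\ref{cjPPa} with Theorem~\ref{YTah-YYHa8} to obtain \eqref{GYan-YJaYn}, then specialize $D:=\partial\Omega$ and invoke part $(5)$ of Lemma~\ref{TYRD-f4f} for \eqref{kaan655.LPK.2}. The extra remarks you include (that the $(\varepsilon,\delta)$ condition is vacuous when $D=\partial\Omega$, and that the $p$-ranges are compatible) are helpful clarifications but do not change the argument.
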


\begin{proof}
Formula \eqref{GYan-YJaYn} follows from Theorem~\ref{YTah-YYHa8} in combination with 
Corollary~\ref{cjPPa}, while formula \eqref{kaan655.LPK.2} is a direct consequence 
of \eqref{GYan-YJaYn} and $(5)$ in Lemma~\ref{TYRD-f4f}.
\end{proof}

In the last part of this section we shall revisit the Jonsson-Wallin extension operator 
from Theorem~\ref{GCC-67}, the main goal being establishing the refinement of 
property \eqref{GGG-90} presented in Theorem~\ref{Ohav-7UJ.2}. This requires a number 
of preliminaries to which we now turn.  

Assume that $D\subseteq{\mathbb{R}}^n$ is a given closed set which is $d$-Ahlfors 
regular for some $d\in(0,n)$, and consider $\sigma:={\mathcal{H}}^d\lfloor D$. 
For any $\sigma$-measurable function $f$ on $D$, define 
\begin{eqnarray}\label{PJE-1XV.4R-1}
{\rm supp}\,f:=\big\{x\in D:\,\mbox{there is no $r>0$ such that }\,\,f\equiv 0\,\,
\mbox{ $\sigma$-a.e. in }\,B(x,r)\cap D\big\}.
\end{eqnarray}
In particular, ${\rm supp}\,f$ is a closed subset of $D$ and  
$f$ vanishes $\sigma$-a.e. on $D\setminus{\rm supp}\,f$. If, in addition, 
two numbers $k,p$ are given such that $k\in{\mathbb{N}}$ and $\max\,\{1,n-d\}<p<\infty$, then for every $\dot{f}=\{f_\alpha\}_{|\alpha|\leq k-1}\in B^{p,p}_{k-(n-d)/p}(D)$ 
we define 
\begin{eqnarray}\label{PJE-1XV.4R-2}
{\rm supp}\,\dot{f}:=\bigcup_{|\alpha|\leq k-1}{\rm supp}\,f_\alpha.
\end{eqnarray}
In a first stage, we wish to augment Theorem~\ref{GCC-67} with the following result.

\begin{proposition}\label{Ohav-7UJ}
Let $D\subseteq{\mathbb{R}}^n$ be a closed set which is $d$-Ahlfors 
regular for some $d\in(0,n)$, and define $\sigma:={\mathcal{H}}^d\lfloor D$.
Also, assume that $k\in{\mathbb{N}}$ and $\max\,\{1,n-d\}<p<\infty$.
Then the extension operator ${\mathscr{E}}^{(k)}_D$ from Theorem~\ref{GCC-67}
has the property that 
\begin{eqnarray}\label{PJE-1XV.4R-3}
D\cap{\rm supp}\,\big({\mathscr{E}}^{(k)}_D\dot{f}\big)
={\rm supp}\,\dot{f},\qquad\forall\,\dot{f}\in B^{p,p}_{k-(n-d)/p}(D).
\end{eqnarray}
Furthermore, 
\begin{eqnarray}\label{PJE-1XV.4R-3B}
\dot{f}\in B^{p,p}_{k-(n-d)/p}(D)\,\,\mbox{ with }\,\,
{\rm supp}\,\dot{f}\,\,\mbox{ compact}\,\Longrightarrow\,
{\rm supp}\,\big({\mathscr{E}}^{(k)}_D\dot{f}\big)\,\,\mbox{ compact}.
\end{eqnarray}
\end{proposition}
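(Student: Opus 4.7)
The plan is to mirror the strategy used for Theorem~\ref{cj-TRf}, exploiting the structural analogy between the Jonsson--Wallin formula \eqref{GoPLa.2} and Jones' formula \eqref{PJE-a1}: both are locally finite sums of products $\varphi_Q\cdot(\text{polynomial best-fitting the data on a reference set near }Q)$, where the reference set for ${\mathscr{E}}^{(k)}_D$ is $D\cap B(x_Q,6\,{\rm diam}\,Q)$ instead of Jones' reflected cube $Q^*$.

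For the right-to-left inclusion in \eqref{PJE-1XV.4R-3}, I would argue by contradiction. Assuming $x_0\in{\rm supp}\,\dot{f}$ but $x_0\notin{\rm supp}\,({\mathscr{E}}^{(k)}_D\dot{f})$, there would exist $R>0$ such that ${\mathscr{E}}^{(k)}_D\dot{f}$ vanishes ${\mathscr{L}}^n$-a.e.\ in $B(x_0,R)$, hence so do all its weak derivatives. For each $x\in D\cap B(x_0,R/2)$ the averages appearing in the limit \eqref{THna.2} are then identically zero for $r<R/2$, so ${\mathscr{R}}_D^{(k)}({\mathscr{E}}^{(k)}_D\dot{f})=0$ at $\sigma$-a.e.\ point of $D\cap B(x_0,R/2)$. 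Combining with the identity \eqref{GGG-91} this forces $f_\alpha=0$ $\sigma$-a.e.\ on $D\cap B(x_0,R/2)$ for every $|\alpha|\leq k-1$, contradicting $x_0\in{\rm supp}\,\dot{f}$.

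For the reverse inclusion, I would introduce the Jonsson--Wallin analogue of \eqref{PJE-5taGB},
\[
F_{\dot{f}}:=\bigcup_{Q\in{\mathcal{F}}}\tfrac{17}{16}Q,\qquad
{\mathcal{F}}:=\bigl\{Q\in{\mathcal{W}}({\mathbb{R}}^n\setminus D):\,\ell(Q)\leq 1
\text{ and }D\cap B(x_Q,6\,{\rm diam}\,Q)\cap{\rm supp}\,\dot{f}\neq\emptyset\bigr\},
\]
and establish ${\rm supp}\,({\mathscr{E}}^{(k)}_D\dot{f})\subseteq{\rm supp}\,\dot{f}\cup\overline{F_{\dot{f}}}$. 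The key observation is that when $Q\notin{\mathcal{F}}$ each coefficient $f_\alpha$ vanishes $\sigma$-a.e.\ on $D\cap B(x_Q,6\,{\rm diam}\,Q)$, rendering $P_{\dot{f}}(x,y)=0$ for $\sigma$-a.e.\ $y$ there and killing that summand; the support constraint on $\varphi_Q$ then confines ${\rm supp}\,({\mathscr{E}}^{(k)}_D\dot{f})$ to $D\cup\overline{F_{\dot{f}}}$, with ${\mathscr{L}}^n(D)=0$ absorbing the measure-zero discrepancy. The remaining step is $D\cap\overline{F_{\dot{f}}}\subseteq{\rm supp}\,\dot{f}$. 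Here I would approximate $x_0\in D\cap\overline{F_{\dot{f}}}$ by $x_j\in\tfrac{17}{16}Q_j$ with $Q_j\in{\mathcal{F}}$, then invoke \eqref{Patah-8n.E}--\eqref{Patah-8n.F} (applied to the Whitney decomposition of ${\mathbb{R}}^n\setminus D$) to deduce $\ell(Q_j)\to 0$ and $x_{Q_j}\to x_0$. Assuming $x_0\notin{\rm supp}\,\dot{f}$ and picking $r>0$ with $B(x_0,r)\cap{\rm supp}\,\dot{f}=\emptyset$, for $j$ large the averaging ball $B(x_{Q_j},6\,{\rm diam}\,Q_j)$ lies inside $B(x_0,r)$, contradicting $Q_j\in{\mathcal{F}}$.

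Finally, \eqref{PJE-1XV.4R-3B} is a triangle-inequality consequence: since $\ell(Q)\leq 1$ for every $Q\in{\mathcal{F}}$, each $x\in F_{\dot{f}}$ satisfies ${\rm dist}(x,{\rm supp}\,\dot{f})\leq C(n)$, so compactness of ${\rm supp}\,\dot{f}$ forces $\overline{F_{\dot{f}}}\cup{\rm supp}\,\dot{f}$ (and hence ${\rm supp}\,({\mathscr{E}}^{(k)}_D\dot{f})$) to be bounded, thus compact. The main obstacle will be the scale-collapse step in the third paragraph, which parallels the passage \eqref{PJbb}--\eqref{PJu-n1} in the proof of Theorem~\ref{cj-TRf}; the delicate point is that the averaging set $B(x_Q,6\,{\rm diam}\,Q)$ must shrink quickly enough relative to $|x_j-x_0|$ to be trapped inside the predetermined neighborhood $B(x_0,r)$, which is ensured precisely by the Whitney distance estimates.
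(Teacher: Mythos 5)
Your proposal follows essentially the same route as the paper's proof: you introduce the same set (your $F_{\dot{f}}$ is the paper's $G_{\dot{f}}$), make the same key observation that $P_{\dot{f}}(x,\cdot)$ is supported on ${\rm supp}\,\dot{f}$ so that only cubes in ${\mathcal{F}}$ contribute, establish $D\cap\overline{F_{\dot{f}}}\subseteq{\rm supp}\,\dot{f}$ via the same Whitney distance estimate \eqref{Patah-8n.F}, use ${\mathscr{R}}_D^{(k)}\circ{\mathscr{E}}_D^{(k)}=I$ together with \eqref{THna.2} for the reverse inclusion, and bound ${\rm dist}(x,{\rm supp}\,\dot{f})$ on $F_{\dot{f}}$ using $\ell(Q)\leq 1$ for compactness. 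The only differences are cosmetic: you phrase some steps as contradictions where the paper argues directly, and you treat the inclusions in the opposite order.
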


\begin{proof}
Pick an arbitrary $\dot{f}\in B^{p,p}_{k-(n-d)/p}(D)$ and note that, 
thanks to \eqref{GoPLa.1}, we have 
\begin{eqnarray}\label{PJE-1XV.4R-4.a}
{\rm supp}\,\big(P_{\dot{f}}(x,\cdot)\big)\subseteq
{\rm supp}\,\dot{f},\qquad\forall\,x\in{\mathbb{R}}^n.
\end{eqnarray}
Consequently, for every $x\in{\mathbb{R}}^n\setminus D$, 
formula \eqref{GoPLa.2} may be re-written in the form 
\begin{eqnarray}\label{PJE-1XV.4R-4}
\big({\mathscr{E}}^{(k)}_D\dot{f}\big)(x)=
\sum\limits_{\stackrel{Q\in{\mathcal{W}}({\mathbb{R}}^n\setminus D)}{\ell(Q)\leq 1}}
\frac{\varphi_Q(x)}{{\mathcal{H}}^d\big(D\cap B(x_Q,6\,{\rm diam}\,(Q)\big)}
\int_{B(x_Q,6\,{\rm diam}\,(Q))\cap{\rm supp}\,\dot{f}}P_{\dot{f}}(x,y)
\,d{\mathcal{H}}^d(y).
\end{eqnarray}
From this and the support condition on the $\varphi_Q$'s from \eqref{GoPLa.3}, 
we may then conclude that 
\begin{eqnarray}\label{PJE-23av.DFDF}
{\rm supp}\,\big({\mathscr{E}}^{(k)}_D\dot{f}\big)
\subseteq\overline{G_{\dot{f}}}
\end{eqnarray}
where 
\begin{eqnarray}\label{PJE-5taGB.DFDF}
G_{\dot{f}}:=\bigcup_{\stackrel{Q\in{\mathcal{W}}({\mathbb{R}}^n\setminus D),\,\,
\ell(Q)\leq 1}{B(x_Q,6\,{\rm diam}\,(Q))
\cap{\rm supp}\,\dot{f}\not=\emptyset}}\tfrac{17}{16}Q.
\end{eqnarray}
As such, the left-to-right inclusion in \eqref{PJE-1XV.4R-3} 
follows as soon as we establish that
\begin{eqnarray}\label{PJE-23av.2.DFDF}
D\cap\overline{G_{\dot{f}}}\subseteq{\rm supp}\,\dot{f}.
\end{eqnarray}
To justify \eqref{PJE-23av.2.DFDF}, select an arbitrary point 
$x_o\in D\cap\overline{G_{\dot{f}}}$. The membership of $x_o$ to 
$\overline{G_{\dot{f}}}$ entails the existence of a sequence of dyadic cubes 
$\{Q_j\}_j\subseteq{\mathcal{W}}\big({\mathbb{R}}^n\setminus D\big)$ 
with $\ell(Q_j)\leq 1$ for every $j$, along with a sequence $\{x_j\}_j$ 
of points in ${\mathbb{R}}^n$, satisfying 
\begin{eqnarray}\label{PJE-E.a3.DFDF}
&& x_j\in\tfrac{17}{16}Q_j\,\,\,\mbox{ for every }\,\,j,
\\[4pt]
&& B\big(x_{Q_j},6\,{\rm diam}\,(Q_j)\big)\cap{\rm supp}\,\dot{f}\not=\emptyset
\,\,\,\mbox{ for every }\,\,j,
\label{PJE-E.a3.i.DFDF}
\\[4pt]
&& \lim_{j}x_j=x_o.
\label{PJE-E.a3.ii.DFDF}
\end{eqnarray}
Now, from \eqref{Patah-8n.F} we conclude that there exists $c\in(0,1)$ such that
\begin{eqnarray}\label{Patah-8n.Fi.DFDF}
c\,\ell(Q_j)\leq{\rm dist}\,\big(\tfrac{17}{16}Q_j,D\,\big)
\leq{\rm dist}\,\big(x_j,D\,\big)\leq |x_j-x_o|,\,\,\,\mbox{ for all }\,\,j,
\end{eqnarray}
where the last inequality uses the fact that $x_o\in D$. In concert with 
\eqref{PJE-E.a3.ii.DFDF} and \eqref{PJE-E.a3.DFDF}, this forces
\begin{eqnarray}\label{PJbb.DFDF}
\lim_{j}\ell(Q_j)=0\,\,\,\mbox{ and }\,\,\,\lim_{j}x_{Q_j}=x_o.
\end{eqnarray}
On the other hand, from \eqref{PJE-E.a3.i.DFDF} we deduce that 
for each $j$ there exists $y_j\in{\rm supp}\,\dot{f}$ such that
\begin{eqnarray}\label{PJbb.DFDF.2}
\big|x_{Q_j}-y_j\big|<6\,{\rm diam}\,(Q_j)=6\sqrt{n}\,\ell(Q_j).
\end{eqnarray}
Consequently, 
\begin{eqnarray}\label{PJbb.LLs3.DFDF}
x_o=\lim_{j}x_{Q_j}=\lim_{j}y_j\in{\rm supp}\,\dot{f}
\end{eqnarray}
by \eqref{PJbb.DFDF}, \eqref{PJbb.DFDF.2}, and the fact that 
${\rm supp}\,\dot{f}$ is a closed set. This justifies \eqref{PJE-23av.2.DFDF} and
finishes the proof of the left-to-right inclusion in \eqref{PJE-1XV.4R-3}. 

To proceed in the opposite direction, assume now that $x_o\in D$ is such that 
$x_o\notin{\rm supp}\,\big({\mathscr{E}}^{(k)}_D\dot{f}\big)$. Then there 
exists $r>0$ with the property that ${\mathscr{E}}^{(k)}_D\dot{f}=0$ at 
${\mathscr{L}}^n$-a.e. point in $B(x_o,r)$. As a consequence of this, \eqref{GGG-91}, 
and \eqref{THna.2}, at ${\mathcal{H}}^d$-a.e. $x\in D\cap B(x_o,r)$ we may write 
\begin{eqnarray}\label{PlaYG865gv3}
\dot{f}(x)={\mathscr{R}}^{(k)}_D\big({\mathscr{E}}^{(k)}_D\dot{f}\big)(x)
=\left\{\lim\limits_{\rho\to 0^{+}}\meanint_{B(x,\rho)}
\partial^\alpha\big({\mathscr{E}}^{(k)}_D\dot{f}\big)
\,d{\mathscr{L}}^n\right\}_{|\alpha|\leq k-1}=0.
\end{eqnarray}
Thus, $\dot{f}=0$ at ${\mathcal{H}}^d$-a.e. point on $D\cap B(x_o,r)$, which shows 
that $x_o\notin{\rm supp}\,\dot{f}$, by \eqref{PJE-1XV.4R-1}-\eqref{PJE-1XV.4R-2}. 
Altogether, this argument shows that ${\rm supp}\,\dot{f}\subseteq
D\cap{\rm supp}\,\big({\mathscr{E}}^{(k)}_D\dot{f}\big)$, hence 
the right-to-left inclusion in \eqref{PJE-1XV.4R-3} holds as well. 
This concludes the proof of \eqref{PJE-1XV.4R-3}.

Turning our attention to \eqref{PJE-1XV.4R-3B}, assume that some 
$\dot{f}\in B^{p,p}_{k-(n-d)/p}(D)$ such that ${\rm supp}\,\dot{f}$ is a 
compact set has been fixed. In view of \eqref{PJE-23av.DFDF}, it suffices
to show that $G_{\dot{f}}$ is a bounded set. However, a simple calculation based
on \eqref{PJE-5taGB.DFDF} reveals that 
\begin{eqnarray}\label{PJbb.LLs3.DFDF.6}
G_{\dot{f}}\subseteq\big\{x\in{\mathbb{R}}^n:\,{\rm dist}\,(x,{\rm supp}\,\dot{f})
<\sqrt{n}\big(6+\tfrac{17}{32}\big)\big\},
\end{eqnarray}
hence the desired conclusion follows. 
\end{proof}

In addition to the Besov space naturally associated with the 
quantitative condition \eqref{Fq-A.5PL2}, we shall now bring into focus 
a related (closed) subspace of it, whose distinguished feature is the requirement that 
the Besov functions vanish, in an appropriate sense, on a given subset of 
the (Ahlfors regular) ambient. This class is formally introduced in the 
following definition. 

\begin{definition}\label{DDDv-7UJ.2PD}
Let $D\subseteq{\mathbb{R}}^n$ be a closed set which is $d$-Ahlfors regular for 
some $d\in(0,n)$, and fix some $k\in{\mathbb{N}}$. For each $p$ such that
$\max\,\{1,n-d\}<p<\infty$ and each closed subset $F$ of $D$ define 
\begin{eqnarray}\label{maygbrg57mi}
\begin{array}{c}
B^{p,p}_{k-(n-d)/p}(D;F):=
\,\mbox{the closure in }\,\,B^{p,p}_{k-(n-d)/p}(D)\,\,\mbox{ of the space }\,\,
\\[8pt]
\Big\{\{(\partial^\alpha\varphi)\big|_{D}\big\}_{|\alpha|\leq k-1}:\,
\varphi\in{\mathscr{C}}^\infty_c({\mathbb{R}}^n)\,\,\mbox{ and }\,\,
F\cap\Big(\bigcup\limits_{|\alpha|\leq k-1}
{\rm supp}\,\big(\partial^\alpha\varphi\big|_D\big)\Big)=\emptyset\Big\},
\end{array}
\end{eqnarray}
where each ${\rm supp}\,\big(\partial^\alpha\varphi\big|_D\big)$ is interpreted in 
the sense of \eqref{PJE-1XV.4R-1}, regarding $\partial^\alpha\varphi\big|_D$ 
as a ${\mathcal{H}}^d\lfloor D$-measurable function on $D$.
\end{definition}

\noindent Obviously, in the context of the above definition, 
\begin{eqnarray}\label{aj8trrN.1}
&& B^{p,p}_{k-(n-d)/p}(D;F)\,\,\mbox{ is a closed subspace of }\,\,
B^{p,p}_{k-(n-d)/p}(D), 
\\[4pt]
&& \mbox{the class $B^{p,p}_{k-(n-d)/p}(D;F)$ 
is monotonic with respect to $F$, and} 
\label{aj8trrN.2}
\\[4pt]
&& B^{p,p}_{k-(n-d)/p}(D;D)=\{0\}.
\label{aj8trrN.3}
\end{eqnarray}
Moreover, from Theorem~\ref{GCC-67} it is also clear that 
\begin{eqnarray}\label{aj8trrN.4}
B^{p,p}_{k-(n-d)/p}(D;\emptyset)=B^{p,p}_{k-(n-d)/p}(D).
\end{eqnarray}

The relevance of the category of Besov spaces considered in
Definition~\ref{DDDv-7UJ.2PD} is most apparent in the context of the theorem 
below, refining the Jonsson-Wallin trace/extension results from Theorem~\ref{GCC-67}.

\begin{theorem}[Extending/restricting partially vanishing functions]\label{Ohav-7UJ.2}
Suppose that $D\subseteq{\mathbb{R}}^n$ is a closed set which is $d$-Ahlfors 
regular for some $d\in(0,n)$, and assume that $F$ is a closed subset of $D$.
Also, fix $k\in{\mathbb{N}}$ and some $p$ such that $\max\,\{1,n-d\}<p<\infty$. 

Then the extension and restriction operators, ${\mathscr{E}}^{(k)}_D$,
${\mathscr{R}}^{(k)}_D$, from Theorem~\ref{GCC-67} have the property that 
\begin{eqnarray}\label{rg57miii}
&& {\mathscr{E}}^{(k)}_D:B^{p,p}_{k-(n-d)/p}(D;F)\longrightarrow
W^{k,p}_F({\mathbb{R}}^n),
\\[4pt]
&& {\mathscr{R}}^{(k)}_D:W^{k,p}_F({\mathbb{R}}^n)\longrightarrow 
B^{p,p}_{k-(n-d)/p}(D;F),
\label{rg57miii.888}
\end{eqnarray}
are well-defined, linear and bounded mappings satisfying 
\begin{eqnarray}\label{rg57miii.999}
{\mathscr{R}}^{(k)}_D\circ{\mathscr{E}}^{(k)}_D=I,\quad
\mbox{ the identity on }\,\,B^{p,p}_{k-(n-d)/p}(D;F).
\end{eqnarray}
In particular, the restriction operator in \eqref{rg57miii.888} is onto. 
\end{theorem}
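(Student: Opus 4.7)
The plan is to establish both mapping properties by combining the boundedness already given by Theorem~\ref{GCC-67} with the support-preservation property from Proposition~\ref{Ohav-7UJ}, then deduce \eqref{rg57miii.999} and surjectivity as formal consequences.

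For the trace operator \eqref{rg57miii.888}, I would start with any $u\in W^{k,p}_F(\mathbb{R}^n)$, pick an approximating sequence $\{\varphi_j\}\subseteq{\mathscr{C}}^\infty_c(\mathbb{R}^n)$ with $F\cap{\rm supp}\,\varphi_j=\emptyset$ and $\varphi_j\to u$ in $W^{k,p}(\mathbb{R}^n)$. Since each $\varphi_j$ is smooth, formula \eqref{THna.2} gives ${\mathscr{R}}^{(k)}_D\varphi_j=\{(\partial^\alpha\varphi_j)|_D\}_{|\alpha|\leq k-1}$, and the inclusion ${\rm supp}\,((\partial^\alpha\varphi_j)|_D)\subseteq D\cap{\rm supp}\,(\partial^\alpha\varphi_j)\subseteq D\cap{\rm supp}\,\varphi_j$ shows that this jet lies in the generating family of Definition~\ref{DDDv-7UJ.2PD}. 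Continuity of ${\mathscr{R}}^{(k)}_D$ from \eqref{GGG-89} then places the limit ${\mathscr{R}}^{(k)}_D u$ inside the closure $B^{p,p}_{k-(n-d)/p}(D;F)$, with the induced bound on the operator norm.

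For the extension operator \eqref{rg57miii}, I would take $\dot{f}\in B^{p,p}_{k-(n-d)/p}(D;F)$ and approximate it by jets $\dot{f}_j=\{(\partial^\alpha\varphi_j)|_D\}$ with $\varphi_j\in{\mathscr{C}}^\infty_c(\mathbb{R}^n)$ and ${\rm supp}\,\dot{f}_j\cap F=\emptyset$ (so, a fortiori, ${\rm supp}\,\dot{f}_j$ is compact). By \eqref{PJE-1XV.4R-3B} in Proposition~\ref{Ohav-7UJ}, ${\mathscr{E}}^{(k)}_D\dot{f}_j$ has compact support, and by \eqref{PJE-1XV.4R-3} together with $F\subseteq D$ one gets
\[
F\cap{\rm supp}\,\big({\mathscr{E}}^{(k)}_D\dot{f}_j\big)
=F\cap D\cap{\rm supp}\,\big({\mathscr{E}}^{(k)}_D\dot{f}_j\big)
=F\cap{\rm supp}\,\dot{f}_j=\emptyset.
\]
The key step here — which I expect to be the main technical obstacle — is turning this disjointness into membership in $W^{k,p}_F(\mathbb{R}^n)$, since ${\mathscr{E}}^{(k)}_D\dot{f}_j$ is only Sobolev, not smooth. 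I would resolve this by mollification: since $F$ is closed and ${\rm supp}\,\big({\mathscr{E}}^{(k)}_D\dot{f}_j\big)$ is compact and disjoint from $F$, their distance is positive, so for sufficiently small $\epsilon>0$ the convolution $\theta_\epsilon\ast{\mathscr{E}}^{(k)}_D\dot{f}_j$ is in ${\mathscr{C}}^\infty_c(\mathbb{R}^n)$ with support still disjoint from $F$, and it converges to ${\mathscr{E}}^{(k)}_D\dot{f}_j$ in $W^{k,p}(\mathbb{R}^n)$ as $\epsilon\downarrow 0$. Hence each ${\mathscr{E}}^{(k)}_D\dot{f}_j\in W^{k,p}_F(\mathbb{R}^n)$. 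Invoking the continuity of ${\mathscr{E}}^{(k)}_D$ from \eqref{GGG-90} together with closedness of $W^{k,p}_F(\mathbb{R}^n)$ in $W^{k,p}(\mathbb{R}^n)$, the limit ${\mathscr{E}}^{(k)}_D\dot{f}$ lies in $W^{k,p}_F(\mathbb{R}^n)$, with the desired bound.

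Finally, identity \eqref{rg57miii.999} follows by restricting \eqref{GGG-91} to the subspace $B^{p,p}_{k-(n-d)/p}(D;F)$ (whose image under ${\mathscr{E}}^{(k)}_D$ has already been shown to land in $W^{k,p}_F(\mathbb{R}^n)$, so that the composition is well-defined on the restricted domain). The existence of this right-inverse immediately gives the surjectivity of the trace map in \eqref{rg57miii.888}, completing the proof.
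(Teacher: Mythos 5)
Your proposal is correct and follows essentially the same argument as the paper: both rely on Proposition~\ref{Ohav-7UJ} for the support-preservation and compactness properties of ${\mathscr{E}}^{(k)}_D$, mollification to pass from a Sobolev function with compact support disjoint from $F$ to membership in $W^{k,p}_F({\mathbb{R}}^n)$, a density argument on the generating family for \eqref{rg57miii.888}, and restriction of \eqref{GGG-91} for \eqref{rg57miii.999}. The only difference is cosmetic — you treat the restriction operator first and the extension second, while the paper does the reverse.
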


We wish to note that the functional analytic properties of the Jonsson-Wallin 
trace and extension operators recorded in Theorem~\ref{GCC-67} are particular 
manifestations of the above result, as seen by specializing Theorem~\ref{Ohav-7UJ.2}
to the case when $F:=\emptyset$ (cf. \eqref{aj8trrN.4} and \eqref{uig-adjb-2AAA}
in this regard). 

\begin{proof}[Proof of Theorem~\ref{Ohav-7UJ.2}]
Since $B^{p,p}_{k-(n-d)/p}(D;F)\hookrightarrow B^{p,p}_{k-(n-d)/p}(D)$ isometrically,
it follows from Theorem~\ref{GCC-67} that ${\mathscr{E}}^{(k)}_D$ maps 
$B^{p,p}_{k-(n-d)/p}(D;F)$ linearly and boundedly into $W^{k,p}({\mathbb{R}}^n)$.
Given that $W^{k,p}_F({\mathbb{R}}^n)$ is a closed subspace of $W^{k,p}({\mathbb{R}}^n)$ 
(cf. part $(2)$ in Lemma~\ref{TYRD-f4f}), the fact that ${\mathscr{E}}^{(k)}_D$ is 
well-defined, linear and bounded mapping in the context of \eqref{rg57miii} follows
as soon as we prove that 
\begin{eqnarray}\label{rg57miii.2}
{\mathscr{E}}^{(k)}_D\dot{\varphi}\in W^{k,p}_F({\mathbb{R}}^n)
\end{eqnarray}
where $\dot{\varphi}:=\{(\partial^\alpha\varphi)\big|_{D}\big\}_{|\alpha|\leq k-1}$, 
whenever $\varphi\in{\mathscr{C}}^\infty_c({\mathbb{R}}^n)$ is such that
\begin{eqnarray}\label{rg57miii.3}
F\cap\Big(\bigcup\limits_{|\alpha|\leq k-1}
{\rm supp}\,\big(\partial^\alpha\varphi\big|_D\big)\Big)=\emptyset.
\end{eqnarray}
With this goal in mind, fix some $\varphi\in{\mathscr{C}}^\infty_c({\mathbb{R}}^n)$
such that \eqref{rg57miii.3} holds, and note that this condition amounts to 
(cf. \eqref{PJE-1XV.4R-2}) 
\begin{eqnarray}\label{rg57miii.4}
F\cap{\rm supp}\,\dot{\varphi}=\emptyset,
\end{eqnarray}
with $\dot{\varphi}=\{(\partial^\alpha\varphi)\big|_{D}\big\}_{|\alpha|\leq k-1}$
is regarded as an element in $B^{p,p}_{k-(n-d)/p}(D)$. Since 
$F\subseteq D$, Proposition~\ref{Ohav-7UJ} and \eqref{rg57miii.4} imply that 
\begin{eqnarray}\label{rg57miii.5}
F\cap{\rm supp}\,\big({\mathscr{E}}^{(k)}_D\dot{\varphi}\big)
&=& (F\cap D)\cap{\rm supp}\,\big({\mathscr{E}}^{(k)}_D\dot{\varphi}\big)
\nonumber\\[4pt]
&=& F\cap\Big(D\cap{\rm supp}\,\big({\mathscr{E}}^{(k)}_D\dot{\varphi}\big)\Big)
=F\cap{\rm supp}\,\dot{\varphi}=\emptyset.
\end{eqnarray}
Furthermore, given that ${\rm supp}\,\dot{\varphi}$ is compact, 
\eqref{PJE-1XV.4R-3B} ensures that 
${\rm supp}\,\big({\mathscr{E}}^{(k)}_D\dot{\varphi}\big)$ is a compact subset 
of ${\mathbb{R}}^n$. As such, \eqref{rg57miii.5} yields 
${\rm dist}\,\big(F\,,\,{\rm supp}\,
\big({\mathscr{E}}^{(k)}_D\dot{\varphi}\big)\Big)>0$. Having proved this, mollifying 
the function ${\mathscr{E}}^{(k)}_D\dot{\varphi}\in W^{k,p}({\mathbb{R}}^n)$ 
(much as in the past) yields a sequence of 
functions $\{\psi_j\}_{j\in{\mathbb{N}}}\subseteq
{\mathscr{C}}^\infty_c({\mathbb{R}}^n)$ with the property that 
$F\cap{\rm supp}\,\psi_j=\emptyset$ for each $j\in{\mathbb{N}}$ and such that
$\psi_j\to{\mathscr{E}}^{(k)}_D\dot{\varphi}$ in $ W^{k,p}({\mathbb{R}}^n)$ as
$j\to\infty$. In light of \eqref{uig-adjb-2AAA}, we may therefore conclude that 
\eqref{rg57miii.2} holds, finishing the proof of the fact that ${\mathscr{E}}^{(k)}_D$ 
is a well-defined, linear and bounded operator in the context of \eqref{rg57miii}.

As regards \eqref{rg57miii.888}, the starting point is the observation that 
whenever $\varphi\in{\mathscr{C}}^\infty_c({\mathbb{R}}^n)$ is such that
$F\cap{\rm supp}\,\varphi=\emptyset$ then 
\begin{eqnarray}\label{rg57miii.3gNBa}
{\mathscr{R}}^{(k)}_D\varphi=
\{(\partial^\alpha\varphi)\big|_{D}\big\}_{|\alpha|\leq k-1}
\,\,\,\mbox{ and }\,\,
F\cap\Big(\bigcup\limits_{|\alpha|\leq k-1}
{\rm supp}\,\big(\partial^\alpha\varphi\big|_D\big)\Big)=\emptyset.
\end{eqnarray}
In view of \eqref{maygbrg57mi}, this proves that ${\mathscr{R}}^{(k)}_D$ maps 
$\big\{\varphi\in{\mathscr{C}}^\infty_c({\mathbb{R}}^n):\,
F\cap{\rm supp}\,\varphi=\emptyset\big\}$ into $B^{p,p}_{k-(n-d)/p}(D;F)$, 
and since the former space is dense in $W^{k,p}_F({\mathbb{R}}^n)$ which is
mapped by ${\mathscr{R}}^{(k)}_D$ boundedly into $B^{p,p}_{k-(n-d)/p}(D)$
(by Theorem~\ref{GCC-67}), we conclude that the restriction 
operator ${\mathscr{R}}^{(k)}_D$ maps 
$W^{k,p}_F({\mathbb{R}}^n)$ boundedly into the closed subspace 
$B^{p,p}_{k-(n-d)/p}(D;F)$ of $B^{p,p}_{k-(n-d)/p}(D)$.
Hence, ${\mathscr{R}}^{(k)}_D$ is indeed a well-defined, linear and bounded 
operator in the context of \eqref{rg57miii.888}. Finally, \eqref{rg57miii.999}
is a direct consequence of \eqref{GGG-91}, finishing the proof of the theorem. 
\end{proof}

Further information about the version of Besov spaces 
introduced in Definition~\ref{DDDv-7UJ.2PD} is contained in our next result.

\begin{proposition}\label{Oh-UYatt}
Let $D\subseteq{\mathbb{R}}^n$ be a closed set which is $d$-Ahlfors 
regular for some $d\in(0,n)$, and let $F$ be a closed subset of $D$.
Also, fix $k\in{\mathbb{N}}$ and assume that $\max\,\{1,n-d\}<p<\infty$. 
Then 
\begin{eqnarray}\label{anyGVjaYGa}
B^{p,p}_{k-(n-d)/p}(D;F)\hookrightarrow\big\{\dot{f}
\in B^{p,p}_{k-(n-d)/p}(D):\,\dot{f}=0\,\,\mbox{ ${\mathcal{H}}^d$-a.e. on }\,\,F\big\}
\end{eqnarray}
and, moreover, 
\begin{eqnarray}\label{anyGVjaYGa.2}
\begin{array}{c}
B^{p,p}_{k-(n-d)/p}(D;F)\,\,\,\mbox{ coincides with the space}
\\[8pt]
\big\{\dot{f}\in B^{p,p}_{k-(n-d)/p}(D):\,
\dot{f}=0\,\,\mbox{ ${\mathcal{H}}^d$-a.e. on }\,\,F\big\}
\\[8pt]
\mbox{whenever the set $F$ is $d$-Ahlfors regular}.
\end{array}
\end{eqnarray}
\end{proposition}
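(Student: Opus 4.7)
The plan is to split the proof along the two statements, with the first (the inclusion \eqref{anyGVjaYGa}) being routine and the second (equality when $F$ is $d$-Ahlfors regular) being the substantive point. For \eqref{anyGVjaYGa}, I would start from the very definition of $B^{p,p}_{k-(n-d)/p}(D;F)$ in \eqref{maygbrg57mi}: given $\dot{f}\in B^{p,p}_{k-(n-d)/p}(D;F)$, pick an approximating sequence $\{\varphi_j\}_{j\in\mathbb{N}}\subseteq{\mathscr{C}}^\infty_c(\mathbb{R}^n)$ with $F\cap{\rm supp}((\partial^\alpha\varphi_j)\big|_D)=\emptyset$ for every $|\alpha|\le k-1$, so that each component $(\partial^\alpha\varphi_j)\big|_D$ vanishes $\sigma$-a.e.\ on $F$ thanks to \eqref{PJE-1XV.4R-1}. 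Since convergence in $B^{p,p}_{k-(n-d)/p}(D)$ entails convergence of the components in $L^p(D,\sigma)$ (cf.\ \eqref{Fq-A.5PL2}), passing to a subsequence that converges $\sigma$-a.e.\ yields the desired conclusion that each $f_\alpha$ vanishes ${\mathcal{H}}^d$-a.e.\ on $F$.

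The heart of the proof is the reverse inclusion, for which I will leverage the full strength of the Jonsson--Wallin calculus refined in Theorem~\ref{Ohav-7UJ.2}. Given $\dot{f}\in B^{p,p}_{k-(n-d)/p}(D)$ with $\dot{f}=0$ at ${\mathcal{H}}^d$-a.e.\ point of $F$, set $u:={\mathscr{E}}^{(k)}_D\dot{f}\in W^{k,p}(\mathbb{R}^n)$. Combining the compatibility identity \eqref{GGG-91} with the pointwise description of the trace in \eqref{THna.2}, one concludes that ${\mathscr{R}}^{(k)}_D u=\dot{f}$ at ${\mathcal{H}}^d$-a.e.\ point of $D$, and in particular at ${\mathcal{H}}^d$-a.e.\ point of $F\subseteq D$. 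Since $F$ is assumed to be closed and $d$-Ahlfors regular with the very same $d\in(0,n)$, the higher-order trace ${\mathscr{R}}^{(k)}_F u$ (the trace taken now on $F$ itself) agrees, by \eqref{THna.2} again, with the restriction of ${\mathscr{R}}^{(k)}_D u$ to $F$ and therefore vanishes ${\mathcal{H}}^d$-a.e.\ on $F$. This puts us in a position to invoke the null-space characterization from Theorem~\ref{YTah-YYHa8}, applied with $F$ playing the role of the Ahlfors regular set, which forces $u\in W^{k,p}_F(\mathbb{R}^n)$. Feeding this membership into the refined mapping property \eqref{rg57miii.888} of ${\mathscr{R}}^{(k)}_D$ yields ${\mathscr{R}}^{(k)}_D u \in B^{p,p}_{k-(n-d)/p}(D;F)$, and since ${\mathscr{R}}^{(k)}_D u = \dot{f}$ by \eqref{GGG-91}, we arrive at $\dot{f}\in B^{p,p}_{k-(n-d)/p}(D;F)$, as desired.

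The only subtle point I anticipate is the identification of the trace on $F$ with the restriction of the trace on $D$: this hinges on the fact that ${\mathscr{R}}^{(k)}_F$ and ${\mathscr{R}}^{(k)}_D$ are both computed by the same intrinsic formula \eqref{THna.2}, namely as limits of Lebesgue-type averages over Euclidean balls that do not "see" whether one is working in $D$ or $F$. Once this is in place, the argument is a clean composition of the extension/restriction formalism from Theorem~\ref{Ohav-7UJ.2} with the Hedberg--Wolff/Netrusov-type null-space characterization from Theorem~\ref{YTah-YYHa8}, and the $d$-Ahlfors regularity of $F$ enters only to make the latter applicable.
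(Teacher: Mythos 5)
Your proposal is correct and follows essentially the same route as the paper's proof: for \eqref{anyGVjaYGa} one approximates by smooth functions with support away from $F$ and uses the continuous embedding $B^{p,p}_{k-(n-d)/p}(D)\hookrightarrow L^p(D,\mathcal{H}^d\lfloor D)$, while for \eqref{anyGVjaYGa.2} one extends via ${\mathscr{E}}^{(k)}_D$, notes the resulting function has vanishing trace on $F$ (using the ball-average formula \eqref{THna.2}, which is insensitive to whether the trace is taken on $F$ or $D$), invokes Theorem~\ref{YTah-YYHa8} with $F$ as the Ahlfors regular set, and then applies \eqref{rg57miii.888}. The minor cosmetic difference of passing to an a.e.\ convergent subsequence in the first part versus the paper's direct use of the embedding is immaterial.
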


\begin{proof}
Let $\dot{f}\in B^{p,p}_{k-(n-d)/p}(D;F)$ be arbitrary. Then 
$\dot{f}\in B^{p,p}_{k-(n-d)/p}(D)$ and there exists a sequence 
$\{\varphi_j\}_{j\in{\mathbb{N}}}\subseteq{\mathscr{C}}^\infty_c({\mathbb{R}}^n)$ 
such that if $\dot{\varphi}_j:=
\{(\partial^\alpha\varphi_j)\big|_{D}\big\}_{|\alpha|\leq k-1}$ 
for each $j\in{\mathbb{N}}$ then 
\begin{eqnarray}\label{aUba2auab}
F\cap{\rm supp}\,\dot{\varphi}_j=\emptyset
\,\,\mbox{ for each $j\in{\mathbb{N}}$, and }\,\,
\dot{\varphi}_j\longrightarrow\dot{f}\,\,\mbox{ in $B^{p,p}_{k-(n-d)/p}(D)$ 
as $j\to\infty$}.
\end{eqnarray}
In particular, for each $j\in{\mathbb{N}}$ we have 
$\dot{\varphi}_j=0$ at ${\mathcal{H}}^d$-a.e. point on $F$ which,
in concert with $\lim_{j\to\infty}\dot{\varphi}_j=\dot{f}$ in 
$B^{p,p}_{k-(n-d)/p}(D)$ and the fact that 
$B^{p,p}_{k-(n-d)/p}(D)\hookrightarrow 
L^p\big(D,{\mathcal{H}}^d\lfloor D\big)$ continuously, implies that $\dot{f}=0$ 
at ${\mathcal{H}}^d$-a.e. point on $F\subseteq D$. This proves \eqref{anyGVjaYGa}.

Moving on, make the additional assumption that the set $F$ is $d$-Ahlfors regular,
and consider an arbitrary $\dot{f}\in B^{p,p}_{k-(n-d)/p}(D)$ with the property that
$\dot{f}=0$ at ${\mathcal{H}}^d$-a.e. point on $F$. If we now define 
$u:={\mathscr{E}}^{(k)}_D\dot{f}$, then $u\in W^{k,p}({\mathbb{R}}^n)$ 
and ${\mathscr{R}}^{(k)}_D u=\dot{f}$
at ${\mathcal{H}}^d$-a.e. point on $D$ by Theorem~\ref{GCC-67}.
As a consequence, ${\mathscr{R}}^{(k)}_F u=0$
at ${\mathcal{H}}^d$-a.e. point on $F$, hence $u\in W^{k,p}_F({\mathbb{R}}^n)$
by Theorem~\ref{YTah-YYHa8}, given that $F$ is $d$-Ahlfors regular. In turn, 
the membership of $u$ to $W^{k,p}_F({\mathbb{R}}^n)$ entails, by virtue of 
\eqref{rg57miii.888} that $\dot{f}={\mathscr{R}}^{(k)}_D u$ belongs 
to $B^{p,p}_{k-(n-d)/p}(D;F)$. In concert with \eqref{anyGVjaYGa},
this proves that the claim made in \eqref{anyGVjaYGa.2} holds.
\end{proof}

\section{Trace/Extension theory on locally $(\varepsilon,\delta)$-domains 
onto/from Ahlfors regular subsets}
\label{Sect:5}
\setcounter{equation}{0}

The first goal in this section is to extend the scope of 
Theorems~\ref{GCC-67}-\ref{YTah-YYHa8} by proving results 
similar in spirit but formulated in a domain $\Omega$ in place of the entire
ambient ${\mathbb{R}}^n$. This is done in Theorem~\ref{NIceTRace}, where 
appropriate versions of the Jonsson-Wallin restriction and extension operators
in locally $(\varepsilon,\delta)$-domains are introduced and studied.
To facilitate the reading of this result, the reader is advised to recall the 
version of Besov spaces introduced in Definition~\ref{DDDv-7UJ.2PD}.

\begin{theorem}[Trace/Extension theory on locally $(\varepsilon,\delta)$-domains]\label{NIceTRace}
Assume that $\Omega\subseteq{\mathbb{R}}^n$ and $D\subseteq\overline{\Omega}$
are such that $D$ is closed and $\Omega$ is locally an $(\varepsilon,\delta)$-domain 
near $\partial\Omega\setminus D$. In addition, suppose that $\Sigma$ is a closed 
subset of $\overline{\Omega}$ which is $d$-Ahlfors regular, for some 
$d\in(0,n)$. Finally, set $\sigma:={\mathcal{H}}^d\lfloor\Sigma$ and fix 
$k\in{\mathbb{N}}$ along with some $p$ satisfying $\max\,\{1,n-d\}<p<\infty$. 

Then for every $u\in W^{k,p}_D(\Omega)$ the function given by 
\begin{eqnarray}\label{Ver-S2TG.2-tr}
\big({\mathscr{R}}_{\Omega\to\Sigma}^{(k)}u\big)(x):=
\left\{\lim\limits_{r\to 0^{+}}\meanint_{B(x,r)}
\partial^\alpha v\,d{\mathscr{L}}^n\right\}_{|\alpha|\leq k-1}
\,\,\,\mbox{ at $\sigma$-a.e. }\,\,x\in\Sigma,
\end{eqnarray}
is meaningfully and unambiguously defined whenever 
\begin{eqnarray}\label{Ver-S2TG.2U}
v\in W^{k,p}_D({\mathbb{R}}^n)\,\,\mbox{ is such that }\,\,v\big|_{\Omega}=u
\end{eqnarray}		
(the existence of such functions being guaranteed by \eqref{kaYabYHH} in 
Corollary~\ref{cjPPa}). Moreover, 
\begin{eqnarray}\label{GGG-89.UUU-tr}
{\mathscr{R}}_{\Omega\to\Sigma}^{(k)}:W^{k,p}_D(\Omega)
\longrightarrow\big\{\dot{f}\in B^{p,p}_{k-(n-d)/p}(\Sigma):\,
\dot{f}=0\,\,\mbox{ $\sigma$-a.e. on }\,\,\Sigma\cap D\big\}
\end{eqnarray}
is a well-defined, linear and bounded operator, 
whose range is contained in $B^{p,p}_{k-(n-d)/p}(\Sigma;D\cap\Sigma)$, 
and whose null-space is precisely $W^{k,p}_{D\cup\Sigma}(\Omega)$, i.e., 
\begin{eqnarray}\label{AUan-Taev75.Ls}
\Big\{u\in W^{k,p}_D(\Omega):\,{\mathscr{R}}_{\Omega\to\Sigma}^{(k)}u=0
\,\,\mbox{ at ${\mathcal{H}}^d$-a.e. point on }\,\,\Sigma\Big\}
=W^{k,p}_{D\cup\Sigma}(\Omega).
\end{eqnarray}

Moreover, under the additional assumption that 
\begin{eqnarray}\label{GTg75g.GeE}
D\subseteq\Sigma,
\end{eqnarray}
if one defines
\begin{eqnarray}\label{GTg75g-tr}
{\mathscr{E}}^{(k)}_{\Sigma\to\Omega}\dot{f}:=
\big({\mathscr{E}}^{(k)}_{\Sigma}\dot{f}\,\big)\Big|_{\Omega}
\,\,\,\mbox{ for each }\,\,\dot{f}\in B^{p,p}_{k-(n-d)/p}(\Sigma)
\end{eqnarray}
(where ${\mathscr{E}}^{(k)}_{\Sigma}$ denotes the operator 
${\mathscr{E}}^{(k)}_D$ from Theorem~\ref{GCC-67} corresponding to $D:=\Sigma$),
then 
\begin{eqnarray}\label{GUab9Fa}
{\mathscr{E}}^{(k)}_{\Sigma\to\Omega}:B^{p,p}_{k-(n-d)/p}(\Sigma;D)
\longrightarrow W^{k,p}_D(\Omega)\,\,\,\mbox{ linearly and boundedly},
\end{eqnarray}
and 
\begin{eqnarray}\label{GiG-91Yab-tr}
{\mathscr{R}}_{\Omega\to\Sigma}^{(k)}\circ{\mathscr{E}}^{(k)}_{\Sigma\to\Omega}=I,
\,\,\,\mbox{ the identity on }\,\,\,B^{p,p}_{k-(n-d)/p}(\Sigma;D).
\end{eqnarray}

As a corollary, whenever \eqref{GTg75g.GeE} holds, the operator 
\begin{eqnarray}\label{GGG-89.UUU.re3}
{\mathscr{R}}_{\Omega\to\Sigma}^{(k)}:W^{k,p}_D(\Omega)
\longrightarrow B^{p,p}_{k-(n-d)/p}(\Sigma;D)\,\,\,\mbox{ is surjective}.
\end{eqnarray}
Finally, if actually $D$ is a $d$-Ahlfors regular subset of $\Sigma$, 
then $B^{p,p}_{k-(n-d)/p}(\Sigma;D)$ may be replaced everywhere above 
by $\big\{\dot{f}\in B^{p,p}_{k-(n-d)/p}(\Sigma):\,
\dot{f}=0\,\,\mbox{ $\sigma$-a.e. on }\,\,D\big\}$.
\end{theorem}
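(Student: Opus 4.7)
The plan is to couple Theorem~\ref{cjeg.5AD} with the Jonsson--Wallin machinery of Theorems~\ref{GCC-67} and \ref{Ohav-7UJ.2}, applied with the Ahlfors regular set $\Sigma$ in the role previously played by $D$. Given $u\in W^{k,p}_D(\Omega)$, I would take $v:=\mathfrak{E}_{k,D}u\in W^{k,p}_D({\mathbb{R}}^n)$ as the canonical extension and define $\mathscr{R}_{\Omega\to\Sigma}^{(k)}u:=\mathscr{R}_\Sigma^{(k)}v$; this is automatically linear and bounded into $B^{p,p}_{k-(n-d)/p}(\Sigma)$ by Theorem~\ref{cjeg.5AD} together with \eqref{THna.2}. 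To sharpen the target to $B^{p,p}_{k-(n-d)/p}(\Sigma;D\cap\Sigma)$, I would exploit the monotonicity $W^{k,p}_D({\mathbb{R}}^n)\hookrightarrow W^{k,p}_{D\cap\Sigma}({\mathbb{R}}^n)$ from part (8) of Lemma~\ref{TYRD-f4f} and invoke Theorem~\ref{Ohav-7UJ.2} with $D\mapsto\Sigma$, $F\mapsto D\cap\Sigma$; the embedding into the $\sigma$-a.e.\ vanishing class is then just \eqref{anyGVjaYGa}.

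The main obstacle is the unambiguity part of \eqref{Ver-S2TG.2-tr}: any other $v\in W^{k,p}_D({\mathbb{R}}^n)$ extending $u$ must yield the same trace $\sigma$-a.e.\ on $\Sigma$. This reduces to showing that every $w\in W^{k,p}_D({\mathbb{R}}^n)$ vanishing $\mathscr{L}^n$-a.e.\ on $\Omega$ satisfies $\mathscr{R}_\Sigma^{(k)}w=0$ $\sigma$-a.e.\ on $\Sigma$. My plan is capacitary: for each $|\alpha|\leq k-1$ the derivative $\partial^\alpha w\in W^{k-|\alpha|,p}({\mathbb{R}}^n)$ vanishes $\mathscr{L}^n$-a.e.\ on the open set $\Omega$, so its $(k-|\alpha|,p)$-quasicontinuous Lebesgue representative $\overline{\partial^\alpha w}$ (see \eqref{Paj-UUU.1}--\eqref{Paj-UUU.2}) is identically zero at every point of $\Omega$ where it is defined. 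By the standard fact (cf.\ \cite{AH96}) that a quasicontinuous function vanishing quasi-everywhere on an open set vanishes quasi-everywhere on its closure, one obtains $\overline{\partial^\alpha w}=0$ at $(k-|\alpha|,p)$-quasi every point of $\overline{\Omega}\supseteq\Sigma$. Because $p>n-d$ forces $d>n-(k-|\alpha|)p$ for every such $\alpha$, sets of zero $(k-|\alpha|,p)$-capacity are $\sigma$-null on the $d$-Ahlfors regular set $\Sigma$, and \eqref{THna.2} then delivers the claim. This capacitary propagation step is the most delicate part of the proof.

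For the null-space identity \eqref{AUan-Taev75.Ls} I would use the capacitary description \eqref{kaan67}, which is plainly additive in the subscript set, together with Theorem~\ref{YTah-YYHa8} and Corollary~\ref{Gbab5tg4} (which interchanges $\sigma$-a.e.\ and quasi-everywhere on $\Sigma$). For $\supseteq$: extending $u\in W^{k,p}_{D\cup\Sigma}(\Omega)$ via Theorem~\ref{cjeg.5AD} (note that the local $(\varepsilon,\delta)$-property passes hereditarily from $\partial\Omega\setminus D$ to $\partial\Omega\setminus(D\cup\Sigma)$) lands in $W^{k,p}_{D\cup\Sigma}({\mathbb{R}}^n)\subseteq W^{k,p}_\Sigma({\mathbb{R}}^n)$, so the trace is zero by Theorem~\ref{YTah-YYHa8}. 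For $\subseteq$: if $u\in W^{k,p}_D(\Omega)$ has vanishing trace on $\Sigma$, the canonical extension $v:=\mathfrak{E}_{k,D}u$ lies in both $W^{k,p}_D({\mathbb{R}}^n)$ and (via Theorem~\ref{YTah-YYHa8}) $W^{k,p}_\Sigma({\mathbb{R}}^n)$, and \eqref{kaan67} identifies this intersection with $W^{k,p}_{D\cup\Sigma}({\mathbb{R}}^n)$; Corollary~\ref{cjPPa} then places $u=v|_\Omega$ in $W^{k,p}_{D\cup\Sigma}(\Omega)$.

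For the extension direction, assuming $D\subseteq\Sigma$, Theorem~\ref{Ohav-7UJ.2} applied with $D\mapsto\Sigma$ and $F\mapsto D$ produces a bounded operator $\mathscr{E}^{(k)}_\Sigma\colon B^{p,p}_{k-(n-d)/p}(\Sigma;D)\to W^{k,p}_D({\mathbb{R}}^n)$; composing with the restriction from $W^{k,p}_D({\mathbb{R}}^n)$ to $W^{k,p}_D(\Omega)$ given by part (4) of Lemma~\ref{TYRD-f4f} yields \eqref{GUab9Fa}. The right-inverse identity \eqref{GiG-91Yab-tr} is immediate from \eqref{GGG-91} once one observes that $\mathscr{E}^{(k)}_\Sigma\dot{f}$ may itself serve as the extension $v$ in \eqref{Ver-S2TG.2-tr}, and surjectivity \eqref{GGG-89.UUU.re3} follows at once. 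Finally, the concluding assertion (that $B^{p,p}_{k-(n-d)/p}(\Sigma;D)$ may be replaced by the $\sigma$-a.e.\ vanishing class on $D$ when $D$ is itself $d$-Ahlfors regular) is exactly \eqref{anyGVjaYGa.2} of Proposition~\ref{Oh-UYatt}.
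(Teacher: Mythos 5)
Your overall architecture is the same as the paper's: construct the trace as the Jonsson--Wallin trace of an extension, prove well-definedness, pin down the range via Theorem~\ref{Ohav-7UJ.2} with $F=D\cap\Sigma$, identify the kernel via \eqref{kaan67}, and build the extension ${\mathscr{E}}^{(k)}_{\Sigma\to\Omega}$ by restricting ${\mathscr{E}}^{(k)}_\Sigma$. Most of your steps track the paper's closely; the null-space and extension portions are correct (modulo the point below) and the final appeal to Proposition~\ref{Oh-UYatt} is exactly right.

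The genuine gap is in your unambiguity argument. You invoke, as a ``standard fact (cf.\ \cite{AH96}),'' that a $(k-|\alpha|,p)$-quasicontinuous function vanishing quasi-everywhere on an open set $\Omega$ vanishes quasi-everywhere on $\overline{\Omega}$. This is \emph{not} a standard result, and as stated it does not follow from quasicontinuity alone. Quasicontinuity gives, for each $\varepsilon>0$, an open exceptional set $U$ of small capacity off which the representative is continuous; on $U^c\cap\Omega$ one indeed has vanishing, but a point $x\in U^c\cap\partial\Omega$ need not be approachable by points of $U^c\cap\Omega$ (the small-capacity set $U$ is not of zero Lebesgue measure, so $U^c\cap\Omega$ need not be dense in $\overline{\Omega}$). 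Phrased in fine-potential-theoretic terms, one would need the set $\{x\in\partial\Omega:\,\Omega\mbox{ is }(k-|\alpha|,p)\mbox{-thin at }x\}$ to be capacity-null; this is \emph{not} Kellogg's lemma (which concerns thinness of a set $E$ at points of $E$ itself, not at points of $\overline{E}\setminus E$), and it is not obviously true for an arbitrary open $\Omega$ with no geometric regularity assumed near $\partial\Omega\cap D$. Proving it would require its own argument, essentially of the type carried out later in Theorem~\ref{Tfgg-75dS} under stronger two-sided hypotheses -- and that theorem comes after the one you are proving.

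The paper sidesteps all of this by a softer approximation scheme that you should adopt. Given $v\in W^{k,p}_D({\mathbb{R}}^n)$ with $v|_\Omega=0$, pick $\varphi_j\in{\mathscr{C}}^\infty_c({\mathbb{R}}^n)$ with $D\cap{\rm supp}\,\varphi_j=\emptyset$ and $\varphi_j\to v$ in $W^{k,p}({\mathbb{R}}^n)$, and form $w_j:=\varphi_j-{\mathfrak{E}}_{k,D}(\varphi_j|_\Omega)$. Because the extension operator of Theorem~\ref{cjeg.5AD} is bounded on $W^{k,q}_D$ for \emph{all} $q\in[1,\infty]$ and $\varphi_j$ is smooth and compactly supported, $w_j\in W^{k,q}({\mathbb{R}}^n)$ for every $q$, hence $w_j\in{\mathscr{C}}^{k-1}({\mathbb{R}}^n)$ by Sobolev embedding. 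Since $w_j|_\Omega=0$ and $\Omega$ is open, continuity of the $(k-1)$-jet forces $\partial^\alpha w_j\equiv 0$ on all of $\overline{\Omega}\supseteq\Sigma$ for $|\alpha|\le k-1$. Then ${\mathscr{R}}_\Sigma^{(k)}w_j=0$ identically, and since $w_j\to v$ in $W^{k,p}({\mathbb{R}}^n)$ with ${\mathscr{R}}_\Sigma^{(k)}$ bounded (Theorem~\ref{GCC-67}), one gets ${\mathscr{R}}_\Sigma^{(k)}v=0$ $\sigma$-a.e. This replaces your capacitary propagation step by the elementary observation that a ${\mathscr{C}}^{k-1}$ function vanishing on an open set vanishes, together with its derivatives of order $\le k-1$, on the closure -- no fine potential theory required.
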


\begin{proof}
The fact that for every $v\in W^{k,p}_D({\mathbb{R}}^n)\subseteq 
W^{k,p}({\mathbb{R}}^n)$ the vector-valued 
limit in \eqref{Ver-S2TG.2-tr} exists at $\sigma$-a.e. point in $\Sigma$ is contained
in Theorem~\ref{GCC-67}. Consider now the task of proving that the definition 
of the higher-order restriction operator from \eqref{Ver-S2TG.2-tr} does not 
depend on the extension of $u\in W^{k,p}_D(\Omega)$ to a function $v$ in 
$W^{k,p}_D({\mathbb{R}}^n)$. With this goal in mind assume that 
$u\in W^{k,p}_D(\Omega)$ has been given and suppose that 
$v_1,v_2\in W^{k,p}_D({\mathbb{R}}^n)$ are such that
$v_1\big|_{\Omega}=v_2\big|_{\Omega}=u$. Then the function $v:=v_1-v_2$
satisfies
\begin{eqnarray}\label{Ww-deVVC.1}
v\in W^{k,p}_D({\mathbb{R}}^n)\,\,\,\mbox{ and }\,\,\,
v\big|_{\Omega}=0.
\end{eqnarray}
Since by design $\big\{\varphi\in{\mathscr{C}}_c^\infty({\mathbb{R}}^n):\,
D\cap{\rm supp}\,\varphi=\emptyset\big\}$ is dense in 
$W^{k,p}_D({\mathbb{R}}^n)$, it is possible to select a sequence
\begin{eqnarray}\label{ayYYH.6a.n}
\begin{array}{c}
\{\varphi_j\}_{j\in{\mathbb{N}}}\subseteq{\mathscr{C}}_c^\infty({\mathbb{R}}^n)
\,\,\mbox{ such that }\,\,D\cap{\rm supp}\,\varphi_j=\emptyset
\,\,\mbox{ for every }\,\,j\in{\mathbb{N}},
\\[4pt]
\mbox{and }\,\,\varphi_j\longrightarrow v\,\,
\mbox{ in }\,\,W^{k,p}({\mathbb{R}}^n)\,\,\mbox{ as }\,\,j\to\infty.
\end{array}
\end{eqnarray}
Then, by \eqref{Ww-deVVC.1}-\eqref{ayYYH.6a.n} and part 
$(4)$ in Lemma~\ref{TYRD-f4f}, we have 
\begin{eqnarray}\label{IhabTGGv8rg}
\begin{array}{c}
\varphi_j\big|_{\Omega}\in W^{k,q}_D(\Omega)\,\,\mbox{ for each }\,\,j\in{\mathbb{N}}
\,\,\mbox{ and }\,\,q\in[1,\infty],
\\[4pt]
\mbox{and }\,\,
\varphi_j\big|_{\Omega}\longrightarrow 0\,\,
\mbox{ in }\,\,W^{k,p}(\Omega)\,\,\mbox{ as }\,\,j\to\infty.
\end{array}
\end{eqnarray}
Next, recall the extension operator ${\mathfrak{E}}_{k,D}$ from Theorem~\ref{cjeg.5AD}
(relative to $\Omega$) and, for each $j\in{\mathbb{N}}$, introduce
\begin{eqnarray}\label{ayYYH.6b.m}
w_j:=\varphi_j-{\mathfrak{E}}_{k,D}\big(\varphi_j\big|_{\Omega}\big)
\,\,\,\mbox{ in }\,\,{\mathbb{R}}^n.
\end{eqnarray}
Thanks to \eqref{ayYYH.6b.m}, \eqref{IhabTGGv8rg}, and 
Theorem~\ref{cjeg.5AD}, for each $j\in{\mathbb{N}}$ we have 
\begin{eqnarray}\label{aTBbii-U87}
w_j\in W^{k,q}_D({\mathbb{R}}^n)\hookrightarrow W^{k,q}({\mathbb{R}}^n)
\,\,\,\mbox{ for every }\,\,\,q\in[1,\infty],\,\,\,\mbox{ and }\,\,\,
w_j\big|_{\Omega}=0.
\end{eqnarray}
In concert with standard embedding results, this further entails
\begin{eqnarray}\label{aTBbii-U.2.Jh}
w_j\in{\mathscr{C}}^{k-1}({\mathbb{R}}^n)\,\,\,\mbox{ for each }\,\,\,j\in{\mathbb{N}}.
\end{eqnarray}
Thus, for each $j\in{\mathbb{N}}$ we may compute 
\begin{eqnarray}\label{InabEW964}
\partial^\alpha w_j=0\,\,\mbox{ everywhere on }\,\,\overline{\Omega},
\,\,\mbox{ for all }\,\,\alpha\in{\mathbb{N}}_0^n\,\,\mbox{ with }\,\,
|\alpha|\leq k-1,
\end{eqnarray}
using \eqref{THna.2}, \eqref{InabEW964}, and \eqref{aTBbii-U.2.Jh}.
In particular, given that $\Sigma\subseteq\overline{\Omega}$, 
for each $j\in{\mathbb{N}}$ we have
\begin{eqnarray}\label{Yfav-h3f4br}
\big({\mathscr{R}}_{\Sigma}^{(k)}w_j\big)(x)
&=& \left\{\lim\limits_{r\to 0^{+}}\meanint_{B(x,r)}
\partial^\alpha w_j\,d{\mathscr{L}}^n\right\}_{|\alpha|\leq k-1}
\nonumber\\[4pt]
&=& \left\{(\partial^\alpha w_j)(x)\right\}_{|\alpha|\leq k-1}
\nonumber\\[4pt]
&=& (0,\dots,0)\,\,\,\mbox{ at every }\,\,x\in\Sigma.
\end{eqnarray}
Next, for every $j\in{\mathbb{N}}$ we may estimate
\begin{eqnarray}\label{ayYYH.6c.MB}
\|v-w_j\|_{W^{k,p}({\mathbb{R}}^n)}
&\leq & \|v-\varphi_j\|_{W^{k,p}({\mathbb{R}}^n)}
+\big\|{\mathfrak{E}}_{k,D}\big(\varphi_j\big|_{\Omega}\big)
\big\|_{W^{k,p}({\mathbb{R}}^n)}
\nonumber\\[4pt]
&\leq & \|v-\varphi_j\|_{W^{k,p}({\mathbb{R}}^n)}
+C\big\|\varphi_j\big|_{\Omega}\big\|_{W^{k,p}(\Omega)}.
\end{eqnarray}
Consequently, by \eqref{ayYYH.6c.MB}, \eqref{ayYYH.6a.n}, and \eqref{IhabTGGv8rg}, 
\begin{eqnarray}\label{ayTTga.FD}
w_j\longrightarrow v\,\,\mbox{ in }\,\,W^{k,p}({\mathbb{R}}^n)\,\,\,
\mbox{ as }\,\,j\to\infty,
\end{eqnarray}
hence, further, using the boundedness of \eqref{GGG-89} and \eqref{Yfav-h3f4br},
\begin{eqnarray}\label{Yfav-h3bbb}
{\mathscr{R}}_{\Sigma}^{(k)}v=\lim_{j\to\infty}
{\mathscr{R}}_{\Sigma}^{(k)}w_j=0\,\,\,\mbox{ in }\,\,
B^{p,p}_{k-(n-d)/p}(\Sigma).
\end{eqnarray}
In turn, the fact that ${\mathscr{R}}_{\Sigma}^{(k)}v=0$ in $B^{p,p}_{k-(n-d)/p}(\Sigma)$
forces ${\mathscr{R}}_{\Sigma}^{(k)}v_1={\mathscr{R}}_{\Sigma}^{(k)}v_2$ 
in $B^{p,p}_{k-(n-d)/p}(\Sigma)\hookrightarrow L^p(\Sigma,\sigma)$, hence 
$\sigma$-a.e. on $\Sigma$. In light of \eqref{THna.2}, this implies
\begin{eqnarray}\label{Vkbv06fv}
\left\{\lim\limits_{r\to 0^{+}}\meanint_{B(x,r)}
\partial^\alpha v_1\,d{\mathscr{L}}^n\right\}_{|\alpha|\leq k-1}
=\left\{\lim\limits_{r\to 0^{+}}\meanint_{B(x,r)}
\partial^\alpha v_2\,d{\mathscr{L}}^n\right\}_{|\alpha|\leq k-1}
\,\,\,\mbox{ at $\sigma$-a.e. }\,\,x\in\Sigma.
\end{eqnarray}
This finishes the proof of the fact that the higher-order restriction operator 
\eqref{Ver-S2TG.2-tr}-\eqref{Ver-S2TG.2U} is meaningfully and unambiguously defined
for each $u\in W^{k,p}(\Omega)$. Subsequently, this shows that it is also linear. 

To prove that this operator is bounded in the context of \eqref{GGG-89.UUU-tr}, 
recall the extension operator ${\mathfrak{E}}_{k,D}$ from 
Theorem~\ref{cjeg.5AD} (relative to $\Omega$). Then given any 
$u\in W^{k,p}_D(\Omega)$ we have ${\mathfrak{E}}_{k,D}\,u\in W^{k,p}_D({\mathbb{R}}^n)
\subseteq W^{k,p}_{D\cap\Sigma}({\mathbb{R}}^n)$ and 
$\big({\mathfrak{E}}_{k,D}\,u\big)\big|_{\Omega}=u$. 
Since $D\cap\Sigma$ is a closed subset of the $d$-Ahlfors regular set $\Sigma$, 
these conditions and Theorem~\ref{Ohav-7UJ.2} imply that 
${\mathscr{R}}_{\Omega\to\Sigma}^{(k)}u={\mathscr{R}}_{\Sigma}^{(k)}
\big({\mathfrak{E}}_{k,D}\,u\big)\in B^{p,p}_{k-(n-d)/p}(\Sigma;D\cap\Sigma)$,
plus a naturally accompanying estimate. Granted this, a reference to \eqref{anyGVjaYGa}
then proves that the restriction operator in the context of \eqref{GGG-89.UUU-tr}
is a well-defined, linear and bounded operator, whose range is contained 
in $B^{p,p}_{k-(n-d)/p}(\Sigma;D\cap\Sigma)$.

Turning to the task of justifying the right-to-left inclusion in \eqref{AUan-Taev75.Ls},
consider an arbitrary function $u\in W^{k,p}_{D\cup\Sigma}(\Omega)$. 
From part $(8)$ in Lemma~\ref{TYRD-f4f} we know that $u\in W^{k,p}_D(\Omega)$.
Moreover, there exists a sequence
$\{\varphi_j\}_{j\in{\mathbb{N}}}\subseteq{\mathscr{C}}_c^\infty({\mathbb{R}}^n)$ 
such that 
\begin{eqnarray}\label{aKna.1}
\begin{array}{c}
(D\cup\Sigma)\cap{\rm supp}\,\varphi_j=\emptyset
\,\,\mbox{ for every }\,\,j\in{\mathbb{N}},
\\[4pt]
\mbox{and }\,\,\varphi_j\big|_{\Omega}\longrightarrow u\,\,
\mbox{ in }\,\,W^{k,p}(\Omega)\,\,\mbox{ as }\,\,j\to\infty.
\end{array}
\end{eqnarray}
As a consequence of this and the boundedness of \eqref{GGG-89.UUU-tr}, we have
\begin{eqnarray}\label{aKna.2}
{\mathscr{R}}_{\Omega\to\Sigma}^{(k)}u
=\lim_{j\to\infty}{\mathscr{R}}_{\Omega\to\Sigma}^{(k)}\big(\varphi_j\big|_{\Omega}\big)
\,\,\,\mbox{ in }\,\,B^{p,p}_{k-(n-d)/p}(\Sigma)\hookrightarrow
L^p(\Sigma,\sigma).
\end{eqnarray}
Given that for each $j\in{\mathbb{N}}$ we have
${\mathscr{R}}_{\Omega\to\Sigma}^{(k)}\big(\varphi_j\big|_{\Omega}\big)
=\big\{\partial^\alpha\varphi_j\big\}_{|\alpha|\leq k-1}=(0,...,0)$ everywhere on 
$\Sigma$, it follows that ${\mathscr{R}}_{\Omega\to\Sigma}^{(k)}u=0$ at 
$\sigma$-a.e. point on $\Sigma$. This places $u$ in the left-hand side of 
\eqref{AUan-Taev75.Ls}, as desired. 

Consider next the left-to-right inclusion in \eqref{AUan-Taev75.Ls}. 
In this regard, suppose that $u\in W^{k,p}_D(\Omega)$ is such that
${\mathscr{R}}_{\Omega\to\Sigma}^{(k)}u=0$ at ${\mathcal{H}}^d$-a.e. point 
on $\Sigma$. With ${\mathfrak{E}}_{k,D}$ denoting the extension operator
from Theorem~\ref{cjeg.5AD} relative to $\Omega$, define 
$v:={\mathfrak{E}}_{k,D}\,u$ and note that, thanks to Theorem~\ref{cjeg.5AD},  
\begin{eqnarray}\label{VeKNa}
v\in W^{k,p}_D({\mathbb{R}}^n)\,\,\mbox{ and }\,\,v\big|_{\Omega}=u.
\end{eqnarray}
Furthermore, $\big({\mathscr{R}}_{\Sigma}^{(k)}v\big)(x)
=\big({\mathscr{R}}_{\Omega\to\Sigma}^{(k)}u\big)(x)=0$
at $\sigma$-a.e. $x\in\Sigma$, by \eqref{Ver-S2TG.2-tr}-\eqref{Ver-S2TG.2U}, 
\eqref{THna.2}, and our assumptions on $u$. Based on this and Corollary~\ref{Gbab5tg4},
we may then conclude that, on the one hand,  
\begin{eqnarray}\label{Piba.2BkLL}
{\mathscr{R}}^{(k)}_\Sigma v=0\,\,\mbox{ quasi-everywhere on }\,\,\Sigma.
\end{eqnarray}
On the other hand, the membership of $v$ to $W^{k,p}_D({\mathbb{R}}^n)$ 
entails, in light of \eqref{kaan67}, that
\begin{eqnarray}\label{Piba.2BkLL.2}
{\mathscr{R}}^{(k)}_D v=0\,\,\mbox{ quasi-everywhere on }\,\,D.
\end{eqnarray}
Collectively, \eqref{Piba.2BkLL}-\eqref{Piba.2BkLL.2} now imply that the 
function $v\in W^{k,p}({\mathbb{R}}^n)$ satisfies
\begin{eqnarray}\label{Piba.2BkLL.3}
{\mathscr{R}}^{(k)}_{D\cup\Sigma} v=0\,\,\mbox{ quasi-everywhere on }\,\,D\cup\Sigma.
\end{eqnarray}
As such, $u=v\big|_{\Omega}$ belongs to $W^{k,p}_{D\cup\Sigma}(\Omega)$, 
by \eqref{kaan655}. This finishes the justification of \eqref{AUan-Taev75.Ls}. 

For the remainder of the proof make the additional assumption that 
\eqref{GTg75g.GeE} holds. To proceed, pick an arbitrary 
$\dot{f}\in B^{p,p}_{k-(n-d)/p}(\Sigma;D)$ and set 
$v:={\mathscr{E}}^{(k)}_{\Sigma}\dot{f}$ in ${\mathbb{R}}^n$. 
Then Theorem~\ref{Ohav-7UJ.2} gives 
\begin{eqnarray}\label{Piba.2BkLL.4}
v\in W^{k,p}_D({\mathbb{R}}^n),\quad {\mathscr{R}}^{(k)}_\Sigma v=\dot{f},
\,\,\,\mbox{ and }\,\,\,
\|v\|_{W^{k,p}({\mathbb{R}}^n)}\leq C\|\dot{f}\|_{B^{p,p}_{k-(n-d)/p}(\Sigma)},
\end{eqnarray}
for some finite constant $C>0$ independent of $\dot{f}$. Based on this and part 
$(4)$ in Lemma~\ref{TYRD-f4f} we deduce that $v\big|_{\Omega}\in W^{k,p}_D(\Omega)$ 
and $\big\|v\big|_{\Omega}\big\|_{W^{k,p}(\Omega)}
\leq\|v\|_{W^{k,p}({\mathbb{R}}^n)}$. The above argument shows that 
${\mathscr{E}}^{(k)}_{\Sigma\to\Omega}\dot{f}:=
\big({\mathscr{E}}^{(k)}_{\Sigma}\dot{f}\,\big)\big|_{\Omega}$
belongs to $W^{k,p}_D(\Omega)$ and 
$\big\|{\mathscr{E}}^{(k)}_{\Sigma\to\Omega}\dot{f}\big\|_{W^{k,p}(\Omega)}
\leq C\|\dot{f}\|_{B^{p,p}_{k-(n-d)/p}(\Sigma)}$ for some constant independent 
of $\dot{f}$. Hence, the operator in \eqref{GUab9Fa} is well-defined, linear,
and bounded.

To shows that \eqref{GiG-91Yab-tr} holds, for every 
$\dot{f}\in B^{p,p}_{k-(n-d)/p}(\Sigma;D)$ we write 
\begin{eqnarray}\label{Piba.2BkLL.5}
{\mathscr{R}}_{\Omega\to\Sigma}^{(k)}
\Big({\mathscr{E}}^{(k)}_{\Sigma\to\Omega}\dot{f}\Big)
={\mathscr{R}}_{\Omega\to\Sigma}^{(k)}
\Big(\big({\mathscr{E}}^{(k)}_{\Sigma}\dot{f}\,\big)\big|_{\Omega}\Big)
={\mathscr{R}}_{\Sigma}^{(k)}\Big({\mathscr{E}}^{(k)}_{\Sigma}\dot{f}\Big)
=\dot{f},
\end{eqnarray}
by \eqref{GTg75g-tr}, \eqref{Ver-S2TG.2-tr}-\eqref{Ver-S2TG.2U} and \eqref{rg57miii.999}.

Finally, the claim in \eqref{GGG-89.UUU.re3} is a direct consequence of 
\eqref{GiG-91Yab-tr}, while the very last claim in the statement of the theorem
follows from Proposition~\ref{Oh-UYatt}.
\end{proof}

We now proceed to record several basic consequences of Theorem~\ref{NIceTRace}, 
starting with the following result which provides an intrinsic characterization
of the Sobolev spaces from Definition~\ref{IUha-Tw} considered in
$(\varepsilon,\delta)$-domains. 

\begin{theorem}[Intrinsic description of spaces on domains]\label{Kance.745}
Assume that $\Omega$ is an $(\varepsilon,\delta)$-domain in ${\mathbb{R}}^n$ 
with ${\rm rad}\,(\Omega)>0$, and that $D$ is a closed subset of $\overline{\Omega}$
which is $d$-Ahlfors regular for some $d\in(0,n)$. In addition, fix $k\in{\mathbb{N}}$ 
and suppose that $\max\,\{1,n-d\}<p<\infty$. Then 
\begin{eqnarray}\label{AUan-TLyat6g}
W^{k,p}_D(\Omega)=\Big\{u\in W^{k,p}(\Omega):\,{\mathscr{R}}_{\Omega\to D}^{(k)}u=0
\,\,\mbox{ at ${\mathcal{H}}^d$-a.e. point on }\,\,D\Big\}.
\end{eqnarray}
\end{theorem}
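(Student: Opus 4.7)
My plan is to establish the two inclusions in \eqref{AUan-TLyat6g} separately, reducing each to facts already proved on all of ${\mathbb{R}}^n$ by exploiting Jones' extension operator (which is available because $\Omega$ is assumed to be an $(\varepsilon,\delta)$-domain with ${\rm rad}\,(\Omega)>0$).

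For the forward inclusion $W^{k,p}_D(\Omega)\subseteq\{\cdots\}$, I will simply invoke Theorem~\ref{NIceTRace} with the choice $\Sigma:=D$. The required hypotheses — $D$ closed and $d$-Ahlfors regular in $\overline{\Omega}$, and $\Omega$ locally an $(\varepsilon,\delta)$-domain near $\partial\Omega\setminus D$ — are all built into the assumptions of Theorem~\ref{Kance.745}. Specializing the null-space formula \eqref{AUan-Taev75.Ls} to $\Sigma=D$ gives
\begin{equation*}
\Big\{u\in W^{k,p}_D(\Omega):\,{\mathscr{R}}_{\Omega\to D}^{(k)}u=0\,\,\mbox{ ${\mathcal{H}}^d$-a.e. on }\,D\Big\}=W^{k,p}_{D\cup D}(\Omega)=W^{k,p}_D(\Omega),
\end{equation*}
which says precisely that every element of $W^{k,p}_D(\Omega)$ has vanishing intrinsic higher-order trace on $D$.

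For the reverse inclusion, my plan is to lift $u\in W^{k,p}(\Omega)$ to ${\mathbb{R}}^n$ via Jones' operator and then apply the trace characterization of $W^{k,p}_D({\mathbb{R}}^n)$. Concretely, set $v:=\Lambda_k u$, so that by Theorem~\ref{cjeg} one has $v\in W^{k,p}({\mathbb{R}}^n)$ with $v\big|_{\Omega}=u$. Unpacking the definition of the intrinsic restriction from \eqref{Ver-S2TG.2-tr}, the hypothesis ${\mathscr{R}}_{\Omega\to D}^{(k)}u=0$ ${\mathcal{H}}^d$-a.e. on $D$ translates into the statement that ${\mathscr{R}}_D^{(k)}v=0$ at ${\mathcal{H}}^d$-a.e. point of $D$. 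Since $\max\{1,n-d\}<p<\infty$, Theorem~\ref{YTah-YYHa8} then upgrades $v$ to an element of $W^{k,p}_D({\mathbb{R}}^n)$. Restricting back via part (4) of Lemma~\ref{TYRD-f4f}, one concludes $u=v\big|_{\Omega}\in W^{k,p}_D(\Omega)$, completing the argument.

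The principal obstacle lies in the reverse inclusion, in the identification of ${\mathscr{R}}_{\Omega\to D}^{(k)}u$ with the Jonsson--Wallin trace ${\mathscr{R}}_D^{(k)}(\Lambda_k u)$. The definition in \eqref{Ver-S2TG.2-tr}--\eqref{Ver-S2TG.2U} was phrased for $u\in W^{k,p}_D(\Omega)$ using an extension in the refined space $W^{k,p}_D({\mathbb{R}}^n)$, whereas Jones' extension $\Lambda_k u$ is a priori only known to sit in the larger space $W^{k,p}({\mathbb{R}}^n)$. What rescues the argument is that the well-definedness portion of the proof of Theorem~\ref{NIceTRace} hinges only on the vanishing of Jonsson--Wallin traces of $W^{k,p}$-functions on ${\mathbb{R}}^n$ that restrict to zero on $\Omega$ — a property which survives the relaxation from $W^{k,p}_D({\mathbb{R}}^n)$ to $W^{k,p}({\mathbb{R}}^n)$ because, at ${\mathcal{H}}^d$-a.e. point of $D\subseteq\overline{\Omega}$, Lebesgue averages over $B(x,r)$ are controlled by averages over $B(x,r)\cap\Omega$ (courtesy of \eqref{PJE-wcX}), so that the full-ball averages of the difference of two arbitrary $W^{k,p}({\mathbb{R}}^n)$-extensions of $u$ produce the same limit at points of $D$ where both Lebesgue-point conditions hold. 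With this identification in place, the chain of reductions closes without further surprises.
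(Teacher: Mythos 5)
Your proof is correct, but it takes a longer route than the paper's, which is a one-line specialization. The paper simply invokes \eqref{AUan-Taev75.Ls} from Theorem~\ref{NIceTRace} with the parameter choice $D:=\emptyset$ and $\Sigma:=D$ (the current $D$): this is legitimate because \eqref{Itebn09} shows any $(\varepsilon,\delta)$-domain with positive ${\rm rad}$ is locally $(\varepsilon,\delta)$ near $\partial\Omega=\partial\Omega\setminus\emptyset$, and Lemma~\ref{LLDEnse} gives $W^{k,p}_\emptyset(\Omega)=W^{k,p}(\Omega)$; then \eqref{AUan-Taev75.Ls} reads
\begin{equation*}
\Big\{u\in W^{k,p}(\Omega):\,{\mathscr{R}}_{\Omega\to D}^{(k)}u=0\,\,\mbox{ ${\mathcal{H}}^d$-a.e.\ on }\,D\Big\}=W^{k,p}_{\emptyset\cup D}(\Omega)=W^{k,p}_D(\Omega),
\end{equation*}
which is both inclusions at once. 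By contrast, you split into two inclusions: for the forward one you apply \eqref{AUan-Taev75.Ls} with the original $D$ and $\Sigma=D$, and for the reverse one you essentially re-derive the hard direction of \eqref{AUan-Taev75.Ls} (specialized to $D=\emptyset$) from scratch, extending via $\Lambda_k$ and invoking Theorem~\ref{YTah-YYHa8}. Your final paragraph's concern about whether $\Lambda_k u\in W^{k,p}({\mathbb{R}}^n)$ is an admissible extension for computing ${\mathscr{R}}_{\Omega\to D}^{(k)}u$ evaporates once one recognizes that the operator in the theorem's statement should be read as the one from Theorem~\ref{NIceTRace} with $D:=\emptyset$, so the extension class there is already $W^{k,p}_\emptyset({\mathbb{R}}^n)=W^{k,p}({\mathbb{R}}^n)$ and no ``relaxation'' argument is needed; the two instances of ${\mathscr{R}}_{\Omega\to D}^{(k)}$ agree on the overlap $W^{k,p}_D(\Omega)$ since a $W^{k,p}_D({\mathbb{R}}^n)$-extension is a fortiori a $W^{k,p}({\mathbb{R}}^n)$-extension and the operator is independent of the choice. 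Both approaches are sound; the paper's is simply more economical because \eqref{AUan-Taev75.Ls} was formulated precisely so that this theorem drops out as a special case.
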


\begin{proof}
This follows from \eqref{AUan-Taev75.Ls}, specialized to the case when 
$D:=\emptyset$ and $\Sigma$ playing the role of the current set $D$, 
in combination with \eqref{Itebn09} and Lemma~\ref{LLDEnse}.
\end{proof}

Another useful application of Theorem~\ref{NIceTRace} is presented in the next
corollary. 

\begin{corollary}\label{Kae.Yab7b6}
Assume that $\Omega\subseteq{\mathbb{R}}^n$ and $D\subseteq\overline{\Omega}$ 
are such that $D$ is closed and $d$-Ahlfors regular for some $d\in(0,n)$, while 
$\Omega$ is locally an $(\varepsilon,\delta)$-domain near $\partial\Omega\setminus D$. 
Then, whenever $k\in{\mathbb{N}}$ and $\max\,\{1,n-d\}<p<\infty$, one has
\begin{eqnarray}\label{amZbYJnab}
{\mathscr{R}}_{\Omega\to D}^{(k)}u=0
\,\,\mbox{ at ${\mathcal{H}}^d$-a.e. point on }\,\,D,
\,\,\mbox{ for each }\,\,\,u\in W^{k,p}_D(\Omega).
\end{eqnarray}
\end{corollary}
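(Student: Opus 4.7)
The plan is to deduce Corollary~\ref{Kae.Yab7b6} as a direct specialization of Theorem~\ref{NIceTRace}, using the triviality of the Besov space $B^{p,p}_{k-(n-d)/p}(D;D)$ recorded in \eqref{aj8trrN.3}. Specifically, I would apply Theorem~\ref{NIceTRace} with the closed, $d$-Ahlfors regular set $\Sigma$ chosen to be precisely $D$. All geometric hypotheses needed to invoke that theorem hold verbatim: $D$ is closed and $d$-Ahlfors regular (it plays both roles simultaneously), $D \subseteq \overline{\Omega}$, and $\Omega$ is locally an $(\varepsilon,\delta)$-domain near $\partial\Omega\setminus D$.

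With this choice, the operator ${\mathscr{R}}_{\Omega\to\Sigma}^{(k)}$ from \eqref{GGG-89.UUU-tr} becomes exactly ${\mathscr{R}}_{\Omega\to D}^{(k)}$, and Theorem~\ref{NIceTRace} asserts that its range is contained in
\[
B^{p,p}_{k-(n-d)/p}(\Sigma;\,D\cap\Sigma) = B^{p,p}_{k-(n-d)/p}(D;\,D).
\]
By \eqref{aj8trrN.3}, however, this space is trivial, i.e., $B^{p,p}_{k-(n-d)/p}(D;D)=\{0\}$. Consequently, ${\mathscr{R}}_{\Omega\to D}^{(k)} u = 0$ as an element of $B^{p,p}_{k-(n-d)/p}(D)$ for every $u \in W^{k,p}_D(\Omega)$. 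Since $B^{p,p}_{k-(n-d)/p}(D)$ embeds continuously into $L^p(D,{\mathcal{H}}^d\lfloor D)$ (as is apparent from the definition \eqref{Fq-A.5PL2}, whose first term already controls the $L^p$-norm of each component), the vanishing in the Besov space translates into pointwise vanishing ${\mathcal{H}}^d$-a.e. on $D$, which is the claim \eqref{amZbYJnab}.

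There is essentially no obstacle beyond making sure the specialization $\Sigma:=D$ is legitimate, which it is, since the hypotheses of the corollary include all that Theorem~\ref{NIceTRace} requires of $\Sigma$, and the extra condition $D\subseteq\Sigma$ needed for the surjectivity part of that theorem is not used here (only the statement about the range being contained in $B^{p,p}_{k-(n-d)/p}(\Sigma;D\cap\Sigma)$ is invoked, and this part of Theorem~\ref{NIceTRace} holds without assuming $D\subseteq\Sigma$). Thus the corollary follows in a single line once Theorem~\ref{NIceTRace} and \eqref{aj8trrN.3} are in place.
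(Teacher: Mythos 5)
Your proposal is correct and takes essentially the same route as the paper: the paper's proof of this corollary is exactly the specialization $\Sigma:=D$ of Theorem~\ref{NIceTRace}, reading off the conclusion directly from the codomain displayed in \eqref{GGG-89.UUU-tr} (which, with $\Sigma=D$, becomes $\{\dot{f}\in B^{p,p}_{k-(n-d)/p}(D):\dot{f}=0\ \sigma$-a.e.\ on $D\}$). Your variant invokes the range-containment in $B^{p,p}_{k-(n-d)/p}(D;D)$ together with \eqref{aj8trrN.3}; this is one small extra step but logically equivalent, since \eqref{anyGVjaYGa} links the two formulations.
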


\begin{proof}
This is a particular case of \eqref{GGG-89.UUU-tr}, considered here with $\Sigma=D$.
\end{proof}

Finally, it is of independent interest to state the version of 
Theorem~\ref{NIceTRace} corresponding to the case of genuinely
$(\varepsilon,\delta)$-domains, given its potential for applications and 
since in such a scenario the conclusions have a more 
streamlined format. We do so in the corollary below. 

\begin{corollary}\label{NIceTRace.CC}
Let $\Omega$ be an $(\varepsilon,\delta)$-domain in ${\mathbb{R}}^n$ with  
${\rm rad}\,(\Omega)>0$ and such that $\partial\Omega$ is $d$-Ahlfors
regular for some $d\in(0,n)$. Also, fix some $k\in{\mathbb{N}}$ along with $p$ 
satisfying $\max\,\{1,n-d\}<p<\infty$. Then for every $u\in W^{k,p}(\Omega)$ one has
\begin{eqnarray}\label{Ver-S2TG.2}
\big({\mathscr{R}}_{\Omega\to\partial\Omega}^{(k)}u\big)(x)=
\left\{\lim\limits_{r\to 0^{+}}\meanint_{B(x,r)}
\partial^\alpha v\,d{\mathscr{L}}^n\right\}_{|\alpha|\leq k-1}
\,\,\,\mbox{ at ${\mathcal{H}}^d$-a.e. }\,\,x\in\partial\Omega,
\end{eqnarray}
for every function $v\in W^{k,p}({\mathbb{R}}^n)$ satisfying 
$v\big|_{\Omega}=u$. Moreover, 
\begin{eqnarray}\label{GGG-89.UUU}
{\mathscr{R}}_{\Omega\to\partial\Omega}^{(k)}:W^{k,p}(\Omega)
\longrightarrow B^{p,p}_{k-(n-d)/p}(\partial\Omega)
\end{eqnarray}
is a well-defined, linear and bounded operator. Also, if 
\begin{eqnarray}\label{GTg75g}
{\mathscr{E}}^{(k)}_{\partial\Omega\to\Omega}\dot{f}:=
\big({\mathscr{E}}^{(k)}_{\partial\Omega}\dot{f}\,\big)\big|_{\Omega}
\,\,\,\mbox{ for each }\,\,\dot{f}\in B^{p,p}_{k-(n-d)/p}(\partial\Omega),
\end{eqnarray}
(where ${\mathscr{E}}^{(k)}_{\partial\Omega}$ denotes the operator 
${\mathscr{E}}^{(k)}_D$ from Theorem~\ref{GCC-67} corresponding to $D:=\partial\Omega$),
then 
\begin{eqnarray}\label{GUab9Fa.34}
{\mathscr{E}}^{(k)}_{\partial\Omega\to\Omega}:
B^{p,p}_{k-(n-d)/p}(\partial\Omega)\longrightarrow W^{k,p}(\Omega)
\,\,\,\mbox{ linearly and boundedly},
\end{eqnarray}
and 
\begin{eqnarray}\label{GiG-91Yab}
{\mathscr{R}}_{\Omega\to\partial\Omega}^{(k)}
\circ{\mathscr{E}}^{(k)}_{\partial\Omega\to\Omega}=I,
\,\,\,\mbox{ the identity on }\,\,\,B^{p,p}_{k-(n-d)/p}(\partial\Omega).
\end{eqnarray}

Finally, the restriction operator 
${\mathscr{R}}_{\Omega\to\partial\Omega}^{(k)}$ from \eqref{GGG-89.UUU}
is surjective, and its null-space is $\mathring{W}^{k,p}(\Omega)$, i.e., 
\begin{eqnarray}\label{Uan-Taev75}
\Big\{u\in W^{k,p}(\Omega):\,{\mathscr{R}}_{\Omega\to\partial\Omega}^{(k)}u=0
\,\,\mbox{ at ${\mathcal{H}}^d$-a.e. point on }\,\,\partial\Omega\Big\}
=\mathring{W}^{k,p}(\Omega).
\end{eqnarray}
In particular, 
\begin{eqnarray}\label{Ww-dense}
{\mathscr{C}}^\infty_c(\Omega)\hookrightarrow \Bigl\{u\in W^{k,p}(\Omega):\,
{\mathscr{R}}_{\Omega\to\partial\Omega}^{(k)}u=0
\,\,\mbox{ at ${\mathcal{H}}^d$-a.e. point on }\,\,\partial\Omega\Big\}
\,\,\mbox{ densely}.
\end{eqnarray}
\end{corollary}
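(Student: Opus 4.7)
The plan is to deduce Corollary~\ref{NIceTRace.CC} as a direct specialization of Theorem~\ref{NIceTRace}, taking $D := \emptyset$ and $\Sigma := \partial\Omega$. First I would verify that the hypotheses of Theorem~\ref{NIceTRace} are fulfilled under this choice. Since $\Omega$ is an $(\varepsilon,\delta)$-domain with ${\rm rad}\,(\Omega)>0$, the implication \eqref{Itebn09} in Lemma~\ref{yufa} guarantees that $\Omega$ is locally an $(\varepsilon,\delta)$-domain near $\partial\Omega\setminus\emptyset=\partial\Omega$, the set $\Sigma=\partial\Omega\subseteq\overline{\Omega}$ is closed and $d$-Ahlfors regular by assumption, and the containment $D\subseteq\Sigma$ (i.e., $\emptyset\subseteq\partial\Omega$) required for \eqref{GTg75g.GeE} is trivial.

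Next I would identify the function spaces appearing in Theorem~\ref{NIceTRace} in this specialized setting. Because $p<\infty$, Lemma~\ref{LLDEnse} gives $W^{k,p}_\emptyset(\Omega)=W^{k,p}(\Omega)$; and by \eqref{aj8trrN.4}, $B^{p,p}_{k-(n-d)/p}(\partial\Omega;\emptyset)=B^{p,p}_{k-(n-d)/p}(\partial\Omega)$. With these identifications, the pointwise definition \eqref{Ver-S2TG.2-tr} becomes \eqref{Ver-S2TG.2}, the boundedness \eqref{GGG-89.UUU-tr} becomes \eqref{GGG-89.UUU}, formula \eqref{GTg75g-tr} is precisely \eqref{GTg75g}, the mapping property \eqref{GUab9Fa} of ${\mathscr{E}}^{(k)}_{\Sigma\to\Omega}$ yields \eqref{GUab9Fa.34}, the right-inverse identity \eqref{GiG-91Yab-tr} yields \eqref{GiG-91Yab}, and the surjectivity statement \eqref{GGG-89.UUU.re3} gives the surjectivity of ${\mathscr{R}}^{(k)}_{\Omega\to\partial\Omega}$.

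For the null-space characterization \eqref{Uan-Taev75}, I would invoke \eqref{AUan-Taev75.Ls}, which in the present setting reads
\begin{equation*}
\Bigl\{u\in W^{k,p}(\Omega):\,{\mathscr{R}}^{(k)}_{\Omega\to\partial\Omega}u=0\,\,\mbox{${\mathcal{H}}^d$-a.e. on }\partial\Omega\Bigr\}=W^{k,p}_{\emptyset\cup\partial\Omega}(\Omega)=W^{k,p}_{\partial\Omega}(\Omega).
\end{equation*}
By part~(5) of Lemma~\ref{TYRD-f4f} the right-hand side equals $\mathring{W}^{k,p}(\Omega)$, so \eqref{Uan-Taev75} follows. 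Finally, the density statement \eqref{Ww-dense} is immediate: the right-hand side of \eqref{Ww-dense} equals $\mathring{W}^{k,p}(\Omega)$ by what was just proved, and $\mathring{W}^{k,p}(\Omega)$ is by definition \eqref{azTagbM} the closure of ${\mathscr{C}}^\infty_c(\Omega)$ in $W^{k,p}(\Omega)$.

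There is no substantive obstacle here beyond careful bookkeeping, since the corollary is a pure specialization of the preceding theorem; the only mildly delicate item is verifying that the reduction $W^{k,p}_{\partial\Omega}(\Omega)=\mathring{W}^{k,p}(\Omega)$ is applicable, which is covered verbatim by part~(5) of Lemma~\ref{TYRD-f4f}.
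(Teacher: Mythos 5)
Your proof is correct and follows the same route as the paper: specialize Theorem~\ref{NIceTRace} to $D:=\emptyset$, $\Sigma:=\partial\Omega$ using \eqref{Itebn09}, identify $W^{k,p}_\emptyset(\Omega)=W^{k,p}(\Omega)$ via Lemma~\ref{LLDEnse} and $B^{p,p}_{k-(n-d)/p}(\partial\Omega;\emptyset)=B^{p,p}_{k-(n-d)/p}(\partial\Omega)$ via \eqref{aj8trrN.4}, then invoke part~(5) of Lemma~\ref{TYRD-f4f} for the null-space and \eqref{azTagbM} for the density claim. The bookkeeping is accurate and the reduction is complete.
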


\begin{proof}
All claims up to, and including, \eqref{Uan-Taev75} are direct consequences
of Theorem~\ref{NIceTRace} specialized to the case when $\Sigma:=\partial\Omega$
and $D:=\emptyset$, keeping in mind \eqref{Itebn09} and $(5)$ in Lemma~\ref{TYRD-f4f}. 
Finally, \eqref{Ww-dense} is immediate from \eqref{Uan-Taev75} and \eqref{azTagbM}. 
\end{proof}

\begin{remark}\label{yagav}
Suppose that $\Omega$ be an $(\varepsilon,\delta)$-domain in 
${\mathbb{R}}^n$ with the property that $\partial\Omega$ is $(n-1)$-Ahlfors regular,
and set $\sigma:={\mathcal{H}}^{n-1}\lfloor\partial\Omega$. Also, 
fix $k\in{\mathbb{N}}$ and $p\in(1,\infty)$. Then for every
$u\in W^{k,p}(\Omega)$ the vector-valued limit 
\begin{eqnarray}\label{Ver-S2TG.3}
\left\{\lim\limits_{r\to 0^{+}}\frac{1}{{\mathscr{L}}^n(\Omega\cap B(x,r))}
\int_{\Omega\cap B(x,r)}\partial^\alpha u\,d{\mathscr{L}}^n\right\}
_{|\alpha|\leq k-1}
\end{eqnarray}
exists and equals $\big({\mathscr{R}}_{\Omega\to\partial\Omega}^{(k)}u\big)(x)$ 
at $\sigma$-a.e. $x\in\partial\Omega$. This is a consequence of 
\cite[Proposition~2, p.\,206]{JoWa84}. In turn, the applicability
of the latter result in the present context is ensured by Theorem~\ref{cjeg}
and the observation that the set $\overline{\Omega}$ is $n$-Ahlfors regular
(as seen from an inspection of the proof of \cite[Lemma~2.3, p.\,77]{Jon81}
which actually reveals that $\Omega$ has the interior corkscrew property,
in the sense of Jerison-Kenig \cite{JeKe82}).
\end{remark}

\begin{remark}
Theorem~\ref{Kance.745} and Corollary~\ref{NIceTRace.CC} deal with the class 
of $(\varepsilon,\delta)$-domains in ${\mathbb{R}}^n$ whose boundaries are 
$d$-Ahlfors regular for some $d\in(0,n)$. While there are many examples
of such domains when $d\in[n-1,n)$ (for example Lipschitz domains, in which
case $d=n-1$, and certain fractal sets like a multi-dimensional 
analogue of the von Koch snowflake, in which case matters can be arranged 
for $d$ to be any desired number in $(n-1,n)$) we wish to note that $(\varepsilon,\delta)$-domains having a $d$-Ahlfors regular boundary 
with $d\in(0,n-1)$ also occur naturally. For example, one may readily
verify that for any affine subspace $H$ of 
${\mathbb{R}}^n$ of dimension $d\leq n-2$ the set $\Omega:={\mathbb{R}}^n\setminus H$
is a $(\varepsilon,\infty)$-domain for some $\varepsilon>0$ whose boundary,
$H$, is $d$-Ahlfors regular (indeed, given any two points 
$x,y\in{\mathbb{R}}^n\setminus H$, the semi-circular path $\gamma$ joining them, 
having $|x-y|$ as diameter, and which is contained in a plane perpendicular 
on the affine variety spanned by $H$ and the line passing through $x,y$, satisfies
\eqref{TYDY-854} for some $c=c(n)>0$).
\end{remark}

In the second part of this section we shall employ the trace/extension theory on 
locally $(\varepsilon,\delta)$-domains onto/from Ahlfors regular subsets developed 
in Theorem~\ref{NIceTRace} and its corollaries in order to derive several important 
properties of the Sobolev spaces with partially vanishing traces considered in 
Definition~\ref{IUha-Tw}. First, we shall use the characterization \eqref{Uan-Taev75} 
as the key ingredient in the proof of the following theorem. 

\begin{theorem}[Hereditary property]\label{Pcsd5}
Let $\Omega$ be an $(\varepsilon,\delta)$-domain in ${\mathbb{R}}^n$ 
with ${\rm rad}\,(\Omega)>0$,
and consider a closed subset $D$ of $\overline\Omega$ which is $d$-Ahlfors regular,
for some $d\in(0,n)$. Then for each $k,m\in{\mathbb{N}}$ and $p$ such that
$\max\,\{1,n-d\}<p<\infty$ one has
\begin{eqnarray}\label{BSgh8.5Ws}
W^{k+m,p}_D(\Omega)=\Bigl\{u\in W^{k+m,p}(\Omega)
\cap W^{k,p}_D(\Omega):\,\partial^\gamma u\in W^{m,p}_D(\Omega),\,\,\,
\forall\,\gamma\in{\mathbb{N}}^n_0,\,\,\,|\gamma|=k\Bigr\}.
\end{eqnarray}
\end{theorem}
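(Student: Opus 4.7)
The plan is to establish the two inclusions separately. For the easy inclusion $W^{k+m,p}_D(\Omega) \subseteq \{\ldots\}$, I would take $u \in W^{k+m,p}_D(\Omega)$ and select a sequence $\{\varphi_j\}_{j\in\mathbb{N}} \subseteq {\mathscr{C}}^\infty_D(\Omega)$ with $\varphi_j \to u$ in $W^{k+m,p}(\Omega)$. Since $\|\cdot\|_{W^{k,p}(\Omega)} \leq \|\cdot\|_{W^{k+m,p}(\Omega)}$, the same sequence witnesses $u \in W^{k,p}_D(\Omega)$. For any fixed $\gamma \in \mathbb{N}_0^n$ with $|\gamma|=k$, the functions $\partial^\gamma \varphi_j$ still belong to ${\mathscr{C}}^\infty_D(\Omega)$ (differentiation cannot enlarge the support beyond $\mathbb{R}^n\setminus D$), and they converge to $\partial^\gamma u$ in $W^{m,p}(\Omega)$, so $\partial^\gamma u \in W^{m,p}_D(\Omega)$. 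This settles the forward direction.

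The reverse inclusion is where the content lies, and the strategy is to convert the problem into a statement about vanishing traces via the intrinsic description of $W^{j,p}_D(\Omega)$ in Theorem~\ref{Kance.745}. Let $u$ be an element of the right-hand side. Because $\Omega$ is an $(\varepsilon,\delta)$-domain (hence, by Lemma~\ref{yufa}, locally $(\varepsilon,\delta)$ near its entire boundary), $D$ is $d$-Ahlfors regular, and $\max\{1,n-d\}<p<\infty$, the characterization \eqref{AUan-TLyat6g} is available at every integer smoothness order. Thus $u \in W^{k+m,p}_D(\Omega)$ will follow once I verify
\[
{\mathscr{R}}_{\Omega\to D}(\partial^\alpha u)=0 \text{ at } {\mathcal{H}}^d\text{-a.e. point of }D,\qquad \forall\,\alpha \in \mathbb{N}_0^n,\ |\alpha| \leq k+m-1,
\]
which is precisely what ${\mathscr{R}}^{(k+m)}_{\Omega\to D}u=0$ unpacks to.

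I would then dispatch this vanishing-trace statement by splitting on $|\alpha|$. When $|\alpha| \leq k-1$, the hypothesis $u \in W^{k,p}_D(\Omega)$ together with Corollary~\ref{Kae.Yab7b6} (applied at smoothness index $k$) yields ${\mathscr{R}}_{\Omega\to D}(\partial^\alpha u)=0$ immediately. When $k \leq |\alpha| \leq k+m-1$, I write $\alpha=\gamma+\beta$ with $|\gamma|=k$ and $|\beta|=|\alpha|-k\in\{0,\dots,m-1\}$, so that $\partial^\alpha u = \partial^\beta(\partial^\gamma u)$. Since by hypothesis $\partial^\gamma u \in W^{m,p}_D(\Omega)$, a second application of Corollary~\ref{Kae.Yab7b6} (now at smoothness index $m$, applied to $\partial^\gamma u$) delivers ${\mathscr{R}}_{\Omega\to D}(\partial^\beta(\partial^\gamma u))=0$, which is the desired conclusion for this range of multi-indices.

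The only subtlety worth flagging, which I do not expect to cause genuine trouble, is that the symbol ${\mathscr{R}}_{\Omega\to D}$ is used in two interpretations: once as the zeroth-order trace component of ${\mathscr{R}}^{(k)}_{\Omega\to D}$ applied to $u$, and once as the corresponding component of ${\mathscr{R}}^{(m)}_{\Omega\to D}$ applied to $\partial^\gamma u$. These agree on $\partial^\alpha u$ because the trace is defined pointwise as a Lebesgue-style limit of integral averages $\lim_{r\to 0^+}\dmeanint_{B(x,r)}\partial^\alpha v\,d\mathscr{L}^n$ of a suitable $W^{1,p}({\mathbb{R}}^n)$-extension $v$, and hence depends only on $\partial^\alpha u$ as an $L^1_{loc}$ function and not on which ambient Sobolev space it is viewed in. Granted this bookkeeping remark, the argument is a clean application of the intrinsic Hausdorff-measure characterization from Theorem~\ref{Kance.745} and the hereditary corollary \ref{Kae.Yab7b6}; no analytic obstacle beyond them arises.
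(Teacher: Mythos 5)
Your proof is correct and follows essentially the same route as the paper's: the forward inclusion by differentiating an approximating sequence from ${\mathscr{C}}^\infty_D(\Omega)$, and the reverse inclusion by reducing, via the intrinsic trace characterization of Theorem~\ref{Kance.745}, to the vanishing of ${\mathscr{R}}_{\Omega\to D}(\partial^\alpha u)$ at ${\mathcal{H}}^d$-a.e. point of $D$ for all $|\alpha|\leq k+m-1$, split into the ranges $|\alpha|\leq k-1$ (from $u\in W^{k,p}_D(\Omega)$) and $k\leq|\alpha|\leq k+m-1$ (from $\partial^\gamma u\in W^{m,p}_D(\Omega)$). The bookkeeping remark you flag is genuine but harmless, exactly as you say, since the trace component depends only on $\partial^\alpha u$ as a locally integrable function.
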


\begin{proof}
Let $u\in W^{k+m,p}_D(\Omega)$. Then clearly 
\begin{eqnarray}\label{NDfgb7}
u\in W^{k+m,p}(\Omega)\cap W^{k,p}_D(\Omega). 
\end{eqnarray}
In addition, using the definition of the space $W^{k+m,p}_D(\Omega)$ it follows 
that there exists a sequence of functions $\{\varphi_j\}_{j\in{\mathbb{N}}}
\subseteq{\mathscr{C}}^\infty_c({\mathbb{R}}^n)$ such that 
$D\cap{\rm supp}\,\varphi_j=\emptyset$ for each $j\in{\mathbb{N}}$, 
and $\varphi_j\big|_{\Omega}\rightarrow u$ in $W^{k+m,p}(\Omega)$ as $j\to\infty$. 
In particular, for every $\gamma\in{\mathbb{N}}_0^n$ with $|\gamma|=k$ there holds
\begin{eqnarray}\label{Pbhnk}
(\partial^\gamma\varphi_j)\big|_{\Omega}
=\partial^\gamma\big(\varphi_j\big|_{\Omega}\big)\longrightarrow\partial^\gamma u
\,\,\mbox{ in $W^{m,p}(\Omega)$ as }\,\,j\to\infty.
\end{eqnarray}
Since for each $j\in{\mathbb{N}}$ and each $\gamma\in{\mathbb{N}}_0^n$ 
we have $\partial^\gamma\varphi_j\in{\mathscr{C}}^\infty_c({\mathbb{R}}^n)$ 
and $D\cap{\rm supp}(\partial^\gamma\varphi_j)=\emptyset$, \eqref{Pbhnk} 
and the definition of $W^{m,p}_D(\Omega)$ guarantee that 
\begin{eqnarray}\label{Mdfcs}
\partial^\gamma u\in W^{m,p}_D(\Omega),
\qquad\forall\,\gamma\in{\mathbb{N}}^n_0\,\,\mbox{ such that }\,\,|\gamma|=k.
\end{eqnarray} 
Combining \eqref{Mdfcs} and \eqref{NDfgb7} we obtain that the left-to-right 
inclusion in \eqref{BSgh8.5Ws} holds. Parenthetically, we wish to note that 
this portion of the proof works for any nonempty open subset $\Omega$ of 
${\mathbb{R}}^n$ and any closed set $D\subseteq{\overline{\Omega}}$.

There remains to establish the right-to-left inclusion in \eqref{BSgh8.5Ws},
which makes full use of the assumptions on $\Omega$ and $D$ stipulated in the
statement of the theorem. To this end, pick a function $u$ such that
\begin{eqnarray}\label{Rbnhgf}
u\in W^{m+k,p}(\Omega)\cap W^{k,p}_D(\Omega)\,\,\mbox{ and }
\,\,\partial^\gamma u\in W^{m,p}_D(\Omega),\,\,\,
\forall\,\gamma\in{\mathbb{N}}^n_0,\,\,\,|\gamma|=k.
\end{eqnarray} 
Keeping in mind that $u\in W^{k+m,p}(\Omega)$, it follows from \eqref{Piba} and
\eqref{AUan-TLyat6g} that the membership of $u$ to $W^{k+m,p}_D(\Omega)$ is equivalent to 
\begin{eqnarray}\label{Jrfbv8}
{\mathscr{R}}_{\Omega\to D}^{(1)}
\bigl[\partial^\alpha u\bigr]=0\,\,\,\,{\mathcal{H}}^d\mbox{-a.e. on $D$, }\,\,
\forall\,\alpha\in{\mathbb{N}}^n_0\,\,
\mbox{ such that }\,\,|\alpha|\leq m+k-1.
\end{eqnarray}
With the goal of proving \eqref{Jrfbv8}, first notice that, on the one hand, 
the last condition in \eqref{Rbnhgf} implies (thanks to \eqref{Piba} and \eqref{AUan-TLyat6g}) that 
\begin{eqnarray}\label{Ndc5bp}
{\mathscr{R}}_{\Omega\to D}^{(1)}
\bigl[\partial^\beta(\partial^\gamma u)\bigr]=0
\,\,\,{\mathcal{H}}^d\mbox{-a.e. on $D$, }\,\,\forall\,\beta,
\gamma\in{\mathbb{N}}^n_0\,\,\mbox{ such that }\,\,
|\beta|\leq m-1\,\,\mbox{ and }\,\,|\gamma|=k,
\end{eqnarray}
whereupon
\begin{eqnarray}\label{JKmrn8}
{\mathscr{R}}_{\Omega\to D}^{(1)}\bigl[\partial^\alpha u\bigr]=0
\,\,\,{\mathcal{H}}^d\mbox{-a.e. on $D$, }\,\,
\forall\,\alpha\in{\mathbb{N}}^n_0\,\,\mbox{ such that }
\,\,|\alpha|\in\{k,\dots,m+k-1\}.
\end{eqnarray}
On the other hand, the first condition in \eqref{Rbnhgf} ensures that
$u\in W^{k,p}_D(\Omega)$, and thus, by once again appealing to 
\eqref{Piba} and \eqref{AUan-TLyat6g}, 
\begin{eqnarray}\label{Guyhnml}
{\mathscr{R}}_{\Omega\to D}^{(1)}\bigl[\partial^\alpha u\bigr]=0
\,\,\,{\mathcal{H}}^d\mbox{-a.e. on $D$, }\,\,\forall\,\alpha\in{\mathbb{N}}^n_0
\,\,\mbox{ such that }\,\,|\alpha|\in\{0,\dots,k-1\}.
\end{eqnarray}
Altogether, \eqref{JKmrn8} and \eqref{Guyhnml} prove that \eqref{Jrfbv8} holds, 
as desired. This shows that if $u$ is as in \eqref{Rbnhgf} then 
$u\in W^{k+m,p}_D(\Omega)$. Thus, the right-to-left inclusion in 
\eqref{BSgh8.5Ws} holds as well, finishing the proof of the theorem.
\end{proof}

The following consequence of Theorem~\ref{Pcsd5} answers a question posed to 
us by D.~Arnold (in the more specialized setting of 
Lipschitz domains a solution has been given in \cite{MiMi-HOPDE}). 

\begin{corollary}\label{Uafav976f4}
Suppose $\Omega$ is an $(\varepsilon,\delta)$-domain in ${\mathbb{R}}^n$ with 
${\rm rad}\,(\Omega)>0$ and such that $\partial\Omega$ is $d$-Ahlfors 
regular for some $d\in(0,n)$. Then 
\begin{eqnarray}\label{BSgh8}
{\mathring{W}}^{k+m,p}(\Omega)=\Bigl\{u\in W^{k+m,p}(\Omega)
\cap{\mathring{W}}^{k,p}(\Omega):\,
\partial^\gamma u\in{\mathring{W}}^{m,p}(\Omega),\,\,\,
\forall\,\gamma\in{\mathbb{N}}^n_0,\,\,\,|\gamma|=k\Bigr\},
\end{eqnarray} 
whenever $k,m\in{\mathbb{N}}$ and $\max\,\{1,n-d\}<p<\infty$. 
\end{corollary}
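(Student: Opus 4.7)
The plan is to derive Corollary~\ref{Uafav976f4} as a direct specialization of Theorem~\ref{Pcsd5} to the case $D := \partial\Omega$. Specifically, under the hypotheses of the corollary, $\partial\Omega$ is a closed subset of $\overline{\Omega}$ which is $d$-Ahlfors regular with $d \in (0,n)$, and $\Omega$ is an $(\varepsilon,\delta)$-domain in $\mathbb{R}^n$ with ${\rm rad}(\Omega) > 0$. Hence all the hypotheses of Theorem~\ref{Pcsd5} are satisfied with $D := \partial\Omega$.

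Applying Theorem~\ref{Pcsd5} in this setting yields
\begin{eqnarray*}
W^{k+m,p}_{\partial\Omega}(\Omega) = \Bigl\{u \in W^{k+m,p}(\Omega) \cap W^{k,p}_{\partial\Omega}(\Omega) :\, \partial^\gamma u \in W^{m,p}_{\partial\Omega}(\Omega),\,\, \forall\,\gamma \in \mathbb{N}_0^n,\,\,|\gamma| = k\Bigr\}.
\end{eqnarray*}

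To conclude, the plan is to invoke part $(5)$ of Lemma~\ref{TYRD-f4f}, which asserts that $W^{j,q}_{\partial\Omega}(\Omega) = \mathring{W}^{j,q}(\Omega)$ for any $j \in \mathbb{N}$ and $q \in [1,\infty]$. Using this identity with $(j,q) = (k+m,p)$, $(k,p)$, and $(m,p)$ translates the previous display into precisely \eqref{BSgh8}. Since both Theorem~\ref{Pcsd5} and part $(5)$ of Lemma~\ref{TYRD-f4f} have already been established, no substantive obstacle remains; the entire argument is a transcription and reduces to a bookkeeping exercise identifying the relevant specializations.
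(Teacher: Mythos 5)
Your proposal is correct and follows exactly the paper's own proof: specialize Theorem~\ref{Pcsd5} to $D := \partial\Omega$ and then rewrite $W^{j,p}_{\partial\Omega}(\Omega)$ as $\mathring{W}^{j,p}(\Omega)$ via part $(5)$ of Lemma~\ref{TYRD-f4f}.
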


\begin{proof}
Formula \eqref{BSgh8} is a direct consequence 
of \eqref{BSgh8.5Ws} (with $D:=\partial\Omega$) and part $(5)$ in Lemma~\ref{TYRD-f4f}
\end{proof}

Moving on to a different, yet related topic, for every nonempty 
open subset $\Omega$ of ${\mathbb{R}}^n$ and any $k\in{\mathbb{N}}$, 
$p\in[1,\infty]$, let us now introduce the following brand of Sobolev space,
\begin{eqnarray}\label{YJBKL-uvv}
\widetilde{W}^{k,p}(\Omega):=\big\{u\in W^{k,p}(\Omega):\,\widetilde{u}\in
W^{k,p}({\mathbb{R}}^n)\big\},
\end{eqnarray}
where, as in the past, for any function $u$ defined in $\Omega$ we have set 
\begin{eqnarray}\label{YJBKL-uvv.2}
\widetilde{u}:=\left\{
\begin{array}{ll}
u & \mbox{ in }\,\,\Omega,
\\[4pt]
0 & \mbox{ in }\,\,\Omega^c:={\mathbb{R}}^n\setminus\Omega.
\end{array}
\right.
\end{eqnarray}
Also, equip the space $\widetilde{W}^{k,p}(\Omega)$ with the norm 
$\|\cdot\|_{W^{k,p}(\Omega)}$. For the time being, we note the following 
elementary lemma. 

\begin{lemma}\label{HtdYUI}
Suppose that $\Omega$ is an arbitrary nonempty open subset of ${\mathbb{R}}^n$,
and fix $k\in{\mathbb{N}}$ along with $p\in[1,\infty]$. Then 
\begin{eqnarray}\label{YJBKL-uvv.4}
\mathring{W}^{k,p}(\Omega)\hookrightarrow\widetilde{W}^{k,p}(\Omega)
\hookrightarrow W^{k,p}(\Omega)\,\,\,\mbox{ isometrically},
\end{eqnarray}
and 
\begin{eqnarray}\label{YJBKL-uvv.6}
{\mathscr{L}}^n(\partial\Omega)=0\,\Longrightarrow\,
\widetilde{W}^{k,p}(\Omega)=\big\{v\big|_{\Omega}:\,
v\in W^{k,p}({\mathbb{R}}^n)\,\,\mbox{ with }\,\,
{\rm supp}\,v\subseteq\overline{\Omega}\,\big\}.
\end{eqnarray}
Moreover, 
\begin{eqnarray}\label{YJBKL-uvv.6GBB}
\begin{array}{c}
\mathring{W}^{k,p}(\Omega)=\widetilde{W}^{k,p}(\Omega)=\big\{v\big|_{\Omega}:\,
v\in W^{k,p}({\mathbb{R}}^n)\,\,\mbox{ with }\,\,{\rm supp}\,v\subseteq\overline{\Omega}
\big\}
\\[8pt]
\mbox{whenever }\,\,\partial\Omega=\partial\big(\,\overline{\Omega}\,\big)
\,\,\mbox{ and }\,\,p>n.
\end{array}
\end{eqnarray}
\end{lemma}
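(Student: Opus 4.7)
The plan is to verify the three assertions in turn, relying on the isometric extension-by-zero result \eqref{YJBKL-uvv.5} for the first two and on the capacitary characterization \eqref{kaan655.LPK} of $\mathring W^{k,p}(\Omega)$ for the last. For \eqref{YJBKL-uvv.4}, the inclusion $\widetilde W^{k,p}(\Omega)\hookrightarrow W^{k,p}(\Omega)$ is tautological from \eqref{YJBKL-uvv} and preserves the norm; for the remaining inclusion $\mathring W^{k,p}(\Omega)\hookrightarrow\widetilde W^{k,p}(\Omega)$, any $u\in\mathring W^{k,p}(\Omega)$ has $\widetilde u\in W^{k,p}({\mathbb{R}}^n)$ with $\|\widetilde u\|_{W^{k,p}({\mathbb{R}}^n)}=\|u\|_{W^{k,p}(\Omega)}$ by \eqref{YJBKL-uvv.5}, placing $u$ in $\widetilde W^{k,p}(\Omega)$ with equal norm.

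For \eqref{YJBKL-uvv.6} I would prove double containment. If $u\in\widetilde W^{k,p}(\Omega)$, then $v:=\widetilde u$ lies in $W^{k,p}({\mathbb{R}}^n)$, restricts to $u$ on $\Omega$, and vanishes on the open set ${\mathbb{R}}^n\setminus\overline\Omega\subseteq\Omega^c$, so ${\rm supp}\,v\subseteq\overline\Omega$ via \eqref{PJE-1}. Conversely, given $v\in W^{k,p}({\mathbb{R}}^n)$ with ${\rm supp}\,v\subseteq\overline\Omega$, setting $u:=v|_\Omega$ I would verify $\widetilde u=v$ ${\mathscr{L}}^n$-a.e.: the two agree on $\Omega$ by construction; both vanish on $(\Omega^c)^\circ={\mathbb{R}}^n\setminus\overline\Omega$ (by the design of $\widetilde u$ and by the support hypothesis on $v$, respectively); and their potential discrepancy on $\partial\Omega$ is immaterial because ${\mathscr{L}}^n(\partial\Omega)=0$. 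Hence $\widetilde u\in W^{k,p}({\mathbb{R}}^n)$, placing $u$ in $\widetilde W^{k,p}(\Omega)$.

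The final equality \eqref{YJBKL-uvv.6GBB} is the crux. A quick set-theoretic computation shows $\partial\Omega=\partial(\overline\Omega)$ is equivalent to $\Omega=(\overline\Omega)^\circ$, since $\partial\Omega=\overline\Omega\setminus\Omega$ whereas $\partial(\overline\Omega)=\overline\Omega\setminus(\overline\Omega)^\circ$. Combining \eqref{YJBKL-uvv.4} with the half of \eqref{YJBKL-uvv.6} that requires no measure assumption (i.e., $\widetilde W^{k,p}(\Omega)\subseteq\{v|_\Omega:\,v\in W^{k,p}({\mathbb{R}}^n),\,{\rm supp}\,v\subseteq\overline\Omega\}$, just proved above), the full chain of three equalities in \eqref{YJBKL-uvv.6GBB} reduces to establishing
\[
\big\{v|_\Omega:\,v\in W^{k,p}({\mathbb{R}}^n),\,{\rm supp}\,v\subseteq\overline\Omega\big\}\subseteq\mathring W^{k,p}(\Omega).
\]
Given such a $v$, Sobolev embedding (using $p>n$) yields a $\mathscr{C}^{k-1}$-representative $v^*$ whose classical derivatives $\partial^\alpha v^*$ for $|\alpha|\leq k-1$ are continuous on ${\mathbb{R}}^n$. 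Since $v\equiv 0$ ${\mathscr{L}}^n$-a.e.\ on the open set ${\mathbb{R}}^n\setminus\overline\Omega$, continuity forces $v^*\equiv 0$ there, whence $\partial^\alpha v^*\equiv 0$ on ${\mathbb{R}}^n\setminus\overline\Omega$; continuity extends this vanishing to the closure ${\mathbb{R}}^n\setminus(\overline\Omega)^\circ={\mathbb{R}}^n\setminus\Omega\supseteq\partial\Omega$. Because $(k-|\alpha|)p>n$, the operator ${\mathscr{R}}_{\partial\Omega}(\partial^\alpha v)$ from Definition~\ref{DEF234} coincides pointwise with $\partial^\alpha v^*|_{\partial\Omega}$ and hence vanishes identically on $\partial\Omega$, in particular $(k-|\alpha|,p)$-q.e.\ there. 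Invoking \eqref{kaan655.LPK} then places $v|_\Omega$ in $\mathring W^{k,p}(\Omega)$. The principal obstacle will be reconciling the pointwise vanishing coming from continuity with the capacitary quasi-everywhere condition in \eqref{kaan655.LPK}; the hypothesis $\partial\Omega=\partial(\overline\Omega)$ enters precisely to ensure $\partial\Omega\subseteq{\mathbb{R}}^n\setminus(\overline\Omega)^\circ$, while $p>n$ suppresses capacity subtleties by upgrading the quasi-everywhere condition to genuine pointwise continuity.
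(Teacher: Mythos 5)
Your proof is correct and follows essentially the same strategy as the paper: the first two items are handled identically via \eqref{YJBKL-uvv.5} and elementary set-theory plus the hypothesis ${\mathscr{L}}^n(\partial\Omega)=0$, and for \eqref{YJBKL-uvv.6GBB} you likewise use Sobolev embedding ($p>n$) to obtain a $\mathscr{C}^{k-1}$ representative, extend the vanishing of derivatives from ${\mathbb{R}}^n\setminus\overline{\Omega}$ to its closure, and identify that closure with $\Omega^c$ via $\partial\Omega=\partial(\overline{\Omega})$. The only cosmetic difference is the final citation: the paper passes through \eqref{kaan67} to place $v$ in $W^{k,p}_{\Omega^c}({\mathbb{R}}^n)$ and then applies parts (9), (5) of Lemma~\ref{TYRD-f4f}, whereas you invoke \eqref{kaan655.LPK} directly, which is a restatement of the same Hedberg--Wolff fact specialized to $D=\partial\Omega$; both routes are valid and rely on the identical underlying machinery.
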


\begin{proof}
The first inclusion in \eqref{YJBKL-uvv.4} follows from \eqref{YJBKL-uvv.5}
and \eqref{YJBKL-uvv}, whereas the second one is clear from definitions.
With the goal of proving \eqref{YJBKL-uvv.6}, first observe 
that if $u\in\widetilde{W}^{k,p}(\Omega)$ then, by design,  
$\widetilde{u}\in W^{k,p}({\mathbb{R}}^n)$,
$\widetilde{u}\big|_{\Omega}=u$, and ${\rm supp}\,\widetilde{u}
\subseteq\overline{\Omega}$. This shows that  
\begin{eqnarray}\label{YJBKL-uvv.6A}
\mbox{the inclusion }\,\,
\widetilde{W}^{k,p}(\Omega)\subseteq\big\{v\big|_{\Omega}:\,
v\in W^{k,p}({\mathbb{R}}^n)\,\,\mbox{ with }\,\,{\rm supp}\,v\subseteq\overline{\Omega}
\big\}\,\,\mbox{ always holds}.
\end{eqnarray}
For the opposite inclusion it is useful to have
${\mathscr{L}}^n(\partial\Omega)=0$, a condition we now assume. 
In this context, suppose that $v\in W^{k,p}({\mathbb{R}}^n)$ 
satisfies ${\rm supp}\,v\subseteq\overline{\Omega}$ and set 
$u:=v\big|_{\Omega}$. Then clearly $u\in W^{k,p}(\Omega)$ 
and $\widetilde{u}$ coincides with $v$ pointwise ${\mathscr{L}}^n$-a.e.
in ${\mathbb{R}}^n\setminus\partial\Omega$, thus ultimately
${\mathscr{L}}^n$-a.e. in ${\mathbb{R}}^n$, granted the assumption on $\partial\Omega$.
As a consequence, $\widetilde{u}$ also belongs to $W^{k,p}({\mathbb{R}}^n)$,
which puts $u=v\big|_{\Omega}$ in $\widetilde{W}^{k,p}(\Omega)$, as desired.

As far as \eqref{YJBKL-uvv.6GBB} is concerned, assume that 
$\partial\Omega=\partial\big(\,\overline{\Omega}\,\big)$ and $p>n$. 
Select an arbitrary $v\in W^{k,p}({\mathbb{R}}^n)$ with 
${\rm supp}\,v\subseteq\overline{\Omega}$ and note that 
$v\in{\mathscr{C}}^{k-1}({\mathbb{R}}^n)$ by standard embeddings. 
In addition, since $v\equiv 0$ on $\big(\Omega^c\big)^\circ$ it follows that
\begin{eqnarray}\label{YakBvvt6}
(\partial^\alpha v)\big|_{\overline{(\Omega^c)^\circ}}\,=0
\,\,\,\mbox{ everywhere on }\,\,\overline{(\Omega^c)^\circ}
\,\,\mbox{ for every }\,\,\alpha\in{\mathbb{N}}_0^n
\,\,\mbox{ with }\,\,|\alpha|\leq k-1.
\end{eqnarray}
Let us momentarily digress in order to note that the assumption 
$\partial\Omega=\partial\big(\,\overline{\Omega}\,\big)$ forces
\begin{eqnarray}\label{YF-YFT-85}
\big(\,\overline{\Omega}\,\big)^\circ
=\overline{\Omega}\setminus\partial\big(\,\overline{\Omega}\,\big)
=\overline{\Omega}\setminus\partial\Omega=\Omega.
\end{eqnarray}
As such, 
\begin{eqnarray}\label{YF-YFT-86}
\overline{(\Omega^c)^\circ}=\overline{(\,\overline{\Omega}\,)^c}
=\big(\big(\,\overline{\Omega}\,)^\circ\big)^c=\Omega^c
\end{eqnarray}
thus \eqref{YakBvvt6} becomes
\begin{eqnarray}\label{Yan-UIah-yT.y}
(\partial^\alpha v)\big|_{\Omega^c}=0\,\,\,\mbox{ everywhere on }\,\,\Omega^c
\,\,\mbox{ for each }\,\,\alpha\in{\mathbb{N}}_0^n
\,\,\mbox{ with }\,\,|\alpha|\leq k-1.
\end{eqnarray}
In particular (cf. \eqref{Piba.2}), 
\begin{eqnarray}\label{PiKhav}
{\mathscr{R}}^{(k)}_{\Omega^c}\,v=0\,\,\mbox{ quasi-everywhere on }\,\,\Omega^c,
\end{eqnarray}
therefore $v\in W^{k,p}_{\Omega^c}({\mathbb{R}}^n)$ by \eqref{kaan67}. 
With the help of $(9)$ and $(5)$ in Lemma~\ref{TYRD-f4f} we then further deduce 
from this that $v\big|_{\Omega}\in W^{k,p}_{\partial\Omega}(\Omega)
={\mathring{W}}^{k,p}(\Omega)$. All in all, this argument shows that in 
the current setting 
$\big\{v\big|_{\Omega}:\,v\in W^{k,p}({\mathbb{R}}^n)\,\,\mbox{ with }\,\,
{\rm supp}\,v\subseteq\overline{\Omega}\big\}\subseteq\mathring{W}^{k,p}(\Omega)$.
Based on this, the first inclusion in \eqref{YJBKL-uvv.4}, and \eqref{YJBKL-uvv.6A},
it follows that the double equality in \eqref{YJBKL-uvv.6GBB} holds, finishing the
proof of the lemma. 
\end{proof}

The issue of the coincidence of the spaces displayed in \eqref{YJBKL-uvv.6GBB}
for {\it all} values of $p\in(1,\infty)$ is addressed in the theorem below. This is
accomplished under the assumptions that $\Omega\subseteq{\mathbb{R}}^n$ is an open 
set which sits on only one side of its topological boundary and such that the 
interior of its complement is an $(\varepsilon,\delta)$-domain. In particular, 
these conditions are satisfied if $\big(\Omega^c\big)^\circ$ is an NTA domain 
in the sense of \cite{JeKe82}.

\begin{theorem}[Extension of Sobolev functions by zero]\label{Tfgg-75dS}
Let $\Omega$ be a nonempty proper open subset of ${\mathbb{R}}^n$ 
with the property that $\partial\Omega=\partial\big(\,\overline{\Omega}\,\big)$
and such that $\big(\Omega^c\big)^\circ$ is an $(\varepsilon,\delta)$-domain
with ${\rm rad}\big((\Omega^c)^\circ\big)>0$. 
Then for every $k\in{\mathbb{N}}$ and $p\in(1,\infty)$,
\begin{eqnarray}\label{YJBKL-uvv.7}
\widetilde{W}^{k,p}(\Omega)=\mathring{W}^{k,p}(\Omega).
\end{eqnarray}
As a corollary, in the current setting the following properties hold:
\begin{eqnarray}\label{YJBKL-uvv.3}
&& \Big(\widetilde{W}^{k,p}(\Omega)\,,\,\|\cdot\|_{W^{k,p}(\Omega)}\Big)
\,\,\,\mbox{ is a Banach space},
\\[4pt]
&& \widetilde{W}^{k,p}(\Omega)\ni u\longmapsto\widetilde{u}
\in W^{k,p}({\mathbb{R}}^n)\,\,\,\mbox{ isometrically, and}
\label{YJBKL-uvv.5.ii}
\\[4pt]
&& \widetilde{W}^{k,p}(\Omega)=\big\{v\big|_{\Omega}:\,
v\in W^{k,p}({\mathbb{R}}^n)\,\,\mbox{ with }\,\,
{\rm supp}\,v\subseteq\overline{\Omega}\big\}.
\label{YJBKL-uvv.6.iii}
\end{eqnarray}
\end{theorem}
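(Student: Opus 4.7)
The inclusion $\mathring{W}^{k,p}(\Omega)\hookrightarrow\widetilde{W}^{k,p}(\Omega)$ is already recorded in \eqref{YJBKL-uvv.4}, so the substance of \eqref{YJBKL-uvv.7} is the reverse containment. To set the stage, the hypothesis $\partial\Omega=\partial(\overline{\Omega})$ yields $\Omega=(\overline{\Omega})^{\circ}$ and $\partial\Omega=\partial((\Omega^c)^\circ)$ exactly as in \eqref{YF-YFT-85}--\eqref{YF-YFT-86}, so $\Omega$ and $(\Omega^c)^\circ$ are complementary open sides of $\partial\Omega$; moreover, since $(\Omega^c)^\circ$ is an $(\varepsilon,\delta)$-domain, \eqref{PJE-wcX} forces ${\mathscr{L}}^n(\partial\Omega)=0$. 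Next, a completely general identification (valid for any nonempty open $\Omega$) reads $\mathring{W}^{k,p}(\Omega)\simeq W^{k,p}_{\Omega^c}({\mathbb{R}}^n)$ via zero extension: by definition, $W^{k,p}_{\Omega^c}({\mathbb{R}}^n)$ is the closure in $W^{k,p}({\mathbb{R}}^n)$ of $\{\varphi\in{\mathscr{C}}^\infty_c({\mathbb{R}}^n):\Omega^c\cap\operatorname{supp}\varphi=\emptyset\}={\mathscr{C}}^\infty_c(\Omega)$, and zero extension is isometric by \eqref{YJBKL-uvv.5}. Thus the theorem reduces to showing that whenever $u\in\widetilde{W}^{k,p}(\Omega)$, the zero extension $\widetilde{u}\in W^{k,p}({\mathbb{R}}^n)$ actually lies in $W^{k,p}_{\Omega^c}({\mathbb{R}}^n)$.

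The plan from here is to apply the Hedberg--Wolff characterization \eqref{kaan67} from Theorem~\ref{YTab-YHb} with $D:=\Omega^c$, which translates the desired membership into the requirement ${\mathscr{R}}_{\Omega^c}(\partial^\alpha\widetilde{u})=0$ in the $(k-|\alpha|,p)$-quasi-everywhere sense on $\Omega^c$, for every $|\alpha|\leq k-1$. Writing $\Omega^c=(\Omega^c)^{\circ}\cup\partial\Omega$, vanishing on the open piece $(\Omega^c)^\circ$ is immediate: since $\widetilde{u}\equiv 0$ there by hypothesis, each distributional derivative $\partial^\alpha\widetilde{u}\in L^p({\mathbb{R}}^n)$ also vanishes ${\mathscr{L}}^n$-a.e.\ on $(\Omega^c)^\circ$, so for every $x\in(\Omega^c)^\circ$ and every $r<d(x,\partial\Omega)$ the ball $B(x,r)$ is contained in $(\Omega^c)^\circ$ and $\dmeanint_{B(x,r)}\partial^\alpha\widetilde{u}\,d{\mathscr{L}}^n=0$; hence the quasi-continuous representative $\overline{\partial^\alpha\widetilde{u}}$ from \eqref{Paj-YaP} is identically zero at every point of $(\Omega^c)^\circ$. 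The nontrivial task is to promote this to $(k-|\alpha|,p)$-quasi-everywhere vanishing on $\partial\Omega=\partial((\Omega^c)^\circ)$.

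This propagation from the open set to its boundary is where the $(\varepsilon,\delta)$-structure enters decisively: the interior corkscrew property inherent in $(\varepsilon,\delta)$-domains (recalled in the discussion following Definition~\ref{Def-EPDE}) yields a constant $c=c(\varepsilon,n)>0$ such that for every $x\in\partial\Omega$ and every small $r>0$ the set $B(x,r)\cap(\Omega^c)^\circ$ contains a ball of radius $cr$. This forces the capacity-density lower bound $C_{k-|\alpha|,p}\bigl(B(x,r)\cap(\Omega^c)^\circ\bigr)\geq c'\,C_{k-|\alpha|,p}(B(x,r))$, a property which (fixing $\alpha$ and writing $m:=k-|\alpha|$) means that any quasi-open exceptional set $G_\epsilon$ with $C_{m,p}(G_\epsilon)<\epsilon$ off which $\overline{\partial^\alpha\widetilde{u}}$ is continuous cannot engulf the corkscrew balls at $x$; extracting for each such $x\in\partial\Omega\setminus\bigcap_\epsilon G_\epsilon$ a sequence $y_j\in(\Omega^c)^\circ\setminus G_{\epsilon_j}$ with $y_j\to x$ and $\overline{\partial^\alpha\widetilde{u}}(y_j)=0$, the quasi-continuity of $\overline{\partial^\alpha\widetilde{u}}$ then forces $\overline{\partial^\alpha\widetilde{u}}(x)=0$. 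Since $\bigcap_\epsilon G_\epsilon$ has $(m,p)$-capacity zero, this delivers the desired quasi-everywhere vanishing on $\partial\Omega$. The hardest step is clearly this last capacitary extraction: it is tantamount to the assertion that every boundary point of the $(\varepsilon,\delta)$-domain $(\Omega^c)^\circ$ is a regular point in the nonlinear-potential-theoretic (Wiener) sense, with non-thinness guaranteed uniformly by the corkscrew. Granted \eqref{YJBKL-uvv.7}, the three corollary statements \eqref{YJBKL-uvv.3}--\eqref{YJBKL-uvv.6.iii} follow at once: \eqref{YJBKL-uvv.3} from the closedness of $\mathring{W}^{k,p}(\Omega)$ in $W^{k,p}(\Omega)$; \eqref{YJBKL-uvv.5.ii} from the isometric extension \eqref{YJBKL-uvv.5}; and \eqref{YJBKL-uvv.6.iii} from \eqref{YJBKL-uvv.6} of Lemma~\ref{HtdYUI}, whose hypothesis ${\mathscr{L}}^n(\partial\Omega)=0$ has been verified.
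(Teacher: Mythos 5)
Your reduction to showing $\widetilde{u}\in W^{k,p}_{\Omega^c}({\mathbb{R}}^n)$ via the Hedberg--Wolff characterization is a genuinely different route from the paper's, and your high-level intuition (corkscrew density forces the quasi-continuous representatives of $\partial^\alpha\widetilde{u}$ to vanish quasi-everywhere on $\partial\Omega$) is correct in spirit. But the key step as written has a real gap. You propose extracting a sequence $y_j\in(\Omega^c)^\circ\setminus G_{\epsilon_j}$ with $y_j\to x$, with the index $\epsilon_j$ varying along the sequence, and then appealing to ``quasi-continuity.'' Quasi-continuity only gives pointwise continuity of $\overline{\partial^\alpha\widetilde{u}}$ restricted to ${\mathbb{R}}^n\setminus G_\epsilon$ for one \emph{fixed} $\epsilon$; you cannot pass to the limit over a sequence crossing different exceptional sets. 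Moreover, for a fixed $\epsilon$ your capacity-density estimate does not immediately forbid $G_\epsilon$ from swallowing all the corkscrew balls near $x$: $C_{m,p}\big(B(z_r,cr)\big)\to 0$ as $r\to 0$, so ``small global capacity of $G_\epsilon$'' does not contradict $G_\epsilon\supseteq E\cap B(x,r)$ for all small $r$. What is actually needed to close this gap is the \emph{fine continuity theorem} for Sobolev functions (the quasi-continuous representative is $(m,p)$-finely continuous at $(m,p)$-q.e.\ point, cf. \cite[\S 6.4]{AH96}), combined with the Wiener-type statement that a uniform capacity density lower bound makes $(\Omega^c)^\circ$ $(m,p)$-thick --- i.e., makes $x$ a fine limit point of $(\Omega^c)^\circ$. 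With those two ingredients, fine continuity at $x$ together with the vanishing on $(\Omega^c)^\circ$ and thickness of $(\Omega^c)^\circ$ at $x$ yields $\overline{\partial^\alpha\widetilde{u}}(x)=0$. You gesture at exactly this (``regular in the Wiener sense''), but the fine-continuity theorem is a genuinely non-elementary ingredient you have not supplied, and the ``extraction'' as written does not substitute for it.

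By contrast, the paper's proof never touches fine topology. It approximates $\widetilde{u}$ by $v_j\in{\mathscr{C}}^\infty_c({\mathbb{R}}^n)$, then corrects each $v_j$ by subtracting the Jones extension $\Lambda^c_k\big(v_j|_{(\Omega^c)^\circ}\big)$ built over the $(\varepsilon,\delta)$-domain $(\Omega^c)^\circ$. Because Jones' operator is bounded for \emph{all} $q\in[1,\infty]$, the corrected functions $w_j$ lie in $\bigcap_q W^{k,q}({\mathbb{R}}^n)\subseteq{\mathscr{C}}^{k-1}({\mathbb{R}}^n)$, so all derivatives up to order $k-1$ of $w_j$ vanish \emph{classically everywhere} on $\overline{(\Omega^c)^\circ}=\Omega^c$ (using $\partial\Omega=\partial(\overline{\Omega})$). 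Then \eqref{kaan67} puts $w_j\in W^{k,p}_{\Omega^c}({\mathbb{R}}^n)$, closedness does the rest, and the potential-theoretic content is reduced to ordinary continuity. So the trade-off is: your approach isolates the real potential-theoretic phenomenon (regularity of boundary points under the corkscrew condition), but at the price of importing the fine continuity machinery; the paper's approach is longer but self-contained, leveraging the $(\varepsilon,\delta)$ hypothesis through Jones' operator --- the very tool the paper is built on --- to upgrade quasi-continuity to $\mathscr{C}^{k-1}$ continuity before the Hedberg--Wolff criterion is applied.
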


\begin{proof}
The inclusion $\mathring{W}^{k,p}(\Omega)\subseteq\widetilde{W}^{k,p}(\Omega)$
is contained in \eqref{YJBKL-uvv.4}, so the crux of the matter is establishing
the opposite one. To this end, let $u\in\widetilde{W}^{k,p}(\Omega)$ be an 
arbitrary function. Then $\widetilde{u}\in W^{k,p}({\mathbb{R}}^n)$ and, hence, 
there exits a sequence
$\{v_j\}_{j\in{\mathbb{N}}}\subseteq{\mathscr{C}}^\infty_c({\mathbb{R}}^n)$
with the property that 
\begin{eqnarray}\label{YJBUHa0TG}
v_j\longrightarrow\widetilde{u}\,\,\mbox{ in }\,\,W^{k,p}({\mathbb{R}}^n)
\,\,\mbox{ as }\,\,j\to\infty. 
\end{eqnarray}
In particular, 
\begin{eqnarray}\label{YJBUHa0TG.222}
v_j\big|_{(\Omega^c)^\circ}\longrightarrow
\widetilde{u}\big|_{(\Omega^c)^\circ}=0\,\,\mbox{ in }\,\,
W^{k,p}\big((\Omega^c)^\circ\big)\,\,\mbox{ as }\,\,j\to\infty. 
\end{eqnarray}
To proceed, for each $j\in{\mathbb{N}}$ consider 
\begin{eqnarray}\label{YJBKL-uvv.8}
w_j:=v_j-\Lambda_k^c\Big(v_j\big|_{(\Omega^c)^\circ}\Big)\,\,\,\mbox{ in }\,\,
{\mathbb{R}}^n,
\end{eqnarray}
where $\Lambda_k^c$ denotes Jones' extension operator for the 
$(\varepsilon,\delta)$-domain $\big(\Omega^c\big)^\circ$, which, 
by assumption, satisfies ${\rm rad}\big((\Omega^c)^\circ\big)>0$.
Based on \eqref{YJBKL-uvv.8} and Theorem~\ref{cjeg}, for each $j\in{\mathbb{N}}$ 
we then have 
\begin{eqnarray}\label{aTGBOUH-9}
w_j\in W^{k,q}({\mathbb{R}}^n)\,\,\,\mbox{ for each }\,\,\,q\in[1,\infty].
\end{eqnarray}
Together with standard embedding results, this implies
\begin{eqnarray}\label{aTB-Tgab-7yG}
w_j\in{\mathscr{C}}^{k-1}({\mathbb{R}}^n)\,\,\,\mbox{ for each }\,\,\,j\in{\mathbb{N}}.
\end{eqnarray}
Furthermore, in light of \eqref{YJBKL-uvv.8} and Theorem~\ref{cjeg}, 
for every $j\in{\mathbb{N}}$ we may estimate
\begin{eqnarray}\label{ayYYH.6cTTT}
\big\|\widetilde{u}-w_j\big\|_{W^{k,p}({\mathbb{R}}^n)}
&\leq & \big\|\widetilde{u}-v_j\big\|_{W^{k,p}({\mathbb{R}}^n)}
+\Big\|\Lambda^c_k\Big(v_j\big|_{(\Omega^c)^\circ}\Big)\Big\|_{W^{k,p}({\mathbb{R}}^n)}
\nonumber\\[4pt]
&\leq & \big\|\widetilde{u}-v_j\big\|_{W^{k,p}({\mathbb{R}}^n)}
+C\big\|v_j\big|_{(\Omega^c)^\circ}\big\|_{W^{k,p}((\Omega^c)^\circ)}.
\end{eqnarray}
In turn, this forces
\begin{eqnarray}\label{ayTTga-Tgab}
w_j\longrightarrow\widetilde{u}\,\,\mbox{ in }\,\,W^{k,p}({\mathbb{R}}^n)\,\,\,
\mbox{ as }\,\,j\to\infty,
\end{eqnarray}
by \eqref{YJBUHa0TG} and \eqref{YJBUHa0TG.222}. In addition, from 
\eqref{YJBKL-uvv.8} and the analogue of \eqref{Pa-PL2} for the 
domain $\big(\Omega^c\big)^\circ$, we conclude that for every 
$j\in{\mathbb{N}}$ we have 
\begin{eqnarray}\label{ayTTga.8UGva}
w_j\big|_{(\Omega^c)^\circ}=0\,\,\,\,{\mathcal{L}}^n\mbox{-a.e. on }\,\,\,
(\Omega^c)^\circ. 
\end{eqnarray}
From \eqref{ayTTga.8UGva} and \eqref{aTB-Tgab-7yG} we may now conclude that, 
for each $j\in{\mathbb{N}}$,
\begin{eqnarray}\label{Yan-UIah-yT.i}
(\partial^\alpha w_j)\big|_{\overline{(\Omega^c)^\circ}}\,=0
\,\,\,\mbox{ everywhere on }\,\,\overline{(\Omega^c)^\circ}
\,\,\mbox{ for every }\,\,\alpha\in{\mathbb{N}}_0^n
\,\,\mbox{ with }\,\,|\alpha|\leq k-1.
\end{eqnarray}
Based on this and the fact that $\partial\Omega=\partial\big(\,\overline{\Omega}\,\big)$
implies \eqref{YF-YFT-86}, we may ultimately conclude that 
\begin{eqnarray}\label{Yan-UIah-yT}
(\partial^\alpha w_j)\big|_{\Omega^c}=0\,\,\,\mbox{ everywhere on }\,\,\Omega^c
\,\,\mbox{ for each }\,\,\alpha\in{\mathbb{N}}_0^n
\,\,\mbox{ with }\,\,|\alpha|\leq k-1.
\end{eqnarray}
When combined with \eqref{kaan67} and \eqref{aTGBOUH-9}, the everywhere vanishing trace 
condition from \eqref{Yan-UIah-yT} implies that 
\begin{eqnarray}\label{kaan67-YYG6ahN}
w_j\in W^{k,p}_{\Omega^c}({\mathbb{R}}^n)\,\,\,\mbox{ for every }\,\,j\in{\mathbb{N}}.
\end{eqnarray}
Finally, from \eqref{kaan67-YYG6ahN}, \eqref{ayTTga-Tgab} and the fact that 
$W^{k,p}_{\Omega^c}({\mathbb{R}}^n)$ is a closed subspace of $W^{k,p}({\mathbb{R}}^n)$
we deduce that 
\begin{eqnarray}\label{PIOHH88}
\widetilde{u}\in W^{k,p}_{\Omega^c}({\mathbb{R}}^n).
\end{eqnarray}
From this and the definition of $W^{k,p}_{\Omega^c}({\mathbb{R}}^n)$ it follows
that there exists a sequence $\{\varphi_j\}_{j\in{\mathbb{N}}}\subseteq
{\mathscr{C}}^\infty_c(\Omega)$ with the property that 
\begin{eqnarray}\label{IanbYG}
\widetilde{\varphi_j}
\longrightarrow\widetilde{u}\,\,\mbox{ in }\,\,W^{k,p}({\mathbb{R}}^n)
\,\,\mbox{ as }\,\,j\to\infty. 
\end{eqnarray}
Consequently,
\begin{eqnarray}\label{POJna0JB}
\varphi_j=\widetilde{\varphi_j}\big|_{\Omega}\longrightarrow
\widetilde{u}\big|_{\Omega}=u\,\,\mbox{ in }\,\,
W^{k,p}(\Omega)\,\,\mbox{ as }\,\,j\to\infty,
\end{eqnarray}
hence, further, $u\in\mathring{W}^{k,p}(\Omega)$ by \eqref{azTagbM}.
Since the function $u\in\widetilde{W}^{k,p}(\Omega)$ has been arbitrarily chosen, 
this shows that $\widetilde{W}^{k,p}(\Omega)\subseteq\mathring{W}^{k,p}(\Omega)$
and finishes the proof of \eqref{YJBKL-uvv.7}.

Moving on, \eqref{YJBKL-uvv.3} is a direct consequence of \eqref{YJBKL-uvv.7}
and \eqref{azTagbM}, whereas \eqref{YJBKL-uvv.5.ii} is immediate from 
\eqref{YJBKL-uvv.7} and \eqref{YJBKL-uvv.5}. Finally, as regards 
\eqref{YJBKL-uvv.6.iii}, first note that since $\big(\Omega^c\big)^\circ$ is 
assumed to be an $(\varepsilon,\delta)$-domain, \eqref{PJE-wcX} implies
that ${\mathscr{L}}^n\Big(\partial\big(\big(\Omega^c\big)^\circ\big)\Big)=0$.
On the other hand, 
\begin{eqnarray}\label{PiahhIYT}
\partial\big(\big(\Omega^c\big)^\circ\big)
=\partial\big(\big(\,\overline{\Omega}\,\big)^c\big)
=\partial\big(\,\overline{\Omega}\,\big)=\partial\Omega. 
\end{eqnarray}
Hence, ${\mathscr{L}}^n(\partial\Omega)=0$, so \eqref{YJBKL-uvv.6.iii} now follows
from \eqref{YJBKL-uvv.6}.
\end{proof}

We now propose to study the issue as to whether Sobolev functions defined
on either side of the boundary of a domain may be ``glued" together with 
preservation of smoothness. In order to introduce a natural geometrical 
setting for this type of question we make the following definition. 

\begin{definition}\label{TYD-Ubbb6B}
Call an open, nonempty, proper subset $\Omega$ of ${\mathbb{R}}^n$ a 
{\tt two-sided} $(\varepsilon,\delta)$-{\tt domain} provided
both $\Omega$ and $\big(\Omega^c\big)^\circ$ are $(\varepsilon,\delta)$-domains, 
${\rm rad}\,(\Omega)>0$, ${\rm rad}\big((\Omega^c)^\circ\big)>0$,
and $\partial\Omega=\partial\big(\,\overline{\Omega}\,\big)$.
\end{definition}

\noindent Note that, as seen from \eqref{PiahhIYT}, 
\begin{eqnarray}\label{PiahhIYT.2}
\mbox{any two-sided $(\varepsilon,\delta)$-domain $\Omega$ satisfies }\,\,
\partial\big(\big(\Omega^c\big)^\circ\big)=\partial\Omega. 
\end{eqnarray}
Examples of two-sided $(\varepsilon,\delta)$-domains include 
the class of bounded Lipschitz domains and, more generally, the class of 
two-sided NTA domains (by which we mean connected sets which are NTA and 
whose interior of their complement is also connected and NTA). 

Theorem~\ref{SSSg-g5dS} below states that gluing Sobolev functions 
defined inside and outside of a two-sided $(\varepsilon,\delta)$-domain
preserves Sobolev smoothness if and only if the functions in question 
have matching traces across the boundary. To facilitate the reading of its 
statement, the reader is advised to recall the definition and properties
of the higher-order restriction map ${\mathscr{R}}^{(k)}_{\Omega\to\partial\Omega}$ 
from Corollary~\ref{NIceTRace.CC}.

\begin{theorem}[Gluing Sobolev functions with matching traces]\label{SSSg-g5dS}
Let $\Omega$ be a two-sided $(\varepsilon,\delta)$-domain in ${\mathbb{R}}^n$ 
with the property that $\partial\Omega$ is $d$-Ahlfors regular for some 
$d\in[n-1,n)$. Also, fix $k\in{\mathbb{N}}$ along with some $p$ such that
$\max\,\{1,n-d\}<p<\infty$.
Then for any $u\in W^{k,p}(\Omega)$ and $v\in W^{k,p}\big((\Omega^c)^\circ\big)$ 
the following two conditions are equivalent:
\begin{enumerate}
\item[(i)] the functions $u,v$ have matching traces, i.e., 
\begin{eqnarray}\label{7reer}
{\mathscr{R}}^{(k)}_{\Omega\to\partial\Omega}\,u
={\mathscr{R}}^{(k)}_{(\Omega^c)^\circ\to\partial\Omega}\,v
\,\,\,\mbox{ at ${\mathcal{H}}^{d}$-a.e. point on }\,\,\partial\Omega;
\end{eqnarray}
\item[(ii)] the function 
\begin{eqnarray}\label{7reer.2}
w:=\left\{
\begin{array}{ll}
u &\mbox{ in }\,\Omega,
\\[4pt]
v &\mbox{ in }\,(\Omega^c)^\circ,
\end{array}
\right.
\end{eqnarray}
(which is defined ${\mathscr{L}}^n$-a.e. in ${\mathbb{R}}^n$) 
belongs to $W^{k,p}({\mathbb{R}}^n)$.
\end{enumerate}

Moreover, whenever condition $(i)$ holds and $w$ is defined as in \eqref{7reer.2}, 
one has
\begin{eqnarray}\label{7reer.3}
\|w\|_{W^{k,p}({\mathbb{R}}^n)}\leq C\Big(
\|u\|_{W^{k,p}(\Omega)}+\|v\|_{W^{k,p}((\Omega^c)^\circ)}\Big),
\end{eqnarray}
where $C>0$ is a finite constant depending only on $n,\varepsilon,\delta,k,p$. 
\end{theorem}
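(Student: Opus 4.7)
\medskip

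\noindent\textbf{Proof proposal.} The plan is to establish the two implications separately and then read off the norm estimate from the construction used in the harder direction.

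\smallskip

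For the implication $(ii)\Rightarrow(i)$, I would simply invoke the well-definedness of the restriction operators ${\mathscr{R}}^{(k)}_{\Omega\to\partial\Omega}$ and ${\mathscr{R}}^{(k)}_{(\Omega^c)^\circ\to\partial\Omega}$ furnished by Corollary~\ref{NIceTRace.CC}, which is applicable here because $\Omega$ and $(\Omega^c)^\circ$ are both $(\varepsilon,\delta)$-domains with positive radius and $\partial\Omega=\partial\bigl((\Omega^c)^\circ\bigr)$ (see \eqref{PiahhIYT.2}) is $d$-Ahlfors regular. If $w\in W^{k,p}({\mathbb{R}}^n)$ is as in \eqref{7reer.2}, then $w$ is simultaneously a $W^{k,p}({\mathbb{R}}^n)$-extension of $u$ and of $v$, so by \eqref{Ver-S2TG.2} both traces are equal to $\{\lim_{r\to 0^+}\dmeanint_{B(x,r)}\partial^\alpha w\,d{\mathscr{L}}^n\}_{|\alpha|\leq k-1}$ at ${\mathcal{H}}^d$-a.e.\ $x\in\partial\Omega$, whence \eqref{7reer} follows.

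\smallskip

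For the implication $(i)\Rightarrow(ii)$, which I expect to be the main content, I would proceed as follows. Set $\dot{f}:={\mathscr{R}}^{(k)}_{\Omega\to\partial\Omega}u={\mathscr{R}}^{(k)}_{(\Omega^c)^\circ\to\partial\Omega}v\in B^{p,p}_{k-(n-d)/p}(\partial\Omega)$ (the membership, and the bound $\|\dot{f}\|_{B^{p,p}_{k-(n-d)/p}(\partial\Omega)}\leq C\min\{\|u\|_{W^{k,p}(\Omega)},\|v\|_{W^{k,p}((\Omega^c)^\circ)}\}$, come from the boundedness of the restriction operators in Corollary~\ref{NIceTRace.CC}). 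Let $W:={\mathscr{E}}^{(k)}_{\partial\Omega}\dot{f}\in W^{k,p}({\mathbb{R}}^n)$ be the Jonsson--Wallin extension from Theorem~\ref{GCC-67}, which satisfies ${\mathscr{R}}^{(k)}_{\partial\Omega}W=\dot{f}$ and $\|W\|_{W^{k,p}({\mathbb{R}}^n)}\leq C\|\dot{f}\|_{B^{p,p}_{k-(n-d)/p}(\partial\Omega)}$. Define
\begin{equation*}
u_0:=u-W\big|_{\Omega}\in W^{k,p}(\Omega),\qquad
v_0:=v-W\big|_{(\Omega^c)^\circ}\in W^{k,p}\bigl((\Omega^c)^\circ\bigr).
\end{equation*}
By construction and by \eqref{Ver-S2TG.2}, both ${\mathscr{R}}^{(k)}_{\Omega\to\partial\Omega}u_0$ and ${\mathscr{R}}^{(k)}_{(\Omega^c)^\circ\to\partial\Omega}v_0$ vanish ${\mathcal{H}}^d$-a.e.\ on $\partial\Omega$. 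Applying the null-space characterization \eqref{Uan-Taev75} on each of the two $(\varepsilon,\delta)$-domains (each having $d$-Ahlfors regular boundary, since $\partial\Omega=\partial\bigl((\Omega^c)^\circ\bigr)$), we obtain $u_0\in\mathring{W}^{k,p}(\Omega)$ and $v_0\in\mathring{W}^{k,p}\bigl((\Omega^c)^\circ\bigr)$.

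\smallskip

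At this point, the isometric embedding \eqref{YJBKL-uvv.5} gives $\widetilde{u_0},\widetilde{v_0}\in W^{k,p}({\mathbb{R}}^n)$ with norms controlled by $\|u_0\|_{W^{k,p}(\Omega)}$ and $\|v_0\|_{W^{k,p}((\Omega^c)^\circ)}$ respectively. Since $\Omega$ is an $(\varepsilon,\delta)$-domain, \eqref{PJE-wcX} gives ${\mathscr{L}}^n(\partial\Omega)=0$, so the functions $\widetilde{u_0}$ and $\widetilde{v_0}$ have disjoint supports up to a null set, and the identity
\begin{equation*}
w=W+\widetilde{u_0}+\widetilde{v_0}\qquad\mbox{${\mathscr{L}}^n$-a.e.\ in ${\mathbb{R}}^n$}
\end{equation*}
exhibits $w$ as an element of $W^{k,p}({\mathbb{R}}^n)$. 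The norm estimate \eqref{7reer.3} then follows by collecting the bounds
\begin{equation*}
\|w\|_{W^{k,p}({\mathbb{R}}^n)}\leq\|W\|_{W^{k,p}({\mathbb{R}}^n)}+\|u_0\|_{W^{k,p}(\Omega)}+\|v_0\|_{W^{k,p}((\Omega^c)^\circ)}
\leq C\bigl(\|u\|_{W^{k,p}(\Omega)}+\|v\|_{W^{k,p}((\Omega^c)^\circ)}\bigr),
\end{equation*}
where the constant absorbs the operator norms of ${\mathscr{E}}^{(k)}_{\partial\Omega}$ and of the two restriction maps.

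\smallskip

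The only genuinely delicate point I anticipate is the bookkeeping required to ensure that \eqref{Uan-Taev75} is applicable on the exterior side; this hinges on the fact that, under Definition~\ref{TYD-Ubbb6B} together with the hypothesis that $\partial\Omega$ is $d$-Ahlfors regular, $(\Omega^c)^\circ$ is itself an $(\varepsilon,\delta)$-domain with $\mathrm{rad}\bigl((\Omega^c)^\circ\bigr)>0$ and boundary coinciding with $\partial\Omega$, so it shares the Ahlfors regularity assumption. Once this is noted, the rest of the argument is a matter of assembly.
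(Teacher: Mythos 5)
Your proof is correct, and it takes a genuinely different route from the paper's argument. The paper builds a background function by applying Jones' extension operator $\Lambda_k^c$ for $(\Omega^c)^\circ$ to $v$, which produces a function on all of ${\mathbb{R}}^n$; it then observes that $u_\ast:=(\Lambda_k^c v)\big|_\Omega$ shares the trace of $u$, so $u-u_\ast\in\mathring{W}^{k,p}(\Omega)$, and corrects on the $\Omega$-side only by adding the zero-extension $\widetilde{u-u_\ast}$. Your construction is more symmetric: you extract the common trace $\dot{f}$, lift it globally via the Jonsson--Wallin operator to a background $W={\mathscr{E}}^{(k)}_{\partial\Omega}\dot{f}\in W^{k,p}({\mathbb{R}}^n)$, and then correct on \emph{both} sides simultaneously with the zero-extensions of $u_0$ and $v_0$. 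Both routes require exactly the two key ingredients, namely the null-space characterization \eqref{Uan-Taev75} of $\mathring{W}^{k,p}$ via vanishing trace, and the fact that zero-extension carries $\mathring{W}^{k,p}$ into $W^{k,p}({\mathbb{R}}^n)$. Your version has the minor virtue of only invoking the elementary inclusion \eqref{YJBKL-uvv.5} rather than the full equality $\widetilde{W}^{k,p}(\Omega)=\mathring{W}^{k,p}(\Omega)$ that the paper cites (the paper uses \eqref{YJBKL-uvv.7} though only the easy direction is needed), and it makes the roles of $u$ and $v$ manifestly symmetric; the paper's version avoids introducing the extra operator ${\mathscr{E}}^{(k)}_{\partial\Omega}$, getting by with $\Lambda_k^c$, which was already front and center. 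Your bookkeeping on why Corollary~\ref{NIceTRace.CC} applies on the exterior side --- namely that $(\Omega^c)^\circ$ is an $(\varepsilon,\delta)$-domain with positive radius sharing the $d$-Ahlfors regular boundary $\partial\Omega$ --- is exactly the right point to flag, and it goes through as you indicate.
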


\begin{proof}
For starters, observe that thanks to property \eqref{PiahhIYT.2} it makes 
sense to talk about ${\mathscr{R}}^{(k)}_{(\Omega^c)^\circ\to\partial\Omega}$. 

Consider the implication $(i)\Rightarrow(ii)$. 
In this regard, pick two functions $u\in W^{k,p}(\Omega)$, 
$v\in W^{k,p}\big((\Omega^c)^\circ\big)$ satisfying \eqref{7reer}, and 
consider $w$ as in \eqref{7reer.2}. To begin with, this function is defined 
${\mathscr{L}}^n$-a.e. in 
\begin{eqnarray}\label{7reer.4}
\Omega\cup\big(\Omega^c\big)^\circ
=\Omega\cup\Big(\Omega^c\setminus\partial\big(\big(\Omega^c\big)^\circ\big)\Big)
=\Omega\cup\Big(\Omega^c\setminus\partial\Omega\Big)
={\mathbb{R}}^n\setminus\partial\Omega
\end{eqnarray}
thus, ultimately, ${\mathscr{L}}^n$-a.e. in ${\mathbb{R}}^n$ by \eqref{PJE-wcX}
and the fact that $\Omega$ is an $(\varepsilon,\delta)$-domain in ${\mathbb{R}}^n$.
Going further, define 
\begin{eqnarray}\label{7rYhab}
u_\ast:=\big(\Lambda_k^c v\big)\big|_{\Omega}\,\,\,\mbox{ in }\,\,\Omega,
\end{eqnarray}
where $\Lambda_k^c$ denotes Jones' extension operator for the 
$(\varepsilon,\delta)$-domain $\big(\Omega^c\big)^\circ$. Then 
there exists a finite constant $C>0$ depending only on $n,\varepsilon,\delta,k,p$
such that
\begin{eqnarray}\label{7reer.5}
u_\ast\in W^{k,p}(\Omega)\,\,\,\mbox{ and }\,\,\,
\|u_\ast\|_{W^{k,p}(\Omega)}\leq C\|v\|_{W^{k,p}((\Omega^c)^\circ)},
\end{eqnarray}
thanks to \eqref{PJE-3UYGav} and Theorem~\ref{cjeg}. Furthermore, 
at ${\mathcal{H}}^{d}$-a.e. $x\in\partial\Omega
=\partial\big(\big(\Omega^c\big)^\circ\big)$ we have 
\begin{eqnarray}\label{7reer.6}
\big({\mathscr{R}}^{(k)}_{\Omega\to\partial\Omega}u_\ast\big)(x)
&=& \left\{\lim\limits_{r\to 0^{+}}\meanint_{B(x,r)}
\partial^\alpha\big(\Lambda_k^c v\big)\,d{\mathscr{L}}^n\right\}_{|\alpha|\leq k-1}
\nonumber\\[4pt]
&=& \Big({\mathscr{R}}^{(k)}_{(\Omega^c)^\circ\to\partial\Omega}
\Big(\big(\Lambda_k^c v\big)\big|_{(\Omega^c)^\circ}\Big)\Big)(x)
\nonumber\\[4pt]
&=& \big({\mathscr{R}}^{(k)}_{(\Omega^c)^\circ\to\partial\Omega}v\big)(x)
\nonumber\\[4pt]
&=& \big({\mathscr{R}}^{(k)}_{\Omega\to\partial\Omega}u\big)(x),
\end{eqnarray}
by \eqref{Ver-S2TG.2} in Corollary~\ref{NIceTRace.CC} (used twice),
the analogue of \eqref{Pa-PL2} for $\big(\Omega^c\big)^\circ$, and \eqref{7reer}.
As a result, the function $u-u_\ast\in W^{k,p}(\Omega)$ satisfies
${\mathscr{R}}^{(k)}_{\Omega\to\partial\Omega}(u-u_\ast)=0$ 
at ${\mathcal{H}}^{d}$-a.e. point in $\partial\Omega$. Hence, 
$u-u_\ast\in\mathring{W}^{k,p}(\Omega)$ by \eqref{Uan-Taev75}.
Consequently, from this, \eqref{YJBKL-uvv.7} and \eqref{7reer.5} we deduce that
\begin{eqnarray}\label{7reer.7}
\widetilde{u-u_\ast}\in W^{k,p}({\mathbb{R}}^n)\,\,\,\mbox{ and }\,\,\,
\big\|\widetilde{u-u_\ast}\big\|_{W^{k,p}({\mathbb{R}}^n)}
\leq C\Big(\|u\|_{W^{k,p}(\Omega)}+\|v\|_{W^{k,p}((\Omega^c)^\circ)}\Big),
\end{eqnarray}
where $C>0$ is a finite constant depending only on $n,\varepsilon,\delta,k,p$.
Thus, if we now introduce
\begin{eqnarray}\label{IahnbF}
w_\ast:=\widetilde{u-u_\ast}+\Lambda_k^c v\,\,\,\mbox{ in }\,\,{\mathbb{R}}^n,
\end{eqnarray}
it follows from \eqref{IahnbF}, \eqref{7reer.7} and Theorem~\ref{cjeg} that
\begin{eqnarray}\label{IahnbF.2}
w_\ast\in W^{k,p}({\mathbb{R}}^n)\,\,\,\mbox{ and }\,\,\,
\|w_\ast\|_{W^{k,p}({\mathbb{R}}^n)}
\leq C\Big(\|u\|_{W^{k,p}(\Omega)}+\|v\|_{W^{k,p}((\Omega^c)^\circ)}\Big),
\end{eqnarray}
where $C>0$ is a finite constant depending only on $n,\varepsilon,\delta,k,p$.
Moreover, from \eqref{IahnbF} and \eqref{7rYhab} we have 
\begin{eqnarray}\label{IahnbF.3}
w_\ast\big|_{\Omega}=u-u_\ast+\big(\Lambda_k^c v\big)\big|_{\Omega}
=u-u_\ast+u_\ast=u\,\,\,\mbox{ ${\mathscr{L}}^n$-a.e. in }\,\,\Omega,
\end{eqnarray}
whereas \eqref{IahnbF} and the analogue of \eqref{Pa-PL2} for 
$\big(\Omega^c\big)^\circ$ we obtain 
\begin{eqnarray}\label{IahnbF.4}
w_\ast\big|_{(\Omega^c)^\circ}=0+\big(\Lambda_k^c v\big)\big|_{(\Omega^c)^\circ}
=v\,\,\,\mbox{ ${\mathscr{L}}^n$-a.e. in }\,\,\big(\Omega^c\big)^\circ.
\end{eqnarray}
Thus, $w_\ast=w$ ${\mathscr{L}}^n$-a.e. in ${\mathbb{R}}^n$, 
by \eqref{IahnbF.3}-\eqref{IahnbF.4}, \eqref{7reer.2}, \eqref{7reer.4}, 
and \eqref{PJE-wcX}. With this in hand, the fact that $w$ belongs to 
$W^{k,p}({\mathbb{R}}^n)$ and satisfies \eqref{7reer.3} follows from 
\eqref{IahnbF.2}. This concludes the proof of the implication $(i)\Rightarrow(ii)$
and also justifies the last claim in the statement of the theorem. 

There remains to show that the implication $(ii)\Rightarrow(i)$ also holds.
To this end, suppose that $u\in W^{k,p}(\Omega)$, 
$v\in W^{k,p}\big((\Omega^c)^\circ\big)$ are such that the function
$w$ defined as in \eqref{7reer.2} belongs to $W^{k,p}({\mathbb{R}}^n)$.
Since, by design, $w\big|_{\Omega}=u$ and $w\big|_{(\Omega^c)^\circ}=v$,
condition \eqref{7reer} follows by writing 
\begin{eqnarray}\label{7reer.Tfavf}
\big({\mathscr{R}}^{(k)}_{\Omega\to\partial\Omega}\,u\big)(x)
=\left\{\lim\limits_{r\to 0^{+}}\meanint_{B(x,r)}
\partial^\alpha w\,d{\mathscr{L}}^n\right\}_{|\alpha|\leq k-1}
=\big({\mathscr{R}}^{(k)}_{(\Omega^c)^\circ\to\partial\Omega}\,v\big)(x)
\end{eqnarray}
at ${\mathcal{H}}^{d}$-a.e. point $x$ in 
$\partial\Omega=\partial\big(\big(\Omega^c\big)^\circ\big)$, 
thanks to (a two-fold application of) Corollary~\ref{NIceTRace.CC}.
Hence $(ii)\Rightarrow(i)$ also holds, completing the proof of the theorem.  
\end{proof}

A word of clarification regarding the statement of the above theorem is in order. 
A cursory inspection of the proof of Theorem~\ref{SSSg-g5dS} reveals that, 
in principle, the argument carries through under the less stringent demand 
that the Ahlfors regularity dimension $d$ of $\partial\Omega$ belongs to 
the interval $(0,n)$. However, if $d\in(0,n-1)$ then the $d$-Ahlfors regular 
set $\partial\Omega$ has $(n-1)$-dimensional Hausdorff measure zero and, as such, 
well-known removability results (cf., e.g., \cite[Lemma~9.1.10, p.\,237]{AH96}) 
give that the function $w$ defined as in \eqref{7reer.2} automatically belongs 
to $W^{k,p}({\mathbb{R}}^n)$ irrespective of the choice of the functions 
$u\in W^{k,p}(\Omega)$ and $v\in W^{k,p}\big((\Omega^c)^\circ\big)$. 
Now, on the one hand, such functions may be constructed with arbitrary traces on 
$B^{p,p}_{k-(n-d)/p}(\partial\Omega)$ (by Corollary~\ref{NIceTRace.CC}),
and this Besov space is nontrivial. On the other hand, as already mentioned, 
the equivalence $(i)\Leftrightarrow(ii)$ in Theorem~\ref{SSSg-g5dS} continues 
to hold for $d\in(0,n-1)$ as well. This contradiction shows that there is 
no two-sided $(\varepsilon,\delta)$-domain $\Omega$ in ${\mathbb{R}}^n$ 
whose boundary is $d$-Ahlfors regular for some $d\in(0,n-1)$.

\vskip 0.10in

The last result in this section requires certain notions from geometric 
measure theory (for standard terminology and basic results in this area 
the reader is referred to the informative discussion in \cite{EG}). 
Specifically, assume now that $\Omega$ is an open subset of ${\mathbb{R}}^n$ 
which is of locally finite perimeter. Recall that the measure-theoretic boundary
$\partial_*\Omega$ of the set $\Omega$ is defined by
\begin{eqnarray}\label{2.1.10}
\partial_*\Omega:=\Bigl\{x\in\partial\Omega:\,
\limsup\limits_{r\rightarrow 0}\,\frac{{\mathcal{L}}^n(B(x,r)\cap\Omega)}{r^n}>0
\,\,\mbox{ and }\,\,\limsup\limits_{r\rightarrow 0}\,
\frac{{\mathcal{L}}^n(B(x,r)\setminus\Omega)}{r^n}>0\Bigr\}.
\end{eqnarray}
The condition that the set $\Omega$ has locally finite perimeter allows us to define 
an outward unit normal $\nu=(\nu_j)_{1\leq j\leq n}$ at ${\mathcal{H}}^{n-1}$-a.e. 
point on $\partial_*\Omega$, in the sense of H.~Federer. In particular, if
${\mathcal{H}}^{n-1}(\partial\Omega\setminus\partial_*\Omega)=0$ 
then $\nu$ is defined ${\mathcal{H}}^{n-1}$-a.e. on $\partial\Omega$.
In such a context, given $m\in{\mathbb{N}}$ it is natural to consider a related 
version of the higher-order restriction operator \eqref{Ver-S2TG.3}, namely
the higher-order Dirichlet trace
\begin{equation}\label{newtrace}
{\rm TR}^{(m)}\,u:=\Bigl\{\frac{\partial^k u}{\partial\nu^k}\Bigr\}
_{0\leq k\leq m-1},
\end{equation}
which has been traditionally employed in the formulation of the classical Dirichlet 
boundary value problem for higher-order operators. A word of caution is in 
order here. Specifically, in general the unit normal $\nu$ has only bounded, 
measurable components, hence taking iterated normal derivatives requires attention. 
Concretely, we define for each $k\in\{0,...,m-1\}$
\begin{eqnarray}\label{nuk-a6h}
\frac{\partial^k}{\partial\nu^k}
:=\Bigl(\sum_{j=1}^n\xi_j\partial/\partial x_j\Bigr)^k\Big|_{\xi=\nu}
=\sum_{|\alpha|=k}\frac{k!}{\alpha!}\,\nu^\alpha\partial^\alpha,
\end{eqnarray}
which suggests setting (in an appropriate context)
\begin{equation}\label{nuk}
\frac{\partial^ku}{\partial\nu^k}
:=\sum_{|\alpha|=k}\frac{k!}{\alpha!}\,
\nu^\alpha\,{\mathscr{R}}^{(1)}_{\Omega\to\partial\Omega}
\,[\partial^\alpha u],\qquad\forall\,k\in\{0,1,...,m-1\},
\end{equation}
where ${\mathscr{R}}^{(1)}_{\Omega\to\partial\Omega}$ is the boundary trace 
operator of order one from Theorem~\ref{NIceTRace}.

Compared to \eqref{Ver-S2TG.3}, a distinguished feature of \eqref{newtrace} 
is that the latter has fewer components. More specifically, while 
${\mathscr{R}}^{(m)}_{\Omega\to\partial\Omega}\,u$ has
\begin{eqnarray}\label{comb-form}
\sum_{k=0}^{m-1}\left(\!\!\!\begin{array}{c} n+k-1\\ n-1\end{array}\!\!\!\right)
\end{eqnarray}
components, ${\rm TR}^{(m)}\,u$ has only $m$ components. It is then 
remarkable that the two trace mappings have the same null-space. 
This is made precise in the theorem stated below, which answers the question 
raised by J.\,Ne\v{c}as in \cite[Problem~4.1, p.\,91]{Nec}, 
\cite[Problem~4.1, p.\,86]{Nec2}, in a considerably more general setting
than the class of Lipschitz domains, as originally asked
(for the latter setting see also \cite{MaMiSh}, \cite{MiMi-HOPDE}). 

\begin{theorem}[The null-space of the higher-order Dirichlet trace operator]\label{NIcBBB}
Let $\Omega$ be an $(\varepsilon,\delta)$-domain in ${\mathbb{R}}^n$ with 
${\rm rad}\,(\Omega)>0$, and such that $\partial\Omega$ is $(n-1)$-Ahlfors 
regular and satisfies
\begin{eqnarray}\label{Tay-1}
{\mathcal{H}}^{n-1}(\partial\Omega\setminus\partial_*\Omega)=0.
\end{eqnarray}
Denote by $\nu$ the geometric measure theoretic outward unit normal to $\Omega$. 

Then for every $m\in{\mathbb{N}}$ and $p\in(1,\infty)$ one has
\begin{eqnarray}\label{Necas}
\mathring{W}^{m,p}(\Omega)=
\Big\{u\in W^{m,p}(\Omega):\,\frac{\partial^k u}{\partial\nu^k}=0\,\,
\mbox{ ${\mathcal{H}}^{n-1}$-a.e. on }\partial\Omega\,\,\mbox{ for }\,\,0\leq k\leq m-1
\Big\}.
\end{eqnarray}
\end{theorem}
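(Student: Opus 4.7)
Plan. The easy inclusion $\mathring{W}^{m,p}(\Omega)\subseteq\{u:\partial^k u/\partial\nu^k=0,\,0\le k\le m-1\}$ is essentially tautological: if $u\in\mathring{W}^{m,p}(\Omega)$, then by \eqref{Uan-Taev75} in Corollary~\ref{NIceTRace.CC} (which applies since $\partial\Omega$ is $(n-1)$-Ahlfors regular and $p>1=\max\{1,n-d\}$), one has ${\mathscr{R}}_{\Omega\to\partial\Omega}^{(1)}[\partial^\alpha u]=0$ at $\mathcal{H}^{n-1}$-a.e.\ point of $\partial\Omega$ for every $|\alpha|\le m-1$, and the definition \eqref{nuk} of iterated normal derivatives delivers $\partial^k u/\partial\nu^k=0$ for each $0\le k\le m-1$.

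For the reverse inclusion, I would argue by induction on $m$. The base case $m=1$ is exactly \eqref{Uan-Taev75} at level one, since the single condition $\partial^0 u/\partial\nu^0=0$ reads ${\mathscr{R}}_{\Omega\to\partial\Omega}^{(1)}u=0$. For the inductive step, fix $u\in W^{m,p}(\Omega)$ satisfying all $m$ vanishing conditions. Viewing $u$ as an element of $W^{m-1,p}(\Omega)$ and applying the inductive hypothesis to the first $m-1$ conditions gives $u\in\mathring{W}^{m-1,p}(\Omega)$, whence \eqref{Uan-Taev75} (at order $m-1$) yields ${\mathscr{R}}_{\Omega\to\partial\Omega}^{(1)}[\partial^\alpha u]=0$ for all $|\alpha|\le m-2$. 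Setting $g_\alpha:={\mathscr{R}}_{\Omega\to\partial\Omega}^{(1)}[\partial^\alpha u]$ for $|\alpha|=m-1$, one more application of \eqref{Uan-Taev75} (now at order $m$) reduces everything to showing that $g_\alpha=0$ $\mathcal{H}^{n-1}$-a.e.\ on $\partial\Omega$ for every such $\alpha$.

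The central technical statement that I would establish is the \emph{pointwise factorization}
\[
g_\alpha(x)=\nu(x)^\alpha\,h(x)\quad\text{for $\mathcal{H}^{n-1}$-a.e.\ }x\in\partial\Omega,\ \text{every }|\alpha|=m-1,
\]
where $h\in L^p(\partial\Omega,\mathcal{H}^{n-1})$ is a scalar function independent of $\alpha$. Granted this, substituting back into \eqref{nuk} and invoking the multinomial identity
\[
\sum_{|\alpha|=m-1}\tfrac{(m-1)!}{\alpha!}\,\nu^{2\alpha}=|\nu|^{2(m-1)}=1
\]
gives $\partial^{m-1}u/\partial\nu^{m-1}=h$ $\mathcal{H}^{n-1}$-a.e.\ on $\partial\Omega$; the standing hypothesis then forces $h\equiv 0$, hence $g_\alpha\equiv 0$ for every $|\alpha|=m-1$, completing the induction.

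The main obstacle is proving the pointwise factorization on a merely Ahlfors regular boundary. Its heuristic content is transparent in the smooth category: a function vanishing to order $m-1$ on a $C^1$-hypersurface factors locally as $f(x')\,d(x)^{m-1}$ with $d$ the signed distance, whose $(m-1)$-order partials restrict to $f\cdot(m-1)!\,\nu^\alpha/\alpha!$ on the boundary, giving exactly the claimed proportionality to $\nu^\alpha$. To convert this heuristic into a rigorous statement in our rough setting, I would work at a point $x_0\in\partial_*\Omega$ (the hypothesis \eqref{Tay-1} ensures $\mathcal{H}^{n-1}$-a.e.\ point of $\partial\Omega$ is of this kind) where the measure-theoretic unit normal $\nu(x_0)$ exists, exploit the Jonsson-Wallin Whitney-jet compatibility \eqref{Fq-A.5PL1}--\eqref{Fq-A.5PL2} together with the already-established vanishing of all lower-order jet components to control the polynomial $P_{x_0}(y):=\sum_{|\alpha|=m-1}\tfrac{(x_0-y)^\alpha}{\alpha!}g_\alpha(x_0)$ along $\partial\Omega$ near $x_0$, and then use the measure-theoretic approximation of $\partial\Omega$ at $x_0$ by the hyperplane $\nu(x_0)^\perp$ — implicit in the definition \eqref{2.1.10} of $\partial_*\Omega$ — to deduce that $P_{x_0}$ must be a scalar multiple of the pure normal monomial $y\mapsto(\nu(x_0)\cdot(x_0-y))^{m-1}$. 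This last step — transferring a measure-theoretic approximation by a hyperplane into a rigid algebraic statement about a homogeneous polynomial of degree $m-1$ — combines the $(n-1)$-Ahlfors regularity of $\partial\Omega$, the Besov-type quantitative jet decay from \eqref{Fq-A.5PL2}, and a standard blowup argument at points of $\partial_*\Omega$, and constitutes the hard analytic core of the proof.
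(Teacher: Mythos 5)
Your opening inclusion and the reduction via \eqref{Uan-Taev75} are fine, and the algebraic skeleton of your argument (the multinomial identity $\sum_{|\alpha|=m-1}\tfrac{(m-1)!}{\alpha!}\nu^{2\alpha}=1$, reduction to the top-order jet components $g_\alpha$ with $|\alpha|=m-1$) is sound. The genuine gap is the pointwise factorization $g_\alpha(x)=\nu(x)^\alpha h(x)$ for $\mathcal{H}^{n-1}$-a.e.\ $x\in\partial\Omega$, which you correctly identify as the core but for which you offer no rigorous argument, only a heuristic. In this generality the heuristic does not readily close. The Jonsson--Wallin jet compatibility \eqref{Fq-A.5PL1}--\eqref{Fq-A.5PL2} controls the remainders $R_\alpha(x,y)$ only in an $L^p$-averaged (Besov-norm) sense over nearby pairs $x,y\in\partial\Omega$; it does not deliver a pointwise Taylor expansion of the jet at $\mathcal{H}^{n-1}$-a.e.\ point. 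A blowup at a point $x_0\in\partial_*\Omega$ gives, at best, $L^1_{loc}$ convergence of rescaled characteristic functions of $\Omega$ to a half-space; converting this into a rigid algebraic constraint on the degree-$(m-1)$ homogeneous polynomial $P_{x_0}$ requires quantitative smallness of the scaled jet remainders along $\partial\Omega$ at $\mathcal{H}^{n-1}$-a.e.\ $x_0$, which the Besov bound does not immediately provide (one would need some Lebesgue-point or differentiation-of-measures argument interleaved with the jet condition, and this is precisely where the hard work would sit).

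The paper's proof takes a genuinely different route that sidesteps pointwise structure entirely. Rather than prove a factorization of the jet, it shows directly that the assignment $\Psi$ in \eqref{ass.777}, sending a Besov jet $\dot f=\{f_\alpha\}_{|\alpha|\le m-1}$ to the $m$-tuple of ``normal combinations'' $\bigl\{\sum_{|\alpha|=k}\tfrac{k!}{\alpha!}\nu^\alpha f_\alpha\bigr\}_{0\le k\le m-1}$, is injective. The decisive algebraic step is identity \eqref{m-xxx}, which writes $\partial^\alpha$ as $\nu^\alpha\,\partial^{|\alpha|}/\partial\nu^{|\alpha|}$ plus linear combinations of the \emph{tangential} first-order operators $(\nu_j\partial_k-\nu_k\partial_j)$ applied to derivatives of strictly lower order. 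The tangential operators can be integrated by parts on $\partial\Omega$ via the De~Giorgi--Federer divergence theorem (this is exactly where hypothesis \eqref{Tay-1} enters), transferring the derivative to the test function. The paper then runs an induction on the order $\ell$, showing that smooth approximants $v_i$ of the Jonsson--Wallin extension of $\dot f$ satisfy $(\partial^\alpha v_i)|_{\partial\Omega}\rightharpoonup 0$ weakly in $L^p(\partial\Omega,\sigma)$ for $|\alpha|=\ell$ and simultaneously that the tangential combinations of order $\ell-1$ converge weakly to zero, each feeding the other. At the end, the strong $L^p$ convergence of $(\partial^\alpha v_i)|_{\partial\Omega}$ to $f_\alpha$ forces $f_\alpha=0$. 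In effect, the paper replaces your pointwise factorization by a weak/dual argument in which the troublesome tangential contributions are killed by integrating by parts along $\partial\Omega$ rather than by pointwise analysis; this is what makes the argument go through under mere $(n-1)$-Ahlfors regularity and \eqref{Tay-1}.

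Unless you can supply a rigorous proof of the pointwise factorization under these weak geometric hypotheses, the proposal as written does not constitute a proof; I would recommend adopting the weak-convergence/integration-by-parts strategy instead.
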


\begin{proof}
To get started, set $\sigma:={\mathcal{H}}^{n-1}\lfloor\partial\Omega$. 
The long term goal is to show that the assignment
\begin{equation}\label{ass}
B^{p,p}_{m-1/p}(\partial\Omega)\ni\dot{f}=\{f_\alpha\}_{|\alpha|\leq m-1}
\longmapsto 
\Bigl\{\sum_{|\alpha|=k}\frac{k!}{\alpha!}\,\nu^\alpha\,f_\alpha\Bigr\}
_{0\leq k\leq m-1}\in L^p(\partial\Omega,\sigma)
\end{equation}
is one-to-one. To justify this, we shall make use of the fact that 
for each multi-index $\alpha\in{\mathbb{N}}_0^n$ there exist polynomial functions
$\big\{p^{\alpha\beta}_{jk}\big\}_{\stackrel{1\leq j,k\leq n}{|\beta|=|\alpha|-1}}$ 
of $n$ variables with the property that 
\begin{eqnarray}\label{m-xxx}
\partial^\alpha=\nu^\alpha\,\frac{\partial^{|\alpha|}}{\partial\nu^{|\alpha|}}
+\sum_{|\beta|=|\alpha|-1}\sum_{j,k=1}^n p^{\alpha\beta}_{jk}(\nu)
(\nu_j\partial_k-\nu_k\partial_j)\partial^\beta.
\end{eqnarray}
To prove identity 
\eqref{m-xxx}, assume that some $\alpha=(\alpha_1,...,\alpha_n)\in{\mathbb{N}}_0^n$ 
has been fixed and write
\begin{eqnarray}\label{m-xxx2}
\partial^\alpha & = & \prod_{j=1}^n\Bigl(\frac{\partial}{\partial x_j}
\Bigr)^{\alpha_j}
=\prod_{j=1}^n\Bigl[\sum_{k=1}^n\xi_k\Bigl(\xi_k
\frac{\partial}{\partial x_j}-\xi_j\frac{\partial}{\partial x_k}\Bigr)
+\sum_{k=1}^n\xi_j\xi_k\frac{\partial}{\partial x_k}\Bigr]^{\alpha_j}
\Bigl|_{\xi=\nu}
\nonumber\\[6pt]
& = & \prod_{j=1}^n\Bigl[\sum_{\ell=0}^{\alpha_j}\frac{\alpha_j!}{\ell!(\alpha_j-\ell)!}
\Bigl(\sum_{k=1}^n\xi_k\Bigl(\xi_k
\frac{\partial}{\partial x_j}-\xi_j\frac{\partial}{\partial x_k}\Bigr)\Bigr)
^{\alpha_j-\ell}\Big(\sum_{k=1}^n\xi_j\xi_k\frac{\partial}{\partial x_k}\Big)^\ell
\Bigr]\Bigl|_{\xi=\nu}
\\[6pt]
& = & \prod_{j=1}^n\Bigl[\Big(\sum_{k=1}^n\xi_j\xi_k\frac{\partial}{\partial x_k}\Big)
^{\alpha_j}+\sum_{\ell=0}^{\alpha_j-1}\frac{\alpha_j!}{\ell!(\alpha_j-\ell)!}
\Bigl(\sum_{k=1}^n\xi_k\Bigl(\xi_k
\frac{\partial}{\partial x_j}-\xi_j\frac{\partial}{\partial x_k}\Bigr)\Bigr)
^{\alpha_j-\ell}\Big(\sum_{k=1}^n\xi_j\xi_k\frac{\partial}{\partial x_k}\Big)^\ell
\Bigr]\Bigl|_{\xi=\nu}.
\nonumber
\end{eqnarray}
Upon noticing that 
\begin{eqnarray}\label{YFSZM895}
\prod_{j=1}^n\Bigl[\Big(\sum_{k=1}^n\xi_j\xi_k\frac{\partial}{\partial x_k}\Big)
^{\alpha_j}\Bigr]\Bigl|_{\xi=\nu}
=\prod_{j=1}^n\nu_j^{\alpha_j}\frac{\partial^{\alpha_j}}{\partial\nu^{\alpha_j}}
=\nu^\alpha\frac{\partial^{|\alpha|}}{\partial\nu^{|\alpha|}},
\end{eqnarray}
and $(\xi_k\partial/\partial x_j-\xi_j\partial/\partial x_k)\big|_{\xi=\nu}
=-(\nu_j\partial_k-\nu_k\partial_j)$, formula \eqref{m-xxx} follows.

Assume next that $\dot{f}\in B^{p,p}_{m-1/p}(\partial\Omega)$ is 
mapped to zero by the assignment \eqref{ass} and consider the function
$u:={\mathscr{E}}^{(m)}_{\partial\Omega\to\Omega}\dot{f}$ in $\Omega$,
with ${\mathscr{E}}^{(m)}_{\partial\Omega\to\Omega}$ as in Theorem~\ref{NIceTRace}.
Then Theorem~\ref{NIceTRace} ensures that
\begin{eqnarray}\label{TYgghcgh}
\begin{array}{c}
u\in W^{m,p}(\Omega)\,\,\mbox{ and }\,\,
f_\alpha={\mathscr{R}}_{\Omega\to\partial\Omega}^{(1)}(\partial^\alpha u)
\,\,\,\sigma\mbox{-a.e. on }\,\,\partial\Omega
\\[6pt]
\mbox{for each multi-index }\,\alpha\in{\mathbb{N}}_0^n\,\,
\mbox{ with }\,\,|\alpha|\leq m-1.
\end{array}
\end{eqnarray}
Also, granted the current assumptions on $\dot{f}$, 
\begin{eqnarray}\label{TYggh569}
\frac{\partial^k u}{\partial\nu^k}=0\,\,\,\sigma\mbox{-a.e. on }\,\,\partial\Omega
\,\,\mbox{ for }\,\,k=0,1,...,m-1. 
\end{eqnarray}
To proceed, observe that since ${\mathscr{C}}^\infty_c({\mathbb{R}}^n)
\hookrightarrow W^{m,p}({\mathbb{R}}^n)$ densely and since the restriction 
operator $W^{m,p}({\mathbb{R}}^n)\ni v\mapsto v\big|_{\Omega}\in W^{m,p}(\Omega)$
is well-defined, linear, continuous and surjective (as seen from Theorem~\ref{cjeg}),
it follows that
\begin{eqnarray}\label{TYGY87f}
\bigl\{\varphi\big|_{\Omega}:\,\varphi\in{\mathscr{C}}^\infty_c({\mathbb{R}}^n)\big\}
\hookrightarrow W^{m,p}(\Omega)\,\,\,\mbox{ densely}.
\end{eqnarray}
Consequently, it is possible to select a sequence 
$\{\varphi_i\}_{i\in{\mathbb{N}}}\subseteq{\mathscr{C}}^\infty_c({\mathbb{R}}^n)$ 
with the property that if $v_i:=\varphi_i\big|_{\Omega}$ for each $i\in{\mathbb{N}}$
then $v_i\to u$ in $W^{m,p}(\Omega)$ as $i\to\infty$. In view of 
Theorem~\ref{NIceTRace} and \eqref{TYgghcgh}, this implies
\begin{eqnarray}\label{m-xIHab}
\begin{array}{c}
(\partial^\alpha v_i)\big|_{\partial\Omega}\longrightarrow f_\alpha
\,\,\mbox{ in }\,\,L^p(\partial\Omega,\sigma)\,\,\mbox{ as }\,\,i\to\infty,
\\[6pt]
\mbox{for every multi-index }\,\,\alpha\in{\mathbb{N}}_0^n
\,\,\mbox{ with }\,\,|\alpha|\leq m-1.
\end{array}
\end{eqnarray}
In particular, from \eqref{m-xIHab}, \eqref{nuk}, and \eqref{TYggh569}, we obtain 
\begin{eqnarray}\label{m-xIHab.5tB}
\frac{\partial^{\ell} v_i}{\partial\nu^{\ell}}\longrightarrow 0
\,\,\mbox{ in }\,\,L^p(\partial\Omega,\sigma)\,\,\mbox{ as }\,\,i\to\infty,
\,\,\mbox{ for every }\,\,\ell\in\{0,1,...,m-1\}.
\end{eqnarray}

We shall now prove by induction on the number $\ell\in\{0,1,...,m-1\}$ that
the sequence 
\begin{eqnarray}\label{m-xiii}
\begin{array}{c}
\big\{(\partial^\alpha v_i)\big|_{\partial\Omega}\big\}_{i\in{\mathbb{N}}}
\,\,\,\mbox{ converges to zero weakly in }\,\,L^p(\partial\Omega,\sigma)
\\[6pt]
\mbox{for every multi-index }\,\,\alpha\in{\mathbb{N}}_0^n
\,\,\mbox{ with }\,\,|\alpha|=\ell,
\end{array}
\end{eqnarray}
and that the sequence 
\begin{eqnarray}\label{m-xxx.64DD}
\begin{array}{c}
\Big\{\nu_j(\partial_k\partial^\beta v_i)\big|_{\partial\Omega}
-\nu_k(\partial_j\partial^\beta v_i)\big|_{\partial\Omega}\Big\}_{i\in{\mathbb{N}}}
\,\,\,\mbox{ converges to zero weakly in }\,\,L^p(\partial\Omega,\sigma)
\\[8pt]
\mbox{for every multi-index $\beta\in{\mathbb{N}}_0^n$
with $|\beta|=\ell-1$ and any $j,k\in\{1,...,n\}$}.
\end{array}
\end{eqnarray}
When $\ell=0$ the second condition is void, whereas \eqref{m-xiii} follows
from by combining \eqref{m-xIHab} and 
\eqref{TYgghcgh} (written for $\alpha=(0,...,0)\in{\mathbb{N}}_0^n$)
with \eqref{TYggh569} (used with $k=0$; cf. also \eqref{nuk} in this regard). 
Assume next that both \eqref{m-xiii} and \eqref{m-xxx.64DD} hold for some 
number $\ell\in\{0,1,...,m-2\}$ and fix 
$\alpha\in{\mathbb{N}}_0^n$ with $|\alpha|=\ell+1$, 
$\beta\in{\mathbb{N}}_0^n$ with $|\beta|=\ell$, as well as some 
$j,k\in\{1,...,n\}$. Now, \eqref{m-xIHab} 
implies that $\Big\{\nu_j(\partial_k\partial^\beta v_i)\big|_{\partial\Omega}
\Big\}_{i\in{\mathbb{N}}}$ and $\Big\{\nu_k(\partial_j\partial^\beta v_i)
\big|_{\partial\Omega}\Big\}_{i\in{\mathbb{N}}}$ are bounded sequences in 
$L^p(\partial\Omega,\sigma)$. Granted this and given that 
\begin{eqnarray}\label{m-xIOH.gv}
\big\{\psi\big|_{\partial\Omega}:\,\psi\in{\mathscr{C}}^\infty_c({\mathbb{R}}^n)\big\}
\hookrightarrow L^p(\partial\Omega,\sigma)\,\,\,\mbox{ densely},
\end{eqnarray}
the fact that the sequence 
$\Big\{\nu_j(\partial_k\partial^\beta v_i)\big|_{\partial\Omega}
-\nu_k(\partial_j\partial^\beta v_i)\big|_{\partial\Omega}\Big\}_{i\in{\mathbb{N}}}$
converges to zero weakly in $L^p(\partial\Omega,\sigma)$ will follows as soon as we 
show that for every $\psi\in{\mathscr{C}}^\infty_c({\mathbb{R}}^n)$ there holds
\begin{eqnarray}\label{m-xxx.64DLGS}
\int_{\partial\Omega}
\Big(\nu_j(\partial_k\partial^\beta v_i)\big|_{\partial\Omega}
-\nu_k(\partial_j\partial^\beta v_i)\big|_{\partial\Omega}\Big)
\big(\psi\big|_{\partial\Omega}\big)\,d\sigma\longrightarrow 0
\,\,\mbox{ as }\,\,i\to\infty.
\end{eqnarray}
To see that this is the case, denote by $\{{\bf e}_r\}_{1\leq r\leq n}$ 
the standard orthonormal basis in ${\mathbb{R}}^n$ and, for each 
fixed $i\in{\mathbb{N}}$, consider the vector field 
\begin{eqnarray}\label{m-IVcc75}
\vec{F}:=\Big(\psi\partial_k\partial^\beta v_i+\partial_k\psi\partial^\beta v_i\Big)
{\bf e}_j-\Big(\psi\partial_j\partial^\beta v_i+\partial_j\psi\partial^\beta v_i\Big)
{\bf e}_k.
\end{eqnarray}
Note that all components of $\vec{F}$ are Lipschitz functions in $\Omega$ with 
bounded support, and 
\begin{eqnarray}\label{m-IVcc7522}
{\rm div}\vec{F} &=& 
\partial_j\Big(\psi\partial_k\partial^\beta v_i+\partial_k\psi\partial^\beta v_i\Big)
-\partial_k\Big(\psi\partial_j\partial^\beta v_i+\partial_j\psi\partial^\beta v_i\Big)
\nonumber\\[4pt]
&=& 
\partial_j\psi\partial_k\partial^\beta v_i
+\psi\partial_j\partial_k\partial^\beta v_i
+\partial_j\partial_k\psi\partial^\beta v_i
+\partial_k\psi\partial_j\partial^\beta v_i
\nonumber\\[4pt]
&& 
-\partial_k\psi\partial_j\partial^\beta v_i
-\psi\partial_k\partial_j\partial^\beta v_i
-\partial_k\partial_j\psi\partial^\beta v_i
-\partial_j\psi\partial_k\partial^\beta v_i
\nonumber\\[4pt]
&=& 0\,\,\mbox{ in }\,\,\Omega.
\end{eqnarray}
Based on this, the De Giorgi-Federer Divergence Theorem
(cf., e.g., \cite{EG}), and \eqref{Tay-1}, we deduce that 
\begin{eqnarray}\label{N-5gvvAsdu}
0=\int_{\partial\Omega}\nu\cdot\big(\vec{F}\big|_{\partial\Omega}\big)\,d\sigma
&=& \int_{\partial\Omega}
\Big(\nu_j(\partial_k\partial^\beta v_i)\big|_{\partial\Omega}
-\nu_k(\partial_j\partial^\beta v_i)\big|_{\partial\Omega}\Big)
\big(\psi\big|_{\partial\Omega}\big)\,d\sigma
\nonumber\\[4pt]
&&+\int_{\partial\Omega}
\Big(\nu_j(\partial_k\psi)\big|_{\partial\Omega}
-\nu_k(\partial_j\psi)\big|_{\partial\Omega}\Big)
\big(\partial^\beta v_i)\big|_{\partial\Omega}\,d\sigma.
\end{eqnarray}
The bottom line is that for each $i\in{\mathbb{N}}$ we have 
\begin{eqnarray}\label{Mmamg}
&&\hskip -0.50in
\int_{\partial\Omega}
\Big(\nu_j(\partial_k\partial^\beta v_i)\big|_{\partial\Omega}
-\nu_k(\partial_j\partial^\beta v_i)\big|_{\partial\Omega}\Big)
\big(\psi\big|_{\partial\Omega}\big)\,d\sigma
\nonumber\\[4pt]
&&\hskip 1.50in
=-\int_{\partial\Omega}
\Big(\nu_j(\partial_k\psi)\big|_{\partial\Omega}
-\nu_k(\partial_j\psi)\big|_{\partial\Omega}\Big)
\big(\partial^\beta v_i)\big|_{\partial\Omega}\,d\sigma.
\end{eqnarray}
Since the multi-index $\beta$ has length $\ell$, hypothesis \eqref{m-xiii} ensures 
that the sequence 
$\big\{(\partial^\beta v_i)\big|_{\partial\Omega}\big\}_{i\in{\mathbb{N}}}$ 
converges to zero weakly in $L^p(\partial\Omega,\sigma)$. Now \eqref{m-xxx.64DLGS}
follows from this and \eqref{Mmamg}.  This takes care of the version of 
\eqref{m-xxx.64DD} with $\ell+1$ in place of $\ell$. As regards the version of 
\eqref{m-xiii} with $\ell+1$ in place of $\ell$, pick a multi-index
$\alpha\in{\mathbb{N}}_0^n$ of length $\ell+1$ and observe that  
identity \eqref{m-xxx} gives
\begin{eqnarray}\label{m-xxx.22}
\partial^\alpha v_i=\nu^\alpha\,\frac{\partial^{\ell+1} v_i}{\partial\nu^{\ell+1}}
+\sum_{|\beta|=|\alpha|-1}\sum_{j,k=1}^n p^{\alpha\beta}_{jk}(\nu)
(\nu_j\partial_k-\nu_k\partial_j)\partial^\beta v_i\,\,\mbox{ $\sigma$-a.e. on }\,\,
\partial\Omega,\quad\forall\,i\in{\mathbb{N}}.
\end{eqnarray}
From what we have just proved, the double sum in the right-hand side of 
\eqref{m-xxx.22} converges to zero weakly in $L^p(\partial\Omega,\sigma)$
as $i\to\infty$ (since the length of all multi-indices $\beta$ involved is $\ell$, 
and each $p^{\alpha\beta}_{jk}(\nu)$ is a bounded function). With this in hand and
recalling \eqref{m-xIHab.5tB}, it readily follows from \eqref{m-xxx.22} that 
$\big\{(\partial^\alpha v_i)\big|_{\partial\Omega}\big\}_{i\in{\mathbb{N}}}$
converges to zero weakly in $L^p(\partial\Omega,\sigma)$. This completes
the induction scheme, hence \eqref{m-xiii} and \eqref{m-xxx.64DD} hold for every
$\ell\in\{0,1,...,m-1\}$. 

At this stage, by combining \eqref{m-xIHab} with \eqref{m-xiii} we deduce that
$f_\alpha=0$ for every multi-index $\alpha\in{\mathbb{N}}_0^n$ with $|\alpha|\leq m-1$, 
finishing the proof of the fact that the assignment 
\begin{equation}\label{ass.777}
\Psi:B^{p,p}_{m-1/p}(\partial\Omega)\longrightarrow L^p(\partial\Omega,\sigma),\quad
\Psi\dot{f}:=\Bigl\{\sum_{|\alpha|=k}\frac{k!}{\alpha!}\,\nu^\alpha\,f_\alpha\Bigr\}
_{0\leq k\leq m-1},\quad\forall\,\dot{f}=\{f_\alpha\}_{|\alpha|\leq m-1},
\end{equation}
is one-to-one. Let us also note that thanks to \eqref{nuk}, \eqref{ass.777},
and \eqref{Ver-S2TG.2}, we have
\begin{eqnarray}\label{nuk.XXX}
\Big\{\frac{\partial^k u}{\partial\nu^k}\Big\}_{0\leq k\leq m-1}
&=& \Bigg\{\sum_{|\alpha|=k}\frac{k!}{\alpha!}\,
\nu^\alpha\,{\mathscr{R}}^{(1)}_{\Omega\to\partial\Omega}\,[\partial^\alpha u]
\Bigg\}_{0\leq k\leq m-1}
\nonumber\\[4pt]
&=& \Psi\Bigl({\mathscr{R}}^{(m)}_{\Omega\to\partial\Omega}\,u\Big),\qquad
\forall\,u\in W^{m,p}(\Omega).
\end{eqnarray}
All things considered, formula \eqref{Necas} now follows from \eqref{nuk.XXX}, 
the fact that \eqref{GGG-89.UUU} is a well-defined operator (used here 
with $k:=m$), the injectivity of the map \eqref{ass.777}, and \eqref{Uan-Taev75}. 
\end{proof}

\section{Interpolation results for Sobolev spaces with partially vanishing traces}
\label{Sect:6}
\setcounter{equation}{0}

Here we take up the task of proving that, in an appropriate geometrical context, 
the scale of spaces introduced in \eqref{uig-adjb-2} is stable under 
both the complex and the real method of interpolation.
In order to facilitate the subsequent discussion, call a family of Banach spaces
$(X_p)_{p\in I}$ indexed by an open interval $I\subseteq(1,\infty)$ a 
{\tt complex interpolation scale} provided
\begin{eqnarray}\label{auig-1}
\big[X_{p_0},X_{p_1}\big]_{\theta}=X_{p}
\end{eqnarray}
whenever $p_0,p_1\in I$, $0<\theta<1$, and $1/p=(1-\theta)/p_0+\theta/p_1$, 
and call $(X_p)_{p\in I}$ a {\tt real interpolation scale} if 
\begin{eqnarray}\label{auig-1345}
\big(X_{p_0},X_{p_1}\big)_{\theta,p}=X_{p}
\end{eqnarray}
whenever $p_0,p_1\in I$, $0<\theta<1$, and $1/p=(1-\theta)/p_0+\theta/p_1$.
Above, $[\cdot,\cdot]_\theta$ and $(\cdot,\cdot)_{\theta,p}$ denote, respectively,
the standard complex and real brackets of interpolation (as defined in, e.g., 
\cite{BeLo76}). The starting point is the following consequence of 
Theorems~\ref{GCC-67}-\ref{YTah-YYHa8}.

We continue by recording an useful result which is essentially folklore
(a proof may be found in, e.g., \cite{KMM}). First, we make a definition.
Let $X_0, X_1$ and $Y_0,Y_1$ be two compatible pairs of Banach spaces.
Call $\{Y_0,Y_1\}$ a {\tt retract} of $\{X_0,X_1\}$ if there exist two
bounded, linear operators $E:Y_i\to X_i$, $R:X_i\to Y_i$, $i=0,1$,
such that $R\circ E=I$, the identity map, on each $Y_i$, $i=0,1$.

\begin{lemma}\label{l3.3}
Assume that $X_0,X_1$ and $Y_0,Y_1$ are two compatible pairs of
Banach spaces such that $\{Y_0,Y_1\}$ is a retract of $\{X_0,X_1\}$
(as before, the ``extension-restriction'' operators are denoted by $E$ and
$R$, respectively). Then for each $\theta\in(0,1)$ and $0<q\leq\infty$,
\begin{equation}\label{retract}
[Y_0,Y_1]_\theta=R\Bigl([X_0,X_1]_\theta\Bigr)\,\,
\mbox{ and }\,\,(Y_0,Y_1)_{\theta,q}
=R\Bigl((X_0,X_1)_{\theta,q}\Bigr).
\end{equation}

As a corollary, the following also holds. Assume that $X_0, X_1$ is a
compatible pair of Banach spaces and that $P$ is a common projection
(i.e., a linear, bounded operator on $X_i$, $i=0,1$, such that $P^2=P$).
Then the real and complex interpolation brackets commute with the action
of $P$, i.e.,
\begin{equation}\label{proj-int}
[PX_0,PX_1]_\theta=P\Bigl([X_0,X_1]_\theta\Bigr)\,\,
\mbox{ and }\,\,(PX_0,PX_1)_{\theta,q}
=P\Bigl((X_0,X_1)_{\theta,q}\Bigr),
\end{equation}
for each $\theta\in(0,1)$ and $0<q\leq\infty$.
\end{lemma}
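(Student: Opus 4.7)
The plan is to exploit the fundamental interpolation (functorial) property shared by both the complex and the real method: any bounded linear operator between compatible Banach couples induces a bounded operator between the corresponding interpolation spaces. Concretely, if $T:X_i\to Z_i$ is bounded for $i=0,1$ with $\{X_0,X_1\}$ and $\{Z_0,Z_1\}$ compatible pairs, then $T$ extends to bounded operators $T:[X_0,X_1]_\theta\to [Z_0,Z_1]_\theta$ and $T:(X_0,X_1)_{\theta,q}\to (Z_0,Z_1)_{\theta,q}$ for every $\theta\in(0,1)$ and $0<q\leq\infty$. This is standard (see, e.g., \cite{BeLo76}), and I would take it for granted. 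The retraction identity $R\circ E=I$ is then what upgrades a one-sided inclusion into a set-theoretic equality.

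To establish the first identity in \eqref{retract}, I would argue by double inclusion. Since $R:X_i\to Y_i$ is bounded for $i=0,1$, the interpolation property gives the bounded mapping $R:[X_0,X_1]_\theta\to [Y_0,Y_1]_\theta$, so $R([X_0,X_1]_\theta)\subseteq [Y_0,Y_1]_\theta$. For the reverse inclusion, the same property applied to $E:Y_i\to X_i$ yields a bounded mapping $E:[Y_0,Y_1]_\theta\to [X_0,X_1]_\theta$; hence for any $y\in [Y_0,Y_1]_\theta$ one has $Ey\in [X_0,X_1]_\theta$, and applying $R$ gives $R(Ey)=y\in R([X_0,X_1]_\theta)$. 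The identical argument, with the complex bracket $[\cdot,\cdot]_\theta$ systematically replaced by the real bracket $(\cdot,\cdot)_{\theta,q}$, produces the second identity in \eqref{retract}.

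For the corollary, I would specialize the retract framework by taking $Y_i:=PX_i$, equipped with the norm inherited from $X_i$, letting $E_i:Y_i\hookrightarrow X_i$ be the natural inclusion, and setting $R:=P$ regarded as a map from $X_i$ onto $Y_i$. Boundedness of each $E_i$ is trivial, while boundedness of $P$ on each $X_i$ is part of the hypothesis. The crucial retraction identity $R\circ E_i=I_{Y_i}$ follows from $P^2=P$: if $y\in Y_i$ then $y=Px$ for some $x\in X_i$, whence $Py=P^2 x=Px=y$. Consequently, $\{Y_0,Y_1\}$ is indeed a retract of $\{X_0,X_1\}$, and \eqref{retract} applied to this setup yields \eqref{proj-int} for both the complex and real brackets. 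The proof is essentially a soft functorial manipulation; the only point requiring any care at all is verifying that $P$ restricts to the identity on $Y_i$, so I do not anticipate any real obstacle.
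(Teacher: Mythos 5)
Your proof is correct and follows the standard argument: exploit the exactness (functoriality) of the complex and real interpolation methods to push $R$ and $E$ to the interpolated spaces, then use $R\circ E=I$ to get equality rather than a mere inclusion; the corollary follows by taking $Y_i:=PX_i$, $E$ the inclusion, $R:=P$, and checking $P|_{PX_i}=I$ from $P^2=P$. The paper itself does not supply a proof of this lemma --- it labels the result ``essentially folklore'' and refers the reader to \cite{KMM} --- but the route you take is exactly what that reference and standard texts record. One small point worth making explicit: the paper's convention \eqref{eqTX} equips $R\bigl([X_0,X_1]_\theta\bigr)$ and $PX_i$ with the graph (quotient) norm, not the norm inherited from the ambient space; your two-sided estimates via $\|R\|$ and $\|E\|$ (resp. $\|P\|$ and $1$) already show the graph norm and the interpolation norm (resp. the inherited norm) are equivalent, so the identities in \eqref{retract} and \eqref{proj-int} hold as isomorphisms of Banach spaces and not merely as equalities of underlying sets, but it is worth saying this out loud since that is the sense in which the lemma is used in Theorem~\ref{HanmNB8}.
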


A word of clarification is in order here. Specifically, generally speaking, given 
two normed spaces $X$, $Y$ and a linear, bounded operator $T:X\to Y$, by $TX$ we 
shall denote its range equipped with the graph-norm
\begin{equation}\label{eqTX}
\|y\|_{TX}:=\inf\{\|x\|_X:\,x\in X \mbox{ such that }y=Tx\},\qquad y\in TX.
\end{equation}
In particular, this is the sense in which \eqref{retract}
and \eqref{proj-int} should be understood.

\begin{proposition}\label{YTah-aP3G}
Assume that $D\subseteq{\mathbb{R}}^n$ is a closed set which is $d$-Ahlfors regular
for some $d\in(0,n)$, and fix a number $k\in{\mathbb{N}}$.
Then the operator 
\begin{eqnarray}\label{Tgb-aj}
{\mathscr{P}}^{(k)}_D:=I-{\mathscr{E}}^{(k)}_D\circ{\mathscr{R}}^{(k)}_D
\end{eqnarray}
is a continuous and linear projection from the classical Sobolev space
$W^{k,p}({\mathbb{R}}^n)$ onto the space $W^{k,p}_D({\mathbb{R}}^n)$
whenever $\max\{1,n-d\}<p<\infty$.
\end{proposition}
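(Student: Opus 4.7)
\medskip

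The plan is to verify the three assertions in turn—linearity/boundedness, the projection identity $({\mathscr{P}}^{(k)}_D)^2={\mathscr{P}}^{(k)}_D$, and the identification of the range with $W^{k,p}_D(\mathbb{R}^n)$—by direct algebraic manipulation, using the operators ${\mathscr{R}}^{(k)}_D$ and ${\mathscr{E}}^{(k)}_D$ from Theorem~\ref{GCC-67} together with Theorem~\ref{YTah-YYHa8} as black boxes. The only nontrivial input is to match ``vanishing quasi-everywhere'' (or, via Corollary~\ref{Gbab5tg4}, ${\mathcal{H}}^d$-a.e.) with ``vanishing as an element of $B^{p,p}_{k-(n-d)/p}(D)$,'' and this should follow from the continuous embedding $B^{p,p}_{k-(n-d)/p}(D)\hookrightarrow L^p(D,\sigma)$ that is built into the very definition \eqref{Fq-A.5PL2}.

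First, I will note that ${\mathscr{P}}^{(k)}_D$ is a linear and bounded endomorphism of $W^{k,p}(\mathbb{R}^n)$, simply as a composition of operators displayed in \eqref{GGG-89} and \eqref{GGG-90}, both of which are linear and bounded under the hypothesis $\max\{1,n-d\}<p<\infty$. Next, I will verify the projection identity. Using the abbreviation $\mathscr{E}:={\mathscr{E}}^{(k)}_D$, $\mathscr{R}:={\mathscr{R}}^{(k)}_D$, and exploiting \eqref{GGG-91}, a direct computation gives
\begin{equation*}
({\mathscr{P}}^{(k)}_D)^2
=(I-\mathscr{E}\circ\mathscr{R})^2
=I-2\,\mathscr{E}\circ\mathscr{R}+\mathscr{E}\circ(\mathscr{R}\circ\mathscr{E})\circ\mathscr{R}
=I-2\,\mathscr{E}\circ\mathscr{R}+\mathscr{E}\circ\mathscr{R}
={\mathscr{P}}^{(k)}_D.
\end{equation*}

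For the range identification, I proceed by double inclusion. Given any $u\in W^{k,p}(\mathbb{R}^n)$, applying $\mathscr{R}$ to ${\mathscr{P}}^{(k)}_D u=u-\mathscr{E}(\mathscr{R} u)$ and invoking \eqref{GGG-91} again yields $\mathscr{R}({\mathscr{P}}^{(k)}_D u)=\mathscr{R} u-\mathscr{R} u=0$ in $B^{p,p}_{k-(n-d)/p}(D)$; via the embedding into $L^p(D,\sigma)$ this entails $\mathscr{R}({\mathscr{P}}^{(k)}_D u)=0$ at $\mathcal{H}^d$-a.e.\ point on $D$, so ${\mathscr{P}}^{(k)}_D u\in W^{k,p}_D(\mathbb{R}^n)$ by Theorem~\ref{YTah-YYHa8}. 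Conversely, for $u\in W^{k,p}_D(\mathbb{R}^n)$ the same theorem forces $\mathscr{R} u=0$ $\mathcal{H}^d$-a.e.\ on $D$; since every component $\mathscr{R}_D(\partial^\alpha u)$ of $\mathscr{R} u$ lies in $L^p(D,\sigma)$ as part of the Besov norm \eqref{Fq-A.5PL2}, this means $\mathscr{R} u=0$ as an element of $B^{p,p}_{k-(n-d)/p}(D)$, whence ${\mathscr{P}}^{(k)}_D u=u-\mathscr{E}(0)=u$. Combined with the inclusion $W^{k,p}_D(\mathbb{R}^n)\subseteq W^{k,p}(\mathbb{R}^n)$, this shows both that every element of $W^{k,p}_D(\mathbb{R}^n)$ lies in the range of ${\mathscr{P}}^{(k)}_D$ and, together with the previous inclusion, that the range equals $W^{k,p}_D(\mathbb{R}^n)$.

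No step looks genuinely hard; the only place demanding a brief verification is the reduction from ``zero in the Besov space'' to ``zero $\mathcal{H}^d$-a.e.\ on $D$'' and back. This will be treated by appealing either to the evident $L^p(D,\sigma)$-embedding inherent in \eqref{Fq-A.5PL2} or, alternatively, to Corollary~\ref{Gbab5tg4}, after which the argument closes in one line.
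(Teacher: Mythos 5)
Your proof is correct and matches the paper's approach exactly; the paper simply compresses everything you wrote into the one-line remark that the claims are ``readily seen from Theorem~\ref{YTah-YYHa8} and Theorem~\ref{GCC-67},'' and you have supplied precisely the routine verifications (boundedness as a composition via \eqref{GGG-89}--\eqref{GGG-90}, idempotence via \eqref{GGG-91}, and the range identification via \eqref{GYan-YHN64}) that this remark leaves implicit.
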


\begin{proof}
The claims about the operator \eqref{Tgb-aj} are readily seen 
from Theorem~\ref{YTah-YYHa8} and Theorem~\ref{GCC-67}.
\end{proof}

In combination with the abstract results from Lemma~\ref{l3.3}, 
Proposition~\ref{YTah-aP3G} permits us to prove the following interpolation
result in the context of ${\mathbb{R}}^n$.

\begin{proposition}\label{Haah8uN}
Suppose $D\subseteq{\mathbb{R}}^n$ is a closed set which is $d$-Ahlfors regular
for some $d\in(0,n)$, and fix a number $k\in{\mathbb{N}}$.
Then, in a natural sense, 
\begin{eqnarray}\label{aJN}
\begin{array}{c}
\big\{W^{k,p}_D({\mathbb{R}}^n)\big\}_{\max\{1,n-d\}<p<\infty}\,\,
\,\mbox{ is an interpolation scale}
\\[6pt]
\mbox{both for the complex and the real method}.
\end{array}
\end{eqnarray}
\end{proposition}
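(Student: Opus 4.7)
The plan is to reduce the assertion to the classical interpolation theorem for ordinary Sobolev spaces on $\mathbb{R}^n$ via the abstract retraction/projection machinery from Lemma~\ref{l3.3}. The starting point is the fact (a staple of harmonic analysis, provable via Bessel potential representations and Littlewood--Paley theory) that the family $\{W^{k,p}(\mathbb{R}^n)\}_{1<p<\infty}$ is a complex interpolation scale as well as a real interpolation scale, in the sense of \eqref{auig-1} and \eqref{auig-1345}. Thus, fixing $p_0,p_1$ with $\max\{1,n-d\}<p_0<p_1<\infty$, $\theta\in(0,1)$, and $1/p=(1-\theta)/p_0+\theta/p_1$ (note $p$ also lies in the relevant range, by convexity), we have $\big[W^{k,p_0}(\mathbb{R}^n),W^{k,p_1}(\mathbb{R}^n)\big]_{\theta}=W^{k,p}(\mathbb{R}^n)$, and likewise for the real bracket $(\cdot,\cdot)_{\theta,p}$.

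Next, I would invoke Proposition~\ref{YTah-aP3G} to bring the projection
\begin{equation*}
\mathscr{P}^{(k)}_D=I-\mathscr{E}^{(k)}_D\circ\mathscr{R}^{(k)}_D
\end{equation*}
into play. The crucial observation here is that both $\mathscr{R}^{(k)}_D$ and $\mathscr{E}^{(k)}_D$ are defined by $p$-independent formulas (cf.\ \eqref{Ver-S2TG.2-tr} and \eqref{GoPLa.2} in Theorem~\ref{GCC-67}), so $\mathscr{P}^{(k)}_D$ is a single map which acts simultaneously as a continuous linear projection on $W^{k,p_0}(\mathbb{R}^n)$ and on $W^{k,p_1}(\mathbb{R}^n)$, with common range $W^{k,p_0}_D(\mathbb{R}^n)$ and $W^{k,p_1}_D(\mathbb{R}^n)$ respectively. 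This is precisely the set-up for the second conclusion of Lemma~\ref{l3.3}: $\mathscr{P}^{(k)}_D$ is a common projection on the compatible pair $\{W^{k,p_0}(\mathbb{R}^n),W^{k,p_1}(\mathbb{R}^n)\}$.

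Applying \eqref{proj-int} with $P:=\mathscr{P}^{(k)}_D$, $X_0:=W^{k,p_0}(\mathbb{R}^n)$, $X_1:=W^{k,p_1}(\mathbb{R}^n)$ then yields
\begin{equation*}
\big[W^{k,p_0}_D(\mathbb{R}^n),W^{k,p_1}_D(\mathbb{R}^n)\big]_{\theta}
=\mathscr{P}^{(k)}_D\Big(\big[W^{k,p_0}(\mathbb{R}^n),W^{k,p_1}(\mathbb{R}^n)\big]_{\theta}\Big)
=\mathscr{P}^{(k)}_D\big(W^{k,p}(\mathbb{R}^n)\big)=W^{k,p}_D(\mathbb{R}^n),
\end{equation*}
and analogously for the real bracket $(\cdot,\cdot)_{\theta,p}$. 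The last equality here uses Proposition~\ref{YTah-aP3G} to identify the range of $\mathscr{P}^{(k)}_D$ on $W^{k,p}(\mathbb{R}^n)$ with $W^{k,p}_D(\mathbb{R}^n)$.

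The only technicality I anticipate is reconciling norms: Lemma~\ref{l3.3} equips the image $PX$ with the graph-norm \eqref{eqTX}, while in \eqref{uig-adjb-2AAA} the space $W^{k,p}_D(\mathbb{R}^n)$ carries the inherited $W^{k,p}(\mathbb{R}^n)$-norm. These two norms are however equivalent: on one hand $\|y\|_{PX}\leq\|y\|_X$ for $y\in PX$ (take $x=y$ in \eqref{eqTX}, since $Py=y$), and on the other hand, whenever $Px=y$, $\|y\|_X=\|Px\|_X\leq\|P\|\|x\|_X$, giving $\|y\|_X\leq\|P\|\|y\|_{PX}$. This is precisely what ``natural sense'' in \eqref{aJN} should be taken to mean, and no serious obstacle arises. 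In summary, the main structural input is the existence of the $p$-independent projection $\mathscr{P}^{(k)}_D$ provided by Proposition~\ref{YTah-aP3G}, after which the interpolation assertion is a direct consequence of the classical interpolation theorem for $W^{k,p}(\mathbb{R}^n)$ and the retraction lemma.
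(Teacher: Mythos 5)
Your proof is correct and follows exactly the same route as the paper: the paper's proof consists precisely of citing Proposition~\ref{YTah-aP3G}, the classical fact that $\{W^{k,p}(\mathbb{R}^n)\}_{1<p<\infty}$ interpolates for both methods, and the second part of Lemma~\ref{l3.3}. Your write-up simply fills in the routine details (the common projection setup, the application of \eqref{proj-int}, and the graph-norm versus inherited-norm equivalence) that the paper leaves implicit.
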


\begin{proof}
This follows from Proposition~\ref{YTah-aP3G}, the fact that 
$\big\{W^{k,p}({\mathbb{R}}^n)\big\}_{1<p<\infty}$ is an interpolation scale
both for the complex and the real method for each fixed $k\in{\mathbb{N}}$, 
and the second part in Lemma~\ref{l3.3}.
\end{proof}

With this in hand, we are ready to prove our main interpolation result, 
formulated below. 

\begin{theorem}[Interpolation Theorem]\label{HanmNB8}
Assume that $\Omega\subseteq{\mathbb{R}}^n$ and $D\subseteq\overline{\Omega}$ are 
such that $D$ is closed and $d$-Ahlfors regular for some $d\in(0,n)$, and $\Omega$ 
is locally an $(\varepsilon,\delta)$-domain near $\partial\Omega\setminus D$.
In addition, fix a number $k\in{\mathbb{N}}$. Then 
\begin{eqnarray}\label{GYaYn16H}
\begin{array}{c}
\big\{W^{k,p}_D(\Omega)\big\}_{\max\{1,n-d\}<p<\infty}\,\,
\,\mbox{ is an interpolation scale}
\\[6pt]
\mbox{both for the complex and the real method},
\end{array}
\end{eqnarray}
and, with the convention that $1/p+1/p'=1$,  
\begin{eqnarray}\label{GYaYn16H.Re}
\begin{array}{c}
\Big\{\big(W^{k,p'}_D(\Omega)\big)^*\Big\}_{\max\{1,n-d\}<p<\infty}\,\,
\,\mbox{ is an interpolation scale}
\\[6pt]
\mbox{both for the complex and the real method}.
\end{array}
\end{eqnarray}

In particular, if $\Omega$ is a nonempty open subset of ${\mathbb{R}}^n$ 
with the property that $\partial\Omega$ is $d$-Ahlfors regular
for some $d\in(0,n)$, then 
\begin{eqnarray}\label{GYaYn16H.Ts}
\begin{array}{c}
\big\{\mathring{W}^{k,p}(\Omega)\big\}_{\max\{1,n-d\}<p<\infty}\,\,
\,\mbox{ is an interpolation scale}
\\[6pt]
\mbox{both for the complex and the real method},
\end{array}
\end{eqnarray}
and
\begin{eqnarray}\label{GYaYn16H.Ts345}
\begin{array}{c}
\big\{W^{-k,p}(\Omega)\big\}_{\max\{1,n-d\}<p<\infty}\,\,
\,\mbox{ is an interpolation scale}
\\[6pt]
\mbox{both for the complex and the real method}.
\end{array}
\end{eqnarray}
\end{theorem}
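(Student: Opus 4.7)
The plan is to deduce \eqref{GYaYn16H} directly from Proposition~\ref{Haah8uN} via the retract technique encoded in Lemma~\ref{l3.3}. Specifically, fix $k\in{\mathbb{N}}$ and take as ``extension'' the operator ${\mathfrak{E}}_{k,D}$ furnished by Theorem~\ref{cjeg.5AD}, which is a single linear mapping independent of $p$ satisfying ${\mathfrak{E}}_{k,D}:W^{k,p}_D(\Omega)\to W^{k,p}_D({\mathbb{R}}^n)$ boundedly for every $p\in[1,\infty]$. As ``restriction'' I would use the $p$-independent map $R$ sending $u\in W^{k,p}_D({\mathbb{R}}^n)$ to $u\big|_{\Omega}\in W^{k,p}_D(\Omega)$, which is linear and bounded by part~$(4)$ of Lemma~\ref{TYRD-f4f}. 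The retract identity $R\circ{\mathfrak{E}}_{k,D}=I$ on $W^{k,p}_D(\Omega)$ is guaranteed by \eqref{Pa-PL2VB}. Combining this with Proposition~\ref{Haah8uN}, the first part of Lemma~\ref{l3.3} then yields \eqref{GYaYn16H}; the graph norm on $R\bigl([W^{k,p_0}_D({\mathbb{R}}^n),W^{k,p_1}_D({\mathbb{R}}^n)]_\theta\bigr)$ (and the corresponding real bracket) is equivalent to the native $W^{k,p}_D(\Omega)$ norm precisely because ${\mathfrak{E}}_{k,D}$ supplies a bounded right-inverse to $R$.

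For \eqref{GYaYn16H.Re}, I would invoke the classical duality theorems for the complex and real methods. Each $W^{k,p}_D(\Omega)$ with $1<p<\infty$ is a closed subspace of the reflexive space $W^{k,p}(\Omega)$ and hence is itself reflexive (part~$(3)$ of Lemma~\ref{TYRD-f4f}). Moreover, for any two admissible exponents $p_0,p_1$, the common dense core ${\mathscr{C}}^\infty_D(\Omega)$ (cf.\ part~$(1)$ of Lemma~\ref{TYRD-f4f}) sits inside $W^{k,p_0}_D(\Omega)\cap W^{k,p_1}_D(\Omega)$ and is dense in each of them. Under these hypotheses, complex interpolation commutes with duality, while real interpolation satisfies $(X_0,X_1)_{\theta,q}^*=(X_0^*,X_1^*)_{\theta,q'}$. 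Applying these identities to the scale \eqref{GYaYn16H} at the exponent $p'$ (and using $1/p+1/p'=1$ together with the corresponding identity for $p_0,p_1$) produces \eqref{GYaYn16H.Re}.

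Finally, both \eqref{GYaYn16H.Ts} and \eqref{GYaYn16H.Ts345} follow by specialization. Taking $D:=\partial\Omega$ in \eqref{GYaYn16H}, the condition that $\Omega$ be locally an $(\varepsilon,\delta)$-domain near $\partial\Omega\setminus D=\emptyset$ is vacuous, and part~$(5)$ of Lemma~\ref{TYRD-f4f} identifies $W^{k,p}_{\partial\Omega}(\Omega)$ with $\mathring{W}^{k,p}(\Omega)$, yielding \eqref{GYaYn16H.Ts}. Likewise, taking $D:=\partial\Omega$ in \eqref{GYaYn16H.Re} and invoking the definition $W^{-k,p}(\Omega):=\bigl(\mathring{W}^{k,p'}(\Omega)\bigr)^*$ from \eqref{azTagbM-W} delivers \eqref{GYaYn16H.Ts345}. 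The only delicate step in the whole scheme is verifying the density prerequisite for the interpolation/duality identities in the second paragraph; all remaining steps are mechanical once the retract architecture is in place and Proposition~\ref{Haah8uN} is available.
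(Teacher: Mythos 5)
Your proof follows essentially the same route as the paper: exhibit $\{W^{k,p}_D(\Omega)\}_p$ as a retract of $\{W^{k,p}_D({\mathbb{R}}^n)\}_p$ via the $p$-independent extension operator ${\mathfrak{E}}_{k,D}$ of Theorem~\ref{cjeg.5AD} and the $p$-independent restriction $u\mapsto u|_\Omega$ from Lemma~\ref{TYRD-f4f}(4), then transfer the interpolation property of Proposition~\ref{Haah8uN} through Lemma~\ref{l3.3}, dualize for \eqref{GYaYn16H.Re}, and specialize to $D=\partial\Omega$ for the last two assertions. The only cosmetic difference is that you spell out the density prerequisite for the duality theorems explicitly (via the common dense core ${\mathscr{C}}^\infty_D(\Omega)$), whereas the paper leaves this implicit in the reference to \cite{BeLo76} together with reflexivity from Lemma~\ref{TYRD-f4f}(3).
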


\begin{proof}
Collectively, Theorem~\ref{YTah-YYHa9}, Theorem~\ref{cjeg.5}, and 
Theorem~\ref{cjeg.5AD} prove that, under assumptions made in the statement 
of the current corollary, 
\begin{eqnarray}\label{GYaTba}
\big\{W^{k,p}_D(\Omega)\big\}_{\max\{1,n-d\}<p<\infty}\,\,
\mbox{ is a retract of }\,\,\,
\big\{W^{k,p}_D({\mathbb{R}}^n)\big\}_{\max\{1,n-d\}<p<\infty}.
\end{eqnarray}
Based on this, \eqref{aJN}, the first part in Lemma~\ref{l3.3}, 
and Corollary~\ref{cjPPa}, the claim made in \eqref{GYaYn16H} follows.
Then the claim in \eqref{GYaYn16H.Re} becomes a consequence of 
\eqref{GYaYn16H}, duality theorems for the complex and real methods 
of interpolation (cf., e.g., \cite[Corollary~4.5.2, p.\,98]{BeLo76} and 
\cite[Theorem~3.7.1, p.\,54]{BeLo76}), and part $(3)$ in Lemma~\ref{TYRD-f4f}.
Finally, the claims in \eqref{GYaYn16H.Ts}-\eqref{GYaYn16H.Ts345}
are implied by \eqref{GYaYn16H}-\eqref{GYaYn16H.Re}, 
part $(5)$ in Lemma~\ref{TYRD-f4f}, and \eqref{azTagbM-W}.
\end{proof}

\section{Applications to mixed boundary value problems for higher-order systems}
\label{Sect:7}
\setcounter{equation}{0}

The aim in this section is to illustrate how the functional analytic results proved 
so far may be used to establish some very general solvability results for mixed boundary 
value problems. In this endeavor, we shall work with higher-order systems of PDE's
in divergence form, with bounded measurable coefficients. This requires a number 
of preliminaries which we first dispense with. The reader is alerted to the fact that
while here we shall frequently work with vector-valued functions, our notation 
does not necessarily reflects that (though matters should always be clear from the
context). 

To start the build-up in earnest, let ${\mathcal{L}}$ be the differential operator 
of order $2m$, where $m\in{\mathbb{N}}$, in divergence form given by 
\begin{eqnarray}\label{coHGB2}
{\mathcal{L}}u:=\sum_{|\alpha|=|\beta|=m} 
\partial^\alpha\big(A_{\alpha\beta}\,\partial^\beta u\big),
\end{eqnarray}
whose tensor coefficient $A=(A_{\alpha\beta})_{|\alpha|=|\beta|=m}$
consists of $M\times M$ matrices $A_{\alpha\beta}$ with bounded, measurable, 
complex-valued entries, i.e., 
\begin{eqnarray}\label{coHGB2.B}
A_{\alpha\beta}=\big(a^{\alpha\beta}_{ij}\big)_{1\leq i,j\leq M}
\,\,\,\mbox{ with each }\,\,a^{\alpha\beta}_{ij}\in L^\infty(\Omega,{\mathscr{L}}^n),
\end{eqnarray}
and where the function $u=(u_1,...,u_M)$ is ${\mathbb{C}}^M$-valued. The first 
order of business is to associate a conormal derivative for ${\mathcal{L}}$. 

In order to motivate the subsequent
discussion, consider first the situation when $\Omega$ is a bounded
${\mathscr{C}}^\infty$ domain and denote by $\nu=(\nu_j)_{1\leq j\leq n}$ 
its outward unit normal. Also, set $\sigma:={\mathcal{H}}^{n-1}\lfloor\partial\Omega$.
In the case when the entries of each $A_{\alpha\beta}$ are functions from  
${\mathscr{C}}^\infty(\overline{\Omega})$, given any ${\mathbb{C}}^M$-valued 
functions $u,v$ with components from ${\mathscr{C}}^\infty(\overline{\Omega})$, 
repeated integrations by parts yield
\begin{eqnarray}\label{con-prop2}
&&\sum_{|\alpha|=|\beta|=m}
\int_{\Omega}\Bigl\langle A_{\alpha\beta}\,\partial^\beta u,
\partial^\alpha v\Bigr\rangle\,d{\mathscr{L}}^n
\\[4pt]
&&\qquad\qquad
=(-1)^{m+1}\int_{\partial\Omega}\Bigl\langle\partial^A_{\nu}u,
{\mathscr{R}}^{(m)}_{\Omega\to\partial\Omega}v\Bigr\rangle\,d\sigma
+(-1)^m\int_{\Omega}\bigl\langle{\mathcal{L}}u,v\bigr\rangle\,d{\mathscr{L}}^n,
\nonumber
\end{eqnarray}
where $\langle\cdot,\cdot\rangle$ is the usual (real) pointwise inner product of 
vector-valued functions, and the vector-valued function $\partial^A_{\nu}u$ 
is given by
\begin{eqnarray}\label{con-comp}
\begin{array}{l}
\partial^A_\nu u
:=\Bigl\{\big(\partial_\nu^A u\big)_\delta\Bigr\}_{|\delta|\leq m-1}
\mbox{ with the $\delta$-component given by the formula} 
\\[16pt]
{\displaystyle
\big(\partial^A_\nu u\big)_\delta:=\sum_{|\alpha|=|\beta|=m}\sum_{j=1}^n(-1)^{|\delta|}
\,\frac{\alpha!|\delta|!(m-|\delta|-1)!}{m!\delta!(\alpha-\delta-e_j)!}
\nu_jA_{\alpha\beta}\Bigl(\partial^{\alpha+\beta-\delta-e_j}u}\Bigr)
\Bigl|_{\partial\Omega},
\end{array}
\end{eqnarray}
with the convention that the sum in $\alpha$ and $j$ is only performed 
over those $\alpha$'s and $j$'s such that $\alpha-\delta-e_j$ does not 
have any negative components (and with $e_j:=(0,...,0,1,0,...,0)\in{\mathbb{N}}_0^n$ 
with the only nonzero component on the $j$-th slot, $j\in\{1,...,n\}$).
While retaining the same context as above, suppose we are interested in the
case in which the function $\partial^A_\nu u$ vanishes on $\partial\Omega\setminus D$
where $D$ is a given closed subset of $\partial\Omega$. 
Note that if $v\in{\mathscr{C}}_D^\infty(\Omega)$, then \eqref{con-prop2} becomes
\begin{eqnarray}\label{con-pTi}
&&\sum_{|\alpha|=|\beta|=m}
\int_{\Omega}\Bigl\langle A_{\alpha\beta}\,\partial^\beta u,
\partial^\alpha v\Bigr\rangle\,d{\mathscr{L}}^n
\\[4pt]
&&\qquad\qquad
=(-1)^{m+1}\int_{\partial\Omega\setminus D}\Bigl\langle\partial^A_{\nu}u,
{\mathscr{R}}^{(m)}_{\Omega\to\partial\Omega}v\Bigr\rangle\,d\sigma
+(-1)^m\int_{\Omega}\bigl\langle{\mathcal{L}}u,v\bigr\rangle\,d{\mathscr{L}}^n,
\nonumber
\end{eqnarray}
since ${\mathscr{R}}^{(m)}_{\Omega\to\partial\Omega}v=0$ near $D$. 
In fact, for every $p\in(1,\infty)$ we have 
${\mathscr{R}}^{(m)}_{\Omega\to\partial\Omega}v
\in B^{p,p}_{m-1/p}(\partial\Omega)$, with compact support contained in the 
relatively open subset $\partial\Omega\setminus D$ of $\partial\Omega$. 
Based on this and \eqref{con-pTi}, it is not difficult to see that under the
current smoothness hypotheses on the objects involved, the fact that  
$\partial^A_\nu u$ vanishes on $\partial\Omega\setminus D$ is equivalent to having 
\begin{eqnarray}\label{con-pTi.2}
\sum_{|\alpha|=|\beta|=m}
\int_{\Omega}\Bigl\langle A_{\alpha\beta}\,\partial^\beta u,
\partial^\alpha v\Bigr\rangle\,d{\mathscr{L}}^n
=(-1)^m\int_{\Omega}\bigl\langle{\mathcal{L}}u,v\bigr\rangle\,d{\mathscr{L}}^n,
\qquad\forall\,v\in{\mathscr{C}}_D^\infty(\Omega).
\end{eqnarray}
We continue to retain the same setting as above and, in addition, fix some
$p'\in(1,\infty)$. Then thanks to \eqref{uig-adjb-2} and \eqref{con-pTi.2} 
the condition that $\partial^A_\nu u$ vanishes on $\partial\Omega\setminus D$
may be further equivalently expressed as 
\begin{eqnarray}\label{con-pTi.3}
\sum_{|\alpha|=|\beta|=m}
\int_{\Omega}\Bigl\langle A_{\alpha\beta}\,\partial^\beta u,
\partial^\alpha v\Bigr\rangle\,d{\mathscr{L}}^n
=(-1)^m\int_{\Omega}\bigl\langle{\mathcal{L}}u,v\bigr\rangle\,d{\mathscr{L}}^n,
\qquad\forall\,v\in W^{m,p'}_D(\Omega).
\end{eqnarray}

At this stage in the build-up, we wish to assign a meaning of the condition that 
\begin{eqnarray}\label{con-pTi.4}
\partial^A_\nu u\,\,\,\mbox{ vanishes on }\,\,\,\partial\Omega\setminus D
\end{eqnarray}
in a much more general setting than the smooth one considered so far. 
To be specific, suppose that $\Omega$ is an open subset of ${\mathbb{R}}^n$ 
and that ${\mathcal{L}}$ is as in \eqref{coHGB2}-\eqref{coHGB2.B}, for 
some $m\in{\mathbb{N}}$. Also, assume that $D$ is a closed subset of 
$\partial\Omega$ and that $u\in W^{m,p}_D(\Omega)$ for some $p\in(1,\infty)$. 
Of course, this renders the pointwise definition of $\partial^A_\nu u$ from
\eqref{con-comp} utterly inadequate so, instead, we shall attempt to make 
sense of \eqref{con-pTi.3} (which, as indicated above, is an equivalent 
way of expressing \eqref{con-pTi.4} in the smooth case). With this goal in mind, 
note that if $p,p'$ are H\"older conjugate exponents (something we will assume
from now on), then the left-hand side of \eqref{con-pTi.3} is well-defined
for any $v\in W^{m,p'}_D(\Omega)$, given that we are assuming that  
the tensor coefficient has bounded entries. This being said, making sense
of the right-hand side could be problematic since one cannot expect 
$f:={\mathcal{L}}u$ to be more regular than a generic distribution 
in $W^{-m,p}(\Omega)$.

Having identified this issue, the remedy we propose is as follows. 
As a preamble, let us agree that given a topological vector space $X$,
with dual $X^*$, the symbol ${}_{X^\ast}\langle\cdot,\cdot\rangle_{X}$
indicates the pairing between functionals from $X^*$ and vectors from $X$. 
Also, we shall associate to any functional $f\in\big(W^{m,p'}_D(\Omega)\big)^*$ 
the distribution $f\lfloor_{\,\Omega}\,\in{\mathcal{D}}'(\Omega)$ defined by  
\begin{eqnarray}\label{Tgb-1}
{}_{{\mathcal{D}}'(\Omega)}
\big\langle f\lfloor_{\,\Omega}\,,\varphi\big\rangle_{{\mathcal{D}}(\Omega)}:=
{}_{\big(W^{m,p'}_D(\Omega)\bigr)^*}
\big\langle f,\varphi\big\rangle_{W^{m,p'}_D(\Omega)},\qquad
\forall\,\varphi\in{\mathscr{C}}^\infty_c(\Omega).
\end{eqnarray}
The reader is alerted to the fact that, while linear and continuous,  
\begin{eqnarray}\label{IYHN0987}
\mbox{the assignment }\,\,\big(W^{m,p'}_D(\Omega)\big)^*\ni f\longmapsto 
f\lfloor_{\,\Omega}\,\in{\mathcal{D}}'(\Omega)\,\,\mbox{ is not injective},
\end{eqnarray}
generally speaking (a remarkable exception is when $D=\partial\Omega$; cf.
$(5)$ in Lemma~\ref{TYRD-f4f}, \eqref{azTagbM-W2} and \eqref{azTagbM-W}). 
To see this, assume for a moment that $\Omega$ is an $(\varepsilon,\delta)$-domain in 
${\mathbb{R}}^n$ with the property that its boundary is $(n-1)$-Ahlfors 
regular, and set $\sigma:={\mathcal{H}}^{n-1}\lfloor\partial\Omega$. Also, suppose 
that $D$ is a proper closed subset of $\partial\Omega$. Then there exists a 
nontrivial function $g\in L^p(\partial\Omega,\sigma)$ with support in 
$\partial\Omega\setminus D$, and we define the functional
$f\in\big(W^{m,p'}_D(\Omega)\big)^*$ by setting
\begin{eqnarray}\label{IYavvTGq}
{}_{\big(W^{m,p'}_D(\Omega)\bigr)^*}
\big\langle f,v\big\rangle_{W^{m,p'}_D(\Omega)}
:=\int_{\partial\Omega}g\,{\mathscr{R}}_{\Omega\to\partial\Omega}^{(1)}v\,d\sigma,
\qquad\forall\,v\in W^{m,p'}_D(\Omega).
\end{eqnarray}
By Theorem~\ref{NIceTRace}, the above functional is then well-defined and nonzero, 
but it is clear that $f\lfloor_{\,\Omega}$ is zero as a distribution. Hence,
\eqref{IYHN0987} holds, as claimed. A heuristic, yet suggestive, way of expressing 
this is by saying that arbitrary distributions in $\Omega$ may have multiple
extensions to functionals in $\big(W^{m,p'}_D(\Omega)\bigr)^*$.

This discussion sets the stage for making the following definition.

\begin{definition}\label{Ian-Yab679}
Let $\Omega$ be an arbitrary open subset of ${\mathbb{R}}^n$ 
and assume that ${\mathcal{L}}$ is as in \eqref{coHGB2}-\eqref{coHGB2.B}, for 
some $m\in{\mathbb{N}}$. Also, suppose that $D$ is a closed subset of 
$\partial\Omega$ and fix $p,p'\in(1,\infty)$ with $1/p+1/p'=1$. 
Then, given $u\in W^{m,p}_D(\Omega)$ and $f\in\big(W^{m,p'}_D(\Omega)\big)^*$ 
satisfying the (necessary) compatibility condition
\begin{eqnarray}\label{Tgb-2}
{\mathcal{L}}u=f\lfloor_{\,\Omega}\,\,\,\mbox{ in }\,\,\,{\mathcal{D}}'(\Omega),
\end{eqnarray}
abbreviate by 
\begin{eqnarray}\label{Tgb-3}
\partial^A_\nu (u,f)=0\,\,\mbox{ on }\,\,\partial\Omega\setminus D
\end{eqnarray}
the demand that 
\begin{eqnarray}\label{Tgb-4}
(-1)^m\sum_{|\alpha|=|\beta|=m}
\int_{\Omega}\Bigl\langle A_{\alpha\beta}\,\partial^\beta u,
\partial^\alpha v\Bigr\rangle\,d{\mathscr{L}}^n
={}_{\big(W^{m,p'}_D(\Omega)\bigr)^*}
\big\langle f,v\big\rangle_{W^{m,p'}_D(\Omega)}
\qquad\forall\,v\in{\mathscr{C}}^\infty_D(\Omega).
\end{eqnarray}
\end{definition}

\noindent We wish to stress that, in \eqref{Tgb-3}, the symbol 
``$\partial^A_\nu (u,f)$" is not defined individually, but rather we 
assign a meaning to the condition 
``$\partial^A_\nu (u,f)=0\,\,\mbox{ on }\,\,\partial\Omega\setminus D$"
as a block, through \eqref{Tgb-4}.

A few comments pertaining to the nature of Definition~\ref{Ian-Yab679} are
in order. First, the fact that the compatibility condition \eqref{Tgb-2} is
necessary if \eqref{Tgb-4} is to hold, is readily seen by specializing 
\eqref{Tgb-4} to the case when $v\in{\mathscr{C}}^\infty_c(\Omega)$ and 
keeping in mind \eqref{Tgb-1}. Second, by density, \eqref{Tgb-4} is equivalent
to the condition that 
\begin{eqnarray}\label{Tgb-5}
(-1)^m\sum_{|\alpha|=|\beta|=m}
\int_{\Omega}\Bigl\langle A_{\alpha\beta}\,\partial^\beta u,
\partial^\alpha v\Bigr\rangle\,d{\mathscr{L}}^n
={}_{\big(W^{m,p'}_D(\Omega)\bigr)^*}
\big\langle f,v\big\rangle_{W^{m,p'}_D(\Omega)}
\qquad\forall\,v\in W^{m,p'}_D(\Omega).
\end{eqnarray}
Third, if $f$ is more regular than originally assumed, say if 
$f\in L^p(\Omega,{\mathscr{L}}^n)$, then \eqref{Tgb-5} further becomes 
equivalent to \eqref{con-pTi.3}. In particular, \eqref{Tgb-3} reduces to 
condition \eqref{con-pTi.4}, interpreted poitwise, in the case when all
objects involved are regular enough (as in the earlier discussion, pertaining 
to \eqref{con-prop2}-\eqref{con-pTi.3}).

However, in general, condition \eqref{Tgb-3} is not an ordinary 
generalization of the demand that \eqref{con-pTi.4} holds in a pointwise sense
(with $\partial^A_\nu u$ as in \eqref{con-comp}). In fact, it is more appropriate 
to regard the former as a ``renormalization'' of the latter, in a fashion that 
depends strongly on the choice of an extension of the distribution
${\mathcal{L}}u\in{\mathcal{D}}'(\Omega)$ to a functional 
$f\in\big(W^{m,p'}_D(\Omega)\big)^*$ (a phenomenon which may be   
traced back to \eqref{IYHN0987} and the subsequent discussion). 
Indeed, if $f_i\in\big(W^{m,p'}_D(\Omega)\big)^*$, $i=1,2$ are two 
such extensions of ${\mathcal{L}}u\in{\mathcal{D}}'(\Omega)$, in the sense that
\begin{eqnarray}\label{Tgb-2Eac}
{\mathcal{L}}u=f_i\lfloor_{\,\Omega}\,\,\,\mbox{ in }\,\,\,{\mathcal{D}}'(\Omega),
\,\,\,\mbox{ for }\,\,\,i=1,2.
\end{eqnarray}
then the validity of \eqref{Tgb-4} with $f=f_1$ does not necessarily imply 
the validity of \eqref{Tgb-4} with $f=f_2$. It is precisely for this reason
that, in contrast to \eqref{con-pTi.4} interpreted as a pointwise condition
in the sense of \eqref{con-comp}, the notation in \eqref{Tgb-3} reflects the 
fact that the functional $f$ plays a basic role in this case.

\vskip 0.10in

Definition~\ref{Ian-Yab679} is now employed in the formulation of the mixed 
boundary value problem in the next proposition.

\begin{proposition}\label{Tabb-Than}
Let $\Omega$ be an arbitrary open subset of ${\mathbb{R}}^n$ 
and assume that ${\mathcal{L}}$ is as in \eqref{coHGB2}-\eqref{coHGB2.B}, for 
some $m\in{\mathbb{N}}$. Also, suppose that $D$ is a closed subset of 
$\partial\Omega$ and fix $p,p'\in(1,\infty)$ with $1/p+1/p'=1$. In this context, 
for an arbitrary $f\in\big(W^{m,p'}_D(\Omega)\big)^*$ consider the mixed 
boundary value problem 
\begin{eqnarray}\label{Tgb-5aa}
\left\{
\begin{array}{l}
{\mathcal{L}}u=f\lfloor_{\,\Omega}\,\,\mbox{ in }\,\,{\mathcal{D}}'(\Omega),
\\[6pt]
u\in W^{m,p}_D(\Omega),
\\[6pt]
\partial^A_\nu(u,f)=0\,\,\mbox{ on }\,\,\partial\Omega\setminus D,
\end{array}
\right.
\end{eqnarray}
where the last condition is understood in the sense of Definition~\ref{Ian-Yab679}.
Finally, define the (linear and bounded) operator 
\begin{eqnarray}\label{IOH-aci}
T_{\mathcal{L}}:W^{m,p}_D(\Omega)\longrightarrow\big(W^{m,p'}_D(\Omega)\big)^*
\end{eqnarray}
by setting 
\begin{eqnarray}\label{IOH-aciF}
\begin{array}{c}
\displaystyle
{}_{\big(W^{m,p'}_D(\Omega)\big)^*}
\big\langle T_{\mathcal{L}}u,v\bigr\rangle_{W^{m,p}_D(\Omega)}
:=(-1)^m\sum\limits_{|\alpha|=|\beta|=m}
\int_{\Omega}\Bigl\langle A_{\alpha\beta}\,\partial^\beta u,
\partial^\alpha v\Bigr\rangle\,d{\mathscr{L}}^n,
\\[16pt]
\mbox{for every }\,\,\,u\in W^{m,p}_D(\Omega)\,\,\,\mbox{ and }\,\,\,
v\in W^{m,p'}_D(\Omega).
\end{array}
\end{eqnarray}

Then the mixed boundary value problem \eqref{Tgb-5aa} has a solution 
if and only if $f\in{\rm Im}\,T_{\mathcal{L}}$, and the solution is 
unique up to functions in ${\rm Ker}\,T_{\mathcal{L}}$. 

In particular, the mixed boundary value problem \eqref{Tgb-5aa} is well-posed 
(for arbitrary data in the space 
$\big(W^{m,p'}_D(\Omega)\big)^*$) if and only if the operator 
$T_{\mathcal{L}}$ in \eqref{IOH-aci}-\eqref{IOH-aciF} is invertible.
\end{proposition}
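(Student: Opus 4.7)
The plan is to recognize this proposition as essentially a repackaging of the weak formulation of \eqref{Tgb-5aa} as a single operator equation $T_{\mathcal{L}}u=f$ in $\bigl(W^{m,p'}_D(\Omega)\bigr)^*$, after which all three conclusions follow by elementary functional analysis.

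First I would verify that the operator $T_{\mathcal{L}}$ in \eqref{IOH-aci}-\eqref{IOH-aciF} is well-defined, linear, and bounded. Linearity in $u$ is immediate from the linearity of the bilinear form in \eqref{IOH-aciF} in its first argument, and linearity of $v\mapsto{}_{(W^{m,p'}_D(\Omega))^*}\langle T_{\mathcal{L}}u,v\rangle_{W^{m,p'}_D(\Omega)}$ on $W^{m,p'}_D(\Omega)$ is similarly obvious. For boundedness, H\"older's inequality together with the uniform bound on the entries $a^{\alpha\beta}_{ij}\in L^\infty(\Omega,{\mathscr{L}}^n)$ yields
\begin{equation*}
\bigl|{}_{(W^{m,p'}_D(\Omega))^*}\langle T_{\mathcal{L}}u,v\rangle_{W^{m,p'}_D(\Omega)}\bigr|
\leq C\,\|A\|_\infty\,\|u\|_{W^{m,p}(\Omega)}\,\|v\|_{W^{m,p'}(\Omega)},
\end{equation*}
so $T_{\mathcal{L}}u$ extends to a bounded functional on $W^{m,p'}_D(\Omega)$ with norm controlled by $\|u\|_{W^{m,p}_D(\Omega)}$.

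Next, the heart of the argument is the equivalence
\begin{equation*}
u\,\mbox{ solves }\,\eqref{Tgb-5aa}\,\Longleftrightarrow\,u\in W^{m,p}_D(\Omega)\,\mbox{ and }\,T_{\mathcal{L}}u=f\,\mbox{ in }\,\bigl(W^{m,p'}_D(\Omega)\bigr)^*.
\end{equation*}
For the forward direction, assume $u$ satisfies \eqref{Tgb-5aa}. Then the compatibility condition ${\mathcal{L}}u=f\lfloor_{\,\Omega}$ holds, so $\partial^A_\nu(u,f)=0$ on $\partial\Omega\setminus D$ is meaningful, and Definition~\ref{Ian-Yab679} combined with the density of ${\mathscr{C}}^\infty_D(\Omega)$ in $W^{m,p'}_D(\Omega)$ (cf. part $(1)$ of Lemma~\ref{TYRD-f4f}) upgrades \eqref{Tgb-4} to its equivalent form \eqref{Tgb-5} for all test functions $v\in W^{m,p'}_D(\Omega)$. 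The latter identity is precisely $T_{\mathcal{L}}u=f$ by the definition \eqref{IOH-aciF}. Conversely, suppose $u\in W^{m,p}_D(\Omega)$ satisfies $T_{\mathcal{L}}u=f$. Specializing to test functions $v=\varphi\in{\mathscr{C}}^\infty_c(\Omega)\subseteq{\mathscr{C}}^\infty_D(\Omega)$, unraveling the distributional derivatives in the left-hand side of \eqref{IOH-aciF}, and invoking \eqref{Tgb-1}, one reads off ${\mathcal{L}}u=f\lfloor_{\,\Omega}$ in ${\mathcal{D}}'(\Omega)$. With this compatibility now verified, the remaining identity \eqref{Tgb-5} (equivalently \eqref{Tgb-4}) holds by hypothesis, which by Definition~\ref{Ian-Yab679} is exactly $\partial^A_\nu(u,f)=0$ on $\partial\Omega\setminus D$.

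Granted this equivalence, all the stated conclusions are immediate. A solution to \eqref{Tgb-5aa} exists precisely when $f$ lies in the range of $T_{\mathcal{L}}$, and if $u_1,u_2$ are two solutions then $u_1-u_2\in W^{m,p}_D(\Omega)$ satisfies $T_{\mathcal{L}}(u_1-u_2)=0$, placing the difference in $\mbox{Ker}\,T_{\mathcal{L}}$. Well-posedness for arbitrary $f\in\bigl(W^{m,p'}_D(\Omega)\bigr)^*$ therefore amounts to $T_{\mathcal{L}}$ being bijective; by the Banach open mapping theorem, the bounded bijection $T_{\mathcal{L}}$ automatically has a bounded inverse, so bijectivity and invertibility coincide. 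The only subtlety to watch is the role of the compatibility condition \eqref{Tgb-2} in making the boundary condition meaningful in Definition~\ref{Ian-Yab679}; this is the one place where care is required, but it is handled transparently by testing against compactly supported $\varphi$ as indicated above, so I do not anticipate any genuine obstacle in the argument.
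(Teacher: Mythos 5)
Your proof is correct and follows essentially the same route as the paper: reformulating the mixed boundary value problem \eqref{Tgb-5aa} as the operator equation $T_{\mathcal{L}}u=f$ in $\big(W^{m,p'}_D(\Omega)\big)^*$ via the density of ${\mathscr{C}}^\infty_D(\Omega)$ in $W^{m,p'}_D(\Omega)$ and the continuity of $T_{\mathcal{L}}$, after which the conclusions are immediate from elementary linear algebra and the Open Mapping Theorem. Your write-up simply makes explicit a few steps (boundedness of $T_{\mathcal{L}}$, both directions of the equivalence) that the paper leaves implicit in the phrase ``after unraveling notation.''
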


\begin{proof}
After unraveling notation, given an arbitrary functional 
$f\in\big(W^{m,p'}_D(\Omega)\big)^*$ the existence of a function 
$u\in W^{m,p}_D(\Omega)$ solving the mixed boundary value problem 
\eqref{Tgb-5aa} comes down to finding some $u\in W^{m,p}_D(\Omega)$ such that 
\begin{eqnarray}\label{Tgb-7}
{}_{\big(W^{m,p'}_D(\Omega)\bigr)^*}
\big\langle T_{\mathcal{L}}u,v\big\rangle_{W^{m,p'}_D(\Omega)}
={}_{\big(W^{m,p'}_D(\Omega)\bigr)^*}
\big\langle f,v\big\rangle_{W^{m,p'}_D(\Omega)}
\qquad\forall\,v\in{\mathscr{C}}^\infty_D(\Omega).
\end{eqnarray}
Since, by design, ${\mathscr{C}}^\infty_D(\Omega)$ is dense in 
$W^{m,p'}_D(\Omega)$ and since the operator $T_{\mathcal{L}}$ in 
\eqref{IOH-aci}-\eqref{IOH-aciF} is continuous, this is further 
equivalent to finding $u\in W^{m,p}_D(\Omega)$ solving 
\begin{eqnarray}\label{Tgb-8}
T_{\mathcal{L}}u=f\,\,\mbox{ in }\,\,\big(W^{m,p'}_D(\Omega)\big)^*.
\end{eqnarray}
From this, all desired conclusions follow.
\end{proof}

Proposition~\ref{Tabb-Than} highlights the relevance of the functional analytic 
properties of $T_{\mathcal{L}}$ in \eqref{IOH-aci}-\eqref{IOH-aciF} 
from the perspective of the solvability of the mixed boundary value problem
\eqref{Tgb-5aa}. In order to state our main Fredholm solvability result for the
mixed boundary value problem \eqref{Tgb-5aa}, let us agree that, 
for each open set $\Omega\subseteq{\mathbb{R}}^n$ and each $m\in{\mathbb{N}}$,
\begin{eqnarray}\label{autYYY}
{\mathcal{P}}_m(\Omega):=\big\{P\big|_{\Omega}:\,
P\,\,\mbox{ complex polynomial of degree }\leq m\,\,\mbox{ in }\,\,{\mathbb{R}}^n\big\}.
\end{eqnarray}
Here is the main well-posedness result in this paper. 

\begin{theorem}[Well-posedness of the higher-order mixed boundary problem]\label{yaUNDJ}
Let $\Omega$ be a bounded, connected, open, nonempty, subset of ${\mathbb{R}}^n$, 
$n\geq 2$, and suppose that $D$ is a closed subset of $\partial\Omega$ which is 
$d$-Ahlfors regular for some $d\in(n-2,n)$. In addition, assume that $\Omega$ is 
locally an $(\varepsilon,\delta)$-domain near $\partial\Omega\setminus D$. 
Next, consider an $M\times M$ divergence-form system ${\mathcal{L}}$ of order 
$2m$, as in \eqref{coHGB2}-\eqref{coHGB2.B}, for some $m\in{\mathbb{N}}$, and 
suppose that ${\mathcal{L}}$ satisfies the strong ellipticity condition 
asserting that there exists $\kappa>0$ such that
\begin{eqnarray}\label{S-Ellip}
\begin{array}{c}
\displaystyle{
{\rm Re}\left[\sum_{|\alpha|=|\beta|=m}\sum_{i,j=1}^M a^{\alpha\beta}_{ij}(x)
\zeta^\alpha_i\overline{\zeta^\beta_j}\right]
\geq\kappa\sum_{|\alpha|=m}\sum_{i=1}^M\frac{\alpha!}{m!}\,|\zeta^\alpha_i|^2}
\,\,\mbox{ for ${\mathscr{L}}^n$-a.e. $x\in\Omega$}, 
\\[20pt]
\displaystyle{
\mbox{and for all families of complex numbers }\,\,\big(\zeta^\alpha_i\big)
_{\stackrel{|\alpha|=m}{1\leq i\leq M}}}.
\end{array}
\end{eqnarray}

Then there exists $p_\ast\in(2,\infty)$, depending only on $n,m,M,d,\Omega,D,A,\kappa$, 
with the property that, whenever 
\begin{eqnarray}\label{S-EllPka}
\frac{p_\ast}{p_\ast-1}<p<p_\ast,
\end{eqnarray}
for each functional
\begin{eqnarray}\label{S-EllP877}
f\in\big(W^{m,p'}_D(\Omega)\big)^*\,\,\,\mbox{ where }\,\,1/p+1/p'=1,
\end{eqnarray}
the mixed boundary value problem 
\begin{eqnarray}\label{Tgb-5bb}
\left\{
\begin{array}{l}
{\mathcal{L}}u=f\lfloor_{\,\Omega}\,\,\mbox{ in }\,\,{\mathcal{D}}'(\Omega),
\\[6pt]
u\in W^{m,p}_D(\Omega),
\\[6pt]
\partial^A_\nu(u,f)=0\,\,\mbox{ on }\,\,\partial\Omega\setminus D,
\end{array}
\right.
\end{eqnarray}
(with the last condition understood in the sense of Definition~\ref{Ian-Yab679})
is well-posed when $D\not=\emptyset$. 

Moreover, in the case when $D=\emptyset$, the problem \eqref{Tgb-5bb}
has a solution if and only if 
\begin{eqnarray}\label{S-EYnn}
{}_{\big(W^{m,p'}_D(\Omega)\big)^*}\langle f,v\rangle_{W^{m,p'}_D(\Omega)}=0,
\qquad\forall\,v\in{\mathcal{P}}_{m-1}(\Omega),
\end{eqnarray}
in which scenario solutions of \eqref{Tgb-5bb} are unique up to 
functions from ${\mathcal{P}}_{m-1}(\Omega)$.
\end{theorem}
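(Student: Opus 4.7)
The plan is to invoke Proposition~\ref{Tabb-Than}, which reduces the well-posedness of \eqref{Tgb-5bb} to the invertibility of the operator $T_{{\mathcal{L}}}:W^{m,p}_D(\Omega)\to\big(W^{m,p'}_D(\Omega)\big)^\ast$ defined in \eqref{IOH-aci}-\eqref{IOH-aciF}. Thanks to H\"older's inequality and the $L^\infty$-bound on the tensor coefficient $A$, this operator is bounded for every $p\in(\max\{1,n-d\},\infty)$; the standing assumption $d\in(n-2,n)$ guarantees that $p=2$ lies in the interior of this admissible range, which will serve as the anchor for an extrapolation scheme.

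First I would settle invertibility at $p=2$ by a Lax--Milgram argument. Taking real parts in \eqref{S-Ellip} yields the G\aa{}rding estimate
\begin{equation*}
\mathrm{Re}\,{}_{(W^{m,2}_D(\Omega))^\ast}\langle T_{{\mathcal{L}}}u,u\rangle_{W^{m,2}_D(\Omega)}\ge\kappa\sum_{|\alpha|=m}\|\partial^\alpha u\|_{L^2(\Omega,{\mathscr{L}}^n)}^2,\qquad\forall\,u\in W^{m,2}_D(\Omega).
\end{equation*}
When $D\neq\emptyset$, the $d$-Ahlfors regularity of $D$ with $d>n-2$ ensures $2m\ge 2>n-d$, so $D$ carries positive $(m,2)$-Bessel capacity. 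A Rellich--compactness scheme, using the bounded extension operator ${\mathfrak{E}}_{m,D}$ from Theorem~\ref{cjeg.5AD} to transfer the problem to ${\mathbb{R}}^n$, the compactness of $W^{m,2}\hookrightarrow W^{m-1,2}$ on a ball $B\supseteq\overline{\Omega}$, and the intrinsic characterization in Theorem~\ref{Kance.745} to identify any weak limit as an element of $W^{m,2}_D(\Omega)$ whose top-order derivatives vanish (and hence is a polynomial of degree less than $m$ whose trace on a set of positive capacity is zero, thus the zero polynomial), then upgrades the G\aa{}rding estimate to the Poincar\'e inequality
\begin{equation*}
\|u\|_{W^{m,2}(\Omega)}\le C\sum_{|\alpha|=m}\|\partial^\alpha u\|_{L^2(\Omega,{\mathscr{L}}^n)},\qquad\forall\,u\in W^{m,2}_D(\Omega),
\end{equation*}
whence coercivity and invertibility of $T_{{\mathcal{L}}}$ at $p=2$ follow. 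In the complementary case $D=\emptyset$, where $W^{m,2}_D(\Omega)=W^{m,2}(\Omega)$, the kernel of $T_{{\mathcal{L}}}$ coincides with ${\mathcal{P}}_{m-1}(\Omega)$ (by strong ellipticity and connectedness of $\Omega$), its range is precisely the annihilator of ${\mathcal{P}}_{m-1}(\Omega)$ in $(W^{m,2}(\Omega))^\ast$, and Lax--Milgram applied on the quotient $W^{m,2}(\Omega)/{\mathcal{P}}_{m-1}(\Omega)$ disposes of $p=2$ and also justifies the compatibility condition \eqref{S-EYnn}.

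The final step extrapolates away from $p=2$. Theorem~\ref{HanmNB8} establishes that both $\{W^{m,p}_D(\Omega)\}_{\max\{1,n-d\}<p<\infty}$ and the dual scale $\{(W^{m,p'}_D(\Omega))^\ast\}_{\max\{1,n-d\}<p<\infty}$ are stable under complex interpolation, and the very same bilinear form defines $T_{{\mathcal{L}}}$ consistently and with uniformly controlled norms across these scales. Invoking the abstract stability theorem of Sneiberg, which asserts that within an interpolation scale the set of parameters for which a consistent family of operators is invertible is open, the invertibility at $p=2$ propagates to an open interval $(p_\ast/(p_\ast-1),p_\ast)$ symmetric about $p=2$, with $p_\ast>2$ depending only on the data $n,m,M,d,\Omega,D,A,\kappa$. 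The most delicate step in this plan is the Poincar\'e inequality on $W^{m,2}_D(\Omega)$: one must carefully exploit both $d>n-2$ and the intrinsic capacitary/pointwise trace characterization from Corollary~\ref{Kae.Yab7b6} to preclude nontrivial polynomial obstructions in the weak limit, since the rough nature of $D$ and of $\partial\Omega\setminus D$ precludes any direct appeal to classical trace theorems.
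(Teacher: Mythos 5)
Your argument is correct and reaches the same destination as the paper, but the route is genuinely different in two places, and this is worth pointing out.

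At $p=2$, the paper does \emph{not} establish coercivity or a Poincar\'e inequality. Instead it proves the G\aa{}rding-type estimate $\|u\|_{W^{m,2}(\Omega)}\le C\|T_{\mathcal{L}}u\|_{(W^{m,2}_D(\Omega))^*}+C\|u\|_{W^{m-1,2}(\Omega)}$, combines it with compactness of $W^{m,2}_D(\Omega)\hookrightarrow W^{m-1,2}(\Omega)$ to conclude $T_{\mathcal{L}}$ is bounded below modulo compacts, repeats the argument for the adjoint by observing $(T_{\mathcal{L}})^*=T_{\mathcal{L}^*}$ with ${\mathcal{L}}^*$ again strongly elliptic, and thereby gets Fredholmness. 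It then identifies both the kernel and the cokernel with ${\mathcal{P}}_{m-1}(\Omega)\cap W^{m,2}_D(\Omega)$, concluding index zero. Your proposal instead passes to the Poincar\'e inequality on $W^{m,2}_D(\Omega)$ by a compactness/contradiction argument and then applies Lax--Milgram (on the quotient by ${\mathcal{P}}_{m-1}(\Omega)$ when $D=\emptyset$). Both paths are valid; the paper's unifies the two cases $D\neq\emptyset$ and $D=\emptyset$ into a single Fredholm statement \eqref{IanUHrr.7H.2}, while yours is more explicit but requires a case split. A second difference is in the extrapolation step: the paper propagates the Fredholm-index-zero property together with stability of the null space along the complex interpolation scale of Theorem~\ref{HanmNB8} (citing the papers referenced after \eqref{IanUHrr.7H.2}), whereas you propagate outright invertibility via Sneiberg (and, for $D=\emptyset$, invertibility of the induced operator on quotients). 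Again both work, since your Lax--Milgram step already yields invertibility at $p=2$, not merely Fredholmness.

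One small imprecision to flag: the phrase ``a polynomial of degree less than $m$ whose trace on a set of positive capacity is zero, thus the zero polynomial'' is not quite the right mechanism. For a $d$-Ahlfors regular $D$ with $d$ possibly as small as just above $n-2$, the vanishing of the zeroth-order trace alone does not rule out a nonzero polynomial vanishing on $D$. What actually forces $P\equiv 0$ is that membership in $W^{m,2}_D(\Omega)$ gives vanishing of the \emph{full} higher-order trace ${\mathscr{R}}^{(m)}_{\Omega\to D}$, i.e.\ $\partial^\alpha P|_D=0$ for all $|\alpha|\le m-1$; one then observes that the top-order derivatives ($|\alpha|=m-1$) are constants vanishing on the nonempty set $D$, hence identically zero, and descends inductively to $P\equiv 0$. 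This is exactly the content of \eqref{amZbYJnab.6YHN} and \eqref{S-EllPPP.5tG}. Since you do invoke the intrinsic characterization (Theorem~\ref{Kance.745} / Corollary~\ref{Kae.Yab7b6}) which carries the full trace, the idea is present, but the one-line justification should be replaced by this inductive argument; as stated it would fail for $d\le n-1$.
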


Note that the membership $u\in W^{m,p}_D(\Omega)$ entails 
${\mathscr{R}}^{(m)}_{\Omega\to D}\,u=0$ 
at ${\mathcal{H}}^d$-a.e. point on $D$, by Corollary~\ref{Kae.Yab7b6} 
(and, in fact, the latter is equivalent to $u\in W^{m,p}_D(\Omega)$
under the background assumption that $u$ is in $W^{k,p}(\Omega)$ to being with, 
in the context specified in Theorem~\ref{Kance.745}). As such, problem 
\eqref{Tgb-5bb} imposes the homogeneous Dirichlet boundary 
condition ${\mathscr{R}}^{(m)}_{\Omega\to D}\,u=0$ on $D$ and the homogeneous
Neumann boundary condition $\partial^A_\nu(u,f)=0$ on $\partial\Omega\setminus D$, 
thus justifying the terminology ``mixed boundary value problem".

\vskip 0.08in
\begin{proof}[Proof of Theorem~\ref{yaUNDJ}]
As a preamble, we first claim that if $\Omega,D$ are as in the statement of the theorem
then whenever $m\in{\mathbb{N}}$ and $\max\,\{1,n-d\}<p<\infty$, we have 
\begin{eqnarray}\label{S-EllPPP.5tG}
{\mathcal{P}}_{m-1}(\Omega)\cap W^{m,p}_D(\Omega) 
=\left\{
\begin{array}{ll}
\{0\} & \mbox{ if }\,D\not=\emptyset,
\\[4pt]
{\mathcal{P}}_{m-1}(\Omega)& \mbox{ if }\,D=\emptyset.
\end{array}
\right.
\end{eqnarray}
To justify this, observe that in the case when $D\not=\emptyset$ 
we necessarily have ${\mathcal{H}}^d(D)>0$, given that $D$ is assumed to be 
$d$-Ahlfors regular. On the other hand, from \eqref{Ver-S2TG.2U}-\eqref{Ver-S2TG.2}
and \eqref{amZbYJnab} we deduce that
\begin{eqnarray}\label{amZbYJnab.6YHN}
\begin{array}{c}
\big\{\partial^\alpha P\big|_D\big\}_{|\alpha|\leq m-1}
={\mathscr{R}}_{\Omega\to D}^{(k)}v=(0,...,0)
\,\,\mbox{ at ${\mathcal{H}}^d$-a.e. point on }\,\,D,
\\[8pt]
\mbox{if }\,\,v\in W^{m,p}_D(\Omega)\,\,\mbox{ is of the form }\,\,
v=P\big|_{\Omega}\,\,\mbox{ for some }\,\,P\in{\mathcal{P}}_{m-1}({\mathbb{R}}^n).
\end{array}
\end{eqnarray}
All together, the above analysis show that ${\mathcal{P}}_{m-1}(\Omega)
\cap W^{m,p}_D(\Omega)=\{0\}$ if $D\not=\emptyset$ (given that $\Omega$ is 
assumed to be connected). Finally, if $D=\emptyset$
then the fact that ${\mathcal{P}}_{m-1}(\Omega)\cap W^{m,p}_D(\Omega)
={\mathcal{P}}_{m-1}(\Omega)$ is clear from \eqref{Itebn09.bFD}, 
Lemma~\ref{LLDEnse}, and the assumption that $\Omega$ is bounded. 

To proceed in earnest with the proof of the well-posedness of the mixed boundary 
problem \eqref{Tgb-5bb}, recall first the operator $T_{\mathcal{L}}$
introduced in \eqref{IOH-aci}-\eqref{IOH-aciF}. When considered in the context
\begin{eqnarray}\label{IanUHrr}
T_{\mathcal{L}}:W^{m,2}_D(\Omega)\longrightarrow\big(W^{m,2}_D(\Omega)\big)^*,
\end{eqnarray}
the ellipticity condition \eqref{S-Ellip} implies that this mapping has the 
property that, for every function $u=(u_j)_{1\leq j\leq M}\in W^{m,2}_D(\Omega)$,
\begin{eqnarray}\label{UaiGEh}
\kappa\sum_{|\alpha|=m}\sum_{i=1}^M\frac{\alpha!}{m!}
\int_{\Omega}|\partial^\alpha u_i|^2\,d{\mathscr{L}}^n
&\leq & {\rm Re}\left[\sum\limits_{|\alpha|=|\beta|=m}\sum\limits_{i,j=1}^M 
\int_{\Omega}a^{\alpha\beta}_{ij}\,\partial^\beta u_j
\overline{\partial^\alpha u_i}\,d{\mathscr{L}}^n\right]
\nonumber\\[4pt]
&\leq &\Big|
{}_{\big(W^{m,2}_D(\Omega)\big)^*}
\big\langle T_{\mathcal{L}}u,\overline{u}\bigr\rangle_{W^{m,2}_D(\Omega)}\Big|
\nonumber\\[4pt]
&\leq & \|T_{\mathcal{L}}u\|_{\big(W^{m,2}_D(\Omega)\big)^*}
\|u\|_{W^{m,2}(\Omega)}
\nonumber\\[4pt]
&\leq & (4\theta)^{-1}\|T_{\mathcal{L}}u\|^2_{\big(W^{m,2}_D(\Omega)\big)^*}
+\theta\|u\|^2_{W^{m,2}(\Omega)},
\end{eqnarray}
for each $\theta\in(0,\infty)$. Choosing $\theta$ small, \eqref{UaiGEh} ultimately
shows that there exists some finite constant $C>0$, independent of $u$, such that
\begin{eqnarray}\label{UaiGEh.7}
\|u\|_{W^{m,2}(\Omega)}
\leq C\|T_{\mathcal{L}}u\|_{\big(W^{m,2}_D(\Omega)\big)^*}
+C\|u\|_{W^{m-1,2}(\Omega)},\qquad\forall\,u\in W^{m,2}_D(\Omega).
\end{eqnarray}
Given that the embedding $W^{m,2}_D(\Omega)\hookrightarrow W^{m-1,2}(\Omega)$
is compact, estimate \eqref{UaiGEh.7} implies that $T_{\mathcal{L}}$ 
in \eqref{IanUHrr} is bounded from below modulo compact operators.
Granted this, standard functional analysis gives that $T_{\mathcal{L}}$
in \eqref{IanUHrr} has closed range and finite dimensional kernel. Since 
\begin{eqnarray}\label{OI-Unn}
\big(T_{\mathcal{L}}\big)^*=T_{\mathcal{L}^*}
\end{eqnarray}
and the adjoint ${\mathcal{L}}^*$ of ${\mathcal{L}}$ also satisfies the ellipticity 
condition \eqref{S-Ellip} (written for its tensor coefficient), it follows 
that the adjoint of $T_{\mathcal{L}}$ from \eqref{IanUHrr} also enjoys the 
aforementioned properties. That is, 
\begin{eqnarray}\label{IanUHrr.6Y}
\big(T_{\mathcal{L}}\big)^\ast:W^{m,2}_D(\Omega)
\longrightarrow\big(W^{m,2}_D(\Omega)\big)^*
\,\,\mbox{ has closed range and finite dimensional kernel},
\end{eqnarray}
since the spaces \eqref{uig-adjb-2} are reflexive for every $p\in(1,\infty)$.
All together, this analysis proves that 
\begin{eqnarray}\label{IanUHrr.7H}
T_{\mathcal{L}}:W^{m,2}_D(\Omega)
\longrightarrow\big(W^{m,2}_D(\Omega)\big)^*
\,\,\mbox{ is a Fredholm operator}.
\end{eqnarray}
It is also implicit in estimate \eqref{UaiGEh} (cf. the third inequality there) that 
if $u\in W^{m,2}_D(\Omega)$ is such that 
$T_{\mathcal{L}}u=0\in\big(W^{m,2}_D(\Omega)\big)^*$ then necessarily 
$u$ is a polynomial of degree $\leq m-1$ in $\Omega$.
Conversely, as seen from \eqref{IOH-aciF}, any such function is mapped 
by $T_{\mathcal{L}}$ to zero. Hence, for the operator \eqref{IanUHrr},
\begin{eqnarray}\label{autYiiR}
{\rm Ker}\,T_{\mathcal{L}}={\mathcal{P}}_{m-1}(\Omega)\cap W^{m,2}_D(\Omega).
\end{eqnarray}
From this and \eqref{OI-Unn} we also deduce that 
\begin{eqnarray}\label{autYiiR.2}
{\rm Ker}\,\big(T_{\mathcal{L}}\big)^\ast
={\mathcal{P}}_{m-1}(\Omega)\cap W^{m,2}_D(\Omega).
\end{eqnarray}
In concert, \eqref{IanUHrr.7H}-\eqref{autYiiR.2} show that
\begin{eqnarray}\label{IanUHrr.7H.2}
T_{\mathcal{L}}:W^{m,2}_D(\Omega)
\longrightarrow\big(W^{m,2}_D(\Omega)\big)^*
\,\,\mbox{ is Fredholm with index zero}.
\end{eqnarray}

At this stage, from \eqref{IanUHrr.7H.2}, the fact that the operator \eqref{IOH-aci}
is linear and bounded, \eqref{GYaYn16H}-\eqref{GYaYn16H.Re}, and the stability of 
the quality of being Fredholm with index zero, as well as the stability of null-space 
on complex interpolation scales which are nested (with respect to the scale parameter),
and the assumption that $n-d<2$ (cf., \cite{CaSa}, \cite{KMM}, \cite{KaMi}, 
\cite{ViVi}), we deduce that there exists $p_\ast\in(2,\infty)$,
depending only on $n,m,M,d,\Omega,D,A,\kappa$, with the property that whenever 
\eqref{S-EllPka} holds and $1/p+1/p'=1$, 
\begin{eqnarray}\label{UIhan9HB}
\begin{array}{c}
T_{\mathcal{L}}:W^{m,p}_D(\Omega)
\longrightarrow\big(W^{m,p'}_D(\Omega)\big)^*
\,\,\mbox{ is a Fredholm operator with index zero, and}
\\[4pt]
\mbox{both its kernel and the kernel of its adjoint coincide with }\,\,
{\mathcal{P}}_{m-1}(\Omega)\cap W^{m,p}_D(\Omega).
\end{array}
\end{eqnarray}
Based on this, \eqref{S-EllPPP.5tG}, and Proposition~\ref{Tabb-Than}, 
all claims in the statement of the theorem now readily follow.
\end{proof}

The case $D=\partial\Omega$ of Theorem~\ref{yaUNDJ} deserves to be stated 
separately since this corresponds to the well-posedness of the 
inhomogeneous Dirichlet boundary value problem for higher-order systems 
in a very general analytic and geometric measure theoretic setting. 
An artifact of the choice $D=\partial\Omega$ which deserves special mention 
is the fact that condition $d\in(n-2,n)$, which would normally carry over from 
the formulation of Theorem~\ref{yaUNDJ}, naturally readjusts to $d\in[n-1,n)$. 
To see this, recall a general version of the classical isoperimetric inequality 
proved by H.\,Federer in \cite[3.2.43-3.2.44, p.\,278]{Fe}, according to which
\begin{eqnarray}\label{Fed-H1}
{\mathscr{L}}^n(\overline{E})
\leq\frac{1}{n\omega_{n-1}^{n-1}}\,{\mathcal{H}}^{n-1}(\partial E)^{n/(n-1)},
\qquad\forall\,E\subset{\mathbb{R}}^n
\,\,\mbox{ with }\,\,{\mathscr{L}}^n(\overline{E})<+\infty,
\end{eqnarray}
where $\omega_{n-1}$ stands for the surface area of the unit sphere 
in ${\mathbb{R}}^n$. Hence, in the case of a bounded, open, nonempty 
set $\Omega\subseteq{\mathbb{R}}^n$ such that $\partial\Omega$ is 
$d$-Ahlfors regular, we simultaneously have ${\mathcal{H}}^{n-1}(\partial\Omega)>0$
and ${\mathcal{H}}^d(\partial\Omega)<+\infty$. Together, these two conditions
imply $d\geq n-1$, which accounts for the adjustment mentioned earlier. 

\begin{theorem}[Well-posedness of the higher-order inhomogeneous Dirichlet
problem]\label{yTganDJ}
Assume that $\Omega$ is a bounded, open, nonempty subset of ${\mathbb{R}}^n$, 
$n\geq 2$, whose boundary is $d$-Ahlfors regular for some $d\in[n-1,n)$. 
Also, consider an $M\times M$ divergence-form system ${\mathcal{L}}$ of order 
$2m$ with bounded measurable coefficients, as in \eqref{coHGB2}-\eqref{coHGB2.B}, 
for some $m\in{\mathbb{N}}$, and suppose that ${\mathcal{L}}$ satisfies the 
strong ellipticity condition \eqref{S-Ellip} for some $\kappa>0$.

Then there exists $p_\ast\in(2,\infty)$, depending only on $n,m,M,d,\Omega,A,\kappa$ 
with the property that the inhomogeneous Dirichlet boundary value problem 
\begin{eqnarray}\label{Tgb-5Ohab}
\left\{
\begin{array}{l}
{\mathcal{L}}u=f\in W^{-m,p}(\Omega),
\\[6pt]
u\in\mathring{W}^{m,p}(\Omega),
\end{array}
\right.
\end{eqnarray}
is well-posed whenever $\frac{p_\ast}{p_\ast-1}<p<p_\ast$.
\end{theorem}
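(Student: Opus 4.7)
The plan is to derive Theorem~\ref{yTganDJ} as the specialization of the mixed-problem well-posedness result Theorem~\ref{yaUNDJ} to the choice $D := \partial\Omega$. With this choice, every hypothesis of Theorem~\ref{yaUNDJ} is present in the current setting: $D = \partial\Omega$ is closed and $d$-Ahlfors regular with $d \in [n-1,n) \subset (n-2,n)$; the requirement that $\Omega$ be locally an $(\varepsilon,\delta)$-domain near $\partial\Omega \setminus D = \emptyset$ is vacuously fulfilled (take $J = \emptyset$ in Definition~\ref{OM-Oah-Y88}); and $D \neq \emptyset$ since $\Omega$ is a bounded nonempty proper open subset of ${\mathbb{R}}^n$, which places us in the well-posedness branch of Theorem~\ref{yaUNDJ}.

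Next, the translation of spaces is essentially automatic. Part~(5) of Lemma~\ref{TYRD-f4f} gives $W^{m,p}_{\partial\Omega}(\Omega) = \mathring{W}^{m,p}(\Omega)$, while \eqref{azTagbM-W} yields $\big(W^{m,p'}_{\partial\Omega}(\Omega)\big)^* = \big(\mathring{W}^{m,p'}(\Omega)\big)^* = W^{-m,p}(\Omega)$. Any $f \in W^{-m,p}(\Omega)$ is thus naturally a functional in $\big(W^{m,p'}_{\partial\Omega}(\Omega)\big)^*$, and the associated distribution $f\lfloor_\Omega \in {\mathcal{D}}'(\Omega)$ from \eqref{Tgb-1} coincides with $f$ under the canonical embedding $W^{-m,p}(\Omega) \hookrightarrow {\mathcal{D}}'(\Omega)$ (indeed, when $D = \partial\Omega$ the map $f \mapsto f\lfloor_\Omega$ is injective, as noted in the comments following \eqref{IYHN0987}). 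The variational Neumann condition $\partial^A_\nu(u,f) = 0$ on the empty set $\partial\Omega \setminus D$ imposes no constraint, so the mixed problem \eqref{Tgb-5bb} of Theorem~\ref{yaUNDJ} collapses precisely to the Dirichlet problem \eqref{Tgb-5Ohab}. The threshold $p_\ast$ together with its claimed dependencies is then inherited directly from Theorem~\ref{yaUNDJ}.

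The only subtlety — and the place where the plan requires the most care — is that Theorem~\ref{yaUNDJ} is stated for \emph{connected} $\Omega$, while the present statement makes no such assumption. This will be addressed by revisiting the proof of Theorem~\ref{yaUNDJ}: connectedness enters there only in the kernel identification \eqref{S-EllPPP.5tG} for $D \neq \emptyset$, where one needs to argue that a polynomial $P \in {\mathcal{P}}_{m-1}({\mathbb{R}}^n)$ whose higher-order boundary trace vanishes ${\mathcal{H}}^d$-a.e.~on $D$ must vanish identically. Since $D$ is $d$-Ahlfors regular with $d > 0$, one has ${\mathcal{H}}^d(D) > 0$, and Corollary~\ref{Kae.Yab7b6} supplies at least one point $x_0 \in D$ with $\partial^\alpha P(x_0) = 0$ for every multi-index $|\alpha| \leq m - 1$; Taylor's theorem then forces $P \equiv 0$ throughout ${\mathbb{R}}^n$ without any appeal to connectedness of $\Omega$. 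Consequently the Fredholm-with-index-zero property of $T_{\mathcal{L}}$ on $\mathring{W}^{m,p}(\Omega)$, the interpolation-based upgrade from $p = 2$ to a small open range around $2$, and the final invocation of Proposition~\ref{Tabb-Than} all carry through unchanged, delivering the desired well-posedness of \eqref{Tgb-5Ohab} in the stated range of exponents.
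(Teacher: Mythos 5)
Your overall strategy coincides with the paper's: both reduce Theorem~\ref{yTganDJ} to Theorem~\ref{yaUNDJ} with $D := \partial\Omega$, invoke part~(5) of Lemma~\ref{TYRD-f4f} and \eqref{azTagbM-W} to translate spaces, and then deal separately with the loss of the connectedness hypothesis. However, your fix for the connectedness issue addresses the wrong step and therefore leaves a genuine gap.

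You assert that connectedness enters the proof of Theorem~\ref{yaUNDJ} ``only in the kernel identification \eqref{S-EllPPP.5tG},'' and you repair \eqref{S-EllPPP.5tG} via a Taylor-expansion argument. That argument is fine as far as it goes, but \eqref{S-EllPPP.5tG} concerns ${\mathcal{P}}_{m-1}(\Omega)\cap W^{m,p}_D(\Omega)$ -- i.e.\ restrictions to $\Omega$ of \emph{single global} polynomials -- and in fact never genuinely required connectedness, since a polynomial vanishing on any nonempty open set is identically zero. The place where connectedness is truly used is earlier, in the kernel characterization \eqref{autYiiR}: from \eqref{UaiGEh} one only deduces that $T_{\mathcal{L}}u=0$ forces $\partial^\alpha u=0$ for all $|\alpha|=m$, which makes $u$ \emph{locally} a polynomial of degree $\leq m-1$ on each connected component of $\Omega$; promoting this to ``$u$ is the restriction of a single polynomial'' is exactly where the connectedness of $\Omega$ is invoked. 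When $\Omega$ is disconnected the kernel of $T_{\mathcal{L}}$ could a priori contain functions assembled from different polynomials on different components, and these lie outside ${\mathcal{P}}_{m-1}(\Omega)\cap\mathring{W}^{m,p}(\Omega)$, so your Taylor argument says nothing about them. This is precisely why the paper does not re-run \eqref{autYiiR}--\eqref{S-EllPPP.5tG} but instead proves \eqref{FYU-UIG} directly -- that \emph{locally} polynomial elements of $\mathring{W}^{m,p}(\Omega)$ vanish -- by an $(m-1)$-fold application of Poincar\'e's inequality, which is available in $\mathring{W}^{m,p}(\Omega)$ for bounded $\Omega$ and applies componentwise without any connectedness assumption. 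You would need to replace your Taylor step with this (or an equivalent componentwise) argument to close the gap.
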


\begin{proof}
Essentially, this is a consequence of Theorem~\ref{yaUNDJ} employed 
with $D=\partial\Omega$, and part $(5)$ in Lemma~\ref{TYRD-f4f}.
The only matter which requires further clarification, given that we are not 
assuming in the current case that $\Omega$ is necessarily connected, is the fact that
\begin{eqnarray}\label{FYU-UIG}
\big\{u\in\mathring{W}^{k,p}(\Omega):\,
u\,\,\mbox{ locally a polynomial of degree }\leq m-1\big\}=\{0\}.
\end{eqnarray}
This, however, is readily seen from an $(m-1)$-fold application of Poincar\'e's 
inequality (which is valid in the space $\mathring{W}^{k,p}(\Omega)$, given its
definition in \eqref{azTagbM}).  
\end{proof}

Both Theorem~\ref{yaUNDJ} and Theorem~\ref{yTganDJ} are sharp, in the sense that 
the membership of $p$ to a small neighborhood of $2$ is a necessary condition, 
even when $\Omega\subseteq{\mathbb{R}}^n$ is a bounded ${\mathscr{C}}^\infty$ domain, 
if the coefficients $A_{\alpha\beta}$ are merely bounded and measurable.
To treat both cases simultaneously, consider the (permissible) scenario in 
which $D=\partial\Omega$. In the case $n=M=2$ and $m=1$ a relevant counterexample 
has been given by N.\,Meyers in \S\,{5} of \cite{Mey}. Specifically, take 
$\Omega:=\{(x_1,x_2)\in{\mathbb{R}}^2:\,x_1^2+x_2^2<1\}$ and consider 
the tensor coefficient given by   
\begin{eqnarray}\label{TE-YGHC.1}
\begin{array}{l}
A_{11}(x_1,x_2)=1-(1-\mu^2)x_2^2(x_1^2+x_2^2)^{-1},
\\[6pt]
A_{12}(x_1,x_2)=A_{21}(x_1,x_2)=(1-\mu^2)x_1x_2(x_1^2+x_2^2)^{-1},
\\[6pt]
A_{22}(x_1,x_2)=1-(1-\mu^2)x_1^2(x_1^2+x_2^2)^{-1},
\end{array}
\qquad\forall\,(x,y)\in\Omega\setminus\{(0,0)\},
\end{eqnarray}
where $\mu\in(0,1)$ is a fixed parameter. Define the scalar operator
${\mathcal{L}}u:=\sum\limits_{\alpha,\beta=1,2}
\partial_\alpha\big(A_{\alpha\beta}(x_1,x_2)\partial_\beta u\big)$ in $\Omega$. 
Note that the $A_{\alpha\beta}$'s belong to $L^\infty(\Omega,{\mathscr{L}}^2)$
and a direct calculation shows that
\begin{eqnarray}\label{S-EHna}
\sum_{\alpha,\beta=1,2}A_{\alpha\beta}(x_1,x_2)
\zeta^\alpha\zeta^\beta=|\zeta|^2-(1-\mu^2)
\frac{(x_1\zeta^2-x_2\zeta^1)^2}{x_1^2+x_2^2}
\geq\mu^2|\zeta|^2,
\end{eqnarray}
for each $\zeta=\big(\zeta^1,\zeta^2\big)\in{\mathbb{R}}^2$
and $(x_1,x_2)\in\Omega\setminus\{0\}$. Hence, ${\mathscr{L}}$ satisfies
the strong ellipticity condition \eqref{S-Ellip}. To proceed, introduce the function 
\begin{eqnarray}\label{TE-Yah}
v(x_1,x_2):=x_1(x_2^2+x_2^2)^{(\mu-1)/2}\in L^\infty(\Omega,{\mathscr{L}}^2)
\cap{\mathscr{C}}^\infty\big(\overline{\Omega}\setminus\{0\}\big).
\end{eqnarray}
A straightforward calculation shows that ${\mathcal{L}}v=0$ near origin. 
Also, fix $\phi\in{\mathscr{C}}^\infty_c(\Omega)$ so that $\phi\equiv 1$ near 
origin, and set $u:=\phi\,v$. It follows that 
\begin{eqnarray}\label{TE-YGHC.2}
u\in\mathring{W}^{1,2}(\Omega),
\quad f:={\mathcal{L}}u\in{\mathscr{C}}_c^\infty(\Omega),\quad
\big|(\nabla u)(x_1,x_2)\big|\approx(x_1^2+x_2^2)^{(\mu-1)/2}\,\,\mbox{ near }\,\,(0,0).
\end{eqnarray}
Consequently,
\begin{eqnarray}\label{TE-YGHC.3}
u\in W^{1,p}(\Omega)\Longleftrightarrow p<\frac{2}{1-\mu}.
\end{eqnarray}
In particular, the fact that $2/(1-\mu)\searrow 2$ as $\mu\searrow 0$ shows that 
that for each $p>2$ there exists $\mu\in(0,1)$ with the property that
the operator ${\mathcal{L}}:\mathring{W}^{1,p}(\Omega)\to W^{-1,p}(\Omega)$
fails to be an isomorphism. By duality, (${\mathcal{L}}$ is formally self-adjoint), 
the same type of conclusion holds for $p<2$. 

When $n\geq 3$, $m=1$, $N=n$, a counterexample may be produced by altering  
(in the spirit of \cite{NeSt72}) a construction of E.\,De Giorgi from \cite{DG}. 
Specifically, consider $\Omega:=\{x\in\mathbb{R}^n:\,|x|<1\}$ and, for 
each $\gamma\in\big[0,\frac{n}{2}\big)$ and $\alpha,\beta\in\{1,...,n\}$, 
let $A_{\alpha\beta}$ be the $n\times n$ matrix whose $(i,j)$-entry is 
\begin{eqnarray}\label{TYba97H}
a^{\alpha\beta}_{ij}(x):=\delta_{\alpha\beta}\delta_{ij}
+\frac{\gamma(n-\gamma)(n-2)^2}{(n-2\gamma)^2(n-1)^2}
\left[\delta_{i\alpha}+\frac{n}{n-2}\frac{x_ix_\alpha}{|x|^2}\right]
\left[\delta_{j\beta}+\frac{n}{n-2}\frac{x_jx_\beta}{|x|^2}\right],
\end{eqnarray}
for each $x\in\Omega\setminus\{0\}$.
Obviously, $a^{\alpha\beta}_{ij}\in L^\infty(\Omega,{\mathscr{L}}^n)$ and 
a straightforward calculation shows that
\begin{eqnarray}\label{S-ElYha}
\sum_{\alpha,\beta=1}^n\sum_{i,j=1}^n a^{\alpha\beta}_{ij}(x)
\zeta^\alpha_i\zeta^\beta_j
=|\zeta|^2+\frac{\gamma(n-\gamma)(n-2)^2}{(n-2\gamma)^2(n-1)^2}
\Bigl(\sum_{i=1}^n\zeta_i^i+\frac{n}{n-2}
\sum_{i,\alpha=1}^n\zeta_i^\alpha\frac{x_ix_\alpha}{|x|^2}\Bigr)^2
\end{eqnarray}
for each $\zeta=\big(\zeta^\alpha_i\big)_{1\leq \alpha,i\leq n}\in{\mathbb{R}}^{n^2}$
and $x\in\Omega\setminus\{0\}$. Given our assumptions on $\gamma$, 
it follows that the strong ellipticity condition holds:
\begin{eqnarray}\label{S-ElYhaii}
\sum_{\alpha,\beta=1}^n\sum_{i,j=1}^n a^{\alpha\beta}_{ij}(x)
\zeta^\alpha_i\zeta^\beta_j\geq|\zeta|^2\qquad
{\mathscr{L}}^n\mbox{-a.e. in }\Omega,\qquad\forall\,
\zeta=\big(\zeta^\alpha_i\big)_{1\leq \alpha,i\leq n}\in{\mathbb{R}}^{n^2}.
\end{eqnarray}
Now, the fact that $\gamma<n/2$ ensure that the function 
\begin{eqnarray}\label{S-El333}
u(x):=\frac{x}{|x|^{\gamma}}-x,\qquad\forall\,x\in\Omega\setminus\{0\},
\end{eqnarray}
belongs to $W^{1,2}(\Omega)$. Since by design $u\big|_{\partial\Omega}=0$, 
we deduce that actually $u\in\mathring{W}^{1,2}(\Omega)$. Furthermore, if
\begin{eqnarray}\label{S-El4v}
f:=(f_1,...,f_n)\,\,\,\mbox{ with }\,\,\,
f_i:=-\sum\limits_{\alpha=1}^n\sum\limits_{j=1}^n\partial_{\alpha}
a^{\alpha j}_{ij}\,\,\,\mbox{ for }\,\,\,1\leq i\leq n, 
\end{eqnarray}
then clearly 
\begin{eqnarray}\label{S-El6tf}
f\in\bigcap_{1<p<\infty} W^{-1,p}(\Omega),
\end{eqnarray}
while a direct computation shows that  
\begin{eqnarray}\label{S-ayYBB65}
\sum\limits_{\alpha,\beta=1}^n\partial_{\alpha}
\Bigl(A_{\alpha\beta}(x)\partial_\beta u\Bigr)=f\,\,\,\mbox{ in }\,\,\,
{\mathcal{D}}'(\Omega).
\end{eqnarray}
However, on the one hand $u\in W^{1,p}(\Omega)$ if and only if $p<n/\gamma$, 
while on the other hand $n/\gamma\searrow 2$ as $\gamma\nearrow n/2$. 

For $n\geq 2$ and higher-order operators we make use of an example originally 
due to V.G.~Maz'ya (cf. \cite{Maz-Ctr}). Specifically, when $m\in\mathbb{N}$ 
is even, consider the divergence-form operator of order $2m$ 
\begin{equation}\label{Maz-Op}
{\mathcal{L}}:=\Delta^{\frac{1}{2}m-1}{\mathcal L}_4\,\Delta^{\frac{1}{2}m-1}
\quad\mbox{in }\,\,\Omega:=\{x\in\mathbb{R}^n:\,|x|<1\},
\end{equation}
where ${\mathcal L}_4$ is the fourth-order operator  
\begin{eqnarray}\label{Maz-Op2}
{\mathcal L}_4 u & \!\!\!\!:=\!\!\!\! & a\,\Delta^2 u
+b\sum_{i,j=1}^n\Delta\Bigl(\frac{x_ix_j}{|x|^2}
\partial_i\partial_j\,u\Bigr)
+b\sum_{i,j=1}^n\partial_i\partial_j\Bigl(\frac{x_ix_j}{|x|^2}\,
\Delta\,u\Bigr)
\nonumber\\[6pt]
&&+c\sum_{i,j,k,l=1}^n\partial_k\partial_l\Bigl(\frac{x_ix_jx_kx_l}{|x|^4}
\partial_i\partial_j\,u\Bigr).
\end{eqnarray}
Obviously, the coefficients of ${\mathcal L}_4$ are bounded, and if the parameters 
$a,b,c\in\mathbb{R}$, $a>0$, are chosen such that $b^2<ac$ then ${\mathcal L}_4$ 
along with ${\mathcal L}=\Delta^{\frac{1}{2}m-1}{\mathcal L}_4\,\Delta^{\frac{1}{2}m-1}$ 
are strongly elliptic. Now, it has been observed in \cite{Maz-Ctr} that if 
\begin{equation}\label{Maz-Op3}
\theta:=2-\frac{n}{2}+\sqrt{\frac{n^2}{4}-\frac{(n-1)(bn+c)}{a+2b+c}},
\end{equation}
then the function $v(x):=|x|^{\theta+m-2}$ for each $x\in\Omega\setminus\{0\}$
belongs to $W^{m,2}(\Omega)$ and satisfies ${\mathcal{L}}v=0$ in
${\mathcal{D}}'(\Omega)$. Furthermore, $v$ is ${\mathscr{C}}^\infty$ 
in a neighborhood of $\partial\Omega$ and, as such, there exists 
a function $w\in{\mathscr{C}}^\infty(\overline{\Omega})$ with the property that
\begin{equation}\label{MaUan}
u:=v-w\in{\mathring{W}}^{m,2}(\Omega).
\end{equation}
Note that, by deign, ${\mathcal{L}}u=f$ in ${\mathcal{D}}'(\Omega)$, where 
\begin{eqnarray}\label{S-El6tftt}
f:=-{\mathcal{L}}w\in\bigcap_{1<p<\infty} W^{-m,p}(\Omega),
\end{eqnarray}
and $u\in W^{m,p}(\Omega)$ if and only if $v\in W^{m,p}(\Omega)$.
In order to focus on the veracity of the latter condition, we find it convenient
to specialize matters by taking $a:=(n-2)^2+\varepsilon$, $b:=n(n-2)$, $c:=n^2$, 
for some small $\varepsilon>0$. The strong ellipticity condition is satisfied, 
and the parameter $\theta$ from \eqref{Maz-Op3} becomes 
\begin{eqnarray}\label{ajBBBV}
\theta(\varepsilon)=2-\frac{n}{2}+
\frac{n\,\varepsilon^{1/2}}{2\sqrt{4(n-1)^2+\varepsilon}}. 
\end{eqnarray}
However, $v\in W^{m,p}(\Omega)$ if and only if 
$p<n/(2-\theta(\varepsilon))$, and the bound $n/(2-\theta(\varepsilon))$ 
approaches $2$ when $\varepsilon\to 0^{+}$. The bottom line is that range 
of $p$'s in the interval $(2,\infty)$ for which $u\in W^{m,p}(\Omega)$ 
shrinks to $2$ as $\varepsilon\to 0^{+}$. 

In \cite{Maz-Ctr} an analogous example was also constructed when $m>1$ 
is odd, starting with the sixth order operator 
\begin{eqnarray}\label{Maz-Op4}
{\mathcal L}_6\,u &\!\!\!:=\!\!\!& a\,\Delta^3u
+b\sum_{i,j,k,l=1}^n\partial_i\partial_j\partial_k\Bigl(\frac{x_ix_jx_kx_l}{|x|^4}
\Delta\partial_l\,u\Bigr)
+b\sum_{i,j,k,l=1}^n\Delta\partial_l\Bigl(\frac{x_ix_jx_kx_l}{|x|^4}
\partial_i\partial_j\partial_k\,u\Bigr)
\nonumber\\[6pt]
&&+c\sum_{i,j,k,l,r,t=1}^n\partial_i\partial_j\partial_k
\Bigl(\frac{x_ix_jx_kx_lx_rx_t}{|x|^6}\partial_l\partial_r\partial_t\,u\Bigr)
\end{eqnarray}
and then considering 
\begin{equation}\label{Maz-Op5}
{\mathcal L}:=\Delta^{\frac{m-3}{2}}{\mathcal L}_6\,\Delta^{\frac{m-3}{2}}
\quad\mbox{ in }\,\,\,\Omega:=\{x\in\mathbb{R}^n:\,|x|<1\}.
\end{equation}
For the choice $a:=(n-4)^2+\varepsilon$, $b:=(n-4)(n+2)$, $c:=(n+2)^2$, 
$\varepsilon>0$, the operator \eqref{Maz-Op5} is strongly elliptic and 
the function $v(x):=|x|^{\mu+m-3}$ for each $x\in\Omega\setminus\{0\}$
belongs to $W^{m,2}(\Omega)$ and satisfies ${\mathcal{L}}v=0$ in
${\mathcal{D}}'(\Omega)$ if $\mu=\mu(\varepsilon)$ given by 
\begin{equation}\label{Maz-Op6}
\mu:=3-\frac{n}{2}+\frac{(n+2)(n-4)}{2}
\sqrt{\frac{\varepsilon}{4(n-1)^2+\varepsilon}}.
\end{equation}
Moreover, $v\in W^{m,p}(\Omega)$ if and only if 
$p<n/(3-\mu(\varepsilon))$, and the bound $n/(3-\mu(\varepsilon))$ 
approaches $2$ when $\varepsilon\to 0^{+}$. With this in hand and proceeding 
as in the previous case, the same type of conclusion may be derived 
in the current setting as well. 

\vskip 0.10in

Moving on, we wish to extend the well-posedness result for the higher-order
inhomogeneous Dirichlet problem from Theorem~\ref{yTganDJ} by treating its 
fully inhomogeneous version, albeit in a more resourceful geometrical setting. 

\begin{theorem}[Well-posedness of the fully inhomogeneous higher-order Poisson 
problem]\label{yTganDJ.Yam}
Let $\Omega$ be a bounded $(\varepsilon,\delta)$-domain in ${\mathbb{R}}^n$, 
$n\geq 2$, with ${\rm rad}\,(\Omega)>0$, and whose boundary is $d$-Ahlfors regular 
for some $d\in[n-1,n)$. In addition, consider an $M\times M$ divergence-form system
${\mathcal{L}}$ of order $2m$ with bounded measurable coefficients, as 
in \eqref{coHGB2}-\eqref{coHGB2.B}, for some $m\in{\mathbb{N}}$, and 
suppose that ${\mathcal{L}}$ satisfies the strong ellipticity condition 
\eqref{S-Ellip} for some $\kappa>0$.

Then there exists $p_\ast\in(2,\infty)$ sufficiently close to $2$, 
which depends only on $n,m,M,d,\Omega,A,\kappa$, and with the property 
that the fully inhomogeneous Poisson problem 
\begin{eqnarray}\label{Tgb-5Ohab.LK}
\left\{
\begin{array}{l}
{\mathcal{L}}u=f\in W^{-m,p}(\Omega),
\\[6pt]
u\in W^{m,p}(\Omega),
\\[6pt]
{\mathscr{R}}^{(m)}_{\Omega\to\partial\Omega}u
=\dot{g}\in B^{p,p}_{m-(n-d)/p}(\partial\Omega),
\end{array}
\right.
\end{eqnarray}
is well-posed whenever $\frac{p_\ast}{p_\ast-1}<p<p_\ast$.
\end{theorem}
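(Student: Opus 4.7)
\noindent
The plan is to reduce the fully inhomogeneous Poisson problem \eqref{Tgb-5Ohab.LK} to the homogeneous Dirichlet problem \eqref{Tgb-5Ohab}, whose well-posedness is already established in Theorem~\ref{yTganDJ}. The reduction is performed by lifting the boundary datum $\dot{g}$ to the interior via the extension operator ${\mathscr{E}}^{(m)}_{\partial\Omega\to\Omega}$ from Corollary~\ref{NIceTRace.CC}. First, I will take $p_\ast\in(2,\infty)$ to be the threshold produced by Theorem~\ref{yTganDJ} (applied to ${\mathcal{L}}$ on $\Omega$), so that we may assume $\max\{1,n-d\}<p<\infty$ (which holds since $d\geq n-1$) and that \eqref{GGG-89.UUU}--\eqref{GiG-91Yab} are all available in Corollary~\ref{NIceTRace.CC}.

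For existence, given $f\in W^{-m,p}(\Omega)$ and $\dot{g}\in B^{p,p}_{m-(n-d)/p}(\partial\Omega)$, set
\begin{equation*}
u_0:={\mathscr{E}}^{(m)}_{\partial\Omega\to\Omega}\,\dot{g}\in W^{m,p}(\Omega),
\qquad {\mathscr{R}}^{(m)}_{\Omega\to\partial\Omega}u_0=\dot{g},
\end{equation*}
by virtue of \eqref{GUab9Fa.34} and \eqref{GiG-91Yab}. Since each $A_{\alpha\beta}$ has entries in $L^\infty(\Omega,{\mathscr{L}}^n)$ and $\partial^\beta u_0\in L^p(\Omega,{\mathscr{L}}^n)$ for $|\beta|\leq m$, the characterization \eqref{azTagbM-W2} gives ${\mathcal{L}}u_0\in W^{-m,p}(\Omega)$, together with a norm estimate controlled by $\|\dot{g}\|_{B^{p,p}_{m-(n-d)/p}(\partial\Omega)}$ via the boundedness in \eqref{GUab9Fa.34}. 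Consequently the datum $\widetilde{f}:=f-{\mathcal{L}}u_0$ belongs to $W^{-m,p}(\Omega)$, and Theorem~\ref{yTganDJ} furnishes a (unique) solution $v\in\mathring{W}^{m,p}(\Omega)$ of ${\mathcal{L}}v=\widetilde{f}$. Then $u:=v+u_0$ solves \eqref{Tgb-5Ohab.LK}: indeed, $u\in W^{m,p}(\Omega)$, ${\mathcal{L}}u=f$ in ${\mathcal{D}}'(\Omega)$, and ${\mathscr{R}}^{(m)}_{\Omega\to\partial\Omega}u={\mathscr{R}}^{(m)}_{\Omega\to\partial\Omega}v+\dot{g}=\dot{g}$ thanks to \eqref{Uan-Taev75}, which identifies $\mathring{W}^{m,p}(\Omega)$ with the null-space of ${\mathscr{R}}^{(m)}_{\Omega\to\partial\Omega}$ in $W^{m,p}(\Omega)$.

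For uniqueness and the accompanying norm estimate, suppose $u_1,u_2$ both solve \eqref{Tgb-5Ohab.LK} with the same data. Then $w:=u_1-u_2\in W^{m,p}(\Omega)$ satisfies ${\mathcal{L}}w=0$ in ${\mathcal{D}}'(\Omega)$ and ${\mathscr{R}}^{(m)}_{\Omega\to\partial\Omega}w=0$ at ${\mathcal{H}}^d$-a.e.\ point of $\partial\Omega$, hence $w\in\mathring{W}^{m,p}(\Omega)$ by \eqref{Uan-Taev75}. Uniqueness in Theorem~\ref{yTganDJ} then forces $w=0$. The continuous dependence estimate
\begin{equation*}
\|u\|_{W^{m,p}(\Omega)}\leq C\Big(\|f\|_{W^{-m,p}(\Omega)}+\|\dot{g}\|_{B^{p,p}_{m-(n-d)/p}(\partial\Omega)}\Big)
\end{equation*}
follows by combining the estimate for $v$ from Theorem~\ref{yTganDJ} with the boundedness of ${\mathscr{E}}^{(m)}_{\partial\Omega\to\Omega}$ in \eqref{GUab9Fa.34}.

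No genuine obstacle appears, since the heavy lifting has been carried out in earlier results: the surjectivity of the trace with bounded right-inverse (Corollary~\ref{NIceTRace.CC}), the intrinsic identification of $\mathring{W}^{m,p}(\Omega)$ as the null-space of the trace (formula \eqref{Uan-Taev75}), and the well-posedness of the homogeneous Dirichlet problem (Theorem~\ref{yTganDJ}). The only point requiring a brief verification is that ${\mathcal{L}}u_0\in W^{-m,p}(\Omega)$, which is immediate from \eqref{azTagbM-W2} once one notes that each $A_{\alpha\beta}\partial^\beta u_0$ lies in $L^p(\Omega,{\mathscr{L}}^n)$. The threshold $p_\ast$ is inherited without alteration from Theorem~\ref{yTganDJ}, since the reduction step is bounded on the full range $\max\{1,n-d\}<p<\infty$.
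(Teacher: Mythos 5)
Your proof is correct and follows essentially the same reduction as the paper's: lift the boundary datum to a function $u_0 \in W^{m,p}(\Omega)$ with the prescribed trace, then solve the remaining homogeneous Dirichlet problem via Theorem~\ref{yTganDJ} and add. The only cosmetic difference is that you explicitly invoke the bounded right-inverse ${\mathscr{E}}^{(m)}_{\partial\Omega\to\Omega}$ for the lift, whereas the paper phrases the same step as surjectivity of ${\mathscr{R}}^{(m)}_{\Omega\to\partial\Omega}$ plus the Open Mapping Theorem; your route is if anything slightly more direct.
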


\begin{proof}
Let $p_\ast>2$ be as in Theorem~\ref{yTganDJ} (without loss of generality, 
it may be assumed that $p_\ast$ is sufficiently close to $2$), and fix some 
$p$ such that $\frac{p_\ast}{p_\ast-1}<p<p_\ast$. Suppose now that an arbitrary 
$f\in W^{-m,p}(\Omega)$ and $\dot{g}\in B^{p,p}_{m-(n-d)/p}(\partial\Omega)$ have
been given. Since in the current context Corollary~\ref{NIceTRace.CC} guarantees that 
the restriction operator ${\mathscr{R}}_{\Omega\to\partial\Omega}^{(m)}$ 
maps $W^{m,p}(\Omega)$ onto $B^{p,p}_{m-(n-d)/p}(\partial\Omega)$, it follows
that there exists $v\in W^{m,p}(\Omega)$ such that 
${\mathscr{R}}_{\Omega\to\partial\Omega}^{(m)}v=\dot{g}$. 
Moreover, as a consequence of the Open Mapping Theorem, it may be assumed 
that $\|v\|_{W^{m,p}(\Omega)}\leq C\|\dot{g}\|_{B^{p,p}_{m-(n-d)/p}(\partial\Omega)}$
for some finite constant $C>0$ independent of $\dot{g}$. If we now use   
the well-posedness statement in Theorem~\ref{yTganDJ} in order to 
solve the inhomogeneous Dirichlet boundary value problem 
\begin{eqnarray}\label{Tgb-5Ohab.A}
\left\{
\begin{array}{l}
{\mathcal{L}}w=f-{\mathcal{L}}v\in W^{-m,p}(\Omega),
\\[6pt]
w\in\mathring{W}^{m,p}(\Omega),
\end{array}
\right.
\end{eqnarray}
it follows that $u:=v+w$ solves the original problem \eqref{Tgb-5Ohab.LK}
(keeping in mind \eqref{Uan-Taev75}), and also obeys natural estimates.
Finally, uniqueness for \eqref{Tgb-5Ohab.LK} is a consequence of the uniqueness part 
in Theorem~\ref{yTganDJ} and \eqref{Uan-Taev75}.
\end{proof}

The well-posedness of the fully inhomogeneous Poisson problem \eqref{Tgb-5Ohab.LK}
has been established in \cite{MaMiSh} in the context of weighted Sobolev
spaces in Lipschitz domains, for strongly elliptic higher-order systems with 
bounded measurable coefficients, when the integrability parameter $p$ belongs to
a small neighborhood of $2$. In \cite{MaMiSh}, the authors have also proved
that problem \eqref{Tgb-5Ohab.LK} continues to be well-posed if, additionally,
the outward unit normal $\nu$ to the Lipschitz domain $\Omega$ belongs to 
${\rm VMO}(\partial\Omega)$, the Sarason space of functions with vanishing 
mean oscillations and the coefficients of the operator ${\mathcal{L}}$ are 
in ${\rm VMO}(\Omega)$.

We conclude this section with the following solvability result for the 
inhomogeneous Neumann problem in $(\varepsilon,\delta)$-domains.

\begin{theorem}[Well-posedness of the higher-order Neumann boundary
problem]\label{yaUNDJ.taF}
Let $\Omega$ be a bounded, connected $(\varepsilon,\delta)$-domain in ${\mathbb{R}}^n$, 
$n\geq 2$, with ${\rm rad}\,(\Omega)>0$, and suppose that ${\mathcal{L}}$ is an 
$M\times M$ divergence-form system of order $2m$, as in 
\eqref{coHGB2}-\eqref{coHGB2.B}, for some $m\in{\mathbb{N}}$, 
which satisfies the strong ellipticity condition \eqref{S-Ellip} for some constant
$\kappa>0$.

Then there exists $p_\ast\in(2,\infty)$, depending only on $n,m,M,\Omega,D,A,\kappa$, 
with the following significance. If $\frac{p_\ast}{p_\ast-1}<p<p_\ast$ 
then for each functional $f\in\big(W^{m,p'}(\Omega)\big)^*$ (where $1/p+1/p'=1$), 
the inhomogeneous Neumann boundary value problem 
\begin{eqnarray}\label{Tgb-5bb.Kan}
\left\{
\begin{array}{l}
{\mathcal{L}}u=f\lfloor_{\,\Omega}\,\,\mbox{ in }\,\,{\mathcal{D}}'(\Omega),
\\[6pt]
u\in W^{m,p}(\Omega),
\\[6pt]
\partial^A_\nu(u,f)=0\,\,\mbox{ on }\,\,\partial\Omega,
\end{array}
\right.
\end{eqnarray}
(with the last condition understood in the sense of Definition~\ref{Ian-Yab679}
specialized to the case when $D=\emptyset$) has a solution if and only if 
\begin{eqnarray}\label{S-EYnn.Lan}
{}_{\big(W^{m,p'}(\Omega)\big)^*}\langle f,v\rangle_{W^{m,p'}(\Omega)}=0,
\qquad\forall\,v\in{\mathcal{P}}_{m-1}(\Omega),
\end{eqnarray}
in which scenario solutions of \eqref{Tgb-5bb.Kan} are unique up to 
functions from ${\mathcal{P}}_{m-1}(\Omega)$.
\end{theorem}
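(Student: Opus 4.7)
The plan is to view \eqref{Tgb-5bb.Kan} as the specialization of the mixed problem \eqref{Tgb-5bb} to the case $D=\emptyset$, and to run through the Fredholm argument of Theorem~\ref{yaUNDJ} in this degenerate setting. Since $\Omega$ is a bounded $(\varepsilon,\delta)$-domain with ${\rm rad}(\Omega)>0$, Lemma~\ref{LLDEnse} gives $W^{m,p}_\emptyset(\Omega)=W^{m,p}(\Omega)$ for every $p\in[1,\infty)$, so the operator $T_{\mathcal{L}}$ from \eqref{IOH-aci}--\eqref{IOH-aciF} is a bounded linear map
\[
T_{\mathcal{L}}:W^{m,p}(\Omega)\longrightarrow\big(W^{m,p'}(\Omega)\big)^*,
\qquad 1/p+1/p'=1,
\]
and Proposition~\ref{Tabb-Than} reduces the solvability of \eqref{Tgb-5bb.Kan} to the functional-analytic study of $T_{\mathcal{L}}$.

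First, I will settle the case $p=2$. Inserting $v=\overline{u}$ in \eqref{IOH-aciF}, the strong ellipticity \eqref{S-Ellip} yields the G{\aa}rding-type chain \eqref{UaiGEh}, which combined with the compactness of the embedding $W^{m,2}(\Omega)\hookrightarrow W^{m-1,2}(\Omega)$ (obtained from Jones' extension Theorem~\ref{cjeg} and the classical Rellich--Kondrachov theorem) shows that $T_{\mathcal{L}}$ is bounded from below modulo compacts. The same reasoning applied to the formal adjoint $T_{\mathcal{L}}^*=T_{{\mathcal{L}}^*}$ (which inherits strong ellipticity) then gives that $T_{\mathcal{L}}$ at $p=2$ is Fredholm of index zero. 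To identify the kernel, note that $T_{\mathcal{L}}u=0$ together with the third inequality in \eqref{UaiGEh} forces $\partial^\alpha u=0$ in $L^2(\Omega)$ for all $|\alpha|=m$; since $\Omega$ is connected, this means $u\in{\mathcal{P}}_{m-1}(\Omega)$, and boundedness of $\Omega$ implies the reverse inclusion. Applying the same argument to $T_{\mathcal{L}}^*$ yields $\operatorname{Ker}T_{\mathcal{L}}=\operatorname{Ker}T_{\mathcal{L}}^*={\mathcal{P}}_{m-1}(\Omega)$, in parallel with \eqref{autYiiR}--\eqref{autYiiR.2}.

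Second, I propagate these properties to a full open interval around $p=2$ via interpolation. Specialized to $D=\emptyset$, Theorem~\ref{HanmNB8} (or equivalently, the classical fact together with Jones' extension used as a retraction onto $W^{m,p}({\mathbb{R}}^n)$) ensures that both $\{W^{m,p}(\Omega)\}_{1<p<\infty}$ and $\{(W^{m,p'}(\Omega))^*\}_{1<p<\infty}$ are stable under the complex and real methods. Since the candidate kernel ${\mathcal{P}}_{m-1}(\Omega)$ is finite-dimensional and embeds continuously into every $W^{m,p}(\Omega)$, the stability machinery for Fredholm operators of index zero on nested complex interpolation scales (cf.\ the references \cite{CaSa}, \cite{KMM}, \cite{KaMi}, \cite{ViVi} used in the proof of Theorem~\ref{yaUNDJ}) produces some $p_\ast\in(2,\infty)$, depending only on $n,m,M,\Omega,A,\kappa$, such that for every $p$ with $\frac{p_\ast}{p_\ast-1}<p<p_\ast$ the operator $T_{\mathcal{L}}:W^{m,p}(\Omega)\to(W^{m,p'}(\Omega))^*$ is Fredholm of index zero, with $\operatorname{Ker}T_{\mathcal{L}}=\operatorname{Ker}T_{\mathcal{L}}^*={\mathcal{P}}_{m-1}(\Omega)$.

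Finally, a direct application of Proposition~\ref{Tabb-Than} completes the proof: \eqref{Tgb-5bb.Kan} is solvable iff $f\in\operatorname{Im}T_{\mathcal{L}}=(\operatorname{Ker}T_{\mathcal{L}}^*)^\perp$, which by the kernel identification is exactly the compatibility condition \eqref{S-EYnn.Lan}, and under this condition solutions are unique up to $\operatorname{Ker}T_{\mathcal{L}}={\mathcal{P}}_{m-1}(\Omega)$. The main technical obstacle is the second step, namely making precise the transfer of the Fredholm index and, crucially, of the kernel/cokernel identifications off $p=2$; the convenient feature here is that the candidate kernel is $p$-independent, so the standard interpolation stability results apply verbatim once the scale $\{W^{m,p}(\Omega)\}_{1<p<\infty}$ has been shown to be a complex and real interpolation scale, which is guaranteed by Theorem~\ref{HanmNB8}.
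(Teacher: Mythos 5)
Your proposal is correct and follows essentially the same route as the paper: the paper simply invokes Theorem~\ref{yaUNDJ} with $D=\emptyset$ (together with Lemma~\ref{LLDEnse} to identify $W^{m,p}_\emptyset(\Omega)$ with $W^{m,p}(\Omega)$), which is precisely what you do, only with the Fredholm and interpolation steps unrolled explicitly rather than cited.
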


\begin{proof}
This is an immediate consequence of Theorem~\ref{yaUNDJ} specialized to the
case in which $D:=\emptyset$ (cf. also Lemma~\ref{LLDEnse} in this regard). 
\end{proof}

\vspace{0.08in}
\noindent --------------------------------------
\vspace{0.08in}

\noindent\begin{minipage}[t]{7.5cm}
\noindent {\tt Kevin Brewster}

\noindent Department of Mathematics

\noindent University of Missouri at Columbia

\noindent Columbia, MO 65211, USA

\vskip 0.08in

\noindent {\tt e-mail}: {\it kjb886\@@mail.mizzou.edu}

\vskip 0.10in

\noindent {\tt Dorina Mitrea}

\noindent Department of Mathematics

\noindent University of Missouri at Columbia

\noindent Columbia, MO 65211, USA

\vskip 0.08in

\noindent {\tt e-mail}: {\it mitread\@@missouri.edu}

\end{minipage}

\vskip 0.15in

\noindent\begin{minipage}[t]{7.5cm}
\noindent {\tt Irina Mitrea}

\noindent Department of Mathematics

\noindent Temple University

\noindent Philadelphia, PA 19122, USA

\vskip 0.08in

\noindent {\tt e-mail}: {\it imitrea\@@temple.edu}

\vskip 0.10in

\noindent {\tt Marius Mitrea}

\noindent Department of Mathematics

\noindent University of Missouri at Columbia

\noindent Columbia, MO 65211, USA

\vskip 0.08in

\noindent {\tt e-mail}: {\it mitream\@@missouri.edu}
\end{minipage}
\end{document}